\newtheorem{ass}{Assumption}[]
\newtheorem{fass}{Functional Assumption}[]
\newtheorem{sass}{SPDE Assumption}[]
\newtheorem{thm}{Theorem} 
\newtheorem{deff}[thm]{Definition}
\newtheorem{lem}[thm]{Lemma}
\newtheorem{cor}[thm]{Corollary}
\newtheorem{rem}[thm]{Remark}
\numberwithin{thm}{section} 
\numberwithin{equation}{section} 
\newcommand{\E}{\mathbb{E}}
\newcommand{\F}{\mathcal{F}}
\newcommand{\G}{\mathcal{G}}
\newcommand{\U}{\mathcal{U}}
\newcommand{\Prb}{\mathbb{P}}
\newcommand{\mcM}{\mathcal{M}}
\newcommand{\Pp}{\mathcal{P}}
\newcommand{\N}{\mathbb{N}}
\newcommand{\R}{\mathbb{R}}
\newcommand{\Id}{\mathbbm{1}}
\newcommand{\inv}{\mathcal{M}_+}
\newcommand{\tinv}{\tilde{\mathcal{M}}_+}
\newcommand{\D}{\mathbb{D}}
\newcommand{\Dm}{\mathcal{D}}
\newcommand{\Dme}{\mathcal{D}^{ext}}
\newcommand{\tDme}{\tilde{\mathcal{D}}^{ext}}
\newcommand{\K}{\mathcal{K}}
\newcommand{\Ll}{\mathcal{L}}
\newcommand{\ip}[2]{\langle #1, #2 \rangle}
\newcommand{\T}{\mathbb{T}}
\newcommand{\tr}{\operatorname{tr}}
\newcommand{\B}{\mathcal{B}}
\newcommand{\X}{\mathcal{X}}
\newcommand{\Y}{\mathcal{Y}}
\newcommand{\cdom}{C(\overline D)_+}
\author[J. F{ö}ldes]{Juraj F{ö}ldes}
\address{Juraj F{ö}ldes\footnote{Juraj F\"oldes was partly supported under the grants: NSF DMS-1816408,  
Simons foundation SFI-MPS-TSM-00013942, U.S. DOE DE-NA0004269
}, Dept. of Mathematics, University of Virginia, Kerchof Hall,
Charlottesville, VA 22904-4137}
\email{foldes@virginia.edu}
\title{Stochastic Persistence in Infinite Dimensions}
\author{Declan Stacy}
\address{Declan Stacy, Institute of Mathematics, EPFL, MA C1 615 (Bâtiment MA), 
Station 8,
1015 Lausanne, Switzerland}
\email{declan.stacy@gmail.com}
\date{January 2026}
\begin{document}

\begin{abstract}
    Motivated by infinite-dimensional ecological and biological models such as reaction-diffusion SPDEs and stochastic functional differential equations, we develop a general criteria for stochastic persistence (coexistence) in terms of an average lyapunov function, which was previously known only in  finite dimensions. To apply our results to SPDEs we  analyze the projective process, and we employ a combination of mild (stochastic convolution) and variational (lyapunov function) techniques. Our analysis also requires some nontrivial well-posedness and nonnegativity results for reaction-diffusion SPDEs, which we state and prove in great generality, extending the known results in the literature. Finally, we present  several examples including ecological models (Lotka-Volterra), an epidemic model (SIR), and a model for turbulence. Notably we show that, as in the SDE case, coexistence in the Lotka-Volterra model is determined by the invasion rates. 

    \smallskip
\noindent \textbf{Keywords:} Stochastic persistence, stochastic reaction-diffusion, stochastic Lotka-Volterra,  instability, invariant subsets, Lyapunov function, Lyapunov exponent, average Lyapunov function, invariant measures, SPDEs, functional differential equations, invasion rates\\

\noindent
\textbf{2020 MSC: 37H30,  60H15, 60H30,  60J25, 60J70 60F15, 92D25, 92D30, 37H10, 35R60, 37C75, 92C60.} 
\end{abstract}

\maketitle 

\section{Introduction}
A prominent feature of many dynamical systems is the existence of invariant subsets of the state or phase space. For instance, in physical systems, level sets of energy or momentum are invariant under the dynamics.
Moreover, often in practice and theoretical analysis, the system is affected by random influences which requires mathematical modeling via stochastic dynamical systems. As in the deterministic case, there can be a stochastically invariant set, which we denote by $\mcM_0$. For example, in biology $\mcM_0$ might represent the subset where a species is extinct, and therefore we refer to $\mcM_0$ as an extinction set.

To assess the relevance of such an extinction set $\mcM_0$ based on its occurrence in observations, experiments, or numerical computation, one investigates the asymoptotic stability or instability of $\mcM_0$. The former means that if the the initial condition is close to $\mcM_0$, then the system converges, as $t \to \infty$, to $\mcM_0$. The latter, which is the focus of our paper, captures the idea that trajectories remain bounded away from the extinction set $\mcM_0$.

Stochastic persistence (instability) is a central concept in the study of long-term behavior of random dynamical systems. Originally developed in the context of finite-dimensional systems, the theory (and its complement, stochastic extinction) has proven to be a powerful tool for studying population dynamics, epidemiology, turbulence, and chemostat models (\cite{persistence, extinction}). However, many models of interest in biology, physics, and other applied sciences are naturally infinite-dimensional. 
As articulated in \cite{deterministic-SIR}, ``the spread of disease may be faster in one part of the population and slower in the other part ... the spatial effect cannot be ignored." Thus, models incorporating spatial dependency (SPDEs) can be more realistic than classical SDE models. The same applies for models where future behavior depends on (part of) the past, not just on the present state. As articulated in \cite{functional}, ``the delays or past dependence are unavoidable in natural phenomena
 and dynamical systems; the framework of stochastic functional differential equations is more realistic, more effective, and more general for the population dynamics in real life than a stochastic differential equation counterpart." 
 
 In the deterministic setting, there is a vast literature analyzing the persistence and stability of reaction-diffusion PDEs (see for example \cite{carrying-capacity, allee-effect, deterministic-two-species, biology-waves, reac-diff-thesis, reac-diff-thesis-2, pattern-formation-deterministic-computational, deterministic-lotka-volterra}). Although  recently there has been some treatment of the stochastic case for various examples, see \cite{space-time, stochastic-SIR-analytical, traveling-waves-mult-noise, mult-noise, stochastic-SIR-NN, stochastic-SEIRS, stochastic-lotka-volterra-SPDE},  there is no unified theory. 
The main goal of our paper is to 
develop a general framework for infinite dimensional stochastic dynamical systems that is readily applicable to 
stochastic partial differential equations (SPDEs), which incorporate spatial dependency, and stochastic functional differential equations, which account for delayed effects. 
We stress that
 several crucial arguments used in the finite-dimensional case no longer apply in infinite dimensions, and therefore
 we propose a new approach to investigate infinite dimensional stochastic persistence.

More specifically, we prove under general  conditions that infinite-dimensional stochastic systems are persistent if an  ``average Lyapunov exponent" is negative and give its formula
(see below for precise definitions).  For stochastic systems, the average Lyapunov exponent is often hard to compute explicitly, so we also show the following perturbation result: if the average Lyapunov exponent of the corresponding deterministic system is negative, then the stochastic system is persistent if the random forcing is sufficiently weak. 
Afterwards we apply our main abstract result to stochastic functional (delayed) differential equations, where our results considerably simplify arguments in \cite{functional} and allow for deeper understanding of the results.

The analysis of SPDEs is quite challenging due to different functional frameworks, notions or solutions, and where the change of one sign can lead to blow-up solutions. 
We devote our attention to SPDEs with
nonnegative solutions, which are the most relevant for biological applications. We combine the Lyapunov function method with powerful tools from semigroup theory to prove 
persistence results for a general class of SPDEs under moderate assumptions. Then, in many examples, we provide criteria that are sharp at the heuristic level; their sharpness will be established rigorously through a complementary extinction theory developed in a forthcoming paper.
 Additionally, our conditions on SPDEs allow for noise which is much closer to white noise compared to the usual Hilbert-Schmidt assumption.  However, in order to treat such stochastic forcing,  we have to prove  well-posedness, non-negativity, and certain It\^{o} formulas for mild solutions, which extend the works  \cite{Cerrai2003, nhu-positivity, feller-stoch-reac-diff} and are 
interesting in their own right. We remark that proofs of many technical and abstract results (especially in \Cref{sec:spde-well-posed}) can be omitted on the first reading. The reader who is less concerned with the dynamics and primarily interested in general SPDE theory can find the contemporary analysis of stochastic reaction-diffusion equations by reading \Cref{sec:spde-well-posed} and the preceding subsections in \Cref{section-SPDE}.

At the conceptual level, 
 the contribution of the present manuscript is twofold. First, we introduce new techniques for obtaining asymptotic  stability of infinite-dimensional systems, which are necessary for accurate analysis of models, for example in epidemics.
 Second, we further the development of the average Lyapunov function approach, which has proven to be optimal for analyzing the stability of fixed points and, more generally, invariant subsets of (random) dynamical systems (see the discussion in the introduction of \cite{extinction}). From a broad perspective, the average Lyaupov function method  allows us to linearize the problem about the invariant subset $\mcM_0$. The linearized problems are easier to analyze, as their behavior depends only upon a single quantity: the average Lyapunov exponent. Despite its abstract definition as $\sup_{\mu \in P_{inv}(\mcM_0)} \mu H$ (see \Cref{as-V}\ref{as-alpha}), the average Lyapunov exponent is a familiar concept in concrete contexts. For example, in spatially homogeneous (finite dimensional) ecological models the average Lyapunov exponent has a natural interpretation as an ``invasion rate" (\cite{persistence, functional, ecologicalContinuous, ecologicalDiscrete, ecologicalGeneral}).  Our method allows us to extend the notion of the invasion rate to spatially dependent (infinite-dimensional) contexts, which is higly non-trivial (see \Cref{rem:on-inv-rates}). For deterministic PDEs with a fixed point, the average Lyapunov exponent is the principal eigenvalue which we express as the maximum of a Rayleigh quotient.  Thus our results in \Cref{SPDE-main-results} can be viewed as a generalization of the classical notion of the principal eigenvalue to the stochastic setting (see \Cref{rem:rayleigh-quotient}). Overall, our results allow us to leverage the power of linearization to analyze the long-term behavior of more realistic applied models compared to the known procedures in the literature. 

Compared to the traditional Lyapunov function approach, which require certain conditions on the generator of the Markov process to hold at every point, average Lyapunov functions only look at averages with respect to invariant measures on the subset $\mcM_0$. In applications, average Lyapunov functions are thus much easier to construct and are less dependent on the specific form of the system. On the other hand, establishing properties of 
stochastic dynamical systems under assumptions on averaged rather than traditional Lyapunov functions is much more challenging and it is the purpose of the present paper. The essential step 
 to prove stochastic persistence is to formulate conditions which guarantee that a certain family of random measures $\mu_t$ (the empirical occupation measures) are tight almost surely, and that a certain unbounded function $H$ satisfies $\int Hd\mu_t \to \int Hd\mu$ if $\mu_t \to \mu$ (see  precise definitions below). The key conditions in  \cite{persistence, extinction} require the existence of functions $W,W'$ and a constant $K > 0$ which satisfy, among other assumptions:

\begin{enumerate}[label=(\roman*)]
    \item $\Ll W \leq K - W'$, where $\Ll$ is the infinitesimal generator.
    \item $W'$ has compact sublevel sets.
    \item $W'$ is a lot bigger than $|H|$.
\end{enumerate}

For finite-dimensional systems, finding such $W,W'$ in applications is usually quite natural, however, it is impossible in infinite-dimensional Banach spaces, where continuous functions with compact sublevel sets do not exist. In addition, SPDEs and SDEs with delay present their own additional challenges for proving the tightness and convergence results (for $\mu_t$). In particular, (i) is very hard to establish for SPDEs defined on the unit sphere in a Banach space (Lyapunov exponent methods naturally introduce a polar change of coordinates, which leads to equations on sphere), while (ii)-(iii) are difficult to prove for SDEs with delay.

To track the history for SDEs with delayed effects up to $r > 0$ units of time into the past, the natural state space is $C([-r,0], \R^n)$, the set of all continuous functions from $[-r,0]$ to $\R^n$. One criterion for compactness in $C([-r,0], \R^n)$, see for example Arzel\` a-Ascoli theorem,  is a uniform bound on the H\"{o}lder continuity of $[T-r,T] \ni t \mapsto X_t$. Inspired by \cite{functional}, instead of requiring (ii), we require the process to belong a compact set on the event that $W(X_t)$ is uniformly bounded for a suitably long period of time, a condition, which is easily verified via Kolmogorov continuity criterion. We also replace (iii) with the assumption that $\int_0^r |H|$ is a lot smaller than $\int_0^r W'$. Then, the proof of tightness of $\mu_t$ and convergence of $\int H d\mu_t$ resembles \cite[Section 3]{functional}, but
 by adapting the arguments in \cite{persistence}
we avoid the 13-14 pages of arguments in \cite[Section 4]{functional}. Also, our results are in some sense more fundamental, because they also apply to SPDEs.

For parabolic SPDEs we consider systems of $m$ equations such that the nonlinearity is dissipative. For these equations, one can often use energy estimates to verify (i); for example, take $W$ to be some norm (for example, $W(u) = \|u\|_{L^2}^2$) and $W'$ to be a stronger norm (like $\|u\|_{H^1}^2$).
For our applications in physics or biology (and to avoid complicated geometrical and topological arguments) we 
take $\mcM_0$ to be a subspace where some components $1, \ldots, d$, with $d < n$ are equal to zero. Then polar coordinates in the first $d$ components are natural as they keep track of the distance of the process to $\mcM_0$.  
Specifically, if we denote $u_t$ the projection of our process on the first $d$ coordinates, we define the norm of our process $u_t$ given by $r_t = \|u_t\|$ and the angle $v_t = u_t/\|u_t\|$. However, since the polar transformation is degenerate at the origin (if $r = 0$, then the angle can be arbitrary), we must also ``blow up" $\mcM_0$ so that $\{u=0\}$ is replaced by the sphere $\{r=0\}$ ($v$ arbitrary) -- otherwise, it is not possible to define a certain function ($H$, the generator applied to our average Lyapunov function) continuously. After enlarging our state space in this way, the energy estimates no longer give us (i).
Indeed, since $\|v_t\|$ is constant, it does not satisfy a dissipative problem.

The problem of obtaining tightness of empirical occupation measures for the projective process $(r_t,v_t)$ is quite difficult \cite{projective-process-1, projective-process-2}, in particular measures supported on $r = 0$. For example, consider the heat equation $u_t = \Delta u$. Since there is an invariant measure (for $v_t$) supported on any eigenfunction direction,
 it is easy to see that the set of all invariant measures is not compact. On the other hand, the assumptions for finite dimensional problems in \cite{persistence, extinction} (especially assumption (i) above) imply that the set of invariant measures is compact. Thus, it is not possible to use the same techniques as in the finite-dimensional case to handle these projective processes. Even in this manuscript, our assumptions imply compactness of the set of all invariant measures for the projective process on $\{r =0\}$, which for a general SPDE would require some non-degeneracy assumption on the noise.

 We avoid these issues by looking only at the cone of nonnegative functions. By recalling our example of the heat equation from above, we note that for nonnegative solutions there is only one invariant measure for the projective process (corresponding to the largest eigenvalue of the laplacian), because eigenfunctions corresponding to higher frequencies are not nonnegative.  Since non-negativity is a natural assumption in biological models (negative population sizes are usually not considered), this restriction covers many important applications.

 Even with this positivity assumption, however, showing compactness is quite hard and involves the combination of several Lyapunov functions (a variational method) with novel techniques involving the mild formulation of SPDEs (semigroup methods). Our results improve upon those in \cite{space-time, stochastic-SIR-analytical} because in addition to proving the required tightness, we also provide an explicit formula for the Lyapunov exponent which guarantees persistence, as opposed to just bounds. 
 
 Also, our general Lyapunov functions apply to variety of SPDEs and in this paper we present details for the following problems:
\begin{itemize}
    \item A logistic growth model with harvesting on a domain (see \Cref{example-logistic}) with multiplicative noise, which is a stochastic version of the deterministic model analyzed in \cite{carrying-capacity}. We formulate criteria for the survival of all species.
    \item A model for turbulence with (multiplicative) space-time white noise on a torus \cite{space-time, physics-interpretation} (see \Cref{example-torus}). We show that when the noise is small enough, there are multiple invariant measures.
    \item A SIR epidemic model on a domain with multiplicative noise \cite{stochastic-SIR, stochastic-SIR-analytical, stochastic-SIR-NN, nhu-positivity} (see \Cref{example-SIR}). We formulate criteria that guarantee that the disease is endemic. Whereas the extinction set in the previous examples was the fixed point $0$, this extinction set is more complicated, further illustrating the power of our theory. Additionally, unlike in the cited papers, we obtain almost-sure results about persistence of the disease, as opposed to bounds on certain expectations.
    \item Competitive Lotka-Volterra models on a domain with a variety of noise options. Whereas \cite{stochastic-lotka-volterra-SPDE} focuses on extinction, we are able to prove coexistence results. We encourage the reader to read \cite[7.3. Discussion]{stochastic-lotka-volterra-SPDE}, which outlines the difficulties in applying the ``growth rate method" or the ``eigenvalue method" to SPDEs. In this paper, we are able to perform both.
\end{itemize}

 We do not provide examples for stochastic functional differential equations, as there are many in \cite[Section 5]{functional} and although our method provides a shorter proof, it does not improve known results in these cases.

We remark that under the positivity preserving assumption one can introduce the Cayley-Hilbert metric on a simplex
 and show contractivity for Markov semigroups corresponding to linear or homogeneous problems
 as in \cite{BenaimLobrySari25, Bushell86, cohenfausti2024}. Thus, one could try to prove $\mu_t \to \mu$ by generalizing these methods, although it is not clear how to establish the contractivity for the non-homegeneus, non-linear problem in infinite dimensions that we analyze here. We opted for a different approach which is more in line with \cite{persistence} and generalizes also to the stochastic functional differential equations case. 
 
To summarize, the present manuscript  pushes the boundary of the current theory of stochastic persistence by providing a unified framework for treating a wide variety of infinite-dimensional systems. After formulating our setup and \Cref{as1}--\ref{as-compact} in \Cref{deff-and-as}, we state and prove our general results in \Cref{results} and \Cref{sec:proofs}. In \Cref{sec:funct} we show how to verify \Cref{as1}--\ref{as-compact} for stochastic functional differential equations. In \Cref{section-SPDE}, we consider a general stochastic partial differential equation with nonnegative solutions and formulate assumptions which guarantee that \Cref{as1}--\ref{as-compact} hold, as well as prove novel results relating to well-posedness and the weak maximum principle. This is where we construct our newly developed Lyapunov functions and ``upgrade" the control that  we obtain using the semigroup (mild) approach. Finally, in \Cref{examples} we illustrate how our general results apply to SPDE models for population dynamics, turbulence, and epidemics.
The reader primarily interested in examples can go straight to \Cref{examples} after reading the introduction of \Cref{section-SPDE}.

\section{Definitions and Assumptions}\label{deff-and-as}
Let $(\mcM,d)$ be a Polish (complete and separable) metric space which is not necessarily locally compact, $\mcM_0 \subset \mcM$ be a closed set, and $\inv \coloneqq \mcM_0^c$ (where $\mcM_0^c$ denotes the complement of $\mcM_0$). The set $\mcM_0$ can be viewed as an ``extinction" set and $\inv$ as a ``persistence'' set.
 We endow $\mcM$ with the Borel $\sigma$- algebra. For each $x \in \mcM$, let $X_t^x$ be a homogeneous Markov process defined on $\mcM$ which has cadlag sample paths. By this we mean that there is a filtered probability space $(\Omega, \mathcal{F}, \{\mathcal{F}_t\}_{t \geq 0}, \Prb)$ (which we assume is complete and right-continuous) and a family of $\mcM-$valued random variables $\{X_t^x\}_{x \in \mcM, t \geq 0}$ such that:

\begin{itemize}
    \item $X_0^x = x$ and $t \mapsto X_t^x$ is cadlag (right continuous with left limits) a.s.
    \item $X^x_\cdot$ is adapted to $\{\mathcal{F}_t\}_{t \geq 0}$, meaning $X_t^x$ is $\mathcal{F}_t$ measurable for each $t \geq 0$.
    \item \label{semigroup} For all bounded measurable functions $f:\mcM \to \mathbb{R}$, the map
    $$
    [0,\infty) \times \mcM \ni (t,x) \mapsto \Pp_t f(x) \coloneqq \E[f(X_t^x)] 
    $$ 
    is measurable and for any $s, t \geq 0$ we assume (homogeneity)
    that
    $$
    \Pp_s f(X_t^x) = \E[f(X_{t + s}^x)|\mathcal{F}_t] \,.
    $$    
\end{itemize}

It is standard to prove that $(\mathcal{P}_t)_{t \geq 0}$ defines 
a semigroup: for any $s, t \geq 0$ it holds that $\Pp_{s+t}f = \Pp_s \Pp_tf$. Also, the definition of $\Pp_tf$ makes sense as long as $f: \mcM \to \R$ is measurable and bounded from below (or above), possibly attaining the value $\infty$ (or $-\infty$).

\subsection{Invariant Sets and the Feller Property }
Next, we list two basic assumptions on our process $X_t^x$ and the sets $\mcM_0, \inv$. First, we suppose that the sets $\mcM_0$ and $\inv$ are invariant, that is, 
if an initial condition is in $\mcM_0$ (respectively $\inv$) then $X_t^x$ belongs to $\mcM_0$ (respectively $\inv$) for all $t \geq 0$. 
In general, we have the following definition.

\begin{deff}
\label{inv-set}
    A measurable set $A \subset \mcM$ is invariant if $x \in A$ implies that almost surely $X_t^x \in A$ for all $t \geq 0$.
\end{deff}

\begin{ass} \label{as1}
    $\mcM_0$ and $\mcM_+$ are invariant.
\end{ass}

Second, we assume a standard continuity on the law of $X_t^x$ as a function of the initial condition $x$ and time $t$. Such a condition is commonly known as the ``Feller" property.
Let $C_b(\mcM)$ denote the space of bounded continuous functions on $\mcM$ endowed with the supremum norm $\|f\| = \sup_{x \in \mcM} |f(x)|$. The following continuity assumption is vital for our analysis:

\begin{ass} \label{as2}
    Assume that $X_t^x$ is Feller in the sense that for any $f \in C_b(\mcM)$ and $t \geq 0$, $\Pp_tf \in C_b(\mcM)$, and also $\Pp_tf \to f$ pointwise as $t \downarrow 0$.
\end{ass}

\begin{rem}
Note that the requirement of right-continuity in time ($\Pp_tf \to f$ pointwise as $t \downarrow 0$) is automatically satisfied since $X^x_t$ is assumed to have cadlag paths.
\end{rem}

 \subsection{Two Important Martingales} \label{generator-stuff}
In this section we introduce important definitions that are used in \Cref{lyap}, as well as two martingales that are crucial to our  analysis.

Two central operators in the study of Feller processes are the generator $\mathcal{L}$ and Carre du Champ $\Gamma$ of the Markov semigroup $\Pp_t$, which are discussed rigorously in \cite{extinction}. For a suitably nice function $f: \mcM \to \R$,  $\mathcal{L}f(x)$ represents the average rate of change of $\E[f(X_t^x)]$ at $t = 0$, so that $M_t^f$ defined as :
\begin{equation} \label{Mtf}
     M_t^f(x) \coloneqq f(X_t^x) - f(x) - \int_0^t \mathcal{L}f(X_s^x)ds   
    \end{equation}
is a martingale. Analogously,     
$\Gamma f(x)$ represents the average rate of change of the quadratic variation of \eqref{Mtf} at $t = 0$, so that the following process is a martingale:
    \begin{equation}\label{Mtf-quad-var}
        (M^f_t(x))^2 - \int_0^t \Gamma f(X_s^x)ds \,.
    \end{equation}

For readers familiar with It\^o calculus, we remark that $\mathcal{L}f$ (resp. $\Gamma f$) is the
 coefficient of the ``$dt$" part of ``$df$" (resp. ``$(df)^2$").  This interpretation is useful when computing terms like $\Ll f^k$ in the context of SPDEs, where one can utilize It\^o's formula.

\begin{deff}
\label{D+}
    Let $A \subset \mcM$ be an open invariant set (see \Cref{inv-set}). We let $\Dme_+(A)$ be the set of all measurable functions $f:A \to [0,\infty)$ for which there exists a measurable function $\mathcal{L}f: A \to \R \cup \{-\infty\}$ such that $M^f_{\cdot}(x)$
   in \eqref{Mtf} is a cadlag local martingale for all $x \in A$.
\end{deff}

\begin{deff}
    \label{D2}
   Let $A \subset \mcM$ be an open invariant set (see \Cref{inv-set}). We let $\Dme_2(A)$ be set of all measurable functions $f:A \to \R$ for which there exist measurable functions $\mathcal{L}f: A \to \R \cup \{-\infty\}$ and $\Gamma f: A \to [0,\infty)$ such that $M^f_{\cdot}(x)$
   in \eqref{Mtf} is a cadlag square integrable martingale and the stochastic process in \eqref{Mtf-quad-var} is a martingale for all $x \in A$.
\end{deff}

\begin{rem}
    Note that in \Cref{D+} and \Cref{D2} we allow $\Ll f$ take the value $-\infty$, but it is implicit in the definitions that $s \mapsto \Ll f(X_s^x)$ must be in $L^1([0,t])$ almost surely for all $t > 0$. Allowing infinite values is crucial for our study of SPDEs, where $f$ is a norm and $\Ll f$ includes a term which is the negative of some stronger norm.
\end{rem}

\subsection{Lyapunov Functions}
\label{lyap}
In this section we provide the assumptions necessary for our main results. In \Cref{as-W} we suppose that there is a Lyapunov function which is large near spatial infinity. In conjunction with \Cref{as-compact}, it allows us to prove tightness of appropriate measures. \Cref{as-V} is crucial as it gives the existence of an ``average Lyapunov function" that forces $X_t^x$ to (on average) move away from $\mcM_0$ if $X_t^x$ is close to $\mcM_0$. \Cref{as-U} guarantees that the variances of the mentioned Lyapunov functions evaluated at $X_t^x$ have a controlled growth in time.
     
Before proceeding, we precisely define what we mean if we write that one function is ``smaller" than another. 

\begin{deff}\label{lesssim}
    For an arbitrary set $A$ and functions $f,g: A \to [-\infty,\infty]$, we write $f \lesssim g$ if there is some $C > 0$ such that $f(x) \leq Cg(x)$ for all $x \in A$.
\end{deff}

\begin{deff}\label{<<}
    For an arbitrary set $A$ and functions $f,g: A \to [-\infty,\infty]$, we write $f << g$ if $\sup_{\{x \in A \mid |g(x)| \leq M\}} |f(x)| < \infty$ for all $M > 0$.
\end{deff}
\begin{rem}
    For example, if $g$ is proper (continuous with compact sublevel sets) and $f$ is continuous then $f << g$. Also, if $f \lesssim g^k$ for some $k > 0$, then $f << g$.
\end{rem}

\begin{ass}
        \label{as-W} There exist measurable functions $W: \mcM \to [1,\infty)$, $W': \mcM \to [1,\infty]$, and a constant $K > 0$ such that: \begin{enumerate}
            \item  $W \in \Dme_2(\mcM)$ (see \Cref{D2}).
            \item $\mathcal{L}W \leq K - W'$.
            \item $W << W'$ (see \Cref{<<}).
        \end{enumerate}
\end{ass}

\begin{deff}\label{fxn-vanish}
    For an arbitrary set $A$ and functions $f: A \to [-\infty,\infty]$ and $g:A \to [0,\infty]$, we say $f$ vanishes over $g$ if one of the following equivalent conditions holds: \begin{enumerate}
    
        \item $\forall \epsilon > 0, \exists M > 0$ such that $|f|\Id_{|f|>M} \leq \epsilon g$
        \item $\forall \epsilon > 0, \exists N > 0$ such that $|f| \leq \epsilon g + N$
    \end{enumerate}
\end{deff}
\begin{rem}
    Note that \Cref{fxn-vanish} is different than the definition given in \cite[Definition 2.18]{extinction}.
\end{rem}

We define a similar notion for collections of random variables:
\begin{deff}\label{rv-vanish}
    For collections of random variables $\{X_t\},\{Y_t\}$ defined on the same probability space and $Y_t > 0$ a.s., we say $\{X_t\}$ vanishes over $\{Y_t\}$ if one of the following equivalent conditions hold:
    \begin{enumerate}
        \item $\forall \epsilon > 0, \exists M > 0$ such that almost surely $|X_t|\Id_{|X_t|>M} \leq \epsilon Y_t$ for all $t$.
        \item $\forall \epsilon > 0, \exists N > 0$ such that almost surely $|X_t| \leq N + \epsilon Y_t$ for all $t$.
    \end{enumerate}
    
\end{deff}

\begin{deff}
\label{meas}
    Let $P(\mcM)$ denote the set of all Borel probability measures on $\mcM$. 
    For any $\mu \in P(\mcM)$, $S \subset \mcM$ measurable, and $f:S \to \mathbb{R}$ measurable, we define
$$
\mu f := \int_S f d\mu \,.
$$
    We endow $P(\mcM)$ with topology of weak convergence, that is, $\mu_n \to \mu$ if for all $f \in C_b(\mcM)$, $\mu_nf \to \mu f$. 
\end{deff}

\begin{deff} \label{inv-meas}
    For $F \subset \mcM$ an invariant set (see \Cref{inv-set}), let $P_{inv}(F) \subset P(\mcM)$ denote the set of invariant measures on $F$, that is, $\mu \in P_{inv}(F)$ if $\mu(F) = 1$ and $\mu \Pp_tf = \mu f$ for all $f \in C_b(\mcM)$ and $t \geq 0$.
\end{deff}
\begin{deff}
\label{occ-meas}
    For any $x \in \mcM$ and $t > 0$ let $\mu_t^x$ denote the empirical occupation measure 
    \begin{equation}\label{eq:docm}
    \mu_t^x(\omega) \coloneqq \frac{1}{t}\int_0^t \delta_{X_s^x(\omega)}ds,    
    \end{equation}
    where $\delta_y$ is the Dirac  measure concentrated at $y$, that is, $\delta_y f = f(y)$. Note that $\omega \in \Omega$, and therefore $\mu_t^x$ is a random measure which depends on $\omega$, but often this dependence is suppressed.
\end{deff}

\begin{deff}\label{x-y-def}

Let $\X$ be the set of all $f : \mcM \to [0,\infty]$ such that there exists $K > 0$ such that for all $x \in \mcM$,
        $$\limsup_{t \to \infty} \mu_t^x f \leq K \quad a.s.$$
Let $\Y$ be the set of all $g : \mcM \to [0,\infty]$ such that there exists $K > 0$ such that for all $x \in \mcM$,
        $$\limsup_{t \to \infty} \E[\mu_t^x g] \leq K$$
\end{deff}

\begin{ass}
        \label{as-V} There is a nonnegative continuous function $V \in \Dme_2(\inv)$ such that:
    \begin{enumerate}[label=(\roman*)]
        \item \label{as-alpha} $\Ll V$ extends to a continuous function $H: \mcM \to \R$ and there is a constant $\Lambda < 0$ such that $\sup_{\mu \in P_{inv}(\mcM_0)} \mu H \leq \Lambda < 0$.
        \item \label{as-vanish} There is a positive finite measure $\mu$ on $[-r,0]$ for some $r \geq 0$ and $f \in \X$ such that for every $x \in \mcM$, $\{H(X_t^x)\}_{t > r, x \in \inv}$ vanishes over the function $\{\int f(X_{t+s}^x)d\mu(s)\}_{t > r, x \in \inv}$ (see \Cref{rv-vanish}). 
    \end{enumerate}
\end{ass} 
\begin{rem}
    It is shown below in \Cref{H-in-L1} that under our assumptions, $H$ is $\mu$-integrable for all $\mu \in P_{inv}(\mcM)$, so the condition $\mu H \leq \Lambda$ in \Cref{as-V}\ref{as-alpha} is well defined.
\end{rem}
\begin{rem}
    \Cref{as-V}\ref{as-vanish} is a generalization of \cite[Assumption 4(ii)]{extinction} (see also \cite[Hypothesis 4(ii)]{persistence}), which corresponds to $\mu = \delta_0$. Indeed, it will be shown in \Cref{muW'-K} that $W' \in \X$. This generalization is necessary for treating models with delayed effects such as those analyzed in \cite{functional}.
\end{rem}

\Cref{as-V}\ref{as-alpha} is the most important assumption. It implies that $V$ decreases (on average) when $X_t$ is close to $\mcM_0$. Since $V$ is nonnegative, this implies that $X_t$ cannot spend too much time near $\mcM_0$.

The following \Cref{as-U} ensures that $M_t^V$ and $M_t^W$ (see \eqref{Mtf}) satisfy the strong law of large numbers for martingales, as stated in \Cref{strong-law}. In fact, \Cref{as-U} can be replaced by \Cref{strong-law}. However, in practice \Cref{as-U} often follows from \Cref{as-W}, and therefore, we assume \Cref{as-U} rather than \Cref{strong-law}.

\begin{ass}\label{as-U}
    Let $W$ and $V$ be respectively as in \Cref{as-W} and \Cref{as-V}. Assume that 
there are measurable functions $U: \mcM \to [0,\infty)$, $U': \mcM \to [0,\infty]$, a constant $K > 0$, and some $g \in \Y$ such that:
   \begin{enumerate}[label=(\roman*)]
       \item \label{5.1} $U \in \Dme_+(\mcM)$ (see \Cref{D+}).
       \item \label{5.2} $\mathcal{L}U \leq K - U'$.
       \item \label{5.3} $\Gamma W \leq g$.
       \item \label{5.4} $\Gamma V \leq g$.
   \end{enumerate}
\end{ass} 

\begin{lem}\label{u'-in-y}
    $U' \in \Y$. In particular, we may replace $g$ with $KU'$ in \Cref{as-U}.
\end{lem}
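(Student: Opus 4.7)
The plan is to extract from the local martingale $M^U$ the integral bound
\[
\E\!\left[\int_0^t U'(X_s^x)\,ds\right] \le Kt + U(x),
\]
whose division by $t$ and $\limsup$ as $t\to\infty$ immediately yield $U'\in \Y$ with the uniform constant $K$ from \Cref{as-U}\ref{5.2}, since $K$ does not depend on $x$ and $U(x)/t\to 0$.

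Since $M^U$ is a cadlag local martingale by \Cref{as-U}\ref{5.1} and \Cref{D+}, I would fix a localizing sequence $\tau_n\uparrow\infty$ so that $M^U_{\cdot\wedge\tau_n}$ is a true martingale starting at $0$. Taking expectations in \eqref{Mtf} stopped at $t\wedge\tau_n$ yields
\[
\E[U(X^x_{t\wedge\tau_n})] = U(x) + \E\!\left[\int_0^{t\wedge\tau_n} \Ll U(X^x_s)\,ds\right].
\]
Using \Cref{as-U}\ref{5.2} in the form $0 \le U' \le K-\Ll U$, together with $U\ge 0$ and $\E[t\wedge\tau_n]\le t$, I would rearrange to obtain
\[
\E\!\left[\int_0^{t\wedge\tau_n} U'(X_s^x)\,ds\right] \le K\,\E[t\wedge\tau_n] - \E[U(X^x_{t\wedge\tau_n})] + U(x) \le Kt + U(x).
\]
Monotone convergence applied to the nonnegative integrand $U'(X_\cdot^x)$ lets me send $n\to\infty$ to remove the stopping time, giving the desired estimate. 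The ``in particular'' assertion then follows because $U'\in \Y$ and $\Y$ is closed under multiplication by positive constants, so $KU' \in \Y$; thus wherever \Cref{as-U}\ref{5.3}--\ref{5.4} only require a $\Y$-valued upper bound on $\Gamma W$ and $\Gamma V$, the concrete choice $g = KU'$ serves as a canonical such bound for use in subsequent arguments.

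The main technical subtlety is that \Cref{D+} permits $\Ll U$ to take the value $-\infty$. This is harmless here because $\Ll U \le K$ bounds its positive part, and the implicit local $L^1$-integrability of $s\mapsto \Ll U(X_s^x)$ noted in the remark after \Cref{D2} guarantees that the stopped integral $\int_0^{t\wedge\tau_n}\Ll U(X_s^x)\,ds$ is a.s.\ well-defined in $[-\infty,\infty)$. Consequently the martingale identity above is meaningful before passing to the limit, and since we only manipulate the nonnegative quantity $U'$ afterwards (discarding the nonpositive term $-\E[U(X^x_{t\wedge\tau_n})]$), the monotone convergence step is routine.
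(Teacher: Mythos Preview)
Your argument is correct and is precisely the standard Lyapunov estimate that the paper's cited reference \cite[Lemma 4.1]{extinction} contains: localize $M^U$, use $\Ll U\le K-U'$ together with $U\ge 0$ to obtain $\E\int_0^{t\wedge\tau_n}U'(X_s^x)\,ds\le Kt+U(x)$, pass to the limit by monotone convergence, and divide by $t$. Your reading of the ``in particular'' clause is also the intended one: once $U'\in\Y$, any multiple $KU'$ lies in $\Y$, so in applications one verifies \Cref{as-U}\ref{5.3}--\ref{5.4} by checking $\Gamma W,\Gamma V\lesssim U'$ and taking $g=KU'$.
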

\begin{proof}
    See \cite[Lemma 4.1]{extinction}.
\end{proof}

A direct consequence of \Cref{as-U} is the following law of large numbers: 
\begin{lem}\label{strong-law}
Let $W,V,U$ be as in \Cref{as-W}, \Cref{as-V}, \Cref{as-U}.
For any $x \in \mcM$ we have 
\begin{equation}
    \lim_{t \to \infty} \frac{M_t^W(x)}{t} = 0 \qquad \textrm{a.s.}
\end{equation}
and for any $x \in \inv$ we have 
\begin{equation}
    \lim_{t \to \infty} \frac{M_t^V(x)}{t} = 0 \qquad \textrm{a.s.} \,,
\end{equation}
where $M^W_t$ and $M^V_t$ were defined in \eqref{Mtf}. 
\end{lem}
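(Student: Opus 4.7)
The plan is to recognize $M^W_\cdot(x)$ and $M^V_\cdot(x)$ as cadlag square-integrable martingales whose predictable quadratic variations grow at most linearly in expectation, and then invoke a Kronecker-type martingale strong law.

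First, by \Cref{as-W}(i) and \Cref{as-V} the processes $M^W_\cdot(x)$ for $x \in \mcM$ and $M^V_\cdot(x)$ for $x \in \inv$ (which is invariant by \Cref{as1}, so $X_t^x \in \inv$ for all $t \geq 0$) are cadlag square-integrable martingales with angle brackets
\begin{equation*}
\langle M^W\rangle_t = \int_0^t \Gamma W(X_s^x)\,ds, \qquad \langle M^V\rangle_t = \int_0^t \Gamma V(X_s^x)\,ds,
\end{equation*}
both dominated pointwise by $\int_0^t g(X_s^x)\,ds$ via \Cref{as-U}. Since $g \in \Y$, there is $K' = K'(x) > 0$ with $\E\bigl[\int_0^t g(X_s^x)\,ds\bigr] \leq K'(1+t)$ for every $t \geq 0$ (the small-$t$ values can be absorbed into the constant using the limsup bound from \Cref{x-y-def}), and in particular this quantity is a.s. finite.

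Second, fix $x$ and introduce the time-weighted martingale $N_t := \int_0^t (1+s)^{-1}\,dM_s^W$, whose predictable quadratic variation satisfies, by integration by parts in the Stieltjes sense and the linear bound above,
\begin{equation*}
\E[\langle N\rangle_\infty] \leq \int_0^\infty \frac{\E[g(X_s^x)]}{(1+s)^2}\,ds = \int_0^\infty \frac{2}{(1+s)^3}\,\E\!\left[\int_0^s g(X_u^x)\,du\right]ds \leq 2K' < \infty.
\end{equation*}
Hence $N$ is $L^2$-bounded and converges a.s. to a finite limit $N_\infty$. Kronecker's lemma, applied with weights $1+s$ to $dM_s^W = (1+s)\,dN_s$, then yields $M_t^W/(1+t) \to 0$ a.s., whence $M_t^W/t \to 0$ a.s. The identical argument, run on the invariant set $\inv$ with $V$ in place of $W$, gives $M_t^V/t \to 0$ a.s. for every $x \in \inv$.

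The argument is essentially routine once the angle brackets and the linear growth bound on $\E[\int_0^t g(X_s^x)\,ds]$ are in hand, so I do not anticipate any significant obstacle; the only delicate point is ensuring that $N$ is a true $L^2$ martingale (not merely local) so that martingale convergence applies, which is exactly what the finiteness of $\E[\langle N \rangle_\infty]$ guarantees.
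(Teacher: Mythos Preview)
Your argument is correct. The linear-in-expectation growth of $\E[\langle M^W\rangle_t]$ (and likewise for $V$) follows exactly as you say: the definition of $\Y$ gives the bound for large $t$, and monotonicity of $t\mapsto\int_0^t g(X_s^x)\,ds$ then covers small $t$. The time-weighted martingale $N_t=\int_0^t(1+s)^{-1}\,dM^W_s$ is a genuine $L^2$ martingale because the integrand is bounded deterministic and $M^W$ is square integrable, your Tonelli/integration-by-parts computation of $\E[\langle N\rangle_\infty]$ is clean, and the continuous-time Kronecker lemma (via integration by parts $M_t=\int_0^t(1+s)\,dN_s$ and a Ces\`aro argument) delivers the conclusion.

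The paper does not give a self-contained argument here; it defers to \cite[Lemma 2.12 and Corollary 4.2]{extinction}. Judging from how that lemma is invoked elsewhere in the paper (see \textit{Step 2} of the proof of \Cref{comparison-result}), the referenced proof proceeds instead by Doob's maximal inequality along a geometric time grid and Borel--Cantelli. Both routes are classical proofs of the martingale strong law: yours via Kronecker is arguably more direct and avoids the dyadic subsequence bookkeeping, while the Doob/Borel--Cantelli route has the mild advantage of not requiring one to verify that the auxiliary integral $N$ is a true (not just local) martingale. Either way the substance is the same linear growth bound on the angle bracket coming from $g\in\Y$.
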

\begin{proof}
    See the arguments in \cite[Lemma 2.12]{extinction} and \cite[Corollary 4.2]{extinction}.
\end{proof}

\begin{cor}\label{muW'-K}
$W' \in \X$ (see \Cref{as-W}, \Cref{x-y-def}).
    
\end{cor}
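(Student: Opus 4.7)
The plan is to apply the martingale decomposition \eqref{Mtf} for $W$ together with the estimate $\Ll W \le K - W'$ from \Cref{as-W}, and then invoke the strong law of large numbers for $M_t^W$ given by \Cref{strong-law}. Since $W \ge 1 > 0$ and $W \in \Dme_2(\mcM)$, we have for each $x \in \mcM$,
\begin{equation*}
W(X_t^x) = W(x) + \int_0^t \Ll W(X_s^x)\,ds + M_t^W(x) \qquad \text{a.s.}
\end{equation*}

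Using $\Ll W \le K - W'$ and discarding the nonnegative term $W(X_t^x)$ on the left, I would rearrange to obtain
\begin{equation*}
\int_0^t W'(X_s^x)\,ds \le W(x) + Kt - W(X_t^x) + M_t^W(x) \le W(x) + Kt + M_t^W(x) \qquad \text{a.s.}
\end{equation*}
Dividing by $t$ gives
\begin{equation*}
\mu_t^x W' = \frac{1}{t}\int_0^t W'(X_s^x)\,ds \le \frac{W(x)}{t} + K + \frac{M_t^W(x)}{t}.
\end{equation*}

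By \Cref{strong-law}, $M_t^W(x)/t \to 0$ almost surely as $t \to \infty$, and clearly $W(x)/t \to 0$. Taking the $\limsup$ of both sides yields $\limsup_{t \to \infty} \mu_t^x W' \le K$ almost surely, with $K$ the constant from \Cref{as-W}. This gives $W' \in \X$ per \Cref{x-y-def}. There is no real obstacle here; the only subtlety is to recognize that although $W'$ may attain the value $\infty$, the left-hand integral $\int_0^t W'(X_s^x)\,ds$ is controlled by the right-hand side which is finite a.s., so $W'(X_s^x) < \infty$ for a.e. $s$ and the integral is well-defined as an extended real number bounded above.
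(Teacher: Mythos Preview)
Your proof is correct and follows essentially the same approach as the paper: both use the martingale decomposition for $W$, the bound $\Ll W \le K - W'$, nonnegativity of $W$, and \Cref{strong-law} to conclude $\limsup_{t\to\infty}\mu_t^x W' \le K$. The only cosmetic difference is that the paper phrases it via $\liminf_{t\to\infty}\frac{W(X_t^x)}{t} = \liminf_{t\to\infty}\mu_t^x(\Ll W)$ before applying $\Ll W \le K - W'$, whereas you rearrange the inequality directly; the content is identical.
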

\begin{proof}
    By \Cref{strong-law}, $\frac{-W(X_t^x)}{t} + \mu_t^x(\mathcal{L}W) \to 0$ a.s. as $t \to \infty$. Moreover, from $W \geq 0$ and $\mathcal{L}W \leq K - W'$ (see \Cref{as-W}) it follows that
    \begin{align*}
        0 &\leq \liminf_{t \to \infty} \frac{W(X_t^x)}{t} = \liminf_{t \to \infty} \mu_t^x(\mathcal{L}W) 
        \leq \liminf_{t \to \infty} K - \mu_t^x W'     \,.   
    \end{align*}

\end{proof}

If $W'$ has compact sublevel sets, as assumed in \cite{extinction} and \cite{persistence}, then \Cref{as-W} -- \ref{as-U} would be enough to guarantee stochastic persistence of $X_t$ (see \Cref{main} below). However, in most infinite-dimensional contexts such as stochastic functional equations arising from biological systems with delay (\Cref{sec:funct}) and stochastic PDEs (see \Cref{examples}), the compactness of sublevel sets cannot be obtained, and replace it by the following weaker assumption.

\begin{ass}
    \label{as-compact} There is some $f \in \X$ (\Cref{x-y-def}) such that for all $\epsilon > 0$ there exists $T > 0$ such that for all $M$ sufficiently large there exists a compact set $\K \subset \mcM$ which satisfies (regadless of initial condition)
    
    \begin{equation}\label{eq:as-compact}
        \Prb\Big(\mu_T(\K^c) > \epsilon \Big) \leq \Prb\Big(\int_0^Tf(X_t)dt + \sup_{t \leq T}W(X_t) > M \Big) + \epsilon \,.
    \end{equation}
    
\end{ass}

In other words, if $f(X^x_t)$ and $W(X^x_t)$ are small for a long time, then with high  probability, $X_t$ spent most of that time in the compact set $\K$.

\begin{rem}\label{compact-sublevel}
    If there exists some $f\in \X$ which has compact sublevel sets, then \Cref{as-compact} is satisfied. Indeed,  for any $M$ and any $T$, $\K = \{W' \leq M/(T\epsilon)\}$ is compact and on the event $\{\int_0^T W'(X_t) dt \leq M\}$ we have
    $$
    \mu_T(\K^c) = \frac{1}{T}\int_0^T \Id_{f(X_t) > M/(T\epsilon)}dt \leq \int_0^T \frac{\epsilon}{M}f(X_t)dt 
    \leq \epsilon\,.
    $$
    Hence, 
    \begin{equation}
      \Prb\Big(\mu_T(\K^c) > \epsilon \Big) \leq \Prb\Big(\int_0^Tf(X_t)dt  > M \Big)  
    \end{equation}
    and \eqref{eq:as-compact} follows from $W \geq 0$.
\end{rem}

Along with \Cref{muW'-K}, the following lemma is instrumental for proving the existence of the $f \in \X$ required for \Cref{as-V}\ref{as-vanish} and \Cref{as-compact}. It is proved below in \Cref{sec:proofs}.

\begin{lem}\label{fn-in-X}
    Let $f_0 \coloneqq (W')^{1/2}$ (see \Cref{as-W}) and $f_1,\dots,f_n$, $h_1,\dots,h_n$ be functions satisfying the following. For $i = 1,\dots,n$, some $T > 0$, and all $x \in \mcM$:
    \begin{itemize}
        \item $h_i$ vanishes over $f_i$.
        \item $\E\Big[\int_0^t f_i(X_t^x)^2dt\Big] < \infty$ for all $t \geq T$.
        \item
        \begin{equation}\label{eq-int-f-h-bounds}
            \begin{aligned}
                 \E\Big[\int_0^T f_i(X_t^x)dt\Big] &\leq h_i(x) + f_{i-1}(x) + \E\Big[\int_0^T f_{i-1}(X_t^x) + h_i(X_t^x)dt\Big] \\
                \E\Big[\int_0^T f_i(X_t^x)^2dt\Big] &\leq h_i(x) + f_{i-1}(x)^2 + \E\Big[\int_0^T f_{i-1}(X_t^x)^2 + h_i(X_t^x)^2dt\Big] \,.
            \end{aligned} 
        \end{equation}
    \end{itemize}
    Then $f_n \in \X$ and $f_n^2 \in \Y$.
\end{lem}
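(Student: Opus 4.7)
The plan is to prove by induction on $i \in \{0, 1, \ldots, n\}$ the stronger statement that both $f_i \in \X$ and $f_i^2 \in \Y$ hold. The base case $i = 0$ combines \Cref{muW'-K} with an elementary expectation argument: since $\Ll W \leq K - W'$ by \Cref{as-W} and $W \geq 0$, taking expectation in \eqref{Mtf} for $W$ yields $\int_0^t \E[W'(X_s^x)]\,ds \leq W(x) + Kt$, so $f_0^2 = W' \in \Y$; meanwhile \Cref{muW'-K} gives $W' \in \X$, and Jensen's inequality $\mu_t^x f_0 \leq (\mu_t^x f_0^2)^{1/2}$ then gives $f_0 \in \X$.

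For the inductive step I first establish $f_i^2 \in \Y$. Applying the Markov property to the second bound in \eqref{eq-int-f-h-bounds} starting from $X_s^x$ gives, for every $s \geq 0$,
\[
\E\Big[\int_s^{s+T} f_i(X_u^x)^2\, du \,\Big|\, \F_s\Big] \leq h_i(X_s^x) + f_{i-1}(X_s^x)^2 + \E\Big[\int_s^{s+T}\!\big(f_{i-1}^2 + h_i^2\big)(X_u^x)\, du \,\Big|\, \F_s\Big].
\]
Taking expectation, integrating $s$ over $[0,t]$, and swapping the order of integration on the left via Fubini transforms the LHS into $T\int_0^t \E[f_i(X_u^x)^2]\,du$ up to an $O(T)$ boundary error. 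Because $h_i$ vanishes over $f_i$ (see \Cref{fxn-vanish}), we have $h_i^2 \leq 2\epsilon^2 f_i^2 + 2N_\epsilon^2$, and choosing $\epsilon$ sufficiently small absorbs the $\E[h_i^2]$ contribution back into the LHS. The linear $\E[h_i]$ term is treated analogously via $|h_i| \leq \epsilon f_i + N_\epsilon$ together with Jensen's inequality $\E[f_i] \leq (\E[f_i^2])^{1/2}$. The surviving term is bounded in limsup uniformly in $x$ by the inductive hypothesis $f_{i-1}^2 \in \Y$, yielding $f_i^2 \in \Y$.

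For the almost-sure statement $f_i \in \X$ I use a discrete-time martingale argument. Set $\hat G(y) := \E[\int_0^T f_i(X_u^y)\,du]$ and $\Phi_k := \int_{kT}^{(k+1)T} f_i(X_s^x)\,ds$; by the Markov property $\E[\Phi_k \mid \F_{kT}] = \hat G(X_{kT}^x)$, so $M_n := \sum_{k=0}^{n-1}(\Phi_k - \hat G(X_{kT}^x))$ is a square-integrable martingale whose conditional second moments are bounded by $T\tilde G(X_{kT}^x)$ with $\tilde G(y) := \E[\int_0^T f_i(X_u^y)^2\,du]$ (Cauchy-Schwarz). Summing gives $\E[\langle M\rangle_n] = O(n)$ by the $f_i^2 \in \Y$ bound from the previous step, so a strong law of large numbers for $L^2$-bounded martingales yields $M_n/n \to 0$ almost surely. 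Combining this with $\hat G(X_{kT}^x) \leq h_i(X_{kT}^x) + f_{i-1}(X_{kT}^x) + \E[\int_{kT}^{(k+1)T}(f_{i-1} + h_i)(X_s^x)\,ds \mid \F_{kT}]$ and applying the analogous martingale decompositions to $\int_0^{nT} h_i\,ds$ and $\int_0^{nT} f_{i-1}\,ds$ (replacing the discrete sums $\sum_k h_i(X_{kT}^x)$ and $\sum_k f_{i-1}(X_{kT}^x)$ by continuous-time integrals modulo another $o(n)$ martingale), and finally absorbing $h_i$ via its vanishing over $f_i$, reduces the problem to the inductive bound $\limsup_{t \to \infty}\mu_t^x f_{i-1} < \infty$ a.s.\ from $f_{i-1} \in \X$.

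The main technical obstacle is keeping the absorption constants finite throughout the induction: each level contributes a factor $(1-c\epsilon)^{-1}$ to the constant, so $\epsilon$ must be chosen small enough (depending on the constants inherited from the previous level) at every step. Once this bookkeeping is in place, the discrete-versus-continuous averaging issue in the martingale argument is handled by the parallel martingale decompositions of $\int h_i\,ds$ and $\int f_{i-1}\,ds$ mentioned above, and the rest is routine.
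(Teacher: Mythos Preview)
Your argument for $f_i^2 \in \Y$ is correct and essentially the same as the paper's: Markov property, integrate over the starting time, absorb $h_i^2$ via the vanishing assumption, and close by induction. (A minor comment: the linear $h_i(x)$ term is more cleanly handled by $|h_i| \leq \epsilon f_i + N \leq \tfrac{\epsilon}{2}(1+f_i^2) + N$ than by invoking Jensen, but your version also works.)

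The $f_i \in \X$ argument, however, has a real gap. After your martingale step you are left with the discrete sums
\[
\frac{1}{n}\sum_{k=0}^{n-1} h_i(X_{kT}^x)
\qquad\text{and}\qquad
\frac{1}{n}\sum_{k=0}^{n-1} f_{i-1}(X_{kT}^x),
\]
and you assert that these can be ``replaced by continuous-time integrals modulo another $o(n)$ martingale.'' This is the step that does not go through as stated. A martingale decomposition of $\int_0^{nT} h_i(X_s^x)\,ds$ yields $\sum_k \E[\int_{kT}^{(k+1)T} h_i\,\vert\,\F_{kT}]$ plus a controllable fluctuation, not $\sum_k h_i(X_{kT}^x)$; there is no general relation between point samples on a fixed grid and time averages. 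Equally, the inductive hypothesis $f_{i-1}\in\X$ controls $\mu_t^x f_{i-1}$, not the discrete average $\tfrac{1}{n}\sum_k f_{i-1}(X_{kT}^x)$, so you cannot close the induction this way. The same issue bites the self-referential term: after absorbing $h_i(X_{kT}^x) \leq \epsilon f_i(X_{kT}^x) + N_\epsilon$ you need to feed $\tfrac{\epsilon}{n}\sum_k f_i(X_{kT}^x)$ back into the left-hand side $\tfrac{1}{nT}\int_0^{nT} f_i$, and these are again incomparable objects.

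The paper's remedy is to average over the grid offset $t_0\in[0,T]$ before running the discrete martingale (this is packaged in \Cref{comparison-result}, including its ``furthermore'' clause). The point evaluation $f_i(X_{t_0+kT}^x)$, once integrated in $t_0$ over $[0,T]$, becomes exactly $\int_{kT}^{(k+1)T} f_i(X_s^x)\,ds$, so after summing in $k$ the point-value terms turn into continuous time averages and can be absorbed on the left (this is why the paper carries the coefficient $\tfrac{T}{2}f_i(\phi(0))$ in $g$ and then subtracts). Your outline has the right skeleton---discretize, use $L^2$ martingale SLLN, absorb via vanishing---but the $t_0$-averaging is the missing idea that makes the discrete-to-continuous conversion go through.
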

\begin{rem}
    It will clear from the proof of \Cref{fn-in-X} that if we had started with $f_0 = U'$ (see \Cref{as-U}) and assumed only the first inequality of \eqref{eq-int-f-h-bounds} then we could conclude that $f_n \in \Y$. However, this will not be needed for our examples.
\end{rem}

\section{Main General Results}\label{results}
\begin{thm} \label{main}
    Under our \Cref{as1}--\ref{as-compact}, the following statements hold: \begin{enumerate}[label=(\roman*)]
        \item For all $x \in \inv$, almost surely all limit points $\mu$ of $\mu_t^x$ as $t \to \infty$ satisfy $\mu \in P_{inv}(\inv)$.
        \item $X_t$ is stochastically persistent, meaning for all $\epsilon > 0$ there exists a compact set $\K_\epsilon \subset \inv$ such that for all $x \in \inv$ $$\Prb(\liminf_{t \to \infty} \mu_t^x(\K_\epsilon) \geq 1 - \epsilon) = 1 \,.$$
        \item $P_{inv}(\inv)$ is nonempty.
    \end{enumerate}
\end{thm}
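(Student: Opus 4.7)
The plan is to adapt the Benaim--Schreiber--Hofbauer strategy for stochastic persistence via an average Lyapunov function to the weaker tightness hypothesis \Cref{as-compact}. The proof decomposes into four pieces, carried out in order: (a) almost sure tightness of $\{\mu_t^x\}$ on $\mcM$, (b) invariance of its subsequential limits, (c) the lower bound $\mu H \geq 0$ for any such limit $\mu$, and (d) the exclusion of mass on $\mcM_0$ using \Cref{as-V}\ref{as-alpha}. Items (ii) and (iii) then fall out as corollaries of (a) and (i).

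For (a), applying \Cref{strong-law} to $W$ together with $\Ll W \leq K - W'$ yields $\limsup_t \mu_t^x W' \leq K$ almost surely, so $W' \in \X$ by \Cref{muW'-K}; feeding $f = W'$ into \Cref{as-compact} and first choosing $M$ large (to absorb the right-hand side of \eqref{eq:as-compact}) and then passing to a sequence $T_k \uparrow \infty$ via Borel--Cantelli gives almost sure tightness of $\{\mu_t^x\}_{t \geq 1}$ in $P(\mcM)$. For (b), any subsequential limit $\mu = \lim \mu_{t_k}^x$ is invariant on $\mcM$: the Feller property \Cref{as2} ensures $\Pp_s f \in C_b(\mcM)$ for all $f \in C_b(\mcM)$ and $s > 0$, so $\mu_{t_k}^x f \to \mu f$ and $\mu_{t_k}^x \Pp_s f \to \mu \Pp_s f$; a direct computation using homogeneity of $\Pp_\cdot$ gives $\E[\mu_t^x(\Pp_s f - f)] = O(s\|f\|_\infty/t)$, and extracting a further almost-sure subsequence yields $\mu \Pp_s f = \mu f$.

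Part (c) is where $V$ enters. For $x \in \inv$, \Cref{as1} keeps $X_t^x \in \inv$, so $V \in \Dme_2(\inv)$ gives the Doob decomposition
\begin{equation*}
\frac{V(X_t^x)}{t} \;=\; \frac{V(x)}{t} + \mu_t^x H + \frac{M_t^V(x)}{t}.
\end{equation*}
Since $V \geq 0$, $V(x)/t \to 0$, and $M_t^V/t \to 0$ a.s.\ by \Cref{strong-law}, one obtains $\liminf_t \mu_t^x H \geq 0$. To upgrade this to $\mu H \geq 0$ along a subsequence $t_k$ realizing $\mu$, I would truncate $H$ by the bounded continuous clamp $H_N := (H \wedge N) \vee (-N)$, for which $|H - H_N| \leq |H|\mathbf{1}_{|H|>N}$ pointwise. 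The bounded continuous part passes to the weak limit, $\mu_{t_k}^x H_N \to \mu H_N$, while the tail is controlled by \Cref{as-V}\ref{as-vanish}: $|H(X_u)|\mathbf{1}_{|H(X_u)|>N} \leq \epsilon \int_{-r}^0 f(X_{u+s})\,d\mu(s)$ combined with $f \in \X$ and a Fubini swap in $s$ (the shift $[s, t+s]$ differs from $[0,t]$ by an interval of length $r \ll t$) yields $\limsup_t \mu_t^x(|H|\mathbf{1}_{|H|>N}) \leq \epsilon K \mu([-r,0])$ uniformly in $N$; sending $N \to \infty$ and then $\epsilon \downarrow 0$ gives $\mu H \geq 0$.

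The main obstacle is (d). Since both $\mcM_0$ and $\inv$ are invariant, an invariant limit point splits as $\mu = \alpha \mu_0 + (1-\alpha) \mu_+$ with $\mu_0 \in P_{inv}(\mcM_0)$ and $\mu_+ \in P_{inv}(\inv)$; \Cref{as-V}\ref{as-alpha} yields $\mu_0 H \leq \Lambda < 0$, but $\mu H \geq 0$ alone does not force $\alpha = 0$ because no automatic upper bound for $\mu_+ H$ is available (the Doob identity only gives $\mu_+ H \geq 0$). To close this gap I plan to construct an auxiliary perturbed Lyapunov function $\tilde V = V + \beta G$ on $\inv$, where $G$ is built from $\int_0^T \Pp_s H\,ds$ and $T$, $\beta$ are chosen so that $\Ll \tilde V = H + \beta(\Pp_T H - H) \leq -c < 0$ strictly on a neighborhood $N_\rho$ of $\mcM_0$; the strict negativity rests on compactness of $P_{inv}(\mcM_0)$ (itself a consequence of \Cref{as-compact} and \Cref{muW'-K} applied to trajectories in $\mcM_0$), continuity of $H$, and the vanishing control of \Cref{as-V}\ref{as-vanish}, in the spirit of Benaim's exclusion lemma. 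Rerunning (c) with $\tilde V$ in place of $V$ then forces $\limsup_t \mu_t^x(N_\rho) < 1$ strictly for all sufficiently small $\rho$; combined with the connectedness of the almost-sure limit-point set in $P(\mcM)$ and sending $\rho \downarrow 0$, this yields $\mu(\mcM_0) = 0$, proving (i). Item (ii) then follows from (i) together with the tightness of (a) by upper semicontinuity of $\mu \mapsto \mu(\K^c)$ for compact $\K \subset \inv$, and (iii) is immediate upon picking any $x \in \inv$ and invoking (i) on any limit point of $\mu_t^x$, whose existence is guaranteed by (a).
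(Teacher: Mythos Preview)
Your steps (a)--(c) are essentially the paper's \Cref{tightness}, \Cref{lim-is-inv}, and \Cref{muH-converge}, and are fine. The gap is in (d): the obstacle you identify is not real. There \emph{is} an automatic upper bound, namely $\mu_+ H = 0$ exactly for every $\mu_+ \in P_{inv}(\inv)$, and it comes from a two-line Birkhoff argument (this is the paper's \Cref{muH-neg}, following \cite[Proposition~4.6]{persistence}). For ergodic $\mu_+ \in P_{inv}(\inv)$, combining \Cref{strong-law} with Birkhoff gives $\lim_t V(X_t^x)/t = \mu_+ H$ for $\mu_+$-a.e.\ $x$; but since $\inv = \bigcup_M \{V \le M\}$ there is some $M$ with $\mu_+(\{V\le M\})>0$, and ergodicity forces $X_t$ to return to $\{V\le M\}$ infinitely often, so $\liminf_t V(X_t^x)/t = 0$ and hence $\mu_+ H = 0$. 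With this in hand, any limit point $\mu = \alpha\mu_0 + (1-\alpha)\mu_+$ has $\mu H = \alpha\,\mu_0 H \le \alpha\Lambda$; combined with $\mu H \ge 0$ from your step~(c) and $\Lambda<0$, this forces $\alpha=0$. No perturbed Lyapunov function is needed.

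Your proposed detour through $\tilde V = V + \beta G$ with $G=\int_0^T \Pp_s H\,ds$ is not only unnecessary but has its own holes: you never check that $\tilde V \ge 0$ (without which ``rerunning (c)'' fails, since the Doob argument uses $V\ge 0$), nor that $G\in\Dme_2(\inv)$ and $\Gamma\tilde V$ is controlled; and even granting $\limsup_t \mu_t^x(N_\rho)<1$, the passage via ``connectedness of the limit set and $\rho\downarrow 0$'' to $\mu(\mcM_0)=0$ is not justified --- a strict inequality $<1$ on shrinking neighborhoods does not by itself exclude a positive mass on $\mcM_0$. Finally, your sketch of (ii) is too thin: one needs compactness of $P_{inv}(\inv)$ as a subset of $P(\inv)$, not just $P(\mcM)$. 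The paper gets this from the characterization $P_{inv}(\inv)=\{\mu\in P_{inv}(\mcM):\mu H=0\}$ (again \Cref{muH-neg}) together with continuity of $\mu\mapsto\mu H$ (\Cref{H-in-L1}), and then runs a short contradiction argument to produce the neighborhood $\mcM_{\delta_\epsilon}$ before intersecting with the compact set from (a).
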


\begin{rem}\label{rem:weaker-thm}
    We believe (iii) and a subsequential version of (i) would still hold under weaker assumptions. In particular, in \Cref{as-V}\ref{as-vanish} and \Cref{as-compact}, consider replacing the space $\X$ with $\Y$. For $g \in \Y$, Fatou's Lemma gives that $\liminf_{t \to \infty} \mu_t^xg < \infty$, and then all of our proofs should go through by working on a subsequence $t_n$ which attains the $\liminf$.
\end{rem}

The proof is based on two key lemmas which are proved in \Cref{sec:proofs} below. 

\begin{lem} \label{tightness}
    Under the assumptions of \Cref{main}, almost surely $(\mu_{t_n}^x)_{n \in \N}$ is tight for all $t_n \to \infty$.
\end{lem}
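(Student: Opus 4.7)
Plan. Fix $\epsilon > 0$. The strategy is to produce a compact $K_\epsilon \subset \mcM$ such that almost surely $\limsup_n \mu_{t_n}^x(K_\epsilon^c) \leq C\epsilon$; since $\epsilon$ is arbitrary, tightness then follows (a single prob.\ measure is always tight, so the finitely many early terms can be absorbed into a slightly enlarged compact). Assumption \ref{as-compact} provides a block length $T > 0$ and, for each sufficiently large $M$, a compact $K_M$; the plan is to take $K_\epsilon = K_M$ with $M$ chosen at the end.

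First I would partition $[0, t_n]$ into blocks $I_i = [iT, (i+1)T]$ for $i = 0, \dots, k_n - 1$, where $k_n = \lfloor t_n/T\rfloor$, and set $\nu_i^x := T^{-1}\int_{I_i}\delta_{X_s^x}\,ds$, so that $\mu_{t_n}^x = k_n^{-1}\sum_i \nu_i^x + O(k_n^{-1})$. Applying the Markov property at time $iT$ and \ref{as-compact} to the shifted process starting from $X_{iT}^x$ gives, for $A_i := \{\nu_i^x(K_M^c) > \epsilon\}$ and $B_i := \{\int_{I_i} f(X_s^x)\,ds + \sup_{I_i} W(X_s^x) > M\}$,
$$\Prb(A_i \mid \F_{iT}) \leq \Prb(B_i \mid \F_{iT}) + \epsilon.$$
The pointwise bound $\nu_i^x(K_M^c) \leq \epsilon + \mathbf{1}_{A_i}$ then yields $\mu_{t_n}^x(K_M^c) \leq \epsilon + k_n^{-1}\sum_i \mathbf{1}_{A_i} + o(1)$. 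Since the martingale differences $\mathbf{1}_{A_i} - \Prb(A_i\mid\F_{iT})$ and $\mathbf{1}_{B_i} - \Prb(B_i\mid\F_{iT})$ are bounded, the $L^2$ martingale strong law reduces the problem to showing that $k_n^{-1}\sum_i\bigl(\int_{I_i} f + \sup_{I_i} W\bigr)$ is almost surely bounded by a deterministic constant independent of $n$; then Markov's inequality $\mathbf{1}_{B_i} \leq M^{-1}(\int_{I_i} f + \sup_{I_i} W)$ lets $M$ be chosen large enough to drive $k_n^{-1}\sum_i \mathbf{1}_{B_i} \leq \epsilon$ a.s.

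The integral term is immediate: $k_n^{-1}\sum_i \int_{I_i} f = T\mu_{k_nT}^x f + o(1)$, and $f \in \X$ (supplied by \ref{as-compact}) bounds this a.s.\ by $TK_f$. For the supremum of $W$, Dynkin's formula with $\Ll W \leq K - W'$ gives the block decomposition $\sup_{I_i} W \leq W(X_{iT}^x) + KT + S_i$, with $S_i := \sup_{t \in I_i}|M_t^W - M_{iT}^W|$. The martingale suprema $S_i$ are tamed by Doob's $L^2$ inequality and \ref{as-U}(iii): $\E[S_i^2 \mid \F_{iT}] \leq 4\E[\int_{I_i} g(X_s^x)\,ds\mid\F_{iT}]$ with $g \in \Y$, whence $\sum_i \E[S_i^2 \mid \F_{iT}] = O(k_n)$, and a second application of the martingale LLN gives $k_n^{-1}\sum_i S_i = O(1)$ a.s. For the grid-point sum $k_n^{-1}\sum_i W(X_{iT}^x)$ I would combine the pointwise bound $W \leq \eta W' + N_\eta$ (from $W \ll W'$) with the integrated identity $W(X_t^x) + \int_0^t W'(X_s^x)\,ds \leq W(x) + Kt + M_t^W$, using $W' \in \X$ (Corollary~\ref{muW'-K}) and the fact that $M_t^W/t \to 0$ a.s.\ (Lemma~\ref{strong-law}) to cancel the apparent linear-in-$k_n$ growth through a Cesaro/Abel-type rearrangement.

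The main obstacle is precisely this last a.s.\ bound on the grid-point sum $k_n^{-1}\sum_i W(X_{iT}^x)$: sampled values of $W$ along the discrete grid $\{iT\}$ are not a priori controlled by any member of $\X$, and the naive Dynkin bound $W(X_{iT}^x) \leq W(x) + KiT + M_{iT}^W$ carries an apparent linear-in-$i$ growth. Overcoming this requires exploiting $W \ll W'$ and $W' \in \X$ in tandem — the negative drift $-W'$ provides exactly enough integrated damping to counter $KiT$, while $\eta$ can be chosen small — in the spirit of the finite-dimensional bookkeeping of \cite{persistence}. This is the single place where careful technical work is required; everything else reduces to standard martingale tools.
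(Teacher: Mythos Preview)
Your block decomposition and invocation of Assumption~\ref{as-compact} via the Markov property match the paper's starting point, but your route diverges at the $\sup_{I_i} W$ term and the divergence contains a genuine gap. The claim that $W \ll W'$ yields a pointwise bound $W \leq \eta W' + N_\eta$ is false: that inequality is the ``vanishes over'' relation of Definition~\ref{fxn-vanish}, strictly stronger than $\ll$ of Definition~\ref{<<} (take $W = W'$ unbounded for a counterexample). Assumption~\ref{as-W} only gives $W \ll W'$, so your Cesaro/Abel sketch for the grid-point sum $k_n^{-1}\sum_i W(X_{iT}^x)$ has no foundation; and even if a vanishing bound held, it is unclear how $W' \in \X$ (control of \emph{continuous} time averages) would control a \emph{discrete} grid-point sum of $W'$.

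The paper avoids this entirely by never bounding $k_n^{-1}\sum_i \sup_{I_i} W$: it bounds the indicator average $k_n^{-1}\sum_i \Id_{\{\sup_{I_i} W > M/2\}}$ instead. Two ingredients replace your grid-point argument. First, an optional-stopping estimate (Lemma~\ref{W-small-M}) shows that, uniformly over $\{W(x)\leq m\}$, $\Prb(\sup_{t\leq T} W(X_t^x) > M) \to 0$ as $M\to\infty$. Second, $W \ll W'$ is used in its correct form, namely $\{W>m\}\subset\{W'>K/\epsilon\}$ for suitable $m$, which combined with $W'\in\X$ gives $\limsup_t \mu_t^x(\{W>m\})\leq\epsilon$ (Corollary~\ref{W-small-m}). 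Together these yield an expectation inequality on indicators, which a general comparison lemma (Lemma~\ref{comparison-result}) converts into an almost-sure time-average bound; this lemma is the paper's engine for the step where you invoke the martingale LLN, but it never needs control of unbounded grid-point sums. Working with indicators rather than raw suprema is precisely what lets the weaker $\ll$ hypothesis suffice.
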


\begin{lem} \label{muH-converge}
    Under the assumptions of \Cref{main}, there is $A > 0$ such that, almost surely, $t_n \to \infty$ and $\mu_n \coloneqq \mu_{t_n}^x \to \mu \in P(M)$ implies $\mu H = \lim_{n \to \infty} \mu_n H$ and $\mu |H| \leq A$.
\end{lem}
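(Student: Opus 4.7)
The plan is to exploit \Cref{as-V}\ref{as-vanish} as a uniform-integrability-type condition: it provides almost-sure pointwise control of $|H(X_t^x)|$ by a time-delayed integral of $f \in \X$, and time-averaging upgrades this to control of $\mu_n|H|$ and its tails. The main obstacle is that $H$ is merely continuous (not bounded), so weak convergence $\mu_n \to \mu$ does not suffice directly; one must separately verify that tails of $|H|$ are uniformly small against both $\mu_n$ and the limit $\mu$.

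First, by \Cref{as-V}\ref{as-vanish} together with \Cref{rv-vanish}(1), for every $\epsilon > 0$ there is $M(\epsilon) > 0$ such that almost surely
\[
|H(X_t^x)|\, \Id_{\{|H(X_t^x)| > M(\epsilon)\}} \leq \epsilon \int_{-r}^0 f(X_{t+s}^x)\, d\mu(s) \qquad \text{for all } t > r.
\]
Integrating in $t \in [r, t_n]$ and applying Fubini, the inclusion $[r+s, t_n+s] \subset [0, t_n]$ for $s \in [-r,0]$ together with $f \geq 0$ bounds the right side by $\epsilon\, \mu([-r,0])\, t_n\, \mu_n f$. The boundary piece $\int_0^r |H(X_t)|\, dt$ is finite a.s.\ since $H$ is continuous and $X$ is cadlag, so dividing by $t_n$ and invoking $f \in \X$ yields
\begin{equation}\label{eq:sketch-tail}
\limsup_{n \to \infty}\, \mu_n\!\bigl(|H|\, \Id_{\{|H| > M(\epsilon)\}}\bigr) \leq \epsilon\, \mu([-r,0])\, K_f \qquad \text{a.s.},
\end{equation}
where $K_f$ is the constant attached to $f \in \X$. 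Applying \eqref{eq:sketch-tail} with $\epsilon = 1$ and the decomposition $|H| \leq M(1) + |H|\Id_{\{|H|>M(1)\}}$ gives $\limsup_n \mu_n|H| \leq A := M(1) + \mu([-r,0])\, K_f$, and testing against the bounded-continuous truncations $|H| \wedge L$ followed by monotone convergence transfers this into $\mu|H| \leq A$.

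For the convergence $\mu_n H \to \mu H$, set $H_M := (-M) \vee H \wedge M$ and split
\[
|\mu_n H - \mu H| \leq \mu_n(|H|\Id_{\{|H|>M\}}) + |\mu_n H_M - \mu H_M| + \mu(|H|\Id_{\{|H|>M\}}).
\]
Fix $\epsilon > 0$ and take $M = M(\epsilon)$. The middle term vanishes as $n \to \infty$ since $H_M \in C_b(\mcM)$; the first is controlled in $\limsup_n$ by \eqref{eq:sketch-tail}. For the $\mu$-tail, approximate from below by bounded continuous functions: choose a continuous cutoff $\chi_M : \R \to [0,1]$ vanishing on $[-M,M]$ and equal to $1$ on $\{|\cdot| \geq M+1\}$, and set $\phi_{M,L}(x) := (|H(x)| \wedge L)\, \chi_M(H(x))$. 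Since $\phi_{M,L} \leq |H|\,\Id_{\{|H|>M\}}$, weak convergence together with \eqref{eq:sketch-tail} gives $\mu \phi_{M,L} = \lim_n \mu_n \phi_{M,L} \leq \epsilon\, \mu([-r,0])\, K_f$, and monotone convergence $L \to \infty$ upgrades this to $\mu(|H|\chi_M) \leq \epsilon\, \mu([-r,0])\, K_f$, which dominates $\mu(|H|\Id_{\{|H| \geq M+1\}})$. Slightly inflating $M$ (or running the tail argument at $M(\epsilon) - 1$) absorbs the off-by-one, yielding $\limsup_n |\mu_n H - \mu H| \leq 2\epsilon\, \mu([-r,0])\, K_f$; letting $\epsilon \downarrow 0$ on a diagonal almost-sure event over $\epsilon \in \{1/k\}_{k \in \N}$ concludes.

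The hard part is the passage from the pathwise vanishing \Cref{as-V}\ref{as-vanish} to a tail bound that simultaneously holds for all weak limits $\mu$: the $\mu_n$-side is nearly immediate via Fubini and $f \in \X$, but the $\mu$-side requires the continuous lower-approximation trick because neither $|H|$ nor $|H|\Id_{\{|H|>M\}}$ is admissible as a test function for weak convergence. The delay parameter $r$ is otherwise minor bookkeeping.
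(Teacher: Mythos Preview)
Your proof is correct and follows essentially the same approach as the paper's: both derive the tail bound on $\mu_n(|H|\Id_{|H|>M})$ from \Cref{as-V}\ref{as-vanish} via Fubini--Tonelli and $f \in \X$, then pass through the truncation $H_M$. Your continuous-cutoff argument for the $\mu$-tail is slightly more elaborate than necessary---once you have $\mu|H| \leq A$ (which you obtain correctly via monotone convergence on $|H|\wedge L$), dominated convergence already gives $\mu(|H|\Id_{|H|>M}) \to 0$; the paper's shortcut here is simply to observe that the identical tail argument applied to $|H|$ yields $\mu_n|H| \to \mu|H|$ directly.
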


We also have the following theorem, which is a version of \cite[Theorem 3.11]{extinction} in our setting. It states that the average Lyapunov exponent $\Lambda$ is (semi-)continuous in the parameters governing the process $X_t$.

\begin{thm}\label{robust}
    Suppose $\mcM$ (resp. $\mcM_0$) decomposes as $\Theta \times \tilde{\mcM}$ (resp. $\Theta \times \tilde{\mcM_0}$), where $\Theta$ is a compact metric space such that $\{\theta\} \times \tilde{\mcM}$ is an invariant set (\Cref{inv-set}) for every $\theta \in \Theta$.
    
    Fix some $\theta_0 \in \Theta$. Suppose \Cref{as1}--\ref{as-compact} hold, except \Cref{as-V}\ref{as-alpha} is replaced with: ``$\Ll V$ extends to a continuous function $H: \mcM \to \R$ and there is a constant $\Lambda < 0$ such that $\sup_{\mu \in P_{inv}(\{\theta_0\} \times \mcM_0)} \mu H \leq \Lambda < 0$." Then for all $\theta$ sufficiently close to $\theta_0$, the results of \Cref{main} hold on $\{\theta\} \times \mcM$. To be more specific, let $\{Y_t^x\}_{t \geq 0, x \in \tilde{\mcM}}$ be the Markov process on $\tilde{\mcM}$ such that $(\theta, Y_t^x) = X_t^{(\theta,x)}$ (this is well-defined because $\{\theta\} \times \tilde{\mcM}$ is an invariant set). Then we have:

    \begin{enumerate}[label=(\roman*)]
        \item For all $x \in \tinv \coloneqq \tilde{\mcM}_0^c$ we have that, almost surely, all limit points $\mu$ of $\tilde{\mu}_t^x \coloneqq (1/t)\int_0^t \delta_{Y_s^x(\omega)}ds$ as $t \to \infty$ satisfy $\mu(\tinv) = 1$.
        \item $Y_t$ is stochastically persistent, meaning for all $\epsilon > 0$ there exists a compact set $\K_\epsilon \subset \tinv$ such that for all $x \in \tinv$ $$\Prb(\liminf_{t \to \infty} \tilde{\mu}_t^x(\K_\epsilon) \geq 1 - \epsilon) = 1 \,.$$
        \item $P_{inv}(\{\theta\} \times \tinv)$ is nonempty.
    \end{enumerate}
\end{thm}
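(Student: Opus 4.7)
The plan is to derive the result from \Cref{main} applied to the slice process $Y_t^x$ on $\tilde{\mcM}$, via the key robustness statement: the map
\[
\lambda(\theta) := \sup_{\nu \in P_{inv}(\{\theta\} \times \tilde{\mcM}_0)} \nu H
\]
is upper semi-continuous at $\theta_0$. Granting this, $\lambda(\theta_0) \leq \Lambda < 0$ yields $\lambda(\theta) \leq \Lambda/2 < 0$ for all $\theta$ in a neighborhood of $\theta_0$. \Cref{as1}--\ref{as-compact} transfer verbatim to the slice process using the restrictions $\tilde W(x) := W(\theta, x)$, $\tilde V(x) := V(\theta, x)$, etc.\ (the slice invariance of $\{\theta\} \times \tilde{\mcM}$ ensures martingale and generator identities descend, and $\inv = \Theta \times \tinv$ gives invariance of $\tinv$ for $Y_t^x$), and \Cref{as-V}\ref{as-alpha} holds on the slice with $\tilde \Lambda = \Lambda/2$. \Cref{main} then delivers conclusions (i)--(iii).

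To prove upper semi-continuity, pick $\theta_n \to \theta_0$ and $\mu_n \in P_{inv}(\{\theta_n\} \times \tilde{\mcM}_0)$ with $\mu_n H \geq \lambda(\theta_n) - 1/n$. The first task is to establish tightness of $\{\mu_n\}$. A basic observation is that for any $\nu \in P_{inv}(\mcM)$ and any $\phi \in \X$ (resp.\ $\phi \in \Y$), one has $\nu \phi \leq K$: truncate to $\phi \wedge L$, apply reverse Fatou (valid by boundedness) to $\E_\nu[\mu_t^x(\phi \wedge L)] = \nu(\phi \wedge L)$ using the a.s.\ bound $\limsup_t \mu_t^x(\phi\wedge L) \leq K$ (resp.\ the $\Y$ version), and let $L \to \infty$ by monotone convergence. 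Applied to $W' \in \X$ (\Cref{muW'-K}) this gives $\mu_n W' \leq K$ uniformly, whence $\mu_n\{W' > L\} \leq K/L$; the hypothesis $W \ll W'$ (\Cref{<<}) then converts this into $\sup_n \mu_n\{W > M\} \to 0$ as $M \to \infty$. Together with Doob's inequality for the square-integrable martingale $M^W$ (using $\Gamma W \leq g \in \Y$ from \Cref{as-U}) and $\mu_n f \leq K$ for the $f \in \X$ of \Cref{as-compact}, this shows $\Prb_{\mu_n}\bigl(\int_0^T f(X_t)dt + \sup_{t \leq T} W(X_t) > M\bigr) \to 0$ uniformly as $M \to \infty$. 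Applying \Cref{as-compact} under the stationary initial condition $x \sim \mu_n$ then produces a compact $\K$ with $\mu_n(\K^c)$ uniformly small. Extracting a subsequence with $\mu_n \to \mu$ weakly, Portmanteau on the closed set $\mcM_0$ gives $\mu(\mcM_0) = 1$; testing against bounded continuous functions of $\theta$ alone shows $\mu$ is supported on $\{\theta_0\} \times \tilde{\mcM}$; and the Feller property (\Cref{as2}) gives $\mu \Pp_t f = \lim \mu_n \Pp_t f = \lim \mu_n f = \mu f$, so $\mu \in P_{inv}(\{\theta_0\} \times \tilde{\mcM}_0)$.

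The main obstacle is the final step: upgrading weak convergence $\mu_n \to \mu$ to $\mu_n H \to \mu H$, since $H$ is continuous but generally unbounded. Here \Cref{as-V}\ref{as-vanish} is crucial. Let $\sigma$ denote the positive finite measure on $[-r,0]$ and $f \in \X$ the function it provides, so that for each $\epsilon > 0$ there is $M > 0$ with
\[
|H(X_t^x)|\, \Id_{|H(X_t^x)| > M} \leq \epsilon \int f(X_{t+s}^x)\, d\sigma(s) \qquad \text{for all } t > r
\]
almost surely. Taking expectation with $x \sim \nu \in P_{inv}(\mcM)$ and using stationarity (valid because $t + s \geq t - r > 0$), the left side becomes $\nu(|H|\, \Id_{|H| > M})$ while the right side is at most $\epsilon \sigma([-r,0])\, \nu f \leq \epsilon \sigma([-r,0]) K$. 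This uniform integrability of $H$ across $P_{inv}(\mcM)$, combined with continuity of $H$ and weak convergence $\mu_n \to \mu$, gives $\mu_n H \to \mu H$. Therefore
\[
\limsup_n \lambda(\theta_n) \leq \lim_n \bigl(\mu_n H + 1/n\bigr) = \mu H \leq \lambda(\theta_0) \leq \Lambda,
\]
establishing upper semi-continuity and completing the proof.
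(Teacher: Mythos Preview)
Your proof is correct and follows the same high-level strategy as the paper: reduce to upper semi-continuity of $\theta \mapsto \sup_{\nu \in P_{inv}(\{\theta\} \times \tilde{\mcM}_0)} \nu H$ at $\theta_0$, then apply \Cref{main} on the slice. The paper's proof of that upper semi-continuity, however, is essentially a one-liner: it invokes \Cref{pinv-compact} for tightness of $(\mu_n)$ and \Cref{H-in-L1} for continuity of $\mu \mapsto \mu H$ on $P_{inv}(\mcM)$, both of which were already established via the almost-sure empirical-measure machinery (\Cref{occ-is-pretty-tight}, \Cref{muH-converge}) combined with Birkhoff's theorem. Your argument instead re-derives tightness and uniform integrability of $H$ directly from stationarity of the $\mu_n$: you integrate the pointwise bounds from \Cref{as-W}, \Cref{as-U}, \Cref{as-compact}, and \Cref{as-V}\ref{as-vanish} against $\mu_n$ and exploit $\mu_n \Pp_t = \mu_n$. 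This is a legitimate and somewhat more self-contained route---it bypasses Birkhoff entirely---at the cost of redoing work that \Cref{pinv-compact} and \Cref{H-in-L1} already package. Either way, the substantive content (compactness of $P_{inv}(\mcM_0)$ and uniform $H$-integrability across invariant measures) is identical.
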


\begin{proof}
It is enough to show that $$\limsup_{\theta \to \theta_0} \sup_{ \mu \in P_{inv}(\{\theta\} \times \tilde{\mcM_0})} \mu H \leq \sup_{ \mu \in P_{inv}(\{\theta_0\} \times \tilde{\mcM_0})} \mu H \,.$$ The proof is exactly the same as the proof of \cite[Theorem 3.11]{extinction}, with \Cref{pinv-compact} and \Cref{H-in-L1} used in place of the first and second (respectively) applications of \cite[Lemma 2.22]{extinction}.
\end{proof}

\section{Proofs}\label{sec:proofs}

In this section, we assume that \Cref{as1}--\ref{as-compact} hold.

\subsection{Proof of \Cref{fn-in-X}}
Recall the set $\X$ defined in \Cref{x-y-def}. First we show some preliminary lemmas which will be important for the proofs below in \Cref{sec:proofs}.

\begin{lem}\label{time-equivalence}
    For any $T > 0$, $x \in \mcM$, and $f: \mcM \to [0,\infty]$ satisfying $\int_0^Tf(X_t^x)dt < \infty$ a.s., we have 
\begin{align*}
    \limsup_{t \to \infty} \mu_t^x f &
    = \limsup_{n \to \infty} \frac{1}{T}\int_0^T \frac{1}{nT} \int_0^{r + nT} f(X_s^x)dsdr \\
    &= 
    \limsup_{t \to \infty} \frac{1}{t}\int_0^t \frac{1}{T} \int_0^T f(X_{r+s}^x)drds 
     \,.
\end{align*}
  In addition, if   $\E[\int_0^Tf(X_t^x)dt] < \infty$, then
\begin{align*}  
    \limsup_{t \to \infty} \E[\mu_t^x f] &= 
    \limsup_{t \to \infty} \frac{1}{t}
    \E\Big[\int_0^t \frac{1}{T} \int_0^T f(X_{r+s}^x)drds\Big] \\
    &= \limsup_{n \to \infty} \frac{1}{T}\E\Big[ \int_0^T \frac{1}{nT} \int_0^{r + nT} f(X_s^x)dsdr\Big] \,.
\end{align*}

\end{lem}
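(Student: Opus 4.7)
The plan is to reduce both right-hand sides to limsups of suitable time-averages of the single scalar process $g(t) \coloneqq \mu_t^x f = F(t)/t$, where $F(t) \coloneqq \int_0^t f(X_s^x)\,ds$, and then show those limsups all agree with $L \coloneqq \limsup_{t\to\infty} g(t)$. Since $f \geq 0$, Fubini-Tonelli is unrestricted. For the third quantity, swapping the order of integration rewrites it as $\frac{1}{T}\int_0^T \frac{F(r+t) - F(r)}{t}\,dr$; the hypothesis $F(T) < \infty$ makes the $F(r)$ contribution $O(1/t)$, and writing $F(r+t)/t = (1 + r/t)\,g(r+t)$ with $(1+r/t)\to 1$ uniformly in $r\in[0,T]$ reduces it to $\limsup_t h(t)$, where
$$
h(t) \coloneqq \frac{1}{T}\int_0^T g(r+t)\,dr = \frac{1}{T}\int_t^{t+T} g(s)\,ds \,.
$$
An entirely analogous manipulation, with the prefactor $(r+nT)/(nT) \in [1,1+1/n]$ playing the role of $1 + r/t$, rewrites the middle quantity as $\limsup_{n\to\infty} h(nT)$.

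The core task is therefore the identity $\limsup_t h(t) = \limsup_n h(nT) = L$. The upper bounds $\le L$ are immediate: for any $\varepsilon > 0$ and $L < \infty$, $g(s) \leq L + \varepsilon$ for all sufficiently large $s$, so $h(t) \leq L + \varepsilon$ eventually (and the $L = \infty$ case is vacuous). The lower bounds rely on the elementary monotonicity inequality
\begin{equation}\label{eq:plan-mono}
    g(s) = \frac{F(s)}{s} \geq \frac{F(t_0)}{t_0 + T} = \frac{t_0}{t_0 + T}\,g(t_0), \qquad s \in [t_0, t_0 + T],
\end{equation}
which follows from $F$ being nondecreasing. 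Choosing $t_k \to \infty$ with $g(t_k) \to L$, \eqref{eq:plan-mono} applied at $t_0 = t_k$ immediately yields $h(t_k) \geq \frac{t_k}{t_k+T}\,g(t_k) \to L$, proving $\limsup_t h(t) \geq L$.

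The main obstacle is the analogous lower bound for the discrete subsequence $\{nT\}$, since the limsup of the continuous function $h$ could a priori be attained only at non-integer multiples of $T$. I resolve it by a two-step use of \eqref{eq:plan-mono}: setting $n_k \coloneqq \lceil t_k/T\rceil$ so that $n_k T \in [t_k, t_k + T]$, the first application (at $t_0 = t_k$, with $s = n_k T$) forces $g(n_k T) \to L$, and a second application (at $t_0 = n_k T$) gives $h(n_k T) \geq \frac{n_k}{n_k+1}\,g(n_k T) \to L$, hence $\limsup_n h(nT) \geq L$.

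The expectation version follows by the same argument with $F$, $g$, $L$ replaced by $G(t) \coloneqq \int_0^t \E[f(X_s^x)]\,ds$, $\tilde g(t) \coloneqq G(t)/t$, and $\tilde L \coloneqq \limsup_t \tilde g(t)$. The hypothesis $\E\big[\int_0^T f(X_s^x)\,ds\big] < \infty$ provides $G(T) < \infty$, Tonelli justifies both $\tilde g(t) = \E[\mu_t^x f]$ and the interchange of $\E$ with the double integrals appearing in the middle and third expressions, and \eqref{eq:plan-mono} applies verbatim to $\tilde g$ since $G$ is also nondecreasing.
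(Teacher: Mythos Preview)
Your proof is correct and rests on the same ingredients as the paper's proof: Tonelli, the monotonicity of $F(t)=\int_0^t f(X_s^x)\,ds$, the finiteness of $F(T)$ to kill boundary terms, and rounding a sequence $t_k$ realizing the $\limsup$ to the lattice $T\mathbb Z$. The organization differs slightly: whereas the paper handles the two right-hand sides by separate Tonelli computations and sandwich bounds, you unify them by reducing both to $\limsup$'s of the single function $h(t)=\frac{1}{T}\int_t^{t+T}g(s)\,ds$ and then proving $\limsup_t h(t)=\limsup_n h(nT)=L$ via your inequality \eqref{eq:plan-mono}. This is a clean repackaging of the same argument rather than a genuinely different route; the two-step use of \eqref{eq:plan-mono} to pass from $t_k$ to $n_kT$ is exactly the paper's choice of $k_n$ with $(k_n-1)T\le t_n<k_nT$, and your sandwich $h(t)\le(\text{quantity})\le(1+T/t)h(t)$ plays the role of the paper's explicit three-term decomposition.
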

\begin{proof}
    The claims follow from Tonelli's theorem. Indeed,
    \begin{align*}
        \frac{1}{T}\int_0^T \frac{1}{nT} \int_0^{r + nT} f(X_s^x)dsdr &= \frac{1}{nT^2} \int_0^{(n+1)T} \int_{(s-nT)\vee 0}^Tf(X_s^x)dr ds \\
        &\leq \frac{1}{nT} \int_0^{(n+1)T}f(X_s^x)ds = \frac{n+1}{n} \mu_{(n+1)T}^xf 
    \end{align*}
and by passing $n \to \infty$, we have 
    \begin{align*}
  \limsup_{n \to \infty} \frac{1}{T}\int_0^T \frac{1}{nT} \int_0^{r + nT} f(X_s^x)dsdr 
        \leq 
        \limsup_{n \to \infty} \frac{n+1}{n} \mu_{(n+1)T}^xf 
        \leq \limsup_{t \to \infty} \mu_{t}^xf \,.
    \end{align*}
On the other hand, let $(t_n)_{n = 1}^\infty$ be a sequence such that 
$$
\limsup_{n \to \infty} \mu_{t_n}^xf = \limsup_{t \to \infty}  \mu_{t}^xf .
$$ 
For each $n \geq 1$ let $k_n$ be such that 
$(k_n -1)T \leq t_n < k_n T$. 
Consequently, 
    \begin{align*}
        \frac{1}{k_n T^2} \int_0^{(k_n+1)T} \int_{(s-k_nT)\vee 0}^Tf(X_s^x)dr ds 
        &\geq \frac{1}{k_nT} \int_0^{k_nT}f(X_s^x)ds \\
        &\geq \frac{1}{k_nT} \int_0^{t_n}f(X_s^x)ds  
        =  \frac{t_n}{k_nT} \mu_{t_n}^xf
    \end{align*}
    and by passing $n \to \infty$ we obtain
\begin{multline*}
  \limsup_{n \to \infty} \frac{1}{T}\int_0^T \frac{1}{nT} \int_0^{r + nT} f(X_s^x)dsdr \\
  \geq 
 \limsup_{n \to \infty}  \frac{1}{k_n T^2} \int_0^{(k_n+1)T} \int_{(s-k_nT)\vee 0}^Tf(X_s^x)dr ds 
        \geq 
         \limsup_{t \to \infty} \mu_{t}^xf 
\end{multline*}
and the first equality follows. To prove the second equality, we 
use the substitution $z = s+t$ and Tonelli's theorem to obtain
\begin{align*}
\frac{1}{tT}
\int_0^t\int_0^T f(X_{r+s}^x)drds &= \frac{1}{tT} \int_0^T zf(X_z^x)dz + \frac{1}{t}\int_T^t f(X_z^x)dz
\\ &\qquad + \frac{1}{tT} \int_t^{t + T} (t+T-z)f(X_z^x)dz \,,
\end{align*}

Since $z \leq T$ in the first integral, then $f \geq 0$  implies
$$
0 \leq \frac{1}{tT} \int_0^T zf(X_z^x)dz \leq 
\frac{1}{t} \int_0^T f(X_z^x)dz \,. 
$$
Since $z \in [t, t+T]$ in the third integral, then $f \geq 0$ yields
\begin{align*}
  0 &\leq \frac{1}{tT} \int_t^{t + T} (t+T-z)f(X_z^x)dz 
 \leq
  \frac{1}{t} \int_t^{t + T} f(X_z^x)dz \,.
\end{align*}
Thus, 
\begin{align*}
  \frac{1}{t}\int_T^t f(X_z^x)dz \leq  \frac{1}{tT}
\int_0^t\int_0^T f(X_{r+s}^x)drds \leq 
\frac{1}{t}\int_0^{t+T} f(X_z^x)dz \,,
\end{align*}
or equivalently, 
\begin{align*}
 \mu_t^xf - \frac{1}{t}\int_0^T f(X_z^x)dz \leq  \frac{1}{tT}
\int_0^t\int_0^T f(X_{r+s}^x)drds \leq 
\frac{T+t}{t} \mu_t^xf 
\end{align*}
and the first assertion follows after passing $t \to \infty$ (in $\limsup$) and using that $\int_0^T f(X_t^x)dt < \infty$. 

The second statement follows analogously.

\end{proof}

\begin{cor}\label{int-f-smaller-than-M}
    For all $f \in \X$, $\epsilon > 0$, $T > 0$, there exists $M > 0$ such that for all $x \in \mcM$
    $$\limsup_{t \to \infty} \frac{1}{t}\int_0^t \Id_{\{\int_0^T f(X_{r+s}^x)dr > M\}}ds \leq \epsilon \quad a.s.$$
\end{cor}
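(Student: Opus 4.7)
The corollary looks like a routine consequence of Markov's (Chebyshev's) inequality combined with \Cref{time-equivalence}. The plan is as follows.

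First, I would reduce the indicator function to a pointwise bound. Since $f \geq 0$, for any $M > 0$ and any $s \geq 0$,
\[
    \Id_{\{\int_0^T f(X_{r+s}^x)dr > M\}} \;\leq\; \frac{1}{M}\int_0^T f(X_{r+s}^x)\,dr,
\]
because the left-hand side is either $0$ (in which case the inequality is trivial since the right-hand side is nonnegative) or $1$ (in which case $\int_0^T f(X_{r+s}^x)dr > M$ forces the right-hand side above $1$). Averaging this inequality over $s \in [0,t]$ gives
\[
    \frac{1}{t}\int_0^t \Id_{\{\int_0^T f(X_{r+s}^x)dr > M\}}\,ds \;\leq\; \frac{1}{Mt}\int_0^t \int_0^T f(X_{r+s}^x)\,dr\,ds.
\]

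Second, I would apply the second equality in \Cref{time-equivalence}, which tells us that
\[
    \limsup_{t \to \infty} \frac{1}{tT}\int_0^t \int_0^T f(X_{r+s}^x)\,dr\,ds \;=\; \limsup_{t \to \infty} \mu_t^x f.
\]
Since $f \in \X$, there is a constant $K > 0$ (depending only on $f$, not on $x$) such that $\limsup_{t \to \infty} \mu_t^x f \leq K$ almost surely. Before invoking \Cref{time-equivalence} I should quickly check its hypothesis $\int_0^T f(X_t^x)dt < \infty$ a.s., which is automatic from $f \in \X$: indeed, as soon as $\mu_t^x f < \infty$ for some $t \geq T$, we have $\int_0^T f(X_s^x)ds \leq \int_0^t f(X_s^x)ds < \infty$ by monotonicity of the integral in $T$.

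Combining the two steps, we obtain almost surely
\[
    \limsup_{t \to \infty} \frac{1}{t}\int_0^t \Id_{\{\int_0^T f(X_{r+s}^x)dr > M\}}\,ds \;\leq\; \frac{TK}{M}.
\]
Choosing $M \coloneqq TK/\epsilon$ then yields the claimed bound uniformly in $x \in \mcM$, since $K$ does not depend on $x$.

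There is no real obstacle here; the only slightly delicate point is making sure that \Cref{time-equivalence} applies pathwise and that the constant $K$ coming from the definition of $\X$ is genuinely uniform in the initial condition $x$ (which is exactly how $\X$ was defined in \Cref{x-y-def}). Everything else is a one-line application of Markov's inequality followed by the time-averaging identity.
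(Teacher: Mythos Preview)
Your proof is correct and follows exactly the same approach as the paper: bound the indicator by $\tfrac{1}{M}\int_0^T f(X_{r+s}^x)\,dr$, invoke \Cref{time-equivalence} together with $f\in\X$ to get the $\limsup$ bounded by $KT/M$, and choose $M=KT/\epsilon$. Your additional verification of the hypothesis $\int_0^T f(X_t^x)\,dt<\infty$ a.s.\ is a detail the paper leaves implicit.
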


\begin{proof}
    With $K$ as in \Cref{x-y-def}, by \Cref{time-equivalence},
    $$\limsup_{t \to \infty} \frac{1}{t}\int_0^t \int_0^T f(X_{r+s}^x)drds \leq KT \quad a.s.$$
    Since
    $$\Id_{\{\int_0^T f(X_{r}^x)dr > M\}} \leq \frac{1}{M} \int_0^T f(X_{r}^x)dr \,,$$
    the claim follows with $M = KT/\epsilon$.
\end{proof}

\begin{rem}
    Recall that  $\D([0,T],\mcM)$ is the Skorokhod space (see \cite[Lemma 5.2]{extinction} and the discussion preceding it).
\end{rem}

\begin{lem}\label{comparison-result}
    Fix $T > 0$ and assume $f,g: \D([0,T], \mcM) \to [0,\infty]$ satisfy for all $x \in \mcM$:
    \begin{gather}\label{eq-compare-exp}
        \E[f(\{X_s^x\}_{s \leq T})] \leq \E[g(\{X_s^x\}_{s \leq T})] \,,
    \\
    \label{eq-compare-var}
        \limsup_{t \to \infty} \E\Big[\frac{1}{t}\int_0^t f(\{X_r^x\}_{s\leq r \leq s+T})^2 + g(\{X_s^x\}_{s \leq r \leq s+T})^2ds\Big] < \infty \,.
    \end{gather}
    Then
    \begin{equation}\label{eq:comparison-result-1}
        \limsup_{t \to \infty} \frac{1}{t}\int_0^t f(\{X_r^x\}_{s\leq r \leq s+T})ds \leq \limsup_{t \to \infty} \frac{1}{t}\int_0^t g(\{X_r^x\}_{s\leq r \leq s+T})ds \,.
    \end{equation}

    Furthermore, if $f$ has the form $f(\phi) = \int_0^T \hat f(\phi(s))ds$ where $\hat f: \mcM \to [0,\infty]$, $g_2: \D([0,T],\mcM) \to [0,\infty)$ satisfies $g(\phi) \leq \frac{T}{2}\hat f(\phi(0)) + g_2(\phi)$, and \eqref{eq-compare-var} also holds with $g$ replaced by $g_2$, then
    $$\limsup_{t \to \infty} \frac{1}{t}\int_0^t \hat f(X_s^x)ds \leq \frac{2}{T}\limsup_{t \to \infty} \frac{1}{t}\int_0^t g_2(\{X_r^x\}_{s\leq r \leq s+T})ds \,.$$

\end{lem}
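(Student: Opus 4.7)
The plan is to compare the two time-averaged integrals by first replacing each functional by its conditional expectation through the Markov property, and then using a law-of-large-numbers argument to control the fluctuations. Set $F(x) := \E[f(\{X_r^x\}_{r \leq T})]$ and $G(x) := \E[g(\{X_r^x\}_{r \leq T})]$; hypothesis \eqref{eq-compare-exp} gives $F \leq G$ pointwise, and the homogeneous Markov property yields $\E[f(\{X_r^x\}_{s \leq r \leq s+T}) \mid \mathcal{F}_s] = F(X_s^x)$ and the analogous identity for $g$. Once I establish that
\[
    \frac{1}{t}\int_0^t \bigl[f(\{X_r^x\}_{s \leq r \leq s+T}) - F(X_s^x)\bigr]\, ds \to 0 \quad \text{a.s.}
\]
(and the same with $g, G$), the first claim \eqref{eq:comparison-result-1} follows by applying $F \leq G$ to the $\limsup$ of $(1/t)\int_0^t F(X_s^x)\, ds$.

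To establish this convergence, set $Y_s := f(\{X_r^x\}_{s \leq r \leq s+T}) - F(X_s^x)$ and split the integral into blocks $a_k := \int_{2kT}^{(2k+1)T} Y_s\, ds$ and $b_k := \int_{(2k+1)T}^{(2k+2)T} Y_s\, ds$ of length $T$. Each $a_k$ is $\mathcal{F}_{(2k+2)T}$-measurable, since $Y_s$ depends on trajectories up to time $s+T$, and $\E[a_k \mid \mathcal{F}_{2kT}] = 0$, so $(a_k)_k$ is a martingale difference sequence; likewise for $(b_k)_k$. The parity split is precisely what avoids overlap in the future-dependence of $Y_s$. By Cauchy--Schwarz and Jensen,
\[
    \E[a_k^2] \leq T \int_{2kT}^{(2k+1)T} \E[Y_s^2]\, ds \leq 4T \int_{2kT}^{(2k+1)T} \E\bigl[f(\{X_r^x\}_{s \leq r \leq s+T})^2\bigr]\, ds,
\]
and hypothesis \eqref{eq-compare-var} then yields $\sum_{k=1}^{n}\E[a_k^2] = O(n)$. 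An Abel-summation argument, using $1/k^2 - 1/(k+1)^2 \leq 3/k^3$, upgrades this averaged bound to the Kolmogorov condition $\sum_k \E[a_k^2]/k^2 < \infty$, and the martingale strong law then yields $\frac{1}{n}\sum_{k=0}^{n-1} a_k \to 0$ a.s. The residual integral over the incomplete terminal block is controlled by Cauchy--Schwarz and Borel--Cantelli using the $L^2$ bound.

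For the second part, I apply the first part to the given $f, g$. By \Cref{time-equivalence}, whose integrability hypothesis follows from the $L^2$ bound on $f$,
\[
    \limsup_{t \to \infty} \frac{1}{t}\int_0^t f(\{X_r^x\}_{s \leq r \leq s+T})\, ds = T \limsup_{t \to \infty} \mu_t^x \hat f.
\]
On the other hand, the pointwise bound $g(\phi) \leq (T/2)\hat f(\phi(0)) + g_2(\phi)$ gives
\[
    \limsup_{t \to \infty} \frac{1}{t}\int_0^t g(\{X_r^x\}_{s \leq r \leq s+T})\, ds \leq \frac{T}{2} \limsup_{t \to \infty} \mu_t^x \hat f + \limsup_{t \to \infty} \frac{1}{t}\int_0^t g_2\, ds.
\]
Writing $L := \limsup_t \mu_t^x \hat f$ and $K := \limsup_t (1/t)\int_0^t g_2\, ds$, combining yields $TL \leq (T/2)L + K$, so rearranging gives $L \leq 2K/T$ when $L < \infty$. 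To rule out the residual edge case $L = \infty$ while $K < \infty$, I first apply the whole argument with $\hat f$ replaced by the truncation $\hat f \wedge N$, for which $L_N \leq N$ is automatically finite, and then pass $N \to \infty$ using monotone convergence for the integral inside the $\limsup$.

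The main obstacle will be verifying the Kolmogorov condition $\sum_k \E[a_k^2]/k^2 < \infty$ from the averaged $L^2$ bound in \eqref{eq-compare-var}, which only provides $\sum_{k=1}^n \E[a_k^2] = O(n)$ rather than any uniform pointwise bound on $\E[a_k^2]$; the Abel-summation trick is exactly what bridges this gap. A secondary technical subtlety is the truncation step in part two, since $\limsup$ does not commute with pointwise monotone limits in general, but the monotone increase of $L_N$ combined with $L_N \leq N$ renders the $N \to \infty$ passage manageable.
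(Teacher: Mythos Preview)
Your argument for the first part is correct and essentially the same as the paper's: both use a parity split into blocks of length $T$ so that the centered increments form a martingale-difference sequence, and both control the second moments via \eqref{eq-compare-var}. The paper additionally averages over a phase $t_0\in[0,T]$ and appeals to Doob's inequality together with a Borel--Cantelli argument, whereas you go directly through the Kolmogorov SLLN for martingale differences using an Abel-summation bound; both routes work.

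The second part, however, has a genuine gap. Your rearrangement $TL\le (T/2)L+K\Rightarrow L\le 2K/T$ requires $L<\infty$, and the proposed truncation does not rescue the case $L=\infty$. Replacing $\hat f$ by $\hat f_N$ in the definition of $f$ gives $TL_N$ on the left, but the right-hand bound still comes from the \emph{hypothesis} $g\le (T/2)\hat f(\phi(0))+g_2$, which involves the untruncated $\hat f$. So you obtain only
\[
TL_N \;\le\; \frac{T}{2}L + K,
\]
and letting $N\to\infty$ merely reproduces the problematic inequality $TL\le (T/2)L+K$. You cannot truncate $\hat f$ in the bound for $g$ without either losing the inequality or changing $g_2$.

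The paper avoids this by never passing to $\limsup$ before subtracting. It keeps the finite-$n$ comparison $A_{t_0+2nT}(f)\le A_{t_0+2nT}(g)+c(n,t_0)$ coming out of the martingale decomposition, integrates over $t_0$, uses Tonelli on $A_{t_0+2nT}(f)=\int_0^{t_0+2nT}\int_s^{s+T}\hat f(X_r)\,dr\,ds$ to expose $\int_0^{t_0+2nT}\hat f(X_r)\,dr$ on the left, and then substitutes $g\le (T/2)\hat f(\phi(0))+g_2$ on the right. At this stage both sides contain the \emph{same} finite quantity $(T/2)\int_0^T\int_0^{t_0+2nT}\hat f(X_r)\,dr\,dt_0$, which is a.s.\ finite by the $L^2$ hypothesis, so the subtraction is legitimate. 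Only after cancelling does one divide by $n$ and take $\limsup$. To fix your argument, you should likewise carry the comparison at the finite-time level and subtract before taking the limit.
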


\begin{proof}
    First we introduce the following notation for $t \geq 0$, $t_0 \in [0,T]$, $k,n \in \N$, and any function $h: \D([0,T],\mcM) \to [0,\infty]$:
    \begin{align*}
        t_k &\coloneqq t_0 + kT \\
        A_t(h) &\coloneqq \int_0^t h(\{X_r^x\}_{s\leq r \leq s+T})ds \\
        \Delta_k^{t_0}(h) &\coloneqq A_{t_k} - A_{t_{k-1}} = \int_{t_{k-1}}^{t_k} h(\{X_r^x\}_{s\leq r \leq s+T})ds \\
        \G_k^{t_0} &\coloneqq \F_{t_k} \\
        M_n^{t_0}(h) &\coloneqq \sum_{k=1}^n \Delta_{2k-1}^{t_0} - \E[\Delta_{2k-1}^{t_0} |\G^{t_0}_{2k-2}] \\
        N_n^{t_0}(h) &\coloneqq \sum_{k=1}^n \Delta_{2k}^{t_0} - \E[\Delta_{2k}^{t_0} |\G^{t_0}_{2k-1}]
    \end{align*}

    By \Cref{time-equivalence},
    \begin{equation}\label{eq:limsup-A-equals-h}
        \limsup_{t \to \infty} \frac{1}{t}\int_0^t h(\{X_r^x\}_{s\leq r \leq s+T})ds = \limsup_{n \to \infty} \frac{1}{T}\int_0^T \frac{1}{2nT} A_{t_0 + 2nT}(h)dt_0 \,.
    \end{equation}

    Thus, our claim is equivalent to
    \begin{equation}\label{eq-quiv-claim-for-comparison-result}
        \limsup_{n \to \infty} \frac{1}{2n}\int_0^T A_{t_0 + 2nT}(f)dt_0 \leq \limsup_{n \to \infty} \frac{1}{2n}\int_0^T A_{t_0 + 2nT}(g)dt_0 \,.
    \end{equation}

     We write
    \begin{equation}\label{eq:break-up-A}
        A_{t_0 + 2nT} = A_{t_0} + \sum_{k=1}^{2n} \E[\Delta_k^{t_0} |\G^{t_0}_{k-1}] + M_n^{t_0} + N_n^{t_0}
    \end{equation}
     and bound each term separately. In what follows, we suppose $h \in \{f,g\}$.

      \textit{Step 1:}
      By \eqref{eq-compare-var}
     \begin{equation*}
         \E\Big[\int_0^T A_{t_0}(h) dt_0\Big] \leq T\E[A_T(h)] < \infty \,,
     \end{equation*}
     and so
     $$\lim_{n \to \infty} \frac{1}{2n}\int_0^T A_{t_0}(h)dt_0 = 0 \quad a.s.$$

    \textit{Step 2:} To control $M_n^{t_0} + N_n^{t_0}$, we first note that 
       \begin{align*}
         \E[|M_n^{t_0}|^2 + |N_n^{t_0}|^2] &= \sum_{k=1}^{2n} \E\Big[\Big|\Delta_k^{t_0} - \E[\Delta_k^{t_0} |\G^{t_0}_{k-1}]\Big|^2\Big] \\
         &\lesssim \sum_{k=1}^{2n} \E[|\Delta_k^{t_0}|^2] \\
         &\lesssim \E\Big[\int_{t_0}^{t_0 + 2nT} h(\{X_r^x\}_{s \leq r \leq s+T})^2ds\Big] \,,
     \end{align*}  
and therefore
     \begin{equation}\label{eq-disc-mg-var}
        \int_0^T \E[|M_n^{t_0}(h)|^2 + |N_n^{t_0}(h)|^2]dt_0 \lesssim \int_0^{(2n+1)T} \E[h(\{X_r^x\}_{t \leq r \leq t+T})^2]dt \,.
     \end{equation}
     By \eqref{eq-compare-var} and \eqref{eq-disc-mg-var}, for almost all $t_0$ the discrete-time process $M_n^{t_0}(h)$ is a $L^2$ martingale adapted to $\G_{2n}^{t_0}$ and $\limsup_{n \to \infty} \frac{1}{n}\int_0^T\E[|M_n^{t_0}|^2]dt_0 < \infty$. By Doob's inequality,
     $$\E\Big[\sup_{m \leq n}\Big(\int_0^T M_m^{t_0}dt_0\Big)^2\Big] \lesssim \E\Big[\int_0^T \sup_{m \leq n}|M_m^{t_0}|^2dt_0\Big] \lesssim \int_0^T \E[|M_n^{t_0}|^2]dt_0 \,.$$
     Then by a similar Borel-Cantelli argument as in \cite[Lemma 2.12]{extinction} we conclude
     $$\lim_{n \to \infty} \frac{1}{n}\int_0^T M^{t_0}_n(h)dt_0 = 0  \qquad \textrm{a.s.} \,,$$
     and the same exact argument works for $N_n^{t_0}$ as well.
     
     \textit{Step 3:} Finally, we note by \eqref{eq-compare-exp} and the Markov property that
     $$\E[\Delta_k^{t_0}(f)|\G_{k-1}^{t_0}] \leq \E[\Delta_k^{t_0}(g)|\G_{k-1}^{t_0}] \,,$$
     and so \eqref{eq-quiv-claim-for-comparison-result} follows from \textit{Step 1, Step 2,} and \eqref{eq:break-up-A}. Thus, we have shown \eqref{eq:comparison-result-1}.

     For the final claim, note that above we showed that
     \begin{equation}\label{eq:general-comparison}
         \int_0^{t_0+2nT} \int_s^{s+T}\hat f(X_r^x)drds= A_{t_0 + 2nT}(f) \leq A_{t_0 + 2nT}(g) + c(n,t_0) \,,
     \end{equation}
     where the first equality follows from the definition and $c(n,t_0)$ is a term satisfying $\lim_{n \to \infty} \frac{1}{2n}\int_0^T c(n,t_0)dt_0 = 0$. Then we notice that (by Tonelli's theorem)
     \begin{multline*}
         \int_0^T\int_0^{t_0+2nT} \int_s^{s+T}\hat f(X_r^x)drdsdt_0 = \int_0^T\int_0^{t_0+2nT+T} \int_{(r-T)\vee0}^{(t_0 + 2nT) \wedge r}\hat f(X_r^x)dsdrdt_0 \\
    \begin{aligned}
         &\geq T\int_0^T\int_T^{t_0+2nT} \hat f(X_r^x)drdt_0 \\
         &= T\int_0^T\int_0^{t_0+2nT} \hat f(X_r^x)drdt_0 - T^2\int_0^T\hat f(X_r^x)dr \,.
     \end{aligned}
     \end{multline*}
     Integrating \eqref{eq:general-comparison} from $0$ to $T$, applying the inequality above, using that $g(\phi) \leq \frac{T}{2}\hat f(\phi(0)) + g_2(\phi)$, and then subtracting $\frac{T}{2}\int_0^T\int_0^{t_0+2nT} \hat f(X_r^x)drdt_0$ from both sides gives
     \begin{multline*}
     \frac{T}{2}\int_0^T\int_0^{t_0+2nT} \hat f(X_r^x)drdt_0 - T^2\int_0^T \hat f(X_r^x)dr\\
     \leq \int_0^TA_{t_0 + 2nT}(g_2)dt_0 + \int_0^T c(n,t_0)dt_0 \,.
     \end{multline*}
     Dividing by $2nT^2$, taking the $\limsup$ as $n \to \infty$, and applying \eqref{eq:limsup-A-equals-h} gives
     $$\limsup_{n \to \infty} \frac{1}{4nT} \int_0^T\int_0^{t_0+2nT} \hat f(X_r^x)drdt_0 \leq         \limsup_{t \to \infty} \frac{1}{t}\int_0^t g_2(\{X_r^x\}_{s\leq r \leq s+T})ds \,.$$
     This finishes the proof by \Cref{comparison-result}.
\end{proof}

We conclude this section with the proof of \Cref{fn-in-X}:
\begin{proof}
    First note that
    $$\limsup_{t \to \infty} \E\Big[\frac{1}{t}\int_0^t f_0(X_t^x)^2 ds\Big] = \limsup_{t \to \infty} \E\Big[\frac{1}{t}\int_0^t W'(X_t^x) ds\Big] < \infty$$
    follows from \cite[Lemma 4.1]{extinction}, so $f_0^2 \in \Y$. Next,  we  inductively show that
    \begin{equation}\label{eq-var-f-finite}
        \limsup_{t \to \infty} \E\Big[\frac{1}{t}\int_0^t f_i(X_t^x)^2 ds\Big] < \infty \,,
    \end{equation}
    which proves $f_n^2 \in \Y$.
    Indeed, since $h_i$ vanishes over $f_i$, there is 
    $M > 0$ so that $h_i^2 \leq \epsilon f_i^2 + M$, where $\epsilon > 0$ will be chosen later. Plugging this into \eqref{eq-int-f-h-bounds}, we obtain
    $$(1-\epsilon)\E\Big[\int_0^T f_i(X_t^x)^2dt\Big] \leq \epsilon f_i(x)^2 + M(T+1) + f_{i-1}(x)^2 + \E\Big[\int_0^T f_{i-1}(X_t^x)^2dt\Big] \,.$$
    (We are justified with subtracting  $\epsilon \E\Big[\int_0^T f_i(X_t^x)^2dt\Big]$ from both sides because it is finite by assumption.) By Tonelli's theorem,
    $$\E\Big[\int_0^t\int_s^{s+T} f_i(X_r^x)^2drds\Big] \geq T\E\Big[\int_0^t f_i(X_r^x)^2dr\Big] - T\E\Big[\int_0^T f_i(X_r^x)^2dr\Big]$$
    (for $t \geq T$). By Markov property and the above we conclude that
    \begin{multline*}
    [(1-\epsilon)T - \epsilon]\E\Big[\int_0^t f_i(X_r^x)^2dr\Big] \\
    \leq (1-\epsilon)T\E\Big[\int_0^T f_i(X_r^x)^2dr\Big] + Mt(T+1) + (T+1)\E\Big[\int_0^{t+T} f_{i-1}(X_r^x)^2dr\Big] \,.
    \end{multline*}
    (Again, we are justified with the subtraction of $\epsilon \E\Big[\int_0^t f_i(X_r^x)^2dr\Big]$ from both sides by assumption.) \eqref{eq-var-f-finite} follows by induction, where $\epsilon$ is chosen such that $(1-\epsilon)T - \epsilon > 0$.

    The claim that $f_i \in \X$ is also true by induction. Indeed, the base step $f_0 \in \X$ holds by \Cref{muW'-K}. Since $h_i$ vanishes over $f_i$, by \eqref{eq-int-f-h-bounds} there is $M' > 0$ such that
    \begin{equation}\label{eq-the-equation-above}
        \E\Big[\int_0^T f_i(X_t^x)dt\Big] \leq 2f_{i-1}(x) +\frac{T}{2}f_i(x)+ 2\E\Big[\int_0^T f_{i-1}(X_t^x)dt\Big] + M' \,.
    \end{equation}
    Then the inductive step follows by \Cref{comparison-result} with $f(\phi) = \int_0^T f_i(\phi(t))dt$, $g_2(\phi) = 2f_{i-1}(\phi(0)) + 2\int_0^T f_{i-1}(\phi(t))dt + M'$, and $g(\phi) = \frac{T}{2}f_i(\phi(0)) + g_2(\phi)$ (\eqref{eq-compare-exp} is \eqref{eq-the-equation-above}, \eqref{eq-compare-var} holds by \eqref{eq-var-f-finite} and \Cref{time-equivalence}, and $\limsup_{t \to \infty} \frac{1}{t}\int_0^t g_2(\{X_r^x\}_{s\leq r \leq s+T})ds$ is bounded by \Cref{comparison-result} and our inductive hypothesis).
\end{proof}

\subsection{Proof of \Cref{tightness}}
Next we focus on using \Cref{as-compact} to show that the empirical occupation measures are tight almost surely.

\begin{lem} \label{W-small-M}
For all $m, T > 0$
 \begin{equation*}
        \lim_{M \to \infty} \inf_{W(x) \leq m} \Prb\Big(\sup_{t \leq T}W(X_t^x) \leq M \Big) = 1 \,.
    \end{equation*}
\end{lem}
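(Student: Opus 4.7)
The plan is to reduce the claim to a Markov-type inequality via a first-passage stopping time, combined with the It\^o-like martingale decomposition $W(X_t^x) = W(x) + \int_0^t \Ll W(X_s^x)ds + M_t^W$ afforded by $W \in \Dme_2(\mcM)$. Define $\tau_M := \inf\{t \geq 0 : W(X_t^x) \geq M\}$. Because $\int_0^t \Ll W(X_s^x)ds$ is absolutely continuous in $t$ and $M_t^W$ is cadlag, so is $t \mapsto W(X_t^x)$. Hence $\{\sup_{t \leq T} W(X_t^x) \geq M\} = \{\tau_M \leq T\}$, and on the latter event right-continuity of the paths gives $W(X_{T \wedge \tau_M}^x) = W(X_{\tau_M}^x) \geq M$.

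Next, I would apply the decomposition at the bounded stopping time $T \wedge \tau_M$ and use $\Ll W \leq K - W' \leq K$ (valid since $W' \geq 1 \geq 0$ by \Cref{as-W}) to bound
$$W(X_{T \wedge \tau_M}^x) \leq W(x) + K T + M_{T \wedge \tau_M}^W.$$
Since $M^W$ is a cadlag square-integrable martingale and $T \wedge \tau_M$ is a bounded stopping time, optional stopping yields $\E[M_{T \wedge \tau_M}^W] = 0$. Taking expectations gives the uniform estimate
$$\E[W(X_{T \wedge \tau_M}^x)] \leq W(x) + KT \leq m + KT.$$

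Finally, I would combine the bound $W(X_{T \wedge \tau_M}^x) \geq M \, \Id_{\{\tau_M \leq T\}}$ (which used $W \geq 0$ to drop the indicator's complement) with Markov's inequality to deduce
$$\Prb\bigl(\sup_{t \leq T} W(X_t^x) \geq M\bigr) = \Prb(\tau_M \leq T) \leq \frac{m + KT}{M},$$
uniformly over $\{x : W(x) \leq m\}$. Letting $M \to \infty$ then finishes the proof. The only potential technical subtlety is that $\Ll W$ is allowed to take the value $-\infty$; however, since the argument only uses the upper bound $\Ll W \leq K$, no integrability issue arises and the computation goes through verbatim. Notably, the assumption $\Gamma W \leq g$ from \Cref{as-U} is not needed here --- the martingale identity $\E[M_{T \wedge \tau_M}^W] = 0$ is enough, so one avoids the delicate issue of bounding $\E\int_0^T g(X_s^x)\,ds$ uniformly in $x$ on $\{W \leq m\}$.
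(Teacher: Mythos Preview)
Your proof is correct and follows essentially the same approach as the paper: both use the decomposition $W(X_t^x)=W(x)+\int_0^t\Ll W(X_s^x)\,ds+M_t^W$, the bound $\Ll W\le K$, and optional stopping at a bounded first-passage time, arriving at the same estimate $\Prb(\sup_{t\le T}W(X_t^x)>M)\le (m+KT)/M$. The only cosmetic difference is that the paper applies the stopping time to the martingale $M_t^W$ itself, whereas you stop $W(X_t^x)$ directly; your route is slightly more streamlined. (One tiny quibble: the set equality $\{\sup_{t\le T}W(X_t^x)\ge M\}=\{\tau_M\le T\}$ can fail for cadlag paths when the supremum equals $M$ but is not attained; however only the inclusion $\{\sup>M\}\subset\{\tau_M\le T\}$ is needed, so the argument is unaffected.)
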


\begin{proof}
    Given $x \in \{W \leq m\}$, define a martingale (see \Cref{D2})
    $$
    M_t \coloneqq W(X_t^x) - W(x) - \int_0^t \Ll W(X_s^x)ds \,.
    $$
     By \Cref{as-W}, 
     $$
     W(X_t^x) = W(x) + \int_0^t \Ll W(X_s^x)ds + M_t \leq m + Kt + M_t \,,
     $$ 
     so that for any $M > 0$, 
     $$
     \Prb\Big(\sup_{t \leq T}W(X_t^x) \leq M \Big) \geq \Prb\Big(\sup_{t \leq T} M_t \leq M - KT - m\Big) \,.
     $$

    Since $W \geq 0$,  $M_t$ is bounded below by $-m - KT$ for $t \leq T$. If we define $\tau \coloneqq \inf\{t \geq 0 \mid M_t > A\}$, then 
     any $A \geq 0$ we have  
    $$
    0 = \E[M_{\tau \wedge T}] \geq A\Prb(\tau \leq T) + (-m - KT)\Prb(\tau > T) \,.
    $$ 
     Rearranging the inequality and using $\Prb(\tau \leq T) = 1 - \Prb(\tau > T)$ gives $$\Prb(\tau > T) \geq \frac{A}{A + m + KT} \,.$$
If $M$ is sufficiently large, by 
    setting $A = M - KT - m > 0$ and noting that $$\Prb\Big(\sup_{t \leq T} M_t \leq M - KT - m\Big) \geq \Prb(\tau > T) \,,$$ we obtain that $$\Prb\Big(\sup_{t \leq T}W(X_t^x) \leq M \Big) \geq \frac{M - KT - m}{M} \to 1 \text{ as } M \to \infty \,.$$ Since the above bounds were independent of $x \in \{W \leq m\}$, the claim follows.
    
\end{proof}

\begin{cor} \label{W-small-m}
    For all $\epsilon > 0$, there exists $m > 0$ such that for any $x \in \mcM$, $$\limsup_{t \to \infty} \mu_t^x(\{W > m\}) \leq \epsilon \quad a.s.$$
\end{cor}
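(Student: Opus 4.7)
The plan is to use the fact that $W \ll W'$ together with Corollary \ref{muW'-K}, which gives $W' \in \X$, and then apply a simple Markov-type bound.

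First I would fix $\epsilon > 0$ and recall that by Corollary \ref{muW'-K} there is a constant $K > 0$ (depending only on $W'$) such that for every $x \in \mcM$,
$$\limsup_{t \to \infty} \mu_t^x W' \leq K \quad \text{a.s.}$$
I would then choose $M > K/\epsilon$, so that $K/M \leq \epsilon$. Since $W \ll W'$ (Assumption \ref{as-W}(iii)), the quantity
$$m \coloneqq \sup_{\{x \in \mcM : W'(x) \leq M\}} W(x)$$
is finite by Definition \ref{<<}.

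Next, I would observe the set inclusion $\{W > m\} \subset \{W' > M\}$, which is immediate from the contrapositive of the definition of $m$: if $W'(x) \leq M$ then $W(x) \leq m$. Consequently, applying the trivial Markov-type estimate $\Id_{\{W' > M\}} \leq W'/M$ gives
$$\mu_t^x(\{W > m\}) \leq \mu_t^x(\{W' > M\}) \leq \frac{1}{M}\, \mu_t^x W' \,.$$
Taking $\limsup$ as $t \to \infty$ and using the bound from $W' \in \X$ yields
$$\limsup_{t \to \infty} \mu_t^x(\{W > m\}) \leq \frac{K}{M} \leq \epsilon \quad \text{a.s.,}$$
uniformly in $x \in \mcM$, which completes the proof.

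There is no real obstacle here; the argument is a direct combination of the comparison $W \ll W'$ (used to pass from sublevel sets of $W$ to sublevel sets of $W'$) and the a.s. integral bound on $W'$ along trajectories provided by Corollary \ref{muW'-K}. The only minor care needed is noting that the constant $K$ depends on $W'$ (it is the one furnished by Assumption \ref{as-W}(ii) through the derivation in Corollary \ref{muW'-K}) but is independent of $x$, which is what permits the uniform choice of $m$.
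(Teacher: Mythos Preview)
Your proof is correct and essentially identical to the paper's: both use the Markov/Chebyshev bound $\mu_t^x(\{W'>M\})\le M^{-1}\mu_t^x W'$ together with $W'\in\X$ from Corollary~\ref{muW'-K}, and then invoke $W\ll W'$ to pass from a sublevel set of $W'$ to one of $W$. The only cosmetic difference is that the paper takes $M=K/\epsilon$ exactly while you take any $M>K/\epsilon$.
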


\begin{proof}
    By \Cref{muW'-K},
    $$
    \limsup_{t \to \infty} \mu_t^x W' \leq K \quad a.s.
    $$
It follows from Chebyshev inequality that $\mu_t^x(\{W' > K/\epsilon\}) \leq (\epsilon / K) \mu_t^x W'$, and therefore  
$$
\limsup_{t \to \infty} \mu_t^x(\{W' > K/\epsilon\}) \leq \epsilon \quad a.s.
$$
    Since $W << W'$ (see \Cref{as-W} and \Cref{<<}), $$m \coloneqq \sup_{W'(y) \leq K/\epsilon} W(y) < \infty \,,$$ and then the claim follows from $$\{W > m\} \subset \{W' > K/\epsilon\} \,.$$
\end{proof}

\begin{cor}\label{W-small-m-2}
    For all $T > 0$, $\epsilon > 0$ there exists $M > 0$ such that for all $x \in \mcM$
    $$\limsup_{t \to \infty} \frac{1}{t}\int_0^t \Id_{\{\sup_{s \leq r \leq s+T} W(X_r^x) > M\}}ds \leq \epsilon \quad a.s.$$
\end{cor}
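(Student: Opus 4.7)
The plan is to combine \Cref{W-small-m} and \Cref{W-small-M} through \Cref{comparison-result}. In words: whenever the starting value $W(X_s^x)$ is bounded by some $m$, \Cref{W-small-M} makes it unlikely that $W$ exceeds $M$ at any point of $[s, s+T]$; and by \Cref{W-small-m}, the trajectory spends only a small fraction of its time in the region $\{W > m\}$. The Markov property turns the first statement into a pointwise-in-$x$ expectation bound on the event of interest, and \Cref{comparison-result} then upgrades this expectation bound into an almost-sure bound on the time-average.

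Concretely, I would fix $\epsilon, T > 0$ and first apply \Cref{W-small-m} with $\epsilon/2$ in place of $\epsilon$ to obtain $m > 0$ with
$$
\limsup_{t \to \infty} \mu_t^x(\{W > m\}) \leq \epsilon/2 \quad a.s.
$$
for every $x \in \mcM$. Next, I would use \Cref{W-small-M} to pick $M > 0$ large enough that
$$
\epsilon_M \coloneqq \sup_{W(y) \leq m}\Prb\big(\sup_{t \leq T} W(X_t^y) > M\big) \leq \epsilon/2.
$$
I would then define, on the Skorokhod space,
$$
f(\phi) \coloneqq \Id_{\{\sup_{r \leq T} W(\phi(r)) > M\}}, \qquad g(\phi) \coloneqq \Id_{\{W(\phi(0)) > m\}} + \epsilon_M,
$$
both bounded by $1$, so that \eqref{eq-compare-var} is immediate. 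The Markov property gives, for every $x \in \mcM$,
$$
\E\big[f(\{X_r^x\}_{r \leq T})\big] = \Prb\big(\sup_{r \leq T} W(X_r^x) > M\big) \leq \Id_{\{W(x) > m\}} + \epsilon_M = \E\big[g(\{X_r^x\}_{r \leq T})\big],
$$
which is \eqref{eq-compare-exp}. \Cref{comparison-result} then yields
$$
\limsup_{t \to \infty} \frac{1}{t}\int_0^t \Id_{\{\sup_{s \leq r \leq s+T} W(X_r^x) > M\}}\, ds \;\leq\; \limsup_{t \to \infty} \mu_t^x(\{W > m\}) + \epsilon_M \;\leq\; \epsilon \quad a.s.
$$

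The conceptually nontrivial step is recognizing that \Cref{comparison-result} is the correct device for upgrading a Markovian expectation bound into an almost-sure time-average bound; once $f$ and $g$ are chosen, both hypotheses of the lemma are immediate from boundedness of the indicators and the Markov property. The only technical point to verify is the $\omega$-measurability of $\sup_{r \leq T} W(X_r^x)$, which follows from progressive measurability of $(r,\omega) \mapsto W(X_r^x(\omega))$ on the completed filtration.
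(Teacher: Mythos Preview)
Your proof is correct and follows essentially the same approach as the paper: choose $m$ via \Cref{W-small-m}, choose $M$ via \Cref{W-small-M}, and feed the resulting pointwise expectation bound into \Cref{comparison-result} with $f(\phi)=\Id_{\{\sup_{r\le T}W(\phi(r))>M\}}$ and $g(\phi)=\Id_{\{W(\phi(0))>m\}}+\epsilon/2$. The only cosmetic difference is that you carry the quantity $\epsilon_M\le\epsilon/2$ instead of writing $\epsilon/2$ directly.
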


\begin{proof}
Fix $x \in \mcM$. 
    By \Cref{W-small-m}, there is $m > 0$ such that
    $$\limsup_{t \to \infty} \mu_t^x(\{W > m\}) \leq \frac{\epsilon}{2} \quad a.s.$$
    By \Cref{W-small-M} there is $M > 0$ such that (for all $x \in \mcM$)
    $$\E\Big[\Id_{\{\sup_{t \leq T} W(X_t^x) > M\}}\Big] \leq \E\Big[\frac{\epsilon}{2} + \Id_{\{W(X_0^x) > m\}}\Big] \,.$$
    The claim follows from \Cref{comparison-result} 
    with $f((X_t)) = \Id_{\{\sup_{t \leq T} W(X_t^x) > M\}} \in [0, 1]$ and $g((X_t)) = \frac{\epsilon}{2} + \Id_{\{W(X_0^x) > m\}} 
    \in [\frac{\epsilon}{2}, \frac{\epsilon}{2} + 1]$, hence 
      \eqref{eq-compare-var} clearly holds. 
\end{proof}

\begin{lem} \label{occ-is-pretty-tight}
    For all $\epsilon > 0$ there exists a compact set $\K \subset \mcM$ such that for any $x \in \mcM$ $$\liminf_{t \to \infty} \mu_t^x(\K) \geq 1 - 4\epsilon \quad a.s.$$ In particular, \Cref{tightness} holds.
\end{lem}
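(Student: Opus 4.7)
The strategy is to combine Assumption~\ref{as-compact}, which gives a \emph{single-interval} tightness estimate uniformly in the initial condition, with the averaging machinery of Lemma~\ref{comparison-result} to upgrade it into a \emph{pathwise} bound on $\mu_t^x(\K^c)$.

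Fix $\epsilon > 0$ and let $T > 0$ and $f \in \X$ be as in Assumption~\ref{as-compact}. Using Corollaries~\ref{int-f-smaller-than-M} and~\ref{W-small-m-2} with this $T$ and $\epsilon$, I would choose $M$ large enough that, uniformly in $x \in \mcM$,
\begin{equation*}
    \limsup_{t\to\infty} \frac{1}{t}\int_0^t \Id_{\{\int_0^T f(X^x_{s+r})\,dr > M/2\}}\,ds \le \epsilon
    \quad\text{and}\quad
    \limsup_{t\to\infty} \frac{1}{t}\int_0^t \Id_{\{\sup_{s\le r\le s+T}W(X_r^x) > M/2\}}\,ds \le \epsilon
\end{equation*}
hold almost surely, and so that Assumption~\ref{as-compact} furnishes a compact set $\K \subset \mcM$ for this $M$.

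Define the bounded path functionals $F(\phi) := \Id_{\{\frac{1}{T}\int_0^T \Id_{\K^c}(\phi(r))\,dr > \epsilon\}}$ and $G(\phi) := \Id_{\{\int_0^T f(\phi(r))\,dr + \sup_{r \in [0,T]} W(\phi(r)) > M\}} + \epsilon$ on $\D([0,T],\mcM)$, and set $A_s := \{F(\{X_r^x\}_{s \le r \le s+T}) = 1\}$, $B_s := \{G(\{X_r^x\}_{s \le r \le s+T}) \ge 1\}$. By homogeneity of the Markov process, Assumption~\ref{as-compact} applied to an arbitrary initial condition $y$ reads $\E F(\{X_r^y\}_{r \le T}) \le \E G(\{X_r^y\}_{r \le T})$, which is exactly hypothesis~\eqref{eq-compare-exp} of Lemma~\ref{comparison-result}; boundedness of $F$ and $G$ makes~\eqref{eq-compare-var} automatic. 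Lemma~\ref{comparison-result} then yields almost surely
\begin{equation*}
    \limsup_{t\to\infty} \frac{1}{t}\int_0^t \Id_{A_s}\,ds \;\le\; \epsilon + \limsup_{t\to\infty} \frac{1}{t}\int_0^t \Id_{B_s}\,ds \;\le\; 3\epsilon,
\end{equation*}
where the final inequality uses $\Id_{B_s} \le \Id_{\{\int f > M/2\}} + \Id_{\{\sup W > M/2\}}$ and the choice of $M$. Combining the elementary bound $\frac{1}{T}\int_0^T \Id_{\K^c}(X_{s+r}^x)\,dr \le \epsilon + \Id_{A_s}$ with a Fubini computation of the type performed in Lemma~\ref{time-equivalence} to replace the double average by $\mu_t^x(\K^c)$ (the endpoint contributions dying at rate $T/t$), I obtain $\limsup_t \mu_t^x(\K^c) \le 4\epsilon$ a.s., which is the stated conclusion.

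For the tightness assertion, apply the above with $\epsilon_k = 1/(4k)$ to obtain compact $\K_k$ with $\limsup_t \mu_t^x(\K_k^c) \le 1/k$ a.s. On the full-probability event where this holds for every $k$ simultaneously, given any sequence $t_n \to \infty$, only finitely many indices $n$ satisfy $\mu_{t_n}^x(\K_k^c) \ge 2/k$; inner regularity of each of those finitely many Borel probability measures on the Polish space $\mcM$ allows us to enlarge $\K_k$ to a single compact set valid for \emph{every} $n$, which verifies Prokhorov's criterion. The main obstacle in the whole argument is precisely the propagation step: the bad sets $A_s$ are correlated across $s$, so one cannot proceed by a naive Borel--Cantelli argument; instead one relies on the martingale decomposition inside Lemma~\ref{comparison-result} to convert the uniform-in-$y$ expectation estimate of Assumption~\ref{as-compact} into an almost-sure time-average bound.
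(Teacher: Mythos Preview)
Your proof is correct and follows essentially the same route as the paper. Both arguments pick $M$ via Corollaries~\ref{int-f-smaller-than-M} and~\ref{W-small-m-2}, invoke Assumption~\ref{as-compact} for the single-interval estimate, and then propagate it to a time-average bound via Lemma~\ref{comparison-result} and Lemma~\ref{time-equivalence}; the only cosmetic difference is that the paper incorporates the bound $\tfrac{1}{T}\int_0^T \Id_{\K^c}\le \epsilon + \Id_{A_s}$ directly into the $f$ of Lemma~\ref{comparison-result} in one step, whereas you apply Lemma~\ref{comparison-result} to $\Id_{A_s}$ first and add the $\epsilon$ afterwards.
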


\begin{proof}
Fix $\epsilon > 0$. 
Let $f,T$ be as in \Cref{as-compact} and choose $M$ large enough such that
\begin{equation}\label{eq-with-two-ids}
     \begin{aligned}
     \limsup_{t \to \infty} \frac{1}{t}\int_0^t \Id_{\{\int_0^T f(X_{r+s}^x)dr > M/2\}}ds &\leq \epsilon \quad a.s. \\
     \limsup_{t \to \infty} \frac{1}{t}\int_0^t \Id_{\{\sup_{s \leq r \leq s+T} W(X_r^x) > M/2\}}ds &\leq \epsilon \quad a.s.
 \end{aligned}
\end{equation}
which is possible by \Cref{int-f-smaller-than-M} and \Cref{W-small-m-2}. By \Cref{as-compact}, there exists a compact set $\K \subset \mcM$ such that

  \begin{align*}
      \E\Big[\frac{1}{T}\int_0^T \Id_{\K^c}(X_t^x)dt\Big] 
      &\leq \epsilon + \Prb \Big[\frac{1}{T}\int_0^T \Id_{\K^c}(X_t^x)dt > \epsilon \Big] \\
      &\leq \E\Big[\epsilon + \Id_{\{\int_0^Tf(X_t)dt + \sup_{t \leq T}W(X_t) > M\}} + \epsilon\Big] \\
      &\leq \E\Big[\Id_{\{\int_0^T f(X_{r}^x)dr > M/2\}} + \Id_{\{\sup_{r \leq T} W(X_r^x) > M/2\}} + 2\epsilon\Big]
  \end{align*}
  By \Cref{comparison-result} and \eqref{eq-with-two-ids},
  $$\limsup_{t \to \infty} \frac{1}{t}\int_0^t \frac{1}{T} \int_0^T \Id_\K^c(X_{r+s})drds \leq 4\epsilon \,,$$
  which by \Cref{time-equivalence} proves the claim.

\end{proof}

We finish this subsection by using \Cref{occ-is-pretty-tight} to show that the sets of invariant measures on $\mcM$ and $\mcM_0$ are nonempty and compact. 

\begin{lem} \label{lim-is-inv}
   If $x \in \mcM$ (respectively $x \in \mcM_0$), then almost surely the following holds: for every sequence $t_n \to \infty$ such that $\mu_{t_n}^x(\omega)$ converges to some $\mu \in P(\mcM)$, then $\mu \in P_{inv}(\mcM)$ (respectively $\mu \in P_{inv}(\mcM_0)$).\end{lem}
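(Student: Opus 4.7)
The plan is to verify the two defining properties of $P_{inv}(\mcM)$ (respectively $P_{inv}(\mcM_0)$): correct support of $\mu$ and the invariance identity $\mu\Pp_s f = \mu f$ for all $f \in C_b(\mcM)$ and $s \geq 0$. The support claim is trivial when $x \in \mcM$; when $x \in \mcM_0$, \Cref{as1} gives $X_t^x \in \mcM_0$ for all $t \geq 0$, hence $\mu_{t_n}^x(\mcM_0) = 1$, and closedness of $\mcM_0$ together with the Portmanteau theorem yields $\mu(\mcM_0) \geq \limsup_n \mu_{t_n}^x(\mcM_0) = 1$.

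For the invariance identity, the Feller property (\Cref{as2}) guarantees $\Pp_s f \in C_b(\mcM)$, so $\mu_{t_n}^x \Pp_s f \to \mu \Pp_s f$ and $\mu_{t_n}^x f \to \mu f$ along the convergent subsequence. It therefore suffices to show that, almost surely, $\mu_t^x \Pp_s f - \mu_t^x f \to 0$ as $t \to \infty$, simultaneously for all $f \in C_b(\mcM)$ and $s \geq 0$. Because $\mcM$ is Polish, $C_b(\mcM)$ contains a countable convergence-determining family; combined with a countable dense set of values of $s$ together with right-continuity of $\Pp_s f$ in $s$, this reduces the claim to countably many pairs $(f,s)$ and a single union bound.

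For fixed $(f,s)$, I would use the telescoping decomposition
\[
\mu_t^x \Pp_s f - \mu_t^x f = \frac{1}{t}\int_0^t [\Pp_s f(X_r^x) - f(X_{r+s}^x)]\,dr + \frac{1}{t}\Big[\int_t^{t+s} f(X_r^x)\,dr - \int_0^s f(X_r^x)\,dr\Big].
\]
The boundary term is bounded deterministically by $2\|f\|s/t$ and vanishes. For the first term, set $Z_r := \Pp_s f(X_r^x) - f(X_{r+s}^x)$ and $\xi_t := \int_0^t Z_r\,dr$. The Markov property gives $\E[Z_r\mid\F_r] = \Pp_s f(X_r^x) - \E[f(X_{r+s}^x)\mid\F_r] = 0$, and for $r' \geq r+s$ the tower property yields $\E[Z_r Z_{r'}] = \E[Z_r\,\E[Z_{r'}\mid\F_{r'}]] = 0$. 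Since $|Z_r|\leq 2\|f\|$, the diagonal contribution on $|r-r'|\leq s$ gives $\E[\xi_t^2] \leq 8\|f\|^2 s t$, so $\xi_t/t \to 0$ in $L^2$. To upgrade to almost-sure convergence, apply Borel--Cantelli along the polynomial subsequence $t_n := n^{1+\delta}$ for small $\delta > 0$, and interpolate on each $[t_n, t_{n+1}]$ using the crude deterministic bound $|\xi_t - \xi_{t_n}| \leq 2\|f\|(t_{n+1} - t_n) = O(n^\delta)$, which is $o(t_n)$.

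The main obstacle, which is moderate, is converting the continuous-time integral $\xi_t$ into a quantity controlled by a law of large numbers; the argument above sidesteps the need for a direct discrete-time martingale decomposition by exploiting the $L^2$-orthogonality of the integrand beyond lag $s$, which is a direct consequence of the Markov property. Notably, no hypothesis beyond \Cref{as1} and \Cref{as2} is needed for this lemma.
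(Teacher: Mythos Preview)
Your proposal is correct. The paper's own proof is extremely terse: it cites \cite[Theorem 2.2ii]{persistence} for the invariance claim on $\mcM$ and then handles the $\mcM_0$ case via Portmanteau exactly as you do. Your explicit argument for invariance --- the telescoping identity, the $L^2$-orthogonality of $Z_r$ beyond lag $s$, and the Borel--Cantelli interpolation along $t_n = n^{1+\delta}$ --- is the standard route underlying the cited result, so there is no genuine methodological difference; you have simply spelled out what the paper outsources. Your observation that only \Cref{as1} and \Cref{as2} are needed is also correct and consistent with the cited reference.
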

   
   \begin{proof}
   The claim for $\mcM$ is proven  in \cite[Theorem 2.2ii]{persistence}.

For any $x \in \mcM_0$, additionally note that the Portmanteau theorem and $\mcM_0$ being invariant imply that if $\mu_{t_n}^x \to \mu$, then $\mu(\mcM_0) \geq \limsup_{n \to \infty} \mu_{t_n}^x(\mcM_0) = 1$ a.s.
\end{proof}

Recall Birkhoff's ergodic theorem:

\begin{thm}\label{Birk}
If $\mu \in P_{inv}(\mcM)$ is ergodic then for $\mu$-a.e. $x \in \mcM$ $$\mu_t^x \to \mu \quad a.s.$$
\end{thm}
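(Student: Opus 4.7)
The plan is to deduce this from the classical continuous-time Birkhoff ergodic theorem for measure-preserving flows. First I would pass to the canonical path space $\Omega_c := \D([0,\infty), \mcM)$ equipped with the shift semigroup $(\theta_s \omega)(t) := \omega(s+t)$, and for each $x \in \mcM$ let $\Prb_x$ denote the law on $\Omega_c$ of the Markov process $X^x_\cdot$. Define $\Prb_\mu := \int_{\mcM} \Prb_x\, d\mu(x)$. The invariance of $\mu$ under the semigroup $\Pp_t$ combined with the Markov property shows that $\Prb_\mu$ is invariant under each shift $\theta_s$, and the standard equivalence between ergodicity of $\mu$ (i.e.\ $\mu$ being an extreme point of $P_{inv}(\mcM)$) and ergodicity of the shift flow $(\theta_s)_{s \geq 0}$ on $(\Omega_c, \Prb_\mu)$ follows from homogeneity (\Cref{as2}) and the Markov property.

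Next I would apply the continuous-time Birkhoff theorem on $(\Omega_c, \Prb_\mu)$. Since $\mcM$ is Polish, $P(\mcM)$ is metrizable and admits a countable family $\{f_n\}_{n \in \N} \subset C_b(\mcM)$ such that weak convergence $\mu_t \to \mu$ in $P(\mcM)$ is equivalent to $\mu_t f_n \to \mu f_n$ for every $n$. For each such $f_n$, applying Birkhoff to the flow $(\theta_s)_{s \geq 0}$ and the bounded observable $\omega \mapsto f_n(\omega(0))$ yields
$$\frac{1}{t}\int_0^t f_n(\omega(s))\, ds \longrightarrow \int_{\mcM} f_n\, d\mu \quad \text{as } t \to \infty, \quad \Prb_\mu\text{-a.s.}$$
Intersecting the resulting full-measure sets over $n \in \N$ produces a single shift-invariant event $A \subset \Omega_c$ with $\Prb_\mu(A) = 1$ on which $\mu_t^x(\omega) \to \mu$ in $P(\mcM)$.

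Finally I would disintegrate: since $\Prb_\mu = \int_{\mcM} \Prb_x\, d\mu(x)$ and $\Prb_\mu(A) = 1$, Fubini's theorem gives $\Prb_x(A) = 1$ for $\mu$-a.e.\ $x \in \mcM$. For any such $x$, $\mu_t^x \to \mu$ almost surely in the weak topology, which is the conclusion of the theorem.

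The only non-routine step is the identification that ergodicity of $\mu$ (as an invariant measure for $\Pp_t$) transfers to ergodicity of the shift flow on $(\Omega_c, \Prb_\mu)$: shift-invariant events on the path space must be shown to correspond, modulo $\Prb_\mu$-null sets, to semigroup-invariant subsets of $\mcM$. This is classical and rests on conditioning on $\F_s$ together with the Markov property, but it is the one place in the argument where the Feller/homogeneity hypotheses of \Cref{as2} are genuinely used; the remainder of the argument is essentially bookkeeping combined with the separability of $P(\mcM)$.
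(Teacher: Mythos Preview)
The paper does not prove this statement at all; it is introduced as ``Recall Birkhoff's ergodic theorem'' and treated as a classical black box. Your sketch is the standard route to the result for continuous-time Markov processes (pass to the shift on path space, identify ergodicity of $\mu$ with ergodicity of the shift under $\Prb_\mu$, apply the continuous-time Birkhoff theorem to a countable determining family in $C_b(\mcM)$, then disintegrate), and it is correct as written.

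One small comment: the transfer of ergodicity from $\mu$ to the shift flow on $(\Omega_c,\Prb_\mu)$ does not actually require the Feller property from \Cref{as2}; it follows from the Markov property and the measurability of $(t,x)\mapsto \Pp_t f(x)$ alone (the third bullet in the paper's setup). The Feller hypothesis is used elsewhere in the paper (e.g.\ to show weak limits of empirical measures are invariant), but not here. Otherwise your bookkeeping is sound, including the use of separability of $P(\mcM)$ to reduce to countably many observables.
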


\begin{cor}\label{pinv-compact}
    The sets $P_{inv}(\mcM)$ and $P_{inv}(\mcM_0)$ are nonempty and compact.
\end{cor}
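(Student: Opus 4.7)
The plan is to handle nonemptiness, closedness, and tightness separately, and then invoke Prokhorov's theorem to conclude compactness.

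For nonemptiness of $P_{inv}(\mcM)$, fix any $x \in \mcM$ and apply \Cref{occ-is-pretty-tight}: almost surely $(\mu_t^x)$ is tight, so by Prokhorov it has a weakly convergent subsequence, whose limit lies in $P_{inv}(\mcM)$ by \Cref{lim-is-inv}. The same argument starting from any $x \in \mcM_0$ gives nonemptiness of $P_{inv}(\mcM_0)$, since by \Cref{as1} every $\mu_{t_n}^x$ is supported on $\mcM_0$ and \Cref{lim-is-inv} delivers a limit in $P_{inv}(\mcM_0)$. For closedness, suppose $\mu_n \to \mu$ weakly with $\mu_n \in P_{inv}(\mcM)$. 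The Feller property \Cref{as2} gives $\Pp_t f \in C_b(\mcM)$ whenever $f \in C_b(\mcM)$, so $\mu_n \Pp_t f \to \mu \Pp_t f$ and $\mu_n f \to \mu f$; combined with $\mu_n \Pp_t f = \mu_n f$ this yields $\mu \Pp_t f = \mu f$, and hence $\mu \in P_{inv}(\mcM)$. For $P_{inv}(\mcM_0)$, the set $\mcM_0$ is closed and the Portmanteau theorem gives $\mu(\mcM_0) \geq \limsup_n \mu_n(\mcM_0) = 1$.

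The remaining and main step is tightness. Fix $\epsilon > 0$ and let $\K$ be the compact set from \Cref{occ-is-pretty-tight}, so that $\liminf_{t \to \infty} \mu_t^x(\K) \geq 1 - 4\epsilon$ almost surely for all $x \in \mcM$. For ergodic $\mu \in P_{inv}(\mcM)$, \Cref{Birk} gives $\mu_t^x \to \mu$ weakly for $\mu$-a.e. $x$, and since $\K$ is closed the Portmanteau theorem yields $\mu(\K) \geq \limsup_t \mu_t^x(\K) \geq 1 - 4\epsilon$. For a general $\mu \in P_{inv}(\mcM)$, the ergodic decomposition $\mu = \int \nu\, d\rho(\nu)$ (valid since $\mcM$ is Polish and the semigroup is Feller) transfers the bound: $\mu(\K) = \int \nu(\K)\, d\rho(\nu) \geq 1 - 4\epsilon$. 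The same $\K$ works for the subset $P_{inv}(\mcM_0) \subset P_{inv}(\mcM)$, and Prokhorov's theorem then concludes compactness of both sets.

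The main obstacle is the passage from ergodic to arbitrary invariant measures: Birkhoff makes the ergodic case transparent, but extending the bound requires invoking the ergodic decomposition theorem. An alternative route would be to use the invariance identity $\mu(\K) = \int \E[\mu_T^x(\K)]\, d\mu(x)$ for any $T > 0$, approximate $\Id_\K$ from above by functions in $C_b(\mcM)$, and then apply Fatou's lemma together with \Cref{occ-is-pretty-tight} to reach the same bound.
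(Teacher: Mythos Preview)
Your proof is correct and follows essentially the same approach as the paper: closedness via the Feller property and Portmanteau, tightness via \Cref{occ-is-pretty-tight} combined with Birkhoff's theorem and ergodic decomposition, and nonemptiness via \Cref{tightness} (equivalently \Cref{occ-is-pretty-tight}) plus \Cref{lim-is-inv}, with Prokhorov tying everything together. Your write-up is in fact somewhat more detailed than the paper's, and your closing remark about the alternative route through the invariance identity is a nice bonus.
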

\begin{proof}
First, note that $P_{inv}(\mcM)$ is closed in the set of all probability measures on $\mcM$ with the topology of weak convergence because $X_t$ is Feller.

    By \Cref{occ-is-pretty-tight}, there is a sequence of compact sets $\K_n$ such that for any $x \in \mcM$ $$\liminf_{t \to \infty} \mu_t^x(\K_n) \geq 1 - \frac{1}{n} \quad a.s.$$
    If $\mu = \lim_{t \to \infty} \mu_t^x$, then by Portmanteau theorem $\mu(\K_n) \geq \limsup_{t \to \infty} \mu_t^x(\K_n)$. Then we obtain tightness of $P_{inv}(\mcM)$ from \Cref{Birk} and ergodic decomposition, and compactness from Prokhorov theorem. Also, $P_{inv}(\mcM_0)$ is closed in $P_{inv}(\mcM)$ by Portmanteau theorem, because $\mcM_0$ is closed.
    
    The nonemptiness is a consequence of \Cref{tightness} and \Cref{lim-is-inv}.    \end{proof}

\subsection{Proof of \Cref{muH-converge}} Recall that our goal is to use \Cref{as-V}\ref{as-vanish} to show $\mu_nH \to \mu H$ if $t_n \to \infty$ and $\mu_n \coloneqq \mu_{t_n}^x \to \mu$.

\begin{proof}
For $M > 0$, let $H_M \coloneqq ((H \wedge M) \vee (-M))$. Note that $\mu_n H_M \to \mu H_M$ as $n \to \infty$ and $|H - H_M| \leq |H|\Id_{|H| > M}$.
Thus, to prove $\mu_nH \to \mu H$ it suffices to show that almost surely there is $T > 0$ such that \begin{equation}\label{eq:uniform-convergence}
    \lim_{M \to \infty} \sup_{t \geq T} \mu_t^x |H|\Id_{|H| > M} = 0 \,.
\end{equation}

Using the first condition in \Cref{rv-vanish} and \Cref{as-V}\ref{as-vanish}, there is is a finite positive measure $\mu$ on $[-r,0]$ and $f \in \X$ such that 
 $\forall \epsilon > 0, \exists M > 0$ such that almost surely $$|H(X_t^x)|\Id_{|H(X_t^x)|>M} \leq \epsilon \int_{-r}^0  f(X^x_{t+s})d \mu(s)$$ 
for all $t > r$. 

 Setting $C =  \mu([-r,0])$ we have by Fubini-Tonelli theorem and $f \geq 0$ that 
\begin{equation*}
        \int_r^t |H(X_s^x)|\Id_{|H(X_s^x)|>M}ds \leq \epsilon C \int_0^{t}  f(X_s^x)ds = \epsilon Ct \mu_{t}^x  f\,.
\end{equation*}

Let $C'(M) = \int_0^r |H(X_s)|\Id_{|H(X_s)|>M}ds$, so that for $t > r$ $$\mu_t^x |H|\Id_{|H| > M} = \frac{1}{t}C'(M) + \frac{1}{t}\int_r^t |H(X_s)|\Id_{|H(X_s)|>M}ds \leq \frac{C'(M)}{t} + \epsilon C \mu_{t}^x f \,.$$

Letting $K$ be as in \Cref{x-y-def}, almost surely there is  $T(\omega) =: T > r$, independent of $M$,  so that $$\frac{t+r}{t}\mu_{t+r}^x f \leq 2K$$ for all $t \geq T$. Putting these inequalities together yields 
\begin{equation} \label{eq:depse}
  \sup_{t \geq T} \mu_t^x |H|\Id_{|H| > M} \leq \frac{C'(M)}{T} + 2\epsilon KC \,.  
\end{equation}
By the continuity of $H$, $\int_0^r |H(X_s^x)|ds < \infty$, 
 and the fact that $X_t^x$ has cadlag paths, the dominated convergence theorem implies 
 $C'(M) \to 0$ as  $M \to \infty$) and \eqref{eq:uniform-convergence} follows.

By the same argument as above, $\mu_n |H| \to \mu |H|$, and thus $$\mu |H| \leq M + \liminf_{n \to \infty} \mu_n |H|\Id_{|H| > M} \leq M + \liminf_{n \to \infty} \frac{C'(M)}{t_n} + 2\epsilon KC \,.$$ The claim follows with $A = M + 2\epsilon KC$.
\end{proof}

\begin{cor}\label{H-in-L1}
For any $\mu \in P_{inv}(\mcM)$ we have $\mu |H| \leq A$, where $A$ is as in \Cref{muH-converge}.  In addition, $\mu \mapsto \mu H$ is a continuous 
function on $P_{inv}(\mcM)$ and thus $\sup_{\mu \in P_{inv}
(\mcM_0)}\mu H < 0$ is equivalent to $\mu H < 0$ for all ergodic 
$\mu \in P_{inv}(\mcM_0)$.
\end{cor}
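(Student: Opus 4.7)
The plan is to handle the three assertions in order, leveraging Birkhoff's ergodic theorem (\Cref{Birk}), ergodic decomposition, and the uniform integrability hidden in \Cref{as-V}\ref{as-vanish}.

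\emph{Step 1: The bound $\mu|H|\le A$.} First I would establish the bound for ergodic $\mu\in P_{inv}(\mcM)$. By \Cref{Birk}, $\mu_t^x\to\mu$ almost surely for $\mu$-a.e.\ $x$. For such $x$, \Cref{muH-converge} gives $\mu|H|\le A$ (applied along the full sequence $t\to\infty$). Every $\mu\in P_{inv}(\mcM)$ can be written as a barycenter $\mu=\int\nu\,d\pi(\nu)$ over ergodic invariant measures, so Tonelli yields $\mu|H|=\int\nu|H|\,d\pi(\nu)\le A$. In particular $H\in L^1(\mu)$ for every $\mu\in P_{inv}(\mcM)$, so $\mu H$ is well defined.

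\emph{Step 2: Continuity of $\mu\mapsto\mu H$.} The main obstacle is that $H$ is not bounded, so weak convergence does not immediately give $\mu_nH\to\mu H$. I will bypass this by a uniform tail estimate. Applying the reasoning of Step~1 to $f\in\X$ from \Cref{as-V}\ref{as-vanish} shows $\mu f\le K$ for every $\mu\in P_{inv}(\mcM)$. Now fix $\varepsilon>0$ and pick $M>0$ as in \Cref{as-V}\ref{as-vanish} so that, almost surely,
\[
|H(X_t^x)|\mathbbm{1}_{|H(X_t^x)|>M}\le\varepsilon\int_{-r}^{0}f(X_{t+s}^x)\,d\mu(s),\qquad t>r.
\]
Integrating this against any ergodic $\nu\in P_{inv}(\mcM)$ via \Cref{Birk} and Fubini--Tonelli gives
\[
\nu\bigl(|H|\mathbbm{1}_{|H|>M}\bigr)\le\varepsilon\,C\,\nu f\le\varepsilon C K,\qquad C\coloneqq\mu([-r,0]),
\]
and then ergodic decomposition extends the same bound to every $\nu\in P_{inv}(\mcM)$. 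Writing $H_M\coloneqq(H\wedge M)\vee(-M)\in C_b(\mcM)$, weak convergence $\mu_n\to\mu$ gives $\mu_nH_M\to\mu H_M$, while
\[
|\mu_nH-\mu_nH_M|\le\mu_n(|H|\mathbbm{1}_{|H|>M})\le\varepsilon CK,
\]
and the analogous bound for $\mu$. Sending $n\to\infty$ and then $\varepsilon\downarrow 0$ yields $\mu_nH\to\mu H$.

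\emph{Step 3: Equivalence with the ergodic condition.} Suppose $\nu H<0$ for every ergodic $\nu\in P_{inv}(\mcM_0)$. By Step~2 the map $\mu\mapsto\mu H$ is continuous, and by \Cref{pinv-compact} the set $P_{inv}(\mcM_0)$ is compact, so the supremum is attained at some $\mu_\star\in P_{inv}(\mcM_0)$. Writing $\mu_\star=\int\nu\,d\pi(\nu)$ via ergodic decomposition (with $\pi$-a.e.\ $\nu$ ergodic and supported in $\mcM_0$, since $\mcM_0$ is closed and invariant), one obtains $\mu_\star H=\int\nu H\,d\pi(\nu)<0$ because the integrand is strictly negative $\pi$-a.e. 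The converse direction is immediate.

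The genuinely delicate point is Step~2: the tail control on $|H|$ must be upgraded from the pathwise bound in \Cref{as-V}\ref{as-vanish} into a uniform bound over $P_{inv}(\mcM)$, which is where Birkhoff's theorem combined with $f\in\X$ does the essential work.
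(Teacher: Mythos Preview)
Your proposal is correct and follows essentially the same route as the paper. The paper invokes the inequality \eqref{eq:depse} from the proof of \Cref{muH-converge} together with \Cref{Birk} to obtain the uniform tail bound $\mu\bigl(|H|\Id_{|H|>M}\bigr)\le 2\varepsilon KC$ for all $\mu\in P_{inv}(\mcM)$, and then argues that $\mu\mapsto\mu H$ is a uniform limit of the continuous maps $\mu\mapsto\mu H_M$; your Step~2 re-derives precisely this bound from \Cref{as-V}\ref{as-vanish} and phrases the conclusion as a direct $\varepsilon$-argument rather than a uniform-limit argument, which is equivalent. For Step~3 the paper simply appeals to compactness (implicitly using that $\mu\mapsto\mu H$ is affine, so the supremum over the compact convex set $P_{inv}(\mcM_0)$ is attained at an extreme point), whereas you attain the supremum and then disintegrate the maximiser---both arguments are standard and yield the same conclusion.
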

\begin{proof}
   The claim $\mu |H| \leq A$ follows from \Cref{muH-converge} and \Cref{Birk}. From \eqref{eq:depse} and \Cref{Birk}, we obtain that for all $\epsilon > 0$ there is $M > 0$ such that $\mu |H|\Id_{|H| > M} \leq 2\epsilon KC$ for all $\mu \in P_{inv}(\mcM)$. Thus, $\mu \mapsto \mu H$ is a uniform limit of the continuous functions $\mu \mapsto \mu ((H\vee(-M)) \wedge M)$ and so it is continuous as well. The last claim follows from the compactness of $P_{inv}(\mcM)$ (see \Cref{pinv-compact}). \end{proof}
\subsection{Proof of \Cref{main}}
First, we verify that \cite[Proposition 4.6]{persistence} holds in our setting:
\begin{lem}\label{muH-neg}
     \begin{itemize}
         \item For all $\mu \in P_{inv}(\mcM)$, we have $\mu H \leq 0$. Also, $\mu H = 0$ if and only if  $\mu \in P_{inv}(\inv)$.
         \item Since $P_{inv}(\inv)$ is compact, for all $\epsilon > 0$ there is a compact set $\K \subset \inv$ such that $\mu(\K) \geq 1 - \epsilon$ for all $\mu \in P_{inv}(\inv)$.
     \end{itemize}
\end{lem}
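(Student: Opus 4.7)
My plan is to prove part (i) first, then deduce compactness of $P_{inv}(\inv)$ and derive part (ii) via a Prokhorov-style tightness argument refined to fit inside $\inv$.

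For part (i), I will reduce to ergodic invariant measures via the ergodic decomposition. Since both $\mcM_0$ and $\inv$ are $X_t$-invariant (\Cref{as1}), the indicator $\Id_{\mcM_0}$ is a $\Pp_t$-invariant bounded measurable function, so every ergodic $\mu \in P_{inv}(\mcM)$ satisfies $\mu(\mcM_0) \in \{0,1\}$. If $\mu(\mcM_0) = 1$, then \Cref{as-V}\ref{as-alpha} gives $\mu H \leq \Lambda < 0$ directly. For ergodic $\mu \in P_{inv}(\inv)$ the task is to show $\mu H = 0$. Birkhoff's theorem yields $\mu_t^x \to \mu$ weakly, a.s., for $\mu$-a.e.\ $x \in \inv$; combined with \Cref{muH-converge} (so $\mu_t^x H \to \mu H$) and \Cref{strong-law} (so $M_t^V(x)/t \to 0$), dividing $V(X_t^x) = V(x) + t\mu_t^x H + M_t^V(x)$ by $t$ gives $V(X_t^x)/t \to \mu H$ a.s., and since $V \geq 0$ this forces $\mu H \geq 0$. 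The matching upper bound is the key step: under the stationary law $\Prb_\mu$, $X_t \sim \mu$ for every $t$, so $\Prb_\mu(V(X_t)/t > c) = \mu(V > ct) \to 0$ for each $c > 0$ (because $V < \infty$ on $\inv$). Hence $V(X_t)/t \to 0$ in probability under $\Prb_\mu$, which must agree with the a.s.\ limit $\mu H$, forcing $\mu H = 0$. Ergodic decomposition then yields $\mu H \leq 0$ in general with equality iff every ergodic component lives on $\inv$, equivalently $\mu \in P_{inv}(\inv)$.

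For part (ii), part (i) identifies $P_{inv}(\inv) = \{\mu \in P_{inv}(\mcM) : \mu H = 0\}$, which is closed in $P_{inv}(\mcM)$ by continuity of $\mu \mapsto \mu H$ (\Cref{H-in-L1}) and hence compact by \Cref{pinv-compact}. Prokhorov's theorem gives a compact $K \subset \mcM$ with $\mu(K) \geq 1 - \epsilon/2$ uniformly on $P_{inv}(\inv)$. The subtlety, and the main obstacle since $\mcM$ need not be locally compact, is that $K$ may intersect $\mcM_0$ and cannot simply be shrunk. I handle this by exhausting the open subset $K \cap \inv$ of the compact metric space $K$ by compacts $L_n \nearrow K \cap \inv$ (every open subset of a compact metric space is $\sigma$-compact).

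It remains to show that some single $L_n$ carries mass $\geq 1-\epsilon$ uniformly. I argue by contradiction: otherwise there exist $\mu_n \in P_{inv}(\inv)$ with $\mu_n(L_n) < 1-\epsilon$. Compactness of $P_{inv}(\inv)$ extracts a subsequential limit $\mu \in P_{inv}(\inv)$; for each fixed $m$, the bound $\mu_n(L_m) \leq \mu_n(L_n) < 1-\epsilon$ for $n \geq m$ combined with the Portmanteau upper semi-continuity applied to the closed set $L_m$ gives $\mu(L_m) \leq 1-\epsilon$. Letting $m \to \infty$ produces $\mu(K \cap \inv) \leq 1-\epsilon$, contradicting $\mu(K \cap \inv) = \mu(K) \geq 1 - \epsilon/2$ (which uses $\mu(\mcM_0) = 0$). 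Taking $\K = L_n$ for any sufficient $n$ completes the proof.
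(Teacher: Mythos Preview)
Your proof is correct in both parts. Part (i) follows essentially the paper's route---reduce to ergodic $\mu$, use \Cref{strong-law}, \Cref{Birk}, and \Cref{muH-converge} to get $V(X_t^x)/t \to \mu H$ a.s.\ for $\mu$-a.e.\ $x$, then argue the limit is $0$---with one stylistic difference: the paper observes that $X_t$ returns to a sublevel set $\{V \leq M\}$ infinitely often (Birkhoff recurrence), forcing $\liminf V(X_t^x)/t = 0$, whereas you use stationarity under $\Prb_\mu$ to get $V(X_t)/t \to 0$ in probability. Both arguments exploit the same underlying fact that $V < \infty$ $\mu$-a.e.

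Part (ii) is a genuinely different argument. The paper establishes compactness of $P_{inv}(\inv)$ in $P(\mcM)$ exactly as you do, then observes that on measures supported in $\inv$ the weak topology of $P(\inv)$ coincides with the subspace topology from $P(\mcM)$ (because $\inv$ is open), and applies Prokhorov's theorem directly in $P(\inv)$ (legitimate since open subsets of Polish spaces are Polish). Your approach instead applies Prokhorov in $P(\mcM)$, exhausts $K \cap \inv$ by compacts $L_n$, and runs a subsequential compactness plus Portmanteau contradiction to select one $L_n$ that works uniformly. The paper's route is shorter once one remembers the topology-compatibility fact; your route is more self-contained and does not rely on $\inv$ being Polish, which is a mild conceptual advantage.
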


\begin{proof}
    As in the proof of \cite[Proposition 4.6i]{persistence}, we note that if $\mu \in P_{inv}(\inv)$ is ergodic then, by \Cref{strong-law} and \Cref{Birk}, for $\mu$-a.e. $x$ almost surely 
    \begin{equation}\label{eq:ervt}
        \lim_{t \to \infty} \frac{V(X_t^x)}{t} = \lim_{t \to \infty} \mu_t^x H = \mu H \,.
    \end{equation}
    Since $\inv = \bigcup_{M > 0} \{V \leq M\}$, there is some $M > 0$ for which $\mu(\{V \leq M\}) \geq \frac{1}{2}$.

Since $\mu$ is ergodic,     
 by \Cref{Birk},  $X_t$  enters $\{V \leq M\}$ infinitely often, and therefore $\lim_{t \to \infty} V(X_t^x)/t = 0$ for $\mu$ almost every $x$. Then from \eqref{eq:ervt} follows that 
 $\mu H = 0$ for each ergodic $\mu \in P_{inv}(\inv)$. The rest of the proof follows as in \cite[Proposition 4.6i]{persistence} (express each invariant measure as a convex combination of ergodic invariant measures $\mu$ supported on $\inv$ and $\mcM_0$ and notice that by \Cref{as-V}\ref{as-alpha} the ones supported on $\mcM_0$ satisfy $\mu H \leq \Lambda < 0$).

 To show that $P_{inv}(\inv)$ is compact, we use the first part of the lemma which implies $P_{inv}(\inv) = \{\mu \in P_{inv}(\mcM) \mid \mu H = 0\}$. Since $P_{inv}(\mcM)$ is compact (\Cref{pinv-compact}) and $\mu \mapsto \mu H$ is a continuous function on $P_{inv}(\mcM)$ (\Cref{H-in-L1}), then $P_{inv}(\inv)$ is a closed subset of a compact set, and thus it is compact. The tightness of $P_{inv}(\inv)$ in $P(\inv)$ follows from Prokhorov's theorem. 
 
 Note that by Portmanteau theorem the subspace topology of $P(\inv)$ inherited from $P(\mcM)$ is the same as the topology of weak convergence in $P(\inv)$. Indeed, $\mu_n \to \mu$ in $P(\inv)$ if and only if $\liminf_{n \to \infty} \mu_n(\U) \geq \mu(\U)$ for all sets $\U$ which are open in $\inv$. In addition, $\U$ being open in $\inv$ is the same as $\U$ being open in $\mcM$ because $\inv$ is open in $\mcM$. Thus, if $\mu_n, \mu \in P(\inv)$ and $\mu_n \to \mu$ in $P(\mcM)$ then $\mu_n \to \mu$ in $P(\inv)$. Conversely, if $\mu_n \to \mu$ in $P(\inv)$ then for all open $\U \subset \mcM$ we have $\liminf_{n \to \infty} \mu_n(\U) = \liminf_{n \to \infty} \mu_n(\U \cap \inv) \geq \mu(\U \cap \inv) = \mu(\U)$.

\end{proof}

\begin{rem}
We could not directly apply the proof given in \cite[Proposition 4.6ii]{persistence} because it uses a sequence of compact sets $K_m \subset \inv$ such that $M_0 = \cap \overline{K_m^c}$, which is equivalent to $\inv = \cup K_m^\circ$. Without local compactness, we cannot construct such a sequence. For the same reason, we must give a slightly different proof of \cite[Theorem 4.4ii]{persistence} below.
\end{rem}

Next, we prove \Cref{main}:
\begin{proof}
    The proof of (i) is exactly the same as the proof of \cite[Theorem 4.4i]{persistence}, and (iii) follows from (ii) and \Cref{lim-is-inv}.

    To prove (ii), we define open neighborhoods $\mcM_\delta \coloneqq \{x \in \mcM \mid d(x,\mcM_0) < \delta\}$. We claim  that for all $\epsilon > 0$ there exists $\delta_\epsilon > 0$ such that  for all $x \in \inv$
    $$\Prb(\limsup_{t \to \infty} \mu_t^x(\mcM_{\delta_\epsilon}) \leq \epsilon) = 1 \,.$$
For a contradiction, suppose that there is some $\epsilon > 0$ and $x_n \in \inv$ such that for each $n \geq 1$
$$\Prb(\limsup_{t \to \infty} \mu_t^{x_n}(\overline{\mcM_{1/n}}) > \epsilon) \geq \Prb(\limsup_{t \to \infty} \mu_t^{x_n}(\mcM_{1/n}) > \epsilon) > 0 \,.$$
By \Cref{tightness} and  \Cref{main} part (i), almost surely for each $n$ there is $(t_m^n)_{m=1}^\infty$ and $\mu_n \in P_{inv}(\inv)$ with $t^n_m \to \infty$ and $\mu_{t_m^n}^{x_n} \to \mu_n$ as $m \to \infty$, and 
$$
\lim_{m \to \infty} \mu_{t_m^n}^{x_n}(\overline{\mcM_{1/n}}) =\limsup_{t \to \infty} \mu_t^{x_n}(\overline{\mcM_{1/n}}) \,.
$$ 
It follows from Portmanteau theorem that 
$\mu_n(\overline{\mcM_{1/n}}) \geq \epsilon$ on the positive probability event $\{\limsup_{t \to \infty} \mu_t^{x_n}(\overline{\mcM_{1/n}}) > \epsilon\}$. Thus, we conclude that there exist $\mu_n \in P_{inv}(\inv)$ with $\mu_n(\overline{\mcM_{1/n}}) \geq \epsilon$. By compactness of $P_{inv}(\inv)$ (\Cref{muH-neg}), there is a convergent subsequence $\mu_{n_k} \to \mu \in P_{inv}(\inv)$. Since $\overline{\mcM_{1/n}}$ are nested, by Portmanteau theorem we have $\mu(\overline{\mcM_{1/n}}) \geq \epsilon$ for all $n$, and thus $\mu(\mcM_0) \geq \epsilon$, contradicting $\mu(\inv) = 1$.

Finally, we use \Cref{occ-is-pretty-tight}
 to conclude that for all $\epsilon > 0$ there exists $\K \subset \mcM$ compact such that  for all $x \in \mcM$
    $$\Prb(\liminf_{t \to \infty} \mu_t^x(\K) \geq 1 - \epsilon) = 1 \,.$$
Thus, the second claim of \Cref{main} follows by setting $\K_{2\epsilon} = \K \cap \mcM_{\delta_\epsilon}^c$.
 \end{proof}

 \begin{rem}\label{K-contained-in-strictly-positive}
     Suppose there is an open set $\U \subset \inv$ for which $\mu(\U) = 1$ for all $\mu \in P_{inv}(\inv)$. Then repeating the argument above with $\mcM_0$ replaced with $\U^c$ shows that the compact sets $\K_\epsilon$ in \Cref{main} may be assumed to be subsets of $\U$, because for all $\epsilon > 0$ there exists a $\delta_\epsilon > 0$ such that for all $x \in \inv$ $$\Prb(\limsup_{t \to \infty} \mu_t^x(\{d(\cdot, \U^c) < \delta_\epsilon\}) \leq \epsilon) = 1 \,.$$
 \end{rem}

\section{Application 1: Stochastic Functional Kolmogorov Equations}\label{sec:funct}

The goal of this section is to show that \Cref{as1}--\ref{as-compact}  are satisfied for stochastic functional Kolmogorov equations considered in \cite{functional}. We assume that our Markov process $(\Phi)_{t\geq 0}$ evolves in $\mcM \coloneqq C([-r,0], [0,\infty)^n)$, where $r, n > 0$ are fixed and $C(X,Y)$ denotes the space of continuous functions from $X$ to $Y$ equipped with the supremum norm:
\begin{equation}
    \|\phi\| \coloneqq \sup_{s \in [-r,0]} |\phi(s)| \qquad 
    \textrm{for any} \, \phi \in \mcM \,,
\end{equation}
where $|\cdot|$ denotes the standard Euclidean norm on $\R^n$.

To differentiate between functions in $\mcM$ and points in $[0,\infty)^n$, we use $x$ to denote $\phi(0)$ for $\phi \in \mcM$
and in general $X^\phi(t) \coloneqq \Phi_t^\phi(0)$ is 
the state of the process at time $t \geq -r$. Note that 
the process $\Phi_t^\phi$ on $\mcM$ contains the information 
on $X^\phi(x)$ on the past interval $s \in [t-r, r]$. Consequently, $\Phi_t^\phi(s) = X^\phi(t+s)$ for any $t \geq 0$ and $s \in [-r, 0]$. Also, denote $\pi_i: [0,\infty)^n \to [0,\infty)$ the projection on the $i$th coordinate and define $X_i^\phi(t) \coloneqq \pi_i(\Phi_t^\phi(0))$.
 We assume that the process $\Phi_t^\phi$ satisfies the following system of stochastic functional differential equations with initial condition 
 $\Phi_0 = \phi$: 
 \begin{equation}\label{eq:func-eq}
    dX_i(t) = X_i(t)f_i(\Phi_t)dt + X_i(t)g_i(\Phi_t)dE_i(t) \qquad 
    i = 1, \cdots, n \,,
\end{equation} 
where $f_i,g_i: \mcM \to \R$ are continuous functions which are Lipschitz on the bounded sets $\{\|\cdot\| \leq M\}$ for all $M > 0$ and $E_i$ is a of independent Brownian motions with covariance $\E[E_i(1)E_j(1)] = \sigma_{ij}$. The process $X_i(t)$ can be thought of as the population of some species $i$ at time $t$, and its evolution depends on the history of the ecosystem starting from time $t-r$. 

We consider that extinction occurred if 
 at least one species vanish, that is, if the process reaches the extinction set:
$$\mcM_0 \coloneqq \{\phi \in \mcM \mid \phi(0)_i = 0 \text{ for some } i \leq n\} \,.$$ 
The coexistence set is then
$$\inv \coloneqq \mcM_0^c = \{\phi \in \mcM \mid \phi(0)_i > 0 \text{ for all } i \leq n\} \,.
$$
The significance of \eqref{eq:func-eq} and the applicability of 
\Cref{fass1}--\ref{fass3}, formulated below, are discussed thoroughly in \cite{functional} in the context of Lotka-Volterra competitive or predator-prey models, stochastic replicator equations in evolutionary game theory, or SIR epidemic models, all with a delay.

In order to verify our general assumptions \Cref{as-W}, \Cref{as-U}, and \Cref{as-compact}, we use the following condition formulated in \cite[Assumption 2.1 (3)]{functional}.

\begin{fass}\label{fass1}
There exist $c_i, \gamma_b, \gamma_0, A_0, M > 0$, $A_1 > A_2 > 0$, a continuous function $h: \R^n \to [1,\infty)$, and a probability measure $\mu$ on $[-r,0]$ such that for any $\phi \in \mcM$, \begin{equation*}
    F(\phi) - \frac{1}{2}G(\phi) + \gamma_b J(\phi) \leq A_0\Id_{\{|x| < M\}} - \gamma_0 - A_1h(x) + A_2 \int_{-r}^0 h(\phi(s))d\mu(s) \,,
\end{equation*}
where $x \coloneqq \phi(0)$ and
\begin{align*}
    F(\phi) &\coloneqq \frac{\sum_{i=1}^nc_ix_if_i(\phi)}{1 + \sum_{i=1}^nc_ix_i} \\
    G(\phi) &\coloneqq \frac{\sum_{i,j = 1}^n \sigma_{ij}c_ic_jx_ix_jg_i(\phi)g_j(\phi)}{(1 + \sum_{i=1}^nc_ix_i)^2} \\
    J(\phi) &\coloneqq \sum_{i=1}^n |f_i(\phi)| + g_i^2(\phi) \,.
\end{align*}
\end{fass}

Additionally, the following assumption originating in \cite[Assumption 2.2]{functional} helps us verify \Cref{as-V} \ref{as-vanish}. 

\begin{fass}\label{fass2}
    One of the following holds for $J$ as in \Cref{fass1}:
    \begin{enumerate}[label=(\roman*)]
        \item If $h$ is as in \Cref{fass1}, then for any $\phi \in \mcM$ we have 
        \begin{equation}\label{eq:fass-1}
            J(\phi) \lesssim h(x) + \int_{-r}^0 h(\phi(s))d\mu(s)\,.
        \end{equation}
        \item There exists a function $h_1: \R^n \to [1,\infty)$ and a probability measure $\mu_1$ on $[-r,0]$ such that 
        for any $\phi \in \mcM$
        \begin{equation}\label{eq:fass-2}
            h_1(x) \lesssim J(\phi) \lesssim h_1(x) + \int_{-r}^0 h_1(\phi(s))d\mu_1(s)\,.
        \end{equation}
    \end{enumerate}
    Here, $x := \phi(0)$ and $\lesssim$ is as in \Cref{lesssim}.
    
\end{fass}

Finally, to verify \Cref{as-V}\ref{as-alpha} we need the following condition from \cite[Assumption 2.3]{functional}.
\begin{fass}\label{fass3}
    For any $\mu \in P_{inv}(\mcM_0)$,
    $$\max_{i \leq n} \lambda_i(\mu) > 0 \,,$$ where
    $$\lambda_i(\mu) \coloneqq \mu\Big(f_i - \frac{\sigma_{ii}g_i^2}{2}\Big) \,.$$
\end{fass}

It follows from standard theory (see the arguments in \cite{functional}) that, under our (local) Lipschitz assumptions on $f_i, g_i$ stated above and \Cref{as-W}, the problem \eqref{eq:func-eq} has a unique global solution $\Phi_t^\phi$ for each initial condition $\phi \in \mcM$, the Markov process $\Phi_t^\phi$ is ($C_b$-)Feller, and $\mcM_0, \inv$ are invariant sets. In particular, \Cref{as1}--\ref{as2} hold.

We aim to use \Cref{main} to prove the following theorem.

\begin{thm}\label{functional-thm}
    Under \Cref{fass1}--\ref{fass3}, $(X_t)_{t\geq 0}$ is stochastically persistent in the sense that for all $\delta > 0$ there are $0 < b < B < \infty$ such that for any nonnegative initial condition $\phi \in \inv$, 
    $$
    \liminf_{T \to \infty} \frac{1}{T}m(\{t \leq T \mid b \leq \min_{i \leq n} (X_i)_t \leq \max_{i \leq n} (X_i)_t \leq B\}) \geq 1 - \delta
    $$ 
    almost surely, where $m=m_T$ denotes the Lebesgue measure on $[0, T]$.
\end{thm}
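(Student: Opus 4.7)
The plan is to apply \Cref{main} to the Markov process $\Phi_t^\phi$ on $\mcM = C([-r,0],[0,\infty)^n)$ with the coexistence set $\inv = \mcM_0^c$, by verifying \Cref{as1}--\ref{as-compact} under \Cref{fass1}--\ref{fass3}. \Cref{as1} (invariance of $\mcM_0$ and $\inv$) and \Cref{as2} (Feller) are given by the standard well-posedness theory for \eqref{eq:func-eq} recalled in the excerpt. The work is to construct $W,W',U,U',V$ adapted to the delayed structure of \eqref{eq:func-eq} and to handle the delay-specific compactness condition.

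For \Cref{as-W} I would take
\begin{equation*}
W(\phi) = 1 + \log\!\Bigl(1 + \sum_{i=1}^n c_i \phi_i(0)\Bigr) + A_2 \int_{-r}^0 \mu([-r,s])\, h(\phi(s))\, ds,
\end{equation*}
so that Itô's formula applied to the pointwise part produces $F(\phi) - \tfrac12 G(\phi)$, while the history integral produces, through the shift action on path space, the contribution $h(\phi(0)) - \int_{-r}^0 h(\phi(s))\, d\mu(s)$. Substituting into \Cref{fass1} cancels the $A_2\!\int h\, d\mu$ term and yields $\Ll W \le K - W'$ with $W'(\phi) = 1 + \gamma_0 + \gamma_b J(\phi) + (A_1 - A_2) h(\phi(0))$. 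The relation $W \ll W'$ is the delicate point, because $W$ contains a path integral not controlled pointwise by the endpoint terms of $W'$; this is resolved by invoking \Cref{fass2}, which bounds the history integral in $W$ by $J(\phi)$ (case (i)) or by $h_1(\phi(0)) + \int h_1 d\mu_1$ (case (ii)), so that after enlarging $W'$ by a harmless additional multiple of these quantities the bound $W \ll W'$ holds. The Carré du champ estimates $\Gamma W, \Gamma V \lesssim J(\phi)$ are then placed in $\Y$ by \Cref{fn-in-X} and \Cref{fass2}, and \Cref{as-U} is obtained by the analogous recipe applied to $W^2$.

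For \Cref{as-V} I would use the standard candidate
\begin{equation*}
V(\phi) = -\sum_{i=1}^n p_i \log \phi_i(0), \qquad H(\phi) := \Ll V(\phi) = -\sum_{i=1}^n p_i\!\left(f_i(\phi) - \tfrac{\sigma_{ii}}{2} g_i^2(\phi)\right),
\end{equation*}
which extends continuously to all of $\mcM$ since it contains no logarithm, and which for every $\mu \in P_{inv}(\mcM_0)$ satisfies $\mu H = -\sum_i p_i \lambda_i(\mu)$. Choosing the weights $p_i > 0$ is the infinite-dimensional analogue of the Schreiber/Benaïm invasion-rate criterion: by \Cref{fass3}, no $\mu \in P_{inv}(\mcM_0)$ has all $\lambda_i(\mu) \le 0$, and the compactness of $P_{inv}(\mcM_0)$ (\Cref{pinv-compact}) together with the continuity of $\mu \mapsto \mu H$ (\Cref{H-in-L1}) allows a Hahn--Banach / minimax separation of the convex hull $\{\lambda(\mu) : \mu \in P_{inv}(\mcM_0)\}$ from the nonpositive orthant in $\R^n$, producing $p_i > 0$ with $\inf_\mu \sum_i p_i \lambda_i(\mu) =: -\Lambda > 0$. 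The vanishing hypothesis \Cref{as-V}\ref{as-vanish} reduces to $|H(\phi)| \lesssim J(\phi)$ combined with \Cref{fass2}; the required $f \in \X$ is then constructed from $f_0 = (W')^{1/2}$ by finitely many applications of \Cref{fn-in-X}.

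The main obstacle is \Cref{as-compact}, since sublevel sets of $W$ and $W'$ only constrain the pointwise value $\phi(0)$, while compact sets in $C([-r,0],[0,\infty)^n)$ require equicontinuity. Following the strategy advertised in the introduction and in \cite{functional}, on the event $\{\sup_{t \le T} W(\Phi_t) + \int_0^T f(\Phi_t)\, dt \le M\}$ I would use Itô's formula applied to \eqref{eq:func-eq} and the Burkholder--Davis--Gundy inequality, together with the bounds $|X_i f_i|, |X_i g_i|^2 \lesssim (1+|X|) J$, to obtain moment estimates of the form $\E |X(t) - X(s)|^{2p} \lesssim C_M |t - s|^p$; a conditional Kolmogorov continuity criterion then supplies an Arzelà--Ascoli compact set $\K \subset \mcM$ satisfying \eqref{eq:as-compact}. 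Once \Cref{as1}--\ref{as-compact} are in place, \Cref{main}(ii) yields, for every $\delta > 0$, a compact $\K_\delta \subset \inv$ on which the empirical measures concentrate with $\liminf \ge 1 - \delta$ almost surely. Since $\K_\delta$ is compact inside $\inv$, the continuous and strictly positive maps $\phi \mapsto \min_i \phi_i(0)$ and $\phi \mapsto \max_i \phi_i(0)$ attain positive lower and finite upper bounds $b, B$ on $\K_\delta$, and the pointwise implication $\Phi_t \in \K_\delta \Rightarrow X(t) \in [b, B]^n$ converts \Cref{main}(ii) into the statement of \Cref{functional-thm}.
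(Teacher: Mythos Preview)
Your overall architecture matches the paper's, but the construction of $W$ has a real gap. With your history term $A_2\int_{-r}^0 \mu([-r,s])\,h(\phi(s))\,ds$, the shift derivative indeed produces $A_2 h(\phi(0)) - A_2\int h(\phi(s))\,d\mu(s)$, and after substituting \Cref{fass1} you get $\Ll W \le K - W'$ with $W'$ built only from $\gamma_0$, $\gamma_b J(\phi)$, and $(A_1-A_2)h(\phi(0))$. The requirement $W\ll W'$ then fails: take $\phi$ with $\phi(0)$ in a fixed compact set (so $h(\phi(0))$ and, via local Lipschitzness of $f_i,g_i$, also $J(\phi)$ stay bounded) but with $h(\phi(s))$ arbitrarily large for $s<0$; then $W'(\phi)$ is bounded while $W(\phi)\to\infty$. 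Your proposed rescue through \Cref{fass2} reads the inequalities in the wrong direction: case (i) bounds $J$ \emph{by} $h(x)+\int h\,d\mu$, and case (ii) bounds $J$ by $h_1(x)+\int h_1\,d\mu_1$; neither gives an upper bound on the history integral in terms of endpoint quantities. In the paper this is handled by inserting an exponential weight $e^{\gamma(u-s)}$ in the double history integral of $\Psi$ (see \Cref{func-WU}); the extra $-\gamma$ produced by $\partial_t$ on the exponential puts a copy of the history integral itself into $W'$, and then taking $W=e^{p\Psi}$ makes $W\le (p\gamma_0)^{-1}W'$ immediate. The paper uses \Cref{fass2} only for \Cref{as-V}\ref{as-vanish} (see \Cref{func-vanish}), not for $W\ll W'$.

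A smaller but genuine issue: your $V(\phi)=-\sum_i p_i\log\phi_i(0)$ is not nonnegative on $\inv$ (it tends to $-\infty$ as any $\phi_i(0)\to\infty$), so it does not satisfy \Cref{as-V} as stated. The paper truncates, taking $V_i(x)=-\ln x$ for $x<1$, $V_i(x)=0$ for $x\ge 2$, smoothly interpolated; this keeps $V\ge 0$ while leaving $\Ll V$ equal to $-f_i+\tfrac12\sigma_{ii}g_i^2$ on $\{x_i=0\}$, which is all that is needed for the computation of $\mu H$ on $P_{inv}(\mcM_0)$. Your Hahn--Banach choice of weights, the Kolmogorov/Arzel\`a--Ascoli verification of \Cref{as-compact}, and the final passage from \Cref{main}(ii) to the statement of \Cref{functional-thm} via continuity of $\phi\mapsto\min_i\phi_i(0)$ and $\phi\mapsto\max_i\phi_i(0)$ on compact subsets of $\inv$ are all correct and match the paper.
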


In other words, for most of the time (a proportion of $1 - \delta$) the ecosystem supports at least a small portion ($b > 0$) of every species.

Following \cite{functional}, 
for any $\varphi \in \mcM$ and $s \geq 0$ we define $\varphi \in \mcM$ as
$\varphi_s(t) = \varphi((s+t)\wedge 0)$ for any $t \in [-r, 0]$. 
Then
for any (sufficiently smooth) function $V : \mcM \to \R$ and $\phi \in \mcM$ we define 
\begin{equation}
    \partial_t V (\varphi) = \lim_{s \to 0} \frac{V(\varphi_s) - V(\varphi)}{s}
\end{equation}
and for any $i = 1, \cdots, n$, we define
$\partial_i V (\varphi)(t) = 0$  for $t \neq 0$ and 
\begin{equation}
    \partial_i V (\varphi)(0) = \lim_{s \to 0} \frac{V(\varphi(0) + se_i) - V(\varphi(x))}{s} \,,
\end{equation}
where $(e_j)_{j = 1}^n$ is the standard basis of $\R^n$. Similarly, we define higher order derivatives. 
The It\^{o} formula for stochastic functional differential equations \eqref{eq:func-eq}, see \cite[(3.3)]{functional},  has the form
\begin{equation}
    dV(X_t) = \Ll V(X_t) dt + \sum_{i = 1}^n X_i(t)g_i(X) \partial_iV(X_t) dE_i(t) \,,
\end{equation}
where $\Ll$ is \eqref{eq:func-eq} as in \cite[(3.2)]{functional}: 
\begin{equation}\label{eq:lske}
    \Ll V(\varphi) = \partial_tV(\varphi) + 
    \sum_{i = 1}^n \varphi_i(0) f_i(\varphi) \partial_iV(\varphi) + 
    \frac{1}{2} \sum_{i, j = 1}^n \varphi_i(0)\varphi_j(0)
    \sigma_{ij} g_i(\varphi)g_j(\varphi) \partial_{ij}V(\varphi) \,.
\end{equation}
Then  carre du champ operator $\Gamma$ is 
$$
\Gamma V(\phi) \coloneqq \Ll (V^2)(\phi) - 2V(\phi)\Ll V(\phi) = \sum_{i,j = 1}^n x_ix_j\sigma_{ij}g_i(\phi)g_j(\phi)\partial_i V(\phi)\partial_j V(\phi) \,.
$$ 
It follows from the functional It\^{o} formula and a similar argument to \cite[Lemma 4.1, Corollary 4.2]{extinction} that all of the functions defined below belong to the appropriate domains (\Cref{D+} or \Cref{D2}).

\begin{lem}\label{func-WU}
    Assume \Cref{fass1}, fix $\gamma = \gamma_b/2$ and define
    \begin{equation}\label{Psi-functional}
        \Psi(\phi) \coloneqq \ln\Big({1 + \sum_{i=1}^n c_ix_i}\Big) + A_2\int_{-r}^0\int_s^0 e^{\gamma (u-s)} h(\phi(u))dud\mu(s) \,,
    \end{equation}
    where $x_i = \phi_i(0)$ and $h, \mu$ are as in \Cref{fass1}. 
Then there are $p > 0, K > 0$ so that $W \coloneqq e^{p\Psi}$ and $U \coloneqq e^{2p\Psi}$ satisfy: \begin{enumerate}[label=(\roman*)]
        \item \label{func-lw} $\Ll W \leq K - W'$,
        \item \label{func-lu} $\Ll U \leq K - U'$,
        \item \label{func-gamma} $J + \Gamma W \lesssim U'$, where $J$ is as in \Cref{fass1},
    \end{enumerate}
    where
\begin{equation}
    \begin{aligned}
        W'(\phi) &\coloneqq pW(\phi)\Big(\gamma_0 + Ah(x) + \frac{A_2\gamma}{2}\int_{-r}^0\int_s^0 e^{\gamma (u-s)} h(\phi(s))dud\mu(s) + \frac{\gamma_b}{2}J(\phi)\Big) \\
        U'(\phi) &\coloneqq 2pU(\phi)\Big(\gamma_0 + Ah(x) + \frac{A_2\gamma}{2}\int_{-r}^0\int_s^0 e^{\gamma (u-s)} h(\phi(s))dud\mu(s) + \frac{\gamma_b}{2}J(\phi)\Big)
    \end{aligned}
\end{equation}
and $A = A_1 - A_2 \int e^{-\gamma s}d\mu(s) > 0$ (see \Cref{fass1}).
\end{lem}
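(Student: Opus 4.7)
The plan is to apply the functional Itô formula to $\Psi$ and then exponentiate. Split $\Psi = R + Q$ where $R(\phi) = \ln(1 + \sum c_i x_i)$ depends only on $\phi(0) = x$, and $Q(\phi) = A_2 \int_{-r}^0 \int_s^0 e^{\gamma(u-s)}h(\phi(u))\,du\,d\mu(s)$ is the Razumikhin-type delay term. Using \eqref{eq:lske} and standard Itô on $R$ gives $\Ll R = F - G/2$, while for $Q$ the pointwise partials $\partial_i Q$ and $\partial_{ij}Q$ vanish (changing $\phi(0)$ affects the $u$-integrand only on a Lebesgue-null set), so $\Ll Q = \partial_t Q$.

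Computing $\partial_t Q$ is the main technical step. For each $s < 0$ and small $\sigma > 0$, split $\int_s^0 e^{\gamma(u-s)}h(\phi_\sigma(u))\,du$ at $u = -\sigma$: on $[s,-\sigma]$ the integrand equals $h(\phi(u+\sigma))$ (substitute $w = u+\sigma$), while on $[-\sigma, 0]$ it is frozen at $h(x)$. Differentiating both pieces at $\sigma = 0$ and integrating against $d\mu(s)$ yields
\begin{equation*}
\partial_t Q(\phi) = A_2 h(x)\int_{-r}^0 e^{-\gamma s}d\mu(s) - A_2 \int_{-r}^0 h(\phi(s))d\mu(s) - \gamma Q(\phi) \,.
\end{equation*}
Combining with $\Ll R$ and invoking \Cref{fass1}, the two $A_2 \int h\,d\mu$ terms cancel exactly, while the coefficient of $h(x)$ becomes $-A_1 + (A_1 - A) = -A$, giving
\begin{equation*}
\Ll \Psi(\phi) \leq A_0 \Id_{\{|x|<M\}} - \gamma_0 - A h(x) - \gamma_b J(\phi) - \gamma Q(\phi) \,.
\end{equation*}
This cancellation is precisely why $\Psi$ carries the weight $e^{\gamma(u-s)}$, and is the only subtle point in the argument.

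For $W = e^{p\Psi}$, the exponential chain rule gives $\Ll W = pW\,\Ll\Psi + (p^2/2)\,W\,\Gamma\Psi$. Since $\partial_i Q = 0$ we have $\Gamma \Psi = G$, and Cauchy--Schwarz together with $(1 + \sum c_k x_k)^2 \geq \sum c_i^2 x_i^2$ gives $G \leq CJ$ for $C := \|\sigma\|_{\mathrm{op}}$. Choose $p > 0$ with $2pC \leq \gamma_b$; then absorbing $(p^2C/2)WJ$ into $-p\gamma_b WJ$ leaves
\begin{equation*}
\Ll W + W' \leq pW\bigl[A_0 \Id_{\{|x|<M\}} - (\gamma/2) Q\bigr] \,.
\end{equation*}
Off $\{|x|<M\}$ this is non-positive; on $\{|x|<M\}$, $e^{pR} \leq (1 + (\max_i c_i)M)^p =: C_M$, so the bound equals $pC_M e^{pQ}(A_0 - \gamma Q/2)$, a continuous function of $Q \in [0,\infty)$ tending to $-\infty$, hence uniformly bounded above by some $K$. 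This proves \ref{func-lw}; item \ref{func-lu} is identical with $p$ replaced by $2p$, using our initial choice $p < \gamma_b/(2C)$. For \ref{func-gamma}, $\Gamma W = p^2 W^2 G \leq p^2 C U J$, and since $\Psi \geq 0$ implies $U \geq 1$, we have $U' \geq 2pU\cdot(\gamma_b/2)J = p\gamma_b UJ \geq p\gamma_b J$, so both $J$ and $\Gamma W$ are bounded by a constant multiple of $U'$.
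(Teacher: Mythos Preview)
Your proof is correct and follows essentially the same route as the paper's: compute $\Ll\Psi$ via the functional It\^o formula (the paper states the formula for $\Ll\Psi$ directly, citing \cite{functional}, while you spell out the $\partial_t Q$ computation), invoke \Cref{fass1} to get the key cancellation, exponentiate, choose $p$ small enough to absorb the $\Gamma\Psi$ term into $-\gamma_b J$, and bound the residual by a constant. One cosmetic slip: on $\{|x|<M\}$ you bound $\sum c_ix_i$ by $(\max_i c_i)M$, but Cauchy--Schwarz only gives $|c|M$ (or $(\max_i c_i)\sqrt{n}M$); this is immaterial since any finite bound on $e^{pR}$ suffices.
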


\begin{rem}
    Note that in \cite[Lemma 3.1]{functional}  our $\Psi$ is called $U$, but we renamed it to avoid confusion with $U$ in \Cref{as-U}.
\end{rem}

\begin{proof}
    The proof of \ref{func-lu} is exactly the same as the proof of \ref{func-lw} with $p$ replaced by $2p$, so we only prove \ref{func-lw} and \ref{func-gamma}. 
    Note that $\partial_t$ and $\partial_i$ of the respectively first and the second term in $\Phi$ vanish, and therefore by standard manipulations (see also \cite[Lemma 3.1]{functional}), we have 
    \begin{equation}
        \begin{aligned}
            \Ll \Psi(\phi) &= F(\phi) - \frac{1}{2}G(\phi) + A_2 h(x)\int_{-r}^0 e^{-\gamma s}d\mu(s) - A_2 \int^0_{-r} h(\phi(s))d\mu(s) \\
            &\qquad - A_2 \gamma \int_{-r}^0\int_s^0 e^{\gamma (u-s)} h(\phi(u)) dud\mu(s) \,,
        \end{aligned}
    \end{equation}
where $F$ and $G$ are as in \Cref{fass1}. 
Using \Cref{fass1}, we obtain \begin{equation}
    \begin{aligned}
        \Ll \Psi(\phi) &\leq -\gamma_bJ(\phi) + A_0\Id_{\{|x| < M|\}} - \gamma_0 - A_1h(x) \\
        &\qquad + A_2 \int^0_{-r} h(\phi(s))d\mu(s)+ A_2 h(x)\int^0_{-r} e^{-\gamma s}d\mu(s) \\
        &\qquad - A_2 \int^0_{-r} h(\phi(s))d\mu(s)
            - A_2 \gamma \int_{-r}^0\int_s^0 e^{\gamma (u-s)} h(\phi(u))dud\mu(s) \\
        &= -\gamma_bJ(\phi) + A_0\Id_{\{|x| < M\}} - \gamma_0 - Ah(x) \\
        &\qquad - A_2 \gamma \int_{-r}^0\int_s^0 e^{\gamma (u-s)} h(\phi(u))dud\mu(s) \,.
    \end{aligned}
\end{equation}
Again since $\partial_i$ of the second term in \eqref{Psi-functional} is equal to zero, we obtain
\begin{equation}\label{gamma-Psi-functional}
    \Gamma \Psi(\phi) = \Gamma \ln\Big({1 + \sum_{i=1}^n c_ix_i}\Big) = G(\phi)\,.
\end{equation}
Since $G \lesssim J$, we can choose $p > 0$ small enough so that $p\Gamma \Psi \leq \gamma_b J$. Then the usual It\^{o} formula or direct computation implies
\begin{equation}
    \begin{aligned}
        \Ll W(\phi) &= \Ll e^{p\Psi}(\phi) = pe^{p\Psi(\phi)}\Big(\Ll \Psi(\phi) + \frac{p}{2}\Gamma \Psi(\phi)\Big) \\
        &\leq pe^{p\Psi(\phi)}\Big(-\frac{\gamma_b}{2}J(\phi) + A_0\Id_{\{|x| < M\}} - \gamma_0 - Ah(x) \\
        &\qquad - A_2 \gamma \int_{-r}^0\int_s^0 e^{\gamma (u-s)} h(\phi(u))dud\mu(s)\Big) \\
        &= pe^{p\Psi(\phi)}\Big(A_0\Id_{\{|x| < M\}} - \frac{A_2\gamma}{2}\int_{-r}^0\int_s^0 e^{\gamma (u-s)} h(\phi(u))dud\mu(s)\Big) \\
        &\qquad - W'(\phi) \,.
    \end{aligned}
\end{equation}

To prove \ref{func-lw}, it suffices to show that for some $K > 0$
\begin{equation}\label{eq:bound-lw-k}
    pe^{p\Psi(\phi)}\Big(A_0\Id_{\{|x| < M\}} - \frac{A_2\gamma}{2}\int_{-r}^0\int_s^0 e^{\gamma (u-s)} h(\phi(u))dud\mu(s)\Big) \leq K \,.
\end{equation}  
If 
$$
\frac{A_2\gamma}{2}\int_{-r}^0\int_s^0 e^{\gamma (u-s)} h(\phi(u))dud\mu(s) \geq A_0 \quad \text{or} \quad |x| \geq M \,,
$$ 
then \eqref{eq:bound-lw-k} is bounded above by $0$. Otherwise $$\Psi(\phi) \leq \ln\Big(1 + |c|M\Big) + \frac{2A_0}{\gamma} \,,$$ where $c = (c_1,\dots,c_n)$, and so \ref{func-lw} holds with $K = \ln(1 + |c|M) + 2A_0/\gamma$.

To prove \ref{func-gamma}, we use \eqref{gamma-Psi-functional} and the usual It\^{o} formula or direct computation to compute
$$\Gamma W(\phi) = p^2e^{2p\Psi(\phi)} \Gamma \Psi(\phi) = p^2U(\phi)G(\phi) \,,$$ 
and consequently $U \geq 1$ and $G \lesssim J$ imply $$J + \Gamma W \lesssim UJ \lesssim U' \,,$$
as desired.
\end{proof}

\begin{cor}\label{fass1-and-fass2-are-satisfied}
Under \Cref{fass1}, \Cref{as-W} and \Cref{as-U} hold with $W, W', U, U'$ as in \Cref{func-WU} and \begin{equation}\label{func-V}
    V(\phi) \coloneqq \sum_{i=1}^n \rho_i V_i(x_i) \,,
\end{equation}
where $\rho_i > 0$, $x \coloneqq \phi(0)$, and $V_i: (0,\infty) \to [0,\infty)$ is a smooth function such that 
\begin{equation}
    V_i(x) = \begin{cases}
        -\ln(x) & x \in (0, 1)\,, \\
        0 & x \geq 2 \,.
    \end{cases}
\end{equation}
\end{cor}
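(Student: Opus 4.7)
The verification proceeds by assembling the estimates from \Cref{func-WU} and checking the remaining clauses of \Cref{as-W} and \Cref{as-U} directly. The memberships $W \in \Dme_2(\mcM)$ and $U \in \Dme_+(\mcM)$ follow from applying the functional It\^o formula to $e^{p\Psi}$ and $e^{2p\Psi}$ together with the standard localization/truncation argument already noted right before \Cref{func-WU} (mirroring \cite[Lemma 4.1]{extinction}). The drift inequalities $\Ll W \leq K - W'$ and $\Ll U \leq K - U'$ are parts \ref{func-lw} and \ref{func-lu} of \Cref{func-WU}, so for \Cref{as-W} it remains only to check $W << W'$: from the explicit formula for $W'$ we have $W' \geq p\gamma_0 W$, hence $\{W' \leq M\} \subset \{W \leq M/(p\gamma_0)\}$, giving the required boundedness of $W$ on sublevel sets of $W'$ in the sense of \Cref{<<}.

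For \Cref{as-U}, clauses \ref{5.1} and \ref{5.2} are immediate, so the main task is to exhibit a single $g \in \Y$ dominating both $\Gamma W$ and $\Gamma V$. From \Cref{func-WU}\ref{func-gamma} we have $\Gamma W \lesssim U'$. For $\Gamma V$, note that $V(\phi) = \sum_i \rho_i V_i(\phi_i(0))$ depends only on $x := \phi(0)$, so $\partial_t V \equiv 0$ and the carr\'e du champ formula yields
\[
   \Gamma V(\phi) \;=\; \sum_{i,j=1}^n \sigma_{ij}\, g_i(\phi)\, g_j(\phi)\, \rho_i \rho_j \bigl(x_i V_i'(x_i)\bigr)\bigl(x_j V_j'(x_j)\bigr).
\]
The decisive observation is that $x \mapsto x V_i'(x)$ is bounded on $(0,\infty)$: on $(0,1)$ we have $x V_i'(x) = -1$ by the choice $V_i(x) = -\ln x$, it vanishes on $[2,\infty)$ since $V_i'\equiv 0$ there, and it is continuous on the compact set $[1,2]$ by smoothness of $V_i$. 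Combined with boundedness of $(\sigma_{ij})$ and Cauchy--Schwarz, this yields $\Gamma V(\phi) \lesssim \sum_i g_i(\phi)^2 \leq J(\phi)$, and invoking \Cref{func-WU}\ref{func-gamma} once more gives $\Gamma V \lesssim U'$.

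Combining the two bounds, we take $g := C U'$ for a suitable constant $C > 0$. By \Cref{u'-in-y} we have $U' \in \Y$, hence $g \in \Y$, which verifies clauses \ref{5.3}--\ref{5.4} of \Cref{as-U}. The only nontrivial step is the control of $\Gamma V$ near $\mcM_0$, where $V_i'(x_i) = -1/x_i$ diverges as $x_i \to 0$; the logarithmic choice of $V_i$ is precisely what compensates, producing the bounded quantity $x_i V_i'(x_i)$, so that the degeneracy of the multiplicative noise coefficient $x_i g_i(\phi)$ at $\{x_i=0\}$ exactly cancels the singularity of $V_i'$ and leaves the clean bound by $J$.
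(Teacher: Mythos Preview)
Your proof is correct and follows essentially the same route as the paper: both invoke \Cref{func-WU} for the drift inequalities and the bound $\Gamma W \lesssim U'$, both check $W << W'$ via $W' \geq p\gamma_0 W$, and both control $\Gamma V$ by observing that $x_iV_i'(x_i)$ is uniformly bounded on $(0,\infty)$ so that $\Gamma V \lesssim J \lesssim U'$. The only cosmetic difference is that the paper bounds each $\Gamma V_i$ separately (splitting into the cases $x_i\in(0,1)$, $[1,2]$, $[2,\infty)$) and then uses $\Gamma V \leq n\sum_i \rho_i^2\,\Gamma V_i$, whereas you work directly with the full bilinear form; the substance is identical.
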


\begin{proof}
    The inequalities $\Ll W \leq K - W'$, $\Ll U \leq K - U'$, and $\Gamma W \lesssim U'$ are shown in \Cref{func-WU}.
    
    We overload notation and use $V_i$ to also denote the function defined for $\phi \in \inv$ as $\phi \mapsto V_i(x_i)$. We have for $\phi \in \inv$ and $x_i \in (0,1)$ that $$\Gamma V_i(\phi) = \frac{1}{x_i^2}\sigma_{ii}^2x_i^2g_i^2(\phi) = \sigma_{ii}^2g_i^2(\phi) \,,$$ and if $x_i \in [1,2]$, then 
    $$\Gamma V_i(\phi) = V_i''(x_i)x_i^2g_i^2(\phi) \lesssim g_i^2(\phi) \,,$$ 
    where we used that $V_i$ is smooth and thus $V_i''(x)$ is uniformly bounded on $[1,2]$. Since $\Gamma V_i(\phi) = 0$ if $x_i \in [2,\infty)$, putting these inequalities together yields $\Gamma V \lesssim J$ (it is easily checked that $\Gamma V \leq n\sum_{i=1}^n \rho_i^2 \Gamma V_i$). By \Cref{func-WU}, 
    $J \lesssim U'$, and thus \Cref{as-U} is satisfied.

    Finally, $W << W'$ in the sense of \Cref{<<} follows immediately from $W \leq (p\gamma_0)^{-1}W'$.
\end{proof}

\begin{lem}\label{func-vanish}
    Assume \Cref{fass1}, \Cref{fass2} and define
    \begin{equation}\label{func-H}
        H(\phi) \coloneqq \sum_{i=1}^n \rho_iH_i(\phi) \,,
    \end{equation}
   where for any $\phi \in \mcM$
   \begin{equation}
   H_i(\phi) = 
   \begin{cases}
   -f_i(\phi) + \frac{1}{2}\sigma_{ii}g_i^2(\phi) & \textrm{if }  x_i = \phi_i(0) =0 \\        
   V_i'(x_i)x_if_i(\phi) + \frac{1}{2}\sigma_{ii}^2V_i''(x_i)x_i^2g_i^2(\phi) & \textrm{otherwise} \,,
       \end{cases}
   \end{equation}
and $V$ is defined in \Cref{fass1-and-fass2-are-satisfied}. Then
\Cref{as-V}\ref{as-vanish} is satisfied with the measure $\mu + \delta_0$ and $W', V$, where $\mu$ is defined in \Cref{fass1} and $W'$ and $V$
 are as in \Cref{func-WU} and \Cref{fass1-and-fass2-are-satisfied} respectively. 
\end{lem}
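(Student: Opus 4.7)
The plan is to establish the pointwise vanishing bound
\[
|H(\Phi_t)| \leq \epsilon \Bigl[W'(\Phi_t) + \int_{-r}^0 W'(\Phi_{t+s})\, d\mu(s)\Bigr] + N_\epsilon \qquad \forall \epsilon > 0,
\]
which, combined with $W' \in \X$ (\Cref{muW'-K}), gives \Cref{as-V}\ref{as-vanish}. The strategy is to bound $|H|$ by $J$, then $J$ by $h$, and finally $h$ by $W'$ via a dichotomy exploiting the exponential growth $W = e^{p\Psi}$.

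First, I would show $|H(\phi)| \leq C J(\phi)$. For each component, on $\{x_i \geq 2\}$ we have $V_i \equiv 0$ and $H_i = 0$; on $(0, 2)$ the quantities $V_i'(x_i) x_i$ and $V_i''(x_i) x_i^2$ are uniformly bounded (equal to $-1$ and $1$ on $(0,1)$ from $V_i(x) = -\ln x$, smoothly extended on $[1, 2]$), and the $x_i \to 0^+$ limit matches the boundary definition of $H_i$. Hence $|H_i(\phi)| \lesssim |f_i(\phi)| + g_i(\phi)^2$ and $|H(\phi)| \leq C J(\phi)$. Then case (i) of \Cref{fass2} yields $J(\phi) \lesssim h(\phi(0)) + \int h(\phi(s))\, d\mu(s)$; case (ii) proceeds analogously with $h_1, \mu_1$ replacing $h, \mu$ (giving the measure $\mu_1 + \delta_0$).

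The key step is upgrading the resulting bound $|H(\Phi_t)| \leq C'[h(\Phi_t(0)) + \int h(\Phi_t(s))\, d\mu(s)]$ to the vanishing bound. The crucial observation is that $W(\phi) = e^{p\Psi(\phi)} \geq (1 + \sum_i c_i \phi_i(0))^p$ from the $\ln(1 + \sum c_i x_i)$ term in $\Psi$ (see \eqref{Psi-functional}). Given $\epsilon > 0$, set $K_\epsilon := 2C'/(\epsilon p A)$ with $p, A$ from \Cref{func-WU}, and let $R_\epsilon$ satisfy $(1 + \sum_i c_i R_\epsilon)^p = K_\epsilon$. On $\{W(\phi) \geq K_\epsilon\}$, the inequality $W'(\phi) \geq p A W(\phi) h(\phi(0)) \geq pAK_\epsilon h(\phi(0))$ gives $C' h(\phi(0)) \leq (\epsilon/2) W'(\phi)$; on $\{W(\phi) < K_\epsilon\}$, the bound $W \geq (1 + \sum_i c_i \phi_i(0))^p$ forces $|\phi(0)| \leq R_\epsilon$, so continuity of $h$ (\Cref{fass1}) yields $h(\phi(0)) \leq h_{\max, \epsilon} := \max_{|y| \leq R_\epsilon} h(y) < \infty$. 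Combining,
\[
C' h(\phi(0)) \leq \frac{\epsilon}{2} W'(\phi) + C' h_{\max, \epsilon}, \qquad \forall \phi \in \mcM.
\]
Applying this at $\phi = \Phi_t$ and at each $\phi = \Phi_{t+s}$ (using $h(\Phi_{t+s}(0)) = h(\Phi_t(s))$), then integrating the latter against $d\mu(s)$ and summing, yields the desired bound with $N_\epsilon := C' h_{\max,\epsilon}(1 + \mu([-r, 0]))$.

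The main obstacle is the dichotomy step: making the comparison constant arbitrarily small requires the exponential factor $W$ to eventually dominate the sub-exponential growth of $h$, while continuity of $h$ handles the remaining bounded region. This relies critically on the $\ln(1 + \sum c_i x_i)$ term appearing in $\Psi$---without it, $W$ would not grow with $|\phi(0)|$, and the pointwise vanishing could fail.
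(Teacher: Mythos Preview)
Your proposal is correct and follows essentially the same route as the paper: bound $|H|\lesssim J$, then establish the pointwise estimate $h(\phi(0))\le \epsilon W'(\phi)+N_\epsilon$ via the dichotomy on $W(\phi)$, exploiting $W'(\phi)\ge pAW(\phi)h(\phi(0))$ on the large-$W$ side and $W(\phi)\ge(1+\sum_ic_ix_i)^p$ to force $|\phi(0)|$ bounded on the small-$W$ side. One small point on case (ii): your ``proceeds analogously with $h_1,\mu_1$ replacing $h,\mu$'' is not quite automatic, because the $Ah(x)$ term in $W'$ involves $h$, not $h_1$. The paper handles this by instead using the $\tfrac{\gamma_b}{2}J(\phi)$ term in $W'$ together with $h_1(x)\lesssim J(\phi)$ from \Cref{fass2}(ii), yielding $W'(\phi)\gtrsim W(\phi)h_1(\phi(0))$; after that your dichotomy goes through verbatim, with the resulting measure $\mu_1+\delta_0$ as you note.
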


\begin{proof}
From standard calculations and the definition of $\Ll V$ in \eqref{eq:lske} follows that $H(\phi) = \Ll V(\phi)$ if $\phi \in \mcM_+$. Clearly, $H$ is continuous on $\mcM$, and therefore
 $H$ is indeed the continuous extension of $\Ll V$ to $\mcM$.
  Also, $|H| \lesssim J$ by using similar arguments as in the proof of $\Gamma V_i \lesssim J$ in \Cref{fass1-and-fass2-are-satisfied}.
    
     To prove \Cref{as-V}\ref{as-vanish}  with the measure $\mu + \delta_0$, 
    from \Cref{rv-vanish} and the almost sure continuity of the paths of \eqref{eq:func-eq}, it suffices to show the following statement: for all $\epsilon > 0$ there is $N > 0$ such that for all continuous functions $\xi: [-2r, 0] \to [0,\infty)^n$ we have \begin{equation}\label{eq:jeawp}
        J(\phi_0) \leq N + \epsilon \Big(W'(\phi_0) + \int_{-r}^0 W'(\phi_s)d\mu(s)\Big) \,,
    \end{equation} 
    where $\phi_s$ is defined for $s,t \in [-r,0]$ by $\phi_s(t) = \xi(s+t)$.

    We give a proof of \eqref{eq:jeawp} under either the assumption \eqref{eq:fass-1} or \eqref{eq:fass-2}. First, suppose \eqref{eq:fass-1} holds. If $W(\phi) \geq 1/(\epsilon pA)$ for some $\phi \in \mcM$, then by the definition of $W'$ 
    and $x = \phi(0)$ we have 
    $$
    h(x) \leq \epsilon pAW(\phi) h(x) \leq \epsilon W'(\phi) \,.
    $$ 
    If $W(\phi) \leq 1/(\epsilon pA)$, then  
    by using $W \geq (1+\sum c_ix_i)^p$ we obtain
    $|x| \leq (\tilde{c}/(\epsilon pA))^{\frac{1}{p}}$, where $\tilde{c} > 0$ is a fixed constant only depending on $c_i$. Since $h$ is a continuous function, $|h| \leq N'$ 
    on the compact set $\{x: |x| \leq (\tilde{c}/(\epsilon pA))^{\frac{1}{p}}\}$. To summarize, we have 
    \begin{equation}\label{eq:srhb}
    h(x) \leq N' + \epsilon W'(\phi) \,,    
    \end{equation}
    and \eqref{eq:jeawp} follows from \eqref{eq:fass-1}.

    If \eqref{eq:fass-2} holds, then by the definition of $W'$ for any $x \in [0, \infty)^n$ we have
    $$
    h_1(x) \lesssim J(\phi_x) \lesssim \frac{W'(\phi_x)}{W(\phi_x)} \,,
    $$
    where $\phi_x \equiv x$. Since for every $\epsilon > 0$
    there is $M_\epsilon$ such that $W(\phi_x) \geq \frac{1}{\epsilon}$ if $|x| \geq M_\epsilon$ and 
    $J(\phi_x)$ is uniformly bounded for $\{x \in [0, \infty)^n  \mid |x| \leq M_\epsilon\}$, we obtain that \eqref{eq:srhb} holds and \eqref{eq:jeawp} follows from \eqref{eq:fass-2}. 
\end{proof}

To obtain the compactness required by \Cref{as-compact}, we use  similar argument as in \cite[Lemma 3.3]{functional}, which relies on the following version of the Kolmogorov continuity criterion.

\begin{thm}\label{Kolmogorov}
    Let $(X_t)_{t\in[0, T]}$ be a stochastic process on $\R^n$ for some $T > 0$. Suppose there exist constants $\alpha, \beta, \gamma, K, C > 0$ such that:
    \begin{itemize}
        \item $\E[|X_t - X_s|^\alpha] \leq K|t-s|^{1 + \beta}$ for all $t,s \in [0,T]$.
        \item $\E[\sup_{t \in [0,T] \cap D} |X_t|] \leq C$, where $D$ denotes the dyadic numbers $\frac{k}{2^n}$.
        \item $\gamma < \beta / \alpha$.
    \end{itemize}

    Then there is a modification $\tilde{X_t}$ of $X_t$ with almost surely $\gamma$-H\"{o}lder continuous sample paths. Additionally, there is a function $A: [0,\infty) \to [0,1]$ which depends only on $T,\alpha,\beta,\gamma,K,C$ (not on $X$) and satisfies: \begin{itemize}
        \item $\lim_{M \to \infty} A(M) = 0$.
        \item $\Prb(\|\tilde{X}\|_\gamma > M) \leq A(M)$, where $$\|\tilde{X}\|_\gamma \coloneqq |\tilde{X}_0| + \sup_{0 \leq s < t \leq T} \frac{|\tilde{X}_t - \tilde{X}_s|}{t-s} \,.$$
    \end{itemize}
\end{thm}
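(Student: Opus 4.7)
The plan is to follow the classical Kolmogorov continuity argument via dyadic chaining, then carefully track constants so that the resulting tail bound depends only on the listed parameters. Set $D_n := \{kT/2^n \mid 0 \leq k \leq 2^n\}$, $D := \bigcup_{n \geq 0} D_n$, and $\Delta_n := \max_{1 \leq k \leq 2^n} |X_{kT/2^n} - X_{(k-1)T/2^n}|$. Markov's inequality together with the moment hypothesis yields
\[
\Prb(\Delta_n > 2^{-\gamma n}) \leq 2^n \cdot K (T/2^n)^{1+\beta} \cdot 2^{\alpha\gamma n} = K T^{1+\beta}\, 2^{-n(\beta - \alpha\gamma)},
\]
which is summable since $\gamma < \beta/\alpha$. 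By Borel--Cantelli, almost surely $\Delta_n \leq 2^{-\gamma n}$ for all $n$ larger than some random $N^\star$.

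On the event $\{N^\star \leq N\}$, a standard chaining argument (telescoping $|X_t - X_s|$ for $s, t \in D$ through successive dyadic refinements, bounding each level by $2^{-\gamma n}$ and summing the geometric series $\sum_{n \geq N} 2^{-\gamma n}$) shows that whenever $s, t \in D$ satisfy $|s-t| \leq T 2^{-N}$, one has $|X_t - X_s| \leq 2(1-2^{-\gamma})^{-1} |t-s|^\gamma$. Hence $X|_D$ is almost surely $\gamma$-H\"older on intervals of length $T 2^{-N^\star}$, so it extends uniquely to a continuous process $\tilde X$ on $[0,T]$, and $\tilde X_t = X_t$ a.s.\ for each fixed $t$ by the continuity in probability implied by the moment hypothesis.

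For the quantitative tail bound, decompose the H\"older seminorm by distinguishing pairs $(s,t)$ with $|s-t| \leq T 2^{-N}$ (``fine scale'') from pairs with $|s-t| > T 2^{-N}$ (``coarse scale''). On $\{N^\star \leq N\}$, the fine-scale contribution is bounded by a universal constant $C_\gamma$, while the coarse-scale contribution is bounded by $2(T 2^{-N})^{-\gamma} \sup_{t \in D} |X_t|$. Combining with $|\tilde X_0| \leq \sup_{t \in D} |X_t|$, for any $M > 0$ and any $N = N(M) \geq 0$ we obtain
\[
\Prb(\|\tilde X\|_\gamma > M) \leq \Prb(N^\star > N) + \Prb\bigl((1 + 2 T^{-\gamma} 2^{\gamma N}) \sup_{t \in D} |X_t| > M - C_\gamma\bigr).
\]
Choosing $N(M) = \lfloor \kappa \log_2 M \rfloor$ for a small $\kappa \in (0, 1/\gamma)$ balances the two terms: the first is bounded by the summable tail $\sum_{n \geq N(M)} K T^{1+\beta} 2^{-n(\beta - \alpha\gamma)}$, and the second by Markov's inequality applied to the hypothesis $\E[\sup_{t \in [0,T] \cap D}|X_t|] \leq C$; both decay as negative powers of $M$. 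The main obstacle is the coarse-scale step, where one must express $|\tilde X_t - \tilde X_s|$ purely in terms of the sup of $|X|$ on $D$ without introducing additional process-dependent constants, and then choose $\kappa$ so that both tails decay to zero at a uniform rate controlled by $T, \alpha, \beta, \gamma, K, C$ alone.
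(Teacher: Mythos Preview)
Your proposal is correct and follows essentially the same route as the paper. The paper's proof simply cites Le Gall for the standard dyadic chaining, extracts the random index $n^\star$ (your $N^\star$) with tail $\Prb(n^\star > n) \leq a_n$ depending only on $T,\alpha,\beta,\gamma,K$, and then writes down the single inequality
\[
\|\tilde X\|_\gamma \leq (1 + 2\cdot 2^{\gamma n^\star})\sup_{t \in [0,T]\cap D}|X_t| + C_\gamma,
\]
which is exactly your fine/coarse decomposition compressed into one line; your explicit choice of $N(M)$ is just the step the paper hides in ``from which the existence of $A$ follows.'' The ``main obstacle'' you flag at the coarse scale is not actually an obstacle: since $\tilde X$ is the continuous extension of $X|_D$, one has $\sup_{[0,T]}|\tilde X| = \sup_D |X|$, so $|\tilde X_t - \tilde X_s| \leq 2\sup_D|X|$ immediately, with no process-dependent constant.
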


\begin{rem}
    The usual Kolmogorov criterion implies that for any process $X$ satisfying suitable bounds, it has a modification which is almost surely H\"{o}lder continuous, so the (random) H\"{o}lder constant $C_h$ is finite almost surely and thus $\Prb(C_h > M) \to 0$ as $M \to \infty$. In \Cref{Kolmogorov} we note that the rate at which $\Prb(C_h > M)$ converges to $0$ only depends on the process $X$ through certain constants. This observation is necessary for verifying \Cref{as-compact}, because we need our $\K$ to be independent of the initial condition.
\end{rem}

\begin{proof}
    The existence of such a modification is given in \cite[Theorem 2.9]{LeGall}. Examination of the proof reveals that there is a constant $C_\gamma$ (depending only on $\gamma$) such that $|X_t(\omega) - X_s(\omega)| \leq C_\gamma |t-s|^\gamma$ for all (dyadic) $s,t$ such that $|s-t| \leq 2^{-n^*(\omega)}$, where $n^*$ is an $\N$-valued random variable with $\Prb(n^* > n) \leq a_n$ for some sequence $a_n \to 0$ which depends only on $T,\alpha,\beta,\Gamma,K$. Thus, almost surely $$\|\tilde{X}\|_\gamma \leq (1 + 2\cdot2^{\gamma n^*})\sup_{t \in [0,T] \cap D}|X_t| + C_\gamma \,,$$ from which the existence of $A$ follows.
\end{proof}

\begin{lem}\label{functional-compact-holds}
    Under \Cref{fass1}, \Cref{as-compact} holds.
\end{lem}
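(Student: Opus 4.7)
The state space $\mcM = C([-r,0], [0,\infty)^n)$ is an infinite-dimensional function space, so compact subsets of $\mcM$ are characterized by the Arzel\`a--Ascoli theorem, i.e., by uniform boundedness together with uniform equicontinuity. My plan is thus to construct the compact set $\K \subset \mcM$ of \Cref{as-compact} as a set of uniformly bounded, uniformly H\"older continuous functions. The key observation is that on the event $\{W(\Phi_t) \leq M \text{ for all } t \leq T\}$, the lower bound $W = e^{p\Psi} \geq (1 + \sum_i c_i \phi_i(0))^p$ (implicit in \Cref{func-WU}) yields $|X(t)| \leq c M^{1/p}$ for all $t \in [0,T]$; consequently $\|\Phi_t\|_\infty \leq R_M := cn M^{1/p}$ for every $t \in [r,T]$, and by the continuity of $f_i, g_i$ on bounded sets we obtain $|f_i(\Phi_t)|, |g_i(\Phi_t)| \leq B_M$ on that range. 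This delivers deterministic bounds on the SDE coefficients which are \emph{uniform in the arbitrary initial condition} $\phi$.

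Concretely, fix $\epsilon > 0$, take $T > 4r/\epsilon$, and take $f \equiv 0$ (which trivially lies in $\X$). Define the stopping time $\tau_M := \inf\{t \geq 0 \mid W(\Phi_t) > M\}$ and introduce the shifted-and-stopped auxiliary process $Y_i(t) := X_i((t+r) \wedge \tau_M)$ for $t \in [0, T-r]$. Its drift and diffusion coefficients are then bounded by $C_M := R_M B_M$ uniformly in $\phi$, and the Burkholder--Davis--Gundy inequality together with H\"older yield the fourth-moment estimate $\E[|Y_i(t) - Y_i(s)|^4] \lesssim C_M^4 |t-s|^2$ as well as $\E[\sup_t |Y_i(t)|] \leq R_M$. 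Applying \Cref{Kolmogorov} with $\alpha = 4$, $\beta = 1$, $\gamma \in (0, 1/4)$ then produces a function $A_M : [0,\infty) \to [0,1]$ with $\lim_{L \to \infty} A_M(L) = 0$, depending only on $M$, such that $\Prb(\|Y\|_\gamma > L) \leq A_M(L)$ uniformly in $\phi$. Choose $L_M$ with $A_M(L_M) \leq \epsilon/2$ and set
$$
\K := \{\phi \in \mcM \mid \|\phi\|_\infty \leq R_M, \ \|\phi\|_\gamma \leq R_M + L_M\},
$$
which is compact by Arzel\`a--Ascoli.

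To conclude, on the event $\{\tau_M > T\} \cap \{\|Y\|_\gamma \leq L_M\}$, for every $t \in [2r, T]$ the segment $\Phi_t(\cdot)$ equals $X$ on $[t-r,t] \subset [r,T]$, which after a shift by $r$ coincides with $Y$; hence $\Phi_t \in \K$. Consequently $\mu_T(\K^c) \leq 2r/T < \epsilon/2$ on that event, and a union bound gives
$$
\Prb(\mu_T(\K^c) > \epsilon) \leq \Prb(\tau_M \leq T) + A_M(L_M) \leq \Prb\Bigl(\sup_{t \leq T} W(\Phi_t) > M\Bigr) + \epsilon/2,
$$
which is dominated by the right-hand side of \eqref{eq:as-compact}. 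The main obstacle is producing moment bounds that are uniform in the initial condition $\phi$: this is precisely what necessitates both the stopping at $\tau_M$ (to make the SDE coefficients pathwise bounded) and the time shift by $r$ (to remove the dependence of those bounds on the arbitrary history $\phi$ for $t < r$).
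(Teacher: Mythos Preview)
Your proposal is correct and follows essentially the same strategy as the paper: localize the process so that the SDE coefficients become bounded, apply the quantitative Kolmogorov criterion (\Cref{Kolmogorov}) with $\alpha=4$, $\beta=1$ to obtain uniform H\"older bounds, and conclude via Arzel\`a--Ascoli. The only real difference is the localization device: the paper truncates the coefficients, replacing $f_i,g_i$ by bounded Lipschitz $f_i^M,g_i^M$ that agree with them on $\{\|\phi\|\le \tilde c M\}$ (so the auxiliary process is globally defined and coincides with the original on the good event), whereas you stop at $\tau_M$. Your stopping argument is slightly more self-contained since it avoids constructing the truncated coefficients, while the paper's truncation makes the moment bounds for Kolmogorov hold unconditionally. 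One small point worth making explicit in your write-up: the uniform bound $\E[\sup_t|Y(t)|]\le R_M$ needs $W(\phi)\le M$; when $W(\phi)>M$ one has $\sup_{t\le T}W(\Phi_t)>M$ deterministically, so the right-hand side of \eqref{eq:as-compact} exceeds $1$ and the inequality is vacuous.
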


\begin{proof}
    Recall our choice of $W$ in \Cref{func-WU}. If $W(\Phi_t) \leq M^p$ for all $t \in [0,T]$, then $|X_t| \leq \tilde{c}M$, where $\tilde{c} > 0$ is a fixed constant only depending on $c_i$ and $p$ (the same one as in the proof of \Cref{func-vanish}), and thus $\|\Phi_t\| \leq \tilde{c}M$ for all $t \in [r,T]$. 
    
    Thus, on the event $\{W(\Phi_t) \leq M^p \textrm{ for all } t \in [0,T]\}$, the process $\{\Phi_t\}_{t \in [r,T]}$ agrees with the truncated process $\tilde{\Phi}_t(s) = \tilde{X}(t+s)$, where $\tilde{X}_t$ satisfies the same stochastic functional differential equation as $X_t$ (see \eqref{eq:func-eq}) but with $f_i,g_i$ replaced by bounded Lipschitz functions $f_i^M, g_i^M$ which agree with $f_i,g_i$ on the set $\{\|\phi\| \leq \tilde{c}M \}$ (for example, such truncated functions are defined in the proof of \cite[Lemma 3.3]{functional}), and initial condition $\tilde{\Phi}_{r} = \Phi_r$. 
    
    It is standard to show that $\{\tilde{X}_t\}_{t \in [r,T]}$ satisfies the assumptions of \Cref{Kolmogorov} with $\alpha = 4$, $\beta = 1$, $\gamma = 1/8$, and $K,C$ depending only on $M,T$ and not on the initial condition $\Phi_r$ which satisfies $\|\Phi_r\| \leq \tilde{c}M$ almost surely. Thus, we have that \eqref{eq:as-compact} holds with $\K = \{\phi \in \mcM \mid \|\phi\|_\gamma \leq C_h \}$ for $C_h$ large enough depending only on $M,T,\epsilon$. The set $\K$ is compact by Arzela-Ascoli theorem, so \Cref{as-compact} holds.
\end{proof}

Finally we prove \Cref{functional-thm}.

\begin{proof}[Proof of \Cref{functional-thm}]
    At the beginning of this section we noted that \Cref{as1}--\ref{as2} hold. In \Cref{fass1-and-fass2-are-satisfied}, \Cref{func-vanish}, and \Cref{functional-compact-holds} we proved that all other assumptions except \Cref{as-V}\ref{as-alpha} hold.
    
    Although it is known (see \cite[Lemma 2.1]{ecologicalContinuous}) that $\mu H = -\sum_{i=1}^n \rho_i\lambda_i(\mu)$ for all $\mu \in P_{inv}(\mcM_0)$, we at least provide some intuition. Clearly, it suffices to show the identity for $\mu$ being ergodic. Since $\{x\mid x_i = 0\}$ and $\{x\mid x_i > 0\}$ are invariant for each $i$, each set has $\mu$ measure equal to 0 or 1. If for some $i$ we have $\mu(\{x\mid x_i = 0\}) = 1$, then $\mu$ almost surely $\mu H_i = -f_i + \frac{\sigma_{iig_i^2}}{2}$ and the claim follows. If $\mu(\{x\mid x_i = 0\}) = 0$, then as in the proof of \Cref{muH-neg} it holds that $\mu(H_i) = \lambda_i(\mu) = 0$.

    Choose $\rho_i > 0$ in \eqref{func-V} and \eqref{func-H} and $\Lambda < 0$ such that 
    $$
    \mu H = -\sum_{i=1}^n \rho_i\lambda_i(\mu) \leq \Lambda < 0
    $$
    for all $\mu \in P_{inv}(\mcM_0)$. This is possible by applying Hahn-Banach separation theorem to the set $\{(\lambda_1(\mu), \dots, \lambda_n(\mu)) \mid \mu \in P_{inv}(\mcM_0)\}$, which is compact by \Cref{pinv-compact} and \Cref{H-in-L1} and is disjoint from $\{x \in \R^n \mid x_i \leq 0 \text{ for all } i \leq n\}$ by \Cref{fass3}. Then \Cref{as-V}\ref{as-alpha} is satisfied.

    Since $\phi \mapsto \min_{i \leq n}\phi(0)_i$ is continuous and strictly positive on $\inv$, it has a strictly positive infimum on any compact subset of $\inv$. Also, $\phi \mapsto \max_{i \leq n}\phi(0)_i$ has a finite supremum on any compact set. Thus, the theorem follows from the second statement of \Cref{main}.
\end{proof}

\section{Application 2: Stochastic PDEs}\label{section-SPDE}

In this section we discuss general stochastic PDEs with positive solutions. Numerous examples are given in \Cref{examples}.
The considered (system of) stochastic PDE has the form \begin{equation}\label{SPDE}
    dx_t = (Ax_t + F(x_t,\theta))dt + G(x_t,\theta)dW_t \,.
\end{equation}

 Let us fix the notation which is used throughout \Cref{section-SPDE}. The precise definitions and assumptions on $F$ and $G$ are given below after a development of the linear theory.

\begin{itemize}
    \item 
    A standing assumption throughout this section is that there is a filtered probability space $(\Omega, \mathcal{F}, \{\mathcal{F}_t\}_{t \geq 0}, \Prb)$ to which all  stochastic processes are adapted.
    \item We fix $N\geq 1$ to be the dimension of the space and $m$ to be the dimension of the process. Specifically, we assume that 
    $x_t = (x^{(1)}_t, \dots,x^{(m)}_t)$ and  $x^{(j)}$ for each $j = 1, \dots, m$ is a real valued 
    random function on the domain $D \subset \R^N$, where 
    $D$ is either a bounded open connected subset of $\R^N$ with smooth boundary or $D = \T^N$ is the $N$-dimensional torus.
    \item The stochastic process $x_t$ takes values in the space $\cdom \coloneqq \{f \in B \mid f \geq 0\}$, where $f \geq 0$ is understood component-wise, and $B \coloneqq C(\overline D) \coloneqq C(\overline D, \R^m)$ is the space of all continuous functions $f: \overline D \to \R^m$ 
    equipped with the supremum norm. 
      \item $A = (A_1,\cdots,A_m)$ acts as $Ax_t = (A_1x^{(1)}_t, \cdots, A_mx_t^{(m)})$, where each $A_i$ is an elliptic second order differential operator with appropriate boundary conditions specified below.
    
    \item Fix $d \in \{1, \dots, m\}$ and write
     $x_t = (u_t,z_t)$, where $u_t$ and $z_t$ represent respectively the first $d$ and last $m-d$ components of $x_t$. The reader can think of $u_t$ as representing the population of $d$ species for which we investigate the survival/extinction and $z_t$ models the populations of other species or environmental factors. 
     In addition, 
  we use $\pi_u: \R^m \to \R^m$ (resp. $\pi_z: \R^m \to \R^m$) to denote the projections onto the first $d$ (resp. last $m-d$) coordinates. Unless otherwise specified, $x_u$ (resp. $x_z$) denotes the first $d$ (resp. last $m - d$) coordinates of the object $x$. For example, $A_u = (A_1,\dots,A_d)$ and   
 we could rewrite \eqref{SPDE} as
    \begin{align*}
        du_t &= [A_uu_t + F_u(u_t,z_t,\theta)]dt + G_u(u_t,z_t,\theta)dW_t \\
        dz_t &= [A_zz_t + F_z(u_t,z_t,\theta)]dt + G_z(u_t,z_t,\theta)dW_t 
    \end{align*}
or we use $x_u = u$ and $x_z = z$. 

\item
 Also, if $x,y \in \R^m$ we denote $x\cdot y \in \R^m$ the component-wise multiplication of $x$ and $y$. 
    
    \item Let $\Sigma$ be the Borel sigma-algebra on $D$ and $\mu$ be a measure admitting a smooth density with respect to Lebesgue measure which is positive and uniformly bounded away from zero.
    We use $L^p$ as a shorthand for $L^p(D, \Sigma, \mu)$, $L^p(D, \Sigma, \mu;\R^d)$, and $L^p(D, \Sigma, \mu;\R^m)$, which is  clear from the context. Note that due to our assumptions on $\mu$, 
    the norm on $L^p(D, \Sigma, \mu)$ is equivalent to $L^p(D, \Sigma, \lambda)$, where $\lambda$ is the Lebesgue measure. In particular, all Sobolev embeddings hold true for our $\mu$ and we use this observation below without notice. 
    \item $\theta$ is some (fixed) parameter which is an element of the compact metric space $\Theta$. In our examples, $\theta$ represents the strength of the noise.

    \item $F$ is a sufficiently smooth function from $B \times \Theta$ to $B$.
    \item $W_t$ is a cylindrical Brownian motion on a separable Hilbert space $\U$.
    
    \item $G$ is a sufficiently smooth map from $B \times \Theta$ to the space of bounded linear operators from $\U$ to $L^2$, denoted by $\Ll(\U,L^2)$.
\end{itemize}

Below, our assumptions guarantee that $\{(x_u)_0 = u_0 \equiv 0\}$ is an invariant set of \eqref{SPDE} and our goal is to develop criteria which imply that $x_t$ is stochastically persistent on $\cdom \setminus \{u \equiv 0\}$ in the sense of \Cref{main}. In ecological models, this indicates survival of the first $d$ species. The main applications are to stochastic reaction-diffusion equations coming from biology and models for turbulence.

Additionally, we will study the dependence of our results on $\theta$, which will be quite useful in the small noise regime. In particular, if $\Theta = [0,1]$ and $G(u,\theta) = \theta G(u)$, then for $\theta = 0$ the problem \eqref{SPDE} becomes a deterministic PDE. In \Cref{examples} this will allow us to say that persistence occurs in the small noise regime if an eigenvalue corresponding to this PDE is positive, and often we are able to explicitly calculate this eigenvalue.

\begin{rem}\label{we-can-also-handle-switching}
    All of our results also apply to models with regime-switching, meaning they remain valid if $\theta$ is allowed to depend on time in such a way that $(x_t,\theta_t)$ forms a Markov process. In this case, $\Theta$ need not be compact as long as $\theta_t$ is suitably stable (empirical occupation measures are tight, etc.). Since none of our examples have regime-switching, we decided to keep $\theta$ a constant  for brevity.
\end{rem}

To ease notation, throughout the following sections we often suppress the dependence of $F,G$ on $\theta$.

\subsection{Semigroups}\label{SPDE-setup}
We begin by imposing some assumptions on the operator $A$ and proving some properties of its semigroup and interpolation spaces.

\begin{deff}\label{sobolev-spaces}
    For $k \in \N$ and $1 < p < \infty$ we define the Sobolev spaces recursively by $W^{0,p} \coloneqq L^p$ and
    $$W^{k,p} \coloneqq \{u \in L^p \mid \partial_iu \in W^{k-1,p} \text{ for } i = 1,\dots, N\}$$
    endowed with the norm
    $$\|u\|_{W^{k,p}} \coloneqq \sum_{i=1}^N\|\partial_i u\|_{W^{k-1,p}} + \|u\|_{L^p} \,.$$
    In the definition, it is implicit that the weak derivatives $\partial_i u$ exist. For $s > 0$ (not necessarily an integer), we define $W^{s,p}$ via complex interpolation, see \cite[Theorem 6.4.5 (7)]{interpolation-simple} with the domain $\R^n$, but the results also hold for our domain $D$ via extension operators.

    If $D = \T^N$, we also define $W^{-s,p}$ to be the dual of $W^{s,p^*}$, where $1/p + 1/p^* = 1$.
\end{deff}

\begin{sass}\label{sass-A}
    Each operator $A_i$, $i = 1, \dots, m$ is of the form
    $$
    \sum_{1 \leq j, k \leq N} a^i_{jk}(x)\partial_j\partial_k + \sum_{j=1}^N b_j^i(x)\partial_j + c^i(x) \,,
    $$
    where $a^i_{jk}, b^i_j, c^i$, $j, k = 1, \dots, N$ are smooth functions on $\overline D$ and there is $\eta > 0$ such that for all $y \in \R^N$ and all $i = 1, \dots, m$
    $$
    \sum_{1 \leq j, k \leq N} a^i_{jk}(x)y_jy_k \geq \eta |y|^2 
    \qquad\qquad \textrm{(ellipticity).}
    $$
    We define the domain of the unbounded operator $A_i$ on $L^p$ for $1 < p < \infty$ based on the boundary conditions on $\partial D$:

    \begin{align*}
    \Dm(A_i) \coloneqq \begin{cases}
   \{u \in W^{2,p} \mid \frac{\partial u}{\partial \nu} \equiv 0 \text{ on } \partial D\} &  \text{Neumann Boundary Conditions}  \\
 W^{2,p} &  \text{Periodic Boundary Conditions} \,,
\end{cases}
    \end{align*}
where $\nu$ denotes the outward-pointing normal vector to $\partial D$.

We also assume the following assertions  for each $i = 1, \dots, m$:
\begin{enumerate}[label=(\roman*)]
    \item Every $z \in \mathbb{C}$ with $\Re z > -1$ lies in the resolvent set of $A_i$.
    \item $A_i$ is self-adjoint when $p = 2$.
    \item The solution $u$ to $\frac{\partial u(t,x)}{\partial t} = A_iu(t,x)$ is nonnegative if $u(0,\cdot) \geq 0$. In other words, the semigroup $S_i$ generated by $A_i$ (\Cref{semigrp-deff}) preserve positivity.
\end{enumerate}
\end{sass}

\begin{rem}\label{boundary-cond-is-arb}
    We restrict to Neumann or periodic boundary conditions because of our examples below, but our results also hold for more general (first order) boundary conditions as well (see \cite[(3.1.3),(3.1.4)]{lunardi}), for example those investigated in \cite{allee-effect}.
\end{rem}

\begin{rem}\label{rem:subtract-from-c}
    In our examples, we will not verify (i) or (iii) in 
    \Cref{sass-A}, because they are always guaranteed by subtracting a suitably large constant from $c$ and then adding that constant to $F$, which, as the reader may check, does not violate any  assumptions on $F$ stated below.
    
    Specifically, since $D$ is bounded, the constant we need to subtract 
    from $c$ to satisfy (i) in \Cref{sass-A} 
    does not depend on $p$, see \cite[Theorem 3.1.2]{lunardi}. In addition (iii) in \Cref{sass-A} follows from \cite[Section 7.1, Theorem 12(ii)]{evans}. Indeed, if $c \leq 0$ and $u_0 \geq 0$
     suppose that there is $t > 0$ such that 
    $S(t)u_0$ is not strictly positive in $D$. Then $S(t)u_0$ attains non-positive minimum and by the strong maximum principle, see \cite[Section 7.1, Theorem 12(ii)]{evans},  $S(t)u_0 \equiv c \leq 0$. By intermediate value theorem and continuity we obtain $S(\tau)u_0 \equiv 0$ for $\tau = \inf \{t > 0 : S(t)u_0 \not > 0\}$. If $\tau = 0$, then $u_0 \equiv 0$ and consequently $u \equiv 0$. If $\tau > 0$, this means that $S(\tau)$ is not injective. However, by (i) and (ii) of \Cref{sass-A}, 
 $S(\tau) = e^{\tau A}$ is compact and self-adjoint with strictly positive eigenvalues, so it is injective, a contradiction.

    Finally, we can without loss of generality assume in what follows that $S(t)$ is a contraction in $B$, which again is immediate if $c \leq 0$ (\cite[Section 7.1, Theorem 9]{evans}).
\end{rem}

\begin{deff}\label{semigrp-deff}
    By \cite[Theorem 3.1.3]{lunardi}, the operators $A_i$ are sectorial and generate analytic semigroups $S_{i,p}(t)$ on $L^p(D)$ for $1 < p < \infty$ (see \cite[Definition 2.0.2]{lunardi}). We use $S_p(t) = (S_{1,p}(t), \dots, S_{m,p}(t))$ to denote the corresponding semigroup on $L^p(D;\R^m)$. In addition, 
    $\|S_{i, p}(t)\|_{\mathcal{L}(L^p, L^p)} \lesssim e^{-t}$ for any $t \geq 0$, $p \in (1, \infty)$, and $i = 1, \dots, m$, 
    see \cite[Proposition 2.1.1]{lunardi}, 
\end{deff}

\begin{deff}
Let $S$ be a semigroup  with generator $A$ satisfying $\|S(t)\|_{\mathcal{L}(L^p, L^p)} \lesssim e^{-t}$ (for example if $A$
is a sectorial operator, see \Cref{semigrp-deff}).
  Then for any $\vartheta > 0$ we define the fractional power of $(-A)$ as
  \begin{equation}\label{eq:doapt}
      (-A)^{-\vartheta} = \frac{1}{\Gamma(\vartheta)} \int_0^\infty t^{\vartheta - 1} S(t)  dt \,, 
  \end{equation}
  where $\Gamma$ is the usual $\Gamma$-function and the integral exist by the assumed bound on $S$. 

  Furthermore, we define 
  \begin{equation}
      (-A)^\vartheta = ((-A)^{-\vartheta})^{-1} \qquad \textrm{and}\quad A^0 = I \,.
  \end{equation}
\end{deff}

\begin{rem}\label{rem:neg-powers-r-inj}
    Observe that for $\vartheta = 1$, $(-A)^{-1}$ coincides with the inverse of $(-A)$, which is equal to the resolvent operator for $\lambda = 0$, see \cite[Lemma 2.1.6]{lunardi}. 
One can verify (see for example \cite[Theorem 3.1.3]{lunardi}) that 
$(-A)^{-\vartheta}(-A)^{-\varsigma} = (-A)^{-(\vartheta + \varsigma)}$
for any $\vartheta, \varsigma > 0$. Consequently 
if $0$ is in the resolvent set of $A$, as assumed in \Cref{sass-A}, then
$(-A)^{-\vartheta}$ is injective. Indeed, $(-A)^{-1}$ is injective by definition and thus so is $(-A)^{-n}$ for any positive integer $n$. Choosing $n > \vartheta$ we have $(-A)^{-(n - \vartheta)}(-A)^{-\vartheta} = (-A)^{-n}$, and thus $(-A)^{-\vartheta}$ is injective.
\end{rem}

\begin{deff}\label{fractional-domains-deff}
    For $\vartheta > 0$, let $\B_{\vartheta,r} = \Dm((-A)^\vartheta) = \textrm{Range}((-A)^{-\vartheta})$ 
    be the fractional domains 
    on $L^r$ with the corresponding norms $\|u\|_{\vartheta,r} \coloneqq \|(-A)^\vartheta u\|_{L^r}$ (see \cite[Section 5.4]{hairer-notes} for more details). If $\vartheta < 0$, we define $\B_{\vartheta,r}$ as the completion of $L^r$ with respect to the norm $\|u\|_{\vartheta,r} \coloneqq \|(-A)^\vartheta u\|_{L^r}$. 
    When $r$ is fixed, we often omit it and simply write $\B_\vartheta$ or $\|u\|_\vartheta$. 
    
    Assume $g$ is a linear operator mapping $\U$ to $\B_{\vartheta,r}$
and there exists an orthonormal basis $(h_n)_{n = 1}^\infty$ of $\U$ such that 
\begin{equation}\label{eq:gron}
    \|g\|_{\gamma(\U,\B_{\vartheta,r})} := \Big(\E \Big\| \sum_{n =1}^\infty \gamma_n g(h_n) \Big\|_{\B_{\vartheta,r}}^2\Big)^{\frac{1}{2}} < \infty \,,
\end{equation}
where $(\gamma_n)_{n = 1}^\infty$ is a sequence of independent standard Gaussian random variables. Then we say $g$ is a $\gamma$-radonifying operator and for a brevity of notation we use $\|g\|_{\vartheta,r} \coloneqq \|g\|_{\gamma(\U,\B_{\vartheta,r})}$
 (see \cite{radonifying-survey, umd-existence, umd-integration} for more about $\gamma$-radonifying operators). 
\end{deff}

\begin{rem}\label{rmk:gropp}
    We remark that the norm in \eqref{eq:gron} is independent of the orthonormal basis $(h_n)_{n = 1}^\infty$ and the space of $\gamma$-radonifying operators is a separable Banach space, since $\U$ is assumed to be separable. In addition, 
    $\|\cdot\|_{\mathcal{L}(\U, \B_{\vartheta,r})} \leq \|\cdot\|_{\gamma(\U, \B_{\vartheta,r})}$ (see \cite{radonifying-survey} for more details). 
\end{rem}

The following lemma describes the relationship between $B_{\vartheta, r}$ and the Sobolev spaces (\Cref{sobolev-spaces}). Below when we write that two Banach spaces are equal we also mean that their norms are equivalent.

\begin{lem}\label{characterization-of-interpolation-spaces}
Let $a = 1$ if the boundary conditions in \Cref{sass-A} are Neumann, and $a = 0$ otherwise. Then for $1 < p < \infty$:
\begin{enumerate}
    \item For $1/(2p) + a/2 < \vartheta < 1$, $\B_{\vartheta,p} = \{u \in W^{2\vartheta,p} \mid Bu \equiv 0 \text{ on } \partial D\}$ (where $B = \partial_\nu$ for Neumann and $B = 0$ for periodic).
    \item For $0 < \vartheta < 1/(2p) + a/2$, $\B_{\vartheta,p} = W^{2\vartheta,p}$.
    \item For $-1 < \vartheta < 0$, $\B_{\vartheta,p} = (\B_{-\vartheta,p^*})^*$, where $1/p + 1/p^* = 1$. In particular, for periodic boundary conditions $\B_{\vartheta,p} = W^{2\vartheta,p}$ for all $-1 \leq \vartheta \leq 1$.

\end{enumerate}
\end{lem}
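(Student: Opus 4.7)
The plan is to identify $\B_{\vartheta,p}$ via the classical theory of complex interpolation and duality for fractional powers of elliptic operators. Under \Cref{sass-A}, each $-A_i$ is sectorial on $L^p$ with $0$ in the resolvent set, so the fractional powers $(-A_i)^\vartheta$ are well-defined; moreover for $\vartheta \in (0,1)$ the fractional domain $\B_{\vartheta,p}$ coincides, with equivalent norm, with the complex interpolation space $[L^p, \Dm(A_i)]_\vartheta$ (see, e.g., \cite[Chapter~2]{lunardi}). This reduces the proof of parts (1) and (2) to identifying this interpolation space.

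For parts (1) and (2), I would invoke Seeley's classical theorem on interpolation of $L^p$ with $W^{2,p}$-spaces subject to Neumann (respectively empty/periodic) boundary conditions. That result asserts that, for $\vartheta \in (0,1)$ avoiding the critical value $1/(2p) + a/2$,
\begin{equation*}
[L^p, \Dm(A_i)]_\vartheta \;=\; \begin{cases} W^{2\vartheta,p}, & 2\vartheta < 1/p + a, \\ \{u \in W^{2\vartheta,p} \mid Bu = 0 \text{ on }\partial D\}, & 2\vartheta > 1/p + a, \end{cases}
\end{equation*}
where the threshold reflects the well-known fact that $u|_{\partial D}$ requires $2\vartheta > 1/p$ and $\partial_\nu u|_{\partial D}$ requires $2\vartheta > 1 + 1/p$ to be meaningful on $W^{2\vartheta,p}$. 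In the periodic case there is no boundary on which to impose conditions, so $a=0$ and both regimes collapse to $W^{2\vartheta,p}$. Combining with the previous paragraph yields exactly the statements of (1) and (2).

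For part (3), I would argue by duality. By \Cref{rem:neg-powers-r-inj}, $(-A_i)^{-\vartheta}$ is an injective isometry from $L^p$ onto $\B_{\vartheta,p}$ for $\vartheta > 0$, and the definition of $\B_{\vartheta,p}$ for $\vartheta \in (-1,0)$ makes it the completion of $L^p$ in the norm induced by $(-A_i)^\vartheta$. The realizations of $A_i$ on $L^p$ and $L^{p^*}$ are formal adjoints under the $L^2$ pairing (when $p=2$ this is exactly the self-adjointness assumed in \Cref{sass-A}), and standard properties of fractional powers (see \cite[Chapter 2]{lunardi}) then give $((-A_i)^{-\vartheta})^* = (-A_i^*)^{-\vartheta}$. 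Extending the $L^2$ pairing to a duality between $\B_{-\vartheta,p}$ and $\B_{\vartheta,p^*}$ therefore identifies $\B_{\vartheta,p} \cong (\B_{-\vartheta,p^*})^*$ for $\vartheta \in (-1,0)$. In the periodic case, combining this with the identification $\B_{-\vartheta,p^*} = W^{-2\vartheta,p^*}$ from parts (1)-(2) and the definition $W^{2\vartheta,p} = (W^{-2\vartheta,p^*})^*$ on the torus yields $\B_{\vartheta,p} = W^{2\vartheta,p}$ for all $\vartheta \in (-1,1)$ as claimed.

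The main obstacle is the invocation of Seeley's interpolation theorem: while classical, the precise formulation requires one to verify that the hypotheses (smooth coefficients, smooth boundary, ellipticity, and in the Neumann case the Lopatinskii-Shapiro condition for $\partial_\nu$) are met by our operators. Matching conventions at the critical value $\vartheta = 1/(2p) + a/2$ — which is excluded both in our statement and in Seeley's theorem — is a minor bookkeeping issue. Beyond this, the argument is a chain of standard identifications.
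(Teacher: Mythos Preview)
Your proposal is correct and follows essentially the same route as the paper: parts (1)–(2) via Seeley's identification of fractional domains with complex interpolation spaces together with his interpolation-with-boundary-conditions theorem, and part (3) via duality through $(-A)^{\pm\vartheta}$ and self-adjointness. The paper's treatment of (3) is slightly more explicit—it writes down the isometry $u\mapsto\phi_u$, $\phi_u(v)=\langle(-A)^\vartheta u,(-A)^{-\vartheta}v\rangle$, and checks surjectivity by first showing $(-A)^{-\vartheta}$ extends to an isomorphism $\B_{-\vartheta,p^*}\to L^{p^*}$—but the content is the same as your abstract duality argument.
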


\begin{rem}
    If $A$ were not self-adjoint, then \Cref{characterization-of-interpolation-spaces} would still hold except for (3), where we would have $\B_{\vartheta,p} = (\tilde \B_{-\vartheta,p^*})^*$ instead, with $\tilde \B_{\vartheta,p}$ being the interpolation spaces corresponding to $A^*$.
\end{rem}

\begin{proof}
    The first and second claims follow from \cite[Theorem 3 and Corollary after Theorem 4]{complex-interp-equals-domain-of-frac-powers}, which identifies $\B_{\vartheta,p}$ as the complex interpolation space $[L^p, D(A)]_\theta$, and \cite[Theorem 4.1]{interpolation-with-bc}, which describes the complex interpolation spaces exactly as stated in \Cref{characterization-of-interpolation-spaces}.

    For the third claim, consider the map $u \mapsto \phi_u$ from $\B_{\vartheta,p}$ to $(\B_{-\vartheta,p^*})^*$ given by $\phi_u(v) \coloneqq \ip{(-A)^\vartheta u}{(-A)^{-\vartheta}v}$ for $u \in \B_{\vartheta,p}$ and $v \in \B_{-\vartheta,p^*}$. This map is an isometry because
    \begin{align*}
        \|u\|_{\vartheta,p} &= \|(-A)^\vartheta u\|_{L^p} = \sup_{\|w\|_{L^{p^*}} = 1} |\ip{(-A)^\vartheta u}{w}| \\
        &= \sup_{\|v\|_{-\vartheta,p^*} = 1} |\ip{(-A)^\vartheta u}{(-A)^{-\vartheta}v}| = \|\phi_u\|_{(\B_{-\vartheta,p^*})^*} \,,
    \end{align*}
     where we made the substitution $w = (-A)^{-\vartheta}v$. This substitution is justified because $(-A)^{-\vartheta}$ can be defined as an isomorphism from $\B_{-\vartheta,p^*}$ to $L^{p^*}$:
     
     Indeed, by \Cref{fractional-domains-deff} $L^{p^*}$ is dense in $\B_{-\vartheta,p^*}$ and $(-A)^{-\vartheta}$ is an isometry from the subspace $L^{p^*}$ of $\B_{-\vartheta,p^*}$ to $L^{p^*}$, so it can be extended to an isometry on all of $\B_{-\vartheta,p^*}$. Suppose its image is not dense in $L^{p^*}$. Then there is $0 \not \equiv \bar{w} \in L^{p}$ such that $\langle (-A)^{-\vartheta}v, \bar{w}\rangle = 0$ for each $v \in \B_{-\vartheta,p^*}$. Since $A$ is self-adjoint, we obtain $
     \langle v, (-A)^{-\vartheta}\bar{w}\rangle = 0$ for all smooth $v$, which yields $(-A)^{-\vartheta}\bar{w} = 0$.  Using that $(-A)^{-\vartheta}$ is injective (\Cref{rem:neg-powers-r-inj}), we obtain $w \equiv 0$, a contradiction. Thus, $(-A)^{-\vartheta}: \B_{-\vartheta,p^*} \to L^{p^*}$ is an isometry with dense image, so it is an isomorphism.

    Finally, $\phi_\cdot$ is surjective because every $\phi \in (\B_{-\vartheta,p^*})^*$ can be factored as the composition of $(-A)^{-\vartheta}: \B_{-\vartheta,p^*} \to L^{p^*}$ and $\psi \coloneqq \phi \circ (-A)^{\vartheta}: L^{p^*} \to \mathbb{C}$, and $\psi$, by the Riesz representation theorem, has the form $\psi(f) = \ip{g}{f}$ for some $g \in L^p$, meaning $\phi$ has the form $\phi(v) = \ip{g}{(-A)^{-\vartheta}v}$, and thus $\phi = \phi_{(-A)^{-\vartheta}g}$.
\end{proof}

Next, we define convolutions with respect to the semigroup from \Cref{semigrp-deff}. For more details about (stochastic) integration in infinite dimensions, we refer the reader to \cite{umd-integration, umd-existence}.

\begin{deff}
    We use $*$ to denote convolution and $\diamond$ to denote stochastic convolution. That is,
    \begin{equation}\label{convolutions}
        (S * y)_t \coloneqq \int_0^tS(t-s)y_sds \quad \text{and} \quad (S \diamond Z)_t \coloneqq \int_0^t S(t-s)Z_sdW_s \,.
    \end{equation}
\end{deff}

Our main tools when using the semigroup approach are 
\Cref{convolution-bounds}, which is instrumental for analyzing the long-term tightness properties of our system 
and \Cref{convolution-sup-bounds}, which is 
crucial for proving well-posedness.
We remark that \Cref{convolution-bounds} and  \Cref{convolution-sup-bounds}  hold generally for any analytic semigroup on a UMD Banach space of type $2$ with interpolation spaces defined as in \cite[Definition 5.30]{hairer-notes}. In the proofs below $f*g$  denotes the usual convolution 
of functions $f,g: (0,T) \to \R$ given by 
$$
(f*g)(t) = \int_0^tf(s)g(t-s)ds
$$
and $L^p(0,T)$ is  
the set of all such $f$ with 
$$
\|f\|_{L^p(0,T)} \coloneqq \Big(\int_0^T |f(t)|^pdt\Big)^{1/p} < \infty \,.
$$

\begin{lem}\label{convolution-bounds}
    
    Fix $2 \leq r < \infty$ and $\B \coloneqq L^r$. For all $\alpha \in \R$, $\infty > p \geq 1$, $\epsilon > 0$, $T > 0$ the following assertions hold:
    \begin{enumerate}[label=(\roman*)]
        \item For $x \in \B_{\alpha - 1/p + \epsilon}$,
        $$\int_0^T \|S(t)x\|^p_\alpha dt \lesssim \|x\|_{\alpha - 1/p + \epsilon}^p \,.$$
        \item For $y \in L^p([0,T]; \B_{\alpha - 1 + \epsilon})$,
        $$\int_0^T \|(S * y)_t\|^p_\alpha dt \lesssim \int_0^T \|y_t\|^p_{\alpha - 1 + \epsilon} dt \,.$$
        \item If $p > 1$ then for $Z \in L^p(\Omega \times [0,T];\gamma(\U, \B_{\alpha - 1/2 + \epsilon}))$,
        $$\E\Big[\int_0^T \|(S \diamond Z)_t\|^p_\alpha dt\Big] \lesssim \E\Big[\int_0^T \|Z_t\|^p_{\gamma(\U,\B_{\alpha - 1/2 + \epsilon})} dt\Big] \,.$$
    \end{enumerate}
    In the above, the constant in the $\lesssim$'s do not depend on $x,y,Z$, and the dependence on $T$ is uniform over bounded subsets of $(0,\infty)$.
\end{lem}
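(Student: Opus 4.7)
The plan is to reduce all three estimates to the basic analyticity bound
\begin{equation*}
\|(-A)^\beta S(t)\|_{\mathcal{L}(L^r,L^r)} \lesssim t^{-\beta} e^{-t} \qquad (\beta \geq 0,\ t > 0),
\end{equation*}
which follows from the analyticity of $S$, the spectral assumption in \Cref{sass-A}(i), and \cite[Proposition 2.1.1]{lunardi}. Since $(-A)^\gamma$ commutes with $S(t)$ on its domain, by \Cref{fractional-domains-deff} this promotes to
\begin{equation*}
\|S(t)\|_{\mathcal{L}(\B_{\gamma,r},\,\B_{\delta,r})} \lesssim t^{-(\delta-\gamma)\vee 0} e^{-t}\qquad \text{for } \delta \geq \gamma.
\end{equation*}

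For part (i) I would simply plug in $\gamma = \alpha - 1/p + \epsilon$, $\delta = \alpha$, and raise to the $p$-th power:
\begin{equation*}
\int_0^T \|S(t)x\|_\alpha^p\,dt \lesssim \|x\|_{\alpha - 1/p+\epsilon}^p \int_0^T t^{-1+p\epsilon} e^{-pt}\,dt,
\end{equation*}
and the time integral is finite since $-1 + p\epsilon > -1$. For part (ii) I would first dominate pointwise
\begin{equation*}
\|(S*y)_t\|_\alpha \leq \int_0^t (t-s)^{-(1-\epsilon)} e^{-(t-s)} \|y_s\|_{\alpha-1+\epsilon}\,ds = (k * \phi)(t)
\end{equation*}
with $k(t) = t^{-(1-\epsilon)} e^{-t} \in L^1(0,T)$ and $\phi(s) = \|y_s\|_{\alpha-1+\epsilon}$, and then invoke Young's convolution inequality $\|k * \phi\|_{L^p(0,T)} \leq \|k\|_{L^1(0,T)} \|\phi\|_{L^p(0,T)}$.

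Part (iii) is the main obstacle, because for general $p \in (1,\infty)$ one cannot just combine Burkholder--Davis--Gundy with Young's inequality. The right framework is stochastic maximal $L^p$-regularity in UMD Banach spaces of type $2$: since $r \geq 2$, the space $L^r$ is UMD of type $2$, and the operator $A$ in \Cref{sass-A} admits a bounded $H^\infty$-calculus of angle less than $\pi/2$ on every sector containing its spectrum (this follows from self-adjointness on $L^2$ in \Cref{sass-A}(ii), together with standard $L^p$-extrapolation for elliptic operators, see the discussion in \cite{umd-integration,umd-existence}). The van Neerven--Veraar--Weis stochastic maximal regularity theorem then gives, for any $\vartheta \in \R$, $\epsilon > 0$, and $p \in (1,\infty)$,
\begin{equation*}
\E\int_0^T \|(-A)^\vartheta(S \diamond Z)_t\|_{L^r}^p\,dt \lesssim \E\int_0^T \|Z_t\|_{\gamma(\U,\,\B_{\vartheta - 1/2 + \epsilon,\,r})}^p\,dt,
\end{equation*}
with the implicit constant uniform on bounded time intervals. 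Taking $\vartheta = \alpha$ and recalling $\|\cdot\|_\alpha = \|(-A)^\alpha \cdot\|_{L^r}$ yields the claim. For the particular range $p \geq 2$, a more elementary route is available: apply the $\gamma$-BDG inequality in the UMD type $2$ space $\B_\alpha$ to get $\E\|(S\diamond Z)_t\|_\alpha^p \lesssim \E\bigl(\int_0^t\|S(t-s)Z_s\|_{\gamma(\U,\B_\alpha)}^2\,ds\bigr)^{p/2}$, use $\|S(t-s)Z_s\|_{\gamma(\U,\B_\alpha)} \lesssim (t-s)^{-(1/2-\epsilon)}e^{-(t-s)}\|Z_s\|_{\gamma(\U,\B_{\alpha-1/2+\epsilon})}$ from \Cref{rmk:gropp}, integrate in $t$, and apply Young's inequality on the level of $L^{p/2}$-norms in time to the resulting convolution with kernel $k(t)^2$; the singularity $t^{-(1-2\epsilon)}$ is integrable, so the argument closes. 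I would write the proof of (iii) by combining these two approaches, using the elementary one for $p \geq 2$ and citing \cite{umd-integration,umd-existence} for $p \in (1,2)$.
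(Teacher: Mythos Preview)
For (i), (ii), and the $p \ge 2$ half of (iii) your argument is correct and coincides with the paper's: everything is reduced to the smoothing bound $\|S(t)\|_{\mathcal L(\B_\gamma,\B_\delta)} \lesssim t^{-(\delta-\gamma)}$, followed by direct integration for (i), Young's convolution inequality with an $L^1$ kernel for (ii), and the one-sided type-$2$ estimate \cite[Corollary~3.10]{umd-integration} plus Young in $L^{p/2}(0,T)$ for (iii). The paper runs exactly this argument, without splitting (iii) into cases.

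The genuine gap is your treatment of (iii) for $p\in(1,2)$. The van~Neerven--Veraar--Weis stochastic maximal $L^p$-regularity theorem you invoke is proved only for $p\in(2,\infty)$, and neither \cite{umd-integration} nor \cite{umd-existence} contains a version for $p<2$; so the citation does not deliver what you need. Worse, no argument can succeed here because the inequality is actually false for $p<2$: take $\U=\R$, fix an eigenfunction $e$ of $A$ with eigenvalue $-\lambda<0$, and set $Z_s = N\,\mathbf 1_{[0,1/N]}(s)\,e$ (deterministic). All the fractional norms on $e$ differ only by powers of $\lambda$, and a direct Gaussian computation gives
\[
\E\int_0^T \|(S\diamond Z)_t\|_\alpha^p\,dt \;\sim\; N^{p/2},
\qquad
\int_0^T \|Z_s\|_{\gamma(\U,\B_{\alpha-1/2+\epsilon})}^p\,ds \;\sim\; N^{p-1},
\]
so the ratio $N^{1-p/2}\to\infty$ whenever $p<2$. (The paper's own last step applies Young's inequality in $L^{p/2}(0,T)$, which is likewise only valid for $p/2\ge 1$, so its proof is complete precisely on the same range $p\ge 2$.) Your instinct to isolate the $p<2$ case was correct; the resolution is not a different tool but that the hypothesis should read $p\ge 2$.
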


\begin{proof}
    The proof relies heavily on the estimate

    \begin{equation}\label{semigrp-bound}
        \|S(t)x\|_\alpha \lesssim t^{\beta - \alpha}\|x\|_\beta
    \end{equation}
    
    (see \cite[Exercise 5.41]{hairer-notes}).    Since $\int_0^1 t^{p(\beta - \alpha)}dt < \infty$ if $\beta - \alpha > -1/p$, the statement (i) follows from \eqref{semigrp-bound}.

    Define $f_p(t) \coloneqq t^p$ and note $\|f_p\|_{L^1(0,T)} \lesssim_p 1$ for $p > -1$.
    
    For (ii), apply Young's inequality:
    \begin{multline*}
        \int_0^T \|(S * y)_t\|^p_\alpha dt \leq \int_0^T \Big(\int_0^t \|S(t-s)y_s\|_\alpha ds\Big)^pdt \\
        \begin{aligned}
        &\lesssim \int_0^T \Big(\int_0^t f_{-1+\epsilon}(t-s)\|y_s\|_{\alpha - 1 + \epsilon} ds\Big)^pdt \\
        &= \Big\|f_{-1+\epsilon}(t) *\|y_t\|_{\alpha - 1 + \epsilon}\Big\|^p_{L^p(0,T)} \leq \|f_{-1+\epsilon}\|_{L^1(0,T)}^p \int_0^T \|y_t\|^p_{\alpha - 1 + \epsilon} dt \,.
    \end{aligned}
    \end{multline*}

   To prove (iii), first note that \eqref{eq:gron} and \eqref{semigrp-bound} give
  \begin{equation}
  \begin{aligned}
  \label{semigrp-bound-Z}
        \|S(t)Z\|_{\gamma(\U, \B_\alpha)} 
        &= \Big(\E \Big\| S(t) \Big(\sum_{n =1}^\infty \gamma_n Z(h_n) \Big)\Big\|_{\B_{\alpha,r}}^2\Big)^{\frac{1}{2}} 
        \\
        &\lesssim t^{\beta - \alpha}
        \Big(\E \Big\|\Big(\sum_{n =1}^\infty \gamma_n Z(h_n) \Big)\Big\|_{\B_{\beta,r}}^2\Big)^{\frac{1}{2}}  
        \lesssim t^{\beta - \alpha}\|Z\|_{\gamma(\U,\B_\beta)} \,.
    \end{aligned}
    \end{equation}   
 Then use  \cite[Corollary 3.10]{umd-integration} and 
   \eqref{semigrp-bound-Z} to obtain
    \begin{align*}
        \E\Big[\Big\|\int_0^t S(t-s)Z_s dW_t\Big\|^p_\alpha\Big] &\lesssim \E\|S(t-s)Z_s\|^p_{L^2((0,T);\gamma(\U,\B_\alpha))} \\
        &= \E\Big[\Big(\int_0^t \|S(t-s)Z_s\|^2_{\gamma(\U,\B_\alpha)} dt\Big)^{p/2}\Big] \\
        &\lesssim \E\Big[\Big(\int_0^t f_{-1 + 2\epsilon}(t-s)\|Z_s\|^2_{\gamma(\U,\B_{\alpha-1/2+\epsilon})} dt\Big)^{p/2}\Big] \,,
    \end{align*}
and consequently Young's inequality yields
    \begin{align*}
        \E\Big[\int_0^T \|(S \diamond Z)_t\|^p_\alpha dt\Big] &\lesssim \E\Big[\Big\|f_{-1+2\epsilon}(t) * \|Z_t\|^2_{\gamma(\U,\B_{\alpha-1/2+\epsilon})}\Big\|_{L^{p/2}(0,T)}^{p/2}\Big] \\
        &\leq \Big\|f_{-1+2\epsilon}\Big\|_{L^{1}(0,T)}^{p/2}\E\Big[\int_0^T \|Z_t\|^p_{\gamma(\U,\B_{\alpha - 1/2 + \epsilon})} dt\Big] \,.
    \end{align*}
\end{proof}

For our special case of a positivity-preserving semigroup on $L^r$, \Cref{convolution-bounds} gives a bound on the $L^q$ norm for $q > r$ given a bound on the $L^r$ norm.

\begin{cor}\label{lq-to-lr}
    Fix $T > 0,p > 1$, $q \in (2,\infty]$ and suppose that $u_t,y_t^+,y_t^- \in L^p(\Omega;C([0,T];\cdom))$ and there is $0 < \eta^* \leq 1/2$ such that, for all $r \in [2,q)$, $Z_t \in L^p(\Omega \times [0,T]; \gamma(\U,\B_{\eta^*-1/2,r}))$. With $y_t = y^+_t - y_t^-$, suppose
    $$u_t = S(t)u_0 + (S * y)_t + (S \diamond Z)_t \,.$$
    Then for all $0 < \eta \leq \eta^*  $ there is $r \in [2,q)$ such that
    $$
    \E\Big[\int_0^T \|u_t\|_{L^q}^pdt\Big] \lesssim \|u_0\|_{L^r}^p + \E\Big[\int_0^T \|y^+_t\|_{\eta-1,r}^pdt\Big] + \E\Big[\int_0^T \|Z_t\|_{\gamma(\U,\B_{\eta-1/2,r})}^p dt\Big] \,.
    $$
    
    Specifically, we may choose 
    \begin{equation}\label{eq:lborc}
    \frac{1}{r} = \Big(\frac{\eta\wedge \frac{1}{p}}{N} + \frac{1}{q}\Big) \wedge \frac{1}{2} \,.
    \end{equation}
    Furthermore, the $\lesssim$ only depends on $p,q,\eta,r$ (and $T$, but the constant is uniform over all $T$ in any bounded subset of $(0,\infty)$).
\end{cor}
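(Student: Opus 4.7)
The strategy is to use positivity to reduce the bound to one involving $y^+$ alone (discarding the $y^-$ term with a favorable sign), then combine Sobolev embedding with the three convolution estimates of \Cref{convolution-bounds}.

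First, since $y_t^-\geq 0$ and the semigroup $S$ preserves positivity (\Cref{sass-A}(iii)), one has $(S*y^-)_t\geq 0$ pointwise on $D$. Combined with $u_t\geq 0$, the identity
$$ u_t + (S*y^-)_t = S(t)u_0 + (S*y^+)_t + (S\diamond Z)_t =: \Psi_t $$
yields $0\leq u_t\leq \Psi_t$ pointwise, and hence $\|u_t\|_{L^q}\leq \|\Psi_t\|_{L^q}$. The triangle inequality and $(a+b+c)^p\lesssim a^p+b^p+c^p$ reduce matters to estimating each of the three terms in $\Psi_t$ separately in the $L^p([0,T];L^q)$ (or $L^p(\Omega\times[0,T];L^q)$) norm.

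Next, fix a small $\epsilon\in(0,(\eta\wedge 1/p)/2)$, set $\alpha=(\eta\wedge 1/p)-2\epsilon>0$, and take $r$ as in \eqref{eq:lborc}. By \Cref{characterization-of-interpolation-spaces}, $\B_{\alpha,r}$ is either $W^{2\alpha,r}$ or a closed subspace of it with equivalent norm, so the standard Sobolev embedding $\B_{\alpha,r}\hookrightarrow L^q$ holds whenever $1/r-1/q\leq 2\alpha/N$ (with strict inequality if $q=\infty$). The choice of $r$ forces $1/r-1/q\leq (\eta\wedge 1/p)/N<2\alpha/N$, so $\|\cdot\|_{L^q}\lesssim \|\cdot\|_{\alpha,r}$ on $\B_{\alpha,r}$. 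Applying the three parts of \Cref{convolution-bounds} to the three summands of $\Psi_t$ with this $\alpha$ produces a bound in terms of $\|u_0\|_{\alpha-1/p+\epsilon,r}^p$, $\int_0^T\|y_t^+\|_{\alpha-1+\epsilon,r}^p\,dt$, and $\int_0^T\|Z_t\|_{\gamma(\U,\B_{\alpha-1/2+\epsilon,r})}^p\,dt$. By construction $\alpha-1/p+\epsilon\leq -\epsilon<0$, $\alpha-1+\epsilon<\eta-1$, and $\alpha-1/2+\epsilon<\eta-1/2$; since the negative fractional powers $(-A)^{-s}$ ($s>0$) are bounded on $L^r$ and on the corresponding spaces of $\gamma$-radonifying operators (\Cref{rmk:gropp}), we obtain continuous inclusions $L^r\hookrightarrow \B_{\alpha-1/p+\epsilon,r}$, $\B_{\eta-1,r}\hookrightarrow \B_{\alpha-1+\epsilon,r}$, and the analogous inclusion for the $Z$ norm, which completes the estimate.

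The main delicate point is coordinating the Sobolev threshold ($2\alpha$) with the regularity supplied by each convolution estimate (respectively gaining $1/p-\epsilon$, $1-\epsilon$, and $1/2-\epsilon$), keeping enough positive slack in $\epsilon$ to absorb the strict inequalities, and simultaneously ensuring $r\in[2,q)$. The conservative choice \eqref{eq:lborc} is off by a factor of $2$ from the sharp Sobolev threshold, and that surplus is precisely what allows the $\epsilon$-slack; the containment $r\in[2,q)$ is then immediate, since $1/r\leq 1/2$ by the $\wedge 1/2$ and $1/r>1/q$ because $(\eta\wedge 1/p)/N>0$.
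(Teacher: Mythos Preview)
Your proof is correct and follows essentially the same approach as the paper: exploit positivity of $u_t$ and $(S*y^-)_t$ to discard the $y^-$ contribution, then combine a Sobolev embedding with the three parts of \Cref{convolution-bounds}. The only organizational difference is that you work with a single target regularity $\alpha=(\eta\wedge 1/p)-2\epsilon$ on $\B_{\alpha,r}$ for all three summands, whereas the paper treats the initial datum separately via the dual embedding $L^r\subset\B_{-1/p+\epsilon,q}$ and handles the two convolutions at level $\alpha=\eta-\epsilon$ on $\B_{\alpha,r}$; your unified bookkeeping is arguably cleaner. One arithmetic slip: to guarantee the strict inequality $(\eta\wedge 1/p)/N<2\alpha/N$ you need $\epsilon<(\eta\wedge 1/p)/4$, not merely $\epsilon<(\eta\wedge 1/p)/2$; this is harmless since you only require $\epsilon$ to be small.
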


\begin{proof}
    First note that $(S * y)_t = (S * y^+)_t - (S * y^-)_t$. Since $S(t)$ preserves positivity (\Cref{sass-A}), $(S * y^-)_t \geq 0$. The assumed $u_t \geq 0$ implies
    \begin{equation}\label{eq:lq-to-lr}
        \|u_t\|_{L^q} \leq \|u_t + (S * y^-)_t\|_{L^q} \leq \|S(t)u_0\|_{L^q} + \|(S * y^+)_t\|_{L^q} + \|(S \diamond Z)_t\|_{L^q} \,.
    \end{equation}
    Then by Sobolev embedding there is $r \in [2,q)$ and $\epsilon > 0$ such that $W^{2\eta-2\epsilon,r} \subset L^q$ and 
    $L^r \subset W^{-2/p + 2\epsilon, q}$ (which is valid if $1/q > 1/r - 2\eta/N$, $1/r < 1/q + 2/(Np) $ and $\epsilon > 0$ is sufficiently small, that is, if for example \eqref{eq:lborc} holds), and \Cref{characterization-of-interpolation-spaces} implies $\B_{\eta - \epsilon,r} \subset L^q$ and $L^r \subset \B_{-1/p + \epsilon,q}$. Thus,

\begin{equation*}
        \|u_t\|_{L^q} \lesssim \|S(t)u_0\|_{q} + 
        \|(S * y^+)_t\|_{\eta-\epsilon,r} + \|(S \diamond Z)_t\|_{\eta-\epsilon,r} 
    \end{equation*}
and by \Cref{convolution-bounds}
\begin{align*}
 \E\int_0^T\|S(t)u_0\|_{q}^p dt \lesssim \|u_0\|^p_{-1/p+\epsilon, q} \lesssim \|u_0\|^p_{L^r} 
 \end{align*}
 and
 \begin{multline*}
 \E\int_0^T\|(S * y^+)_t\|_{\eta-\epsilon,r}^p dt+ \E\int_0^T\|(S \diamond Z)_t\|_{\eta-\epsilon,r}^p dt \\
 \lesssim
 \E\int_0^T\|y^+_t\|_{\eta-1,r}^p dt+ \E\int_0^T\| Z_t\|_{\gamma(\U,\B_{\eta-1/2,r})}^p dt    
 \end{multline*}
as desired.  
\end{proof}

\begin{lem}\label{convolution-sup-bounds}
    Fix $2 \leq r < \infty$ and $\B \coloneqq L^r$. For all $\alpha \in \R$, $\epsilon \in (0,1/2)$, $p > \epsilon^{-1}$, $T > 0$ the following hold:
    \begin{enumerate}[label=(\roman*)]
        \item For $y \in L^p([0,T]; \B_{\alpha - 1 + \epsilon})$,
        $$[0,T] \ni t \mapsto (S*y)_t \in \B_\alpha$$
        is continuous and
        $$
        \sup_{t \leq T} \|(S * y)_t\|^p_\alpha \lesssim \int_0^T \|y_t\|^p_{\alpha - 1 + \epsilon} dt \,.$$
        \item For $Z \in L^p(\Omega \times [0,T];\gamma(\U, \B_{\alpha - 1/2 + \epsilon}))$,
        $$[0,T] \ni t \mapsto (S\diamond Z)_t \in \B_\alpha$$
        is continuous and
        $$\E\Big[\sup_{t \leq T} \|(S \diamond Z)_t\|^p_\alpha \Big] \lesssim \E\Big[\int_0^T \|Z_t\|^p_{\gamma(\U,\B_{\alpha - 1/2 + \epsilon})} dt\Big] \,.$$
    \end{enumerate}
    In the above, the constants in $\lesssim$'s do not depend on $x,y,Z$, and the dependence on $T$ is uniform over bounded subsets of $(0,\infty)$.
\end{lem}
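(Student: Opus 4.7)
The plan is to apply the factorization method of Da Prato--Kwapień--Zabczyk, which was designed precisely to promote the $L^p$-in-time bounds of \Cref{convolution-bounds} into pathwise $L^\infty$-in-time bounds, and whose success here is exactly why the hypothesis $p > \epsilon^{-1}$ appears. Fix $\delta \in (1/p, \epsilon)$, possible because $p\epsilon > 1$. Using the classical identity $\frac{\sin(\pi\delta)}{\pi}\int_s^t (t-r)^{\delta-1}(r-s)^{-\delta}\,dr = 1$ and Fubini, rewrite
\[
    (S*y)_t = \frac{\sin(\pi\delta)}{\pi}\int_0^t (t-r)^{\delta-1} S(t-r) Y_r\,dr, \qquad Y_r \coloneqq \int_0^r (r-s)^{-\delta} S(r-s) y_s\,ds,
\]
and analogously in the stochastic case with $Y_r \coloneqq \int_0^r (r-s)^{-\delta} S(r-s) Z_s\,dW_s$.

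For part (i), I would first control the inner quantity $\|Y\|_{L^p(0,T;\B_\alpha)}$ by invoking the estimate $\|S(r-s)y_s\|_\alpha \lesssim (r-s)^{-1+\epsilon}\|y_s\|_{\alpha-1+\epsilon}$ used in the proof of \Cref{convolution-bounds}; this presents $\|Y_r\|_\alpha$ as bounded by the scalar convolution of $t^{-\delta-1+\epsilon}$ with $\|y_t\|_{\alpha-1+\epsilon}$, and since $\delta < \epsilon$ the kernel is in $L^1(0,T)$, so Young's inequality gives the $L^p$-bound $\int_0^T \|Y_r\|_\alpha^p\,dr \lesssim \int_0^T \|y_t\|_{\alpha-1+\epsilon}^p\,dt$. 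Then the outer sup-bound is obtained by Hölder on the factorization formula with conjugate exponent $p^* = p/(p-1)$: since $\|S(t-r)\|_{\B_\alpha\to\B_\alpha}$ is bounded uniformly on $[0,T]$,
\[
    \|(S*y)_t\|_\alpha \lesssim \Big(\int_0^t (t-r)^{(\delta-1)p^*}dr\Big)^{1/p^*} \|Y\|_{L^p(0,T;\B_\alpha)},
\]
and the first factor is finite uniformly in $t \in [0,T]$ precisely because $(\delta-1)p^* > -1 \iff \delta > 1/p$. For part (ii), the argument is identical in structure: an It\^o-type inequality (e.g.\ \cite[Corollary 3.10]{umd-integration}) applied to the weighted stochastic convolution $Y_r$, together with $\|S(r-s)Z_s\|_{\gamma(\U,\B_\alpha)} \lesssim (r-s)^{-1/2+\epsilon/2}\|Z_s\|_{\gamma(\U,\B_{\alpha-1/2+\epsilon/2})}$, bounds $\E\|Y_r\|_\alpha^p$ by a convolution with the kernel $t^{-2\delta-1+\epsilon}$ (an $L^1$ function since $2\delta < 2\epsilon$, adjusting $\epsilon$ slightly if needed); Young's inequality then gives the $L^p$-in-time bound on $Y$ in terms of $\|Z\|_{\gamma(\U,\B_{\alpha-1/2+\epsilon})}$, and the outer Hölder step proceeds exactly as before, after invoking Fubini to pull the supremum inside the expectation against the deterministic Hölder factor.

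Continuity of $t \mapsto (S*y)_t$ and $t \mapsto (S\diamond Z)_t$ with values in $\B_\alpha$ follows from the same factorization representation: once $Y \in L^p(0,T;\B_\alpha)$ (almost surely in the stochastic case), the map $t \mapsto \int_0^t (t-r)^{\delta-1} S(t-r) Y_r dr$ is continuous by strong continuity of $S$ on $\B_\alpha$ combined with a standard density argument, where the uniform sup-bound established above lets us pass continuity from simple $Y$ to $L^p$ limits. The main obstacle will be the bookkeeping in the stochastic case, namely carefully invoking the Burkholder--Davis--Gundy-type inequality for the weighted stochastic convolution $Y_r$ while tracking the $\gamma$-radonifying norms through the interpolation scale; this requires ensuring that the weight $(r-s)^{-\delta}$ does not push us outside the regime where \cite[Corollary 3.10]{umd-integration} applies, which is guaranteed precisely by $\delta < \epsilon$.
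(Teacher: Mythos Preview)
Your approach is correct and is essentially the same as the paper's: the paper simply cites the packaged factorization results (\cite[Lemma 3.6]{umd-existence} for (i) and \cite[Proposition 4.2]{umd-existence} together with the type-2 embedding $L^2((0,T);\gamma(\U,\B_{\alpha-1/2+\epsilon})) \hookrightarrow \gamma(L^2((0,T);\U),\B_{\alpha-1/2+\epsilon})$ for (ii)), whereas you reconstruct the factorization argument explicitly. One small arithmetic slip in (ii): using $\epsilon/2$ in the semigroup estimate produces the kernel $t^{-2\delta-1+\epsilon}$, whose integrability requires $2\delta < \epsilon$ and hence $p > 2/\epsilon$, stronger than the stated hypothesis; use the full $\epsilon$ (so $\|S(r-s)Z_s\|_{\gamma(\U,\B_\alpha)} \lesssim (r-s)^{-1/2+\epsilon}\|Z_s\|_{\gamma(\U,\B_{\alpha-1/2+\epsilon})}$) to get the kernel $t^{-2\delta-1+2\epsilon}$, and then $\delta \in (1/p,\epsilon)$ works exactly as claimed.
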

\begin{proof}
    The first claim is a direct corollary of \cite[Lemma 3.6]{umd-existence}. 

    For the second claim, fix $\beta \in ( 1/2 + 1/p - \epsilon,  1/2) $ and roughly follow \cite[Pages 1057-1058]{feller-stoch-reac-diff}:
    \begin{align*}
    \E\Big[\sup_{t \leq T} \|(S \diamond Z)_t\|^p_\alpha \Big] &\lesssim \int_0^T \E\Big[\|s \mapsto (t-s)^{-\beta}Z_s\|^p_{\gamma(L^2((0,T);\U),\B_{\alpha-1/2+\epsilon})}\Big]dt \\
    &\lesssim
    \int_0^T  \E\Big[\|s \mapsto (t-s)^{-\beta} Z_s\|^p_{L^2((0,T); \gamma(\U,\B_{\alpha-1/2+\epsilon}))}\Big]dt \\
    &= 
    \E \int_0^T  \Big[ \int_0^t (t-s)^{-2\beta}\| Z_s \|_{\gamma(\U,\B_{\alpha-1/2+\epsilon})}^2 ds \Big]^{\frac{p}{2}} dt
    \\
    &\lesssim \Big(\int_0^T (t-s)^{-2\beta}\Big)^{\frac{p}{2}}
    \E\Big[\int_0^T \|Z_t\|^p_{\gamma(\U,\B_{\alpha - 1/2 + \epsilon})} dt\Big] \,,
    \end{align*}
    where in the first inequality we used \cite[Proposition 4.2]{umd-existence}, in the second one the embedding $L^2((0,T); \gamma(\U,\B_{\alpha-1/2+\epsilon})) \subset \gamma(L^2((0,T);\U),\B_{\alpha-1/2+\epsilon})$ (this is valid because $\B_{\alpha-1/2+\epsilon}$ has type 2, see \cite[Page 24, the discussion before Corollary 3.10]{umd-integration} and the references cited within),
    
    and in the third one Young's  convolution inequality.
\end{proof}

\subsection{Well-posedness and It\^{o} Formulas}\label{sec:spde-well-posed}
Next we define precisely what we mean by a solution $x_t = (u_t,z_t)$ to \eqref{SPDE} and list assumptions which imply that the problem is well-posed and positivity preserving. We also prove these properties for the crucial ``projective process" 
 \begin{equation}\label{eq:proj-process-first-def}
 (r_t,v_t,z_t) = 
 (\|u_t\|_{L^1}, u_t/\|u_t\|_{L^1},z_t) \,.
\end{equation}

Since  we work with mild as opposed to what we call ``strong" solutions, we derive nontrivial It\^{o}/energy formulas for the evolution of certain norms. Finally, we provide sufficient conditions under which \Cref{as1} and \Cref{as2} hold.

Next, we introduce different strong and mild solutions to 
\begin{equation}\label{eq:gefx}
   dx_t = [Ax_t + F(x_t)]dt + G(x_t)dW_t \,. 
\end{equation}
For technical reasons we must allow local solutions as well, but we will omit the statement ``up to time $\tau$" when $\tau = \infty$.

\begin{deff}\label{def:strong}
Fix $r \in [2,\infty), \theta \in \R$, a stopping time $\tau$, and assume $x \in L^p(\Omega;C([0,T];B))$. 
   We say a process $x_t$ is a strong solution in $\B_{\theta,r}$ to \eqref{eq:gefx} up to time $\tau$
    if for all $T > 0$:
    \begin{enumerate}[label=(\roman*)]
        \item $\E[\int_0^{T\wedge \tau} \|x_t\|_{1+\theta, r} dt] < \infty$
        \item $F: B \to \B_{\theta, r}$ and $\E[\int_0^{T\wedge \tau} \|F(x_t)\|_{\theta,r} dt] < \infty$
        \item $G: B \to \gamma(\U,\B_{\theta, r})$ and $\E[\int_0^{T\wedge\tau} \|G(x_t)\|_{\theta,r}^2dt] < \infty$
    \end{enumerate}
    (see \Cref{fractional-domains-deff} for the definition of the norms) and furthermore
    \begin{equation}\label{eq:seqcn}
    x_{t \wedge \tau} = x_0 + \int_0^{t \wedge \tau} Ax_s + F(x_s)ds + \int_0^{t \wedge \tau} G(x_s)dW_s \,.    
    \end{equation}
\end{deff}

\begin{deff}\label{def:mild}
Fix numbers $r \in [2,\infty), \theta \in \R$, functions $F: B \to \B_{-1+\theta, r}$, $G: B \to \gamma(\U,\B_{-1/2+\theta, r})$, a stopping time $\tau$, and assume $x \in L^p(\Omega;C([0,T];B))$.  
     A process $x_t$ is a mild solution in $\B_{\theta,r}$ to \eqref{eq:gefx} up to time $\tau$

    if there is some $\epsilon > 0$ such that for all $T > 0$
    \begin{equation}\label{eq:int-condition-mild}
        \E\Big[\int_0^{T \wedge \tau} \|F(x_t)\|_{-1+\theta + \epsilon, r} + \|G(x_t)\|_{-1/2+\theta +\epsilon}^2dt\Big] < \infty \,,
    \end{equation}
    (see \Cref{fractional-domains-deff} for the definition of the norms)  
    and furthermore
    \begin{equation}\label{eq:mild-sol-def}
        x_t = S(t)x_0 +(S*F(x))_t + (S \diamond G(x))_t
    \end{equation}
    almost surely on the event $\{t \leq \tau\}$.
\end{deff}

\begin{rem}\label{rem:convolution-bounds-hold-locally}
    Note that \Cref{convolution-bounds} still holds for local mild solutions. For example, in the setting of \Cref{convolution-bounds}(ii) we have
    \begin{align*}
        \int_0^{T\wedge \tau} \|(S * y)_t\|_\alpha^pdt &= \int_0^{T \wedge \tau} \|(S * (y_\cdot1_{\cdot \leq \tau}))_t\|_\alpha^pdt \leq \int_0^T \|(S * (y_\cdot1_{\cdot \leq \tau}))_t\|_\alpha^pdt \\
        &\lesssim \int_0^{T \wedge \tau} \|y_t\|_{\alpha-1+\epsilon}dt \,,
    \end{align*}
    where \Cref{convolution-bounds}(ii) is applied for the last inequality. The analogous statement holds for \Cref{convolution-bounds}(iii) by the same reasoning.
\end{rem}

The following provides a relationship between strong and mild solutions:

\begin{lem}\label{mild-strong}
Fix $r \in [2,\infty), \theta \in \R$, $F: B \to \B_{\theta, r}$, $G: B \to \gamma(\U,\B_{1/2+\theta, r})$, a stopping time $\tau$, and assume $x \in L^p(\Omega;C([0,T];B))$.
    If $x_t$ is a mild solution in $\B_{\theta,r}$ of \eqref{eq:gefx} up to time $\tau$ and there is some $\epsilon > 0$ such that
    \begin{equation}\label{eq:mild-vs-strong}
        \|x_0\|_{\theta + \epsilon,r} + \E\Big[\int_0^{T \wedge \tau} \|F(x_t)\|_{\theta + \epsilon,r} + \|G(x_t)\|_{\theta + 1/2+\epsilon, r}^2dt\Big] < \infty \,,
    \end{equation}
    then $x_t$ is a strong solution in $\B_{\theta,r}$ up to time $\tau$. Conversely, if $x_t$ is a strong solution in $\B_{\theta,r}$ up to time $\tau$ and \eqref{eq:mild-vs-strong} holds, then $x_t$ is a mild solution in $\B_{\theta+1,r}$ up to time $\tau$.
\end{lem}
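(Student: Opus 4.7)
The proof splits into the two asserted implications; both rely on the convolution estimates of \Cref{convolution-bounds} combined with a stochastic Fubini / It\^{o}-type identity. In both directions I localize to $[0, T\wedge\tau]$ by replacing $x_\cdot$ with $x_{\cdot\wedge\tau}$, as in \Cref{rem:convolution-bounds-hold-locally}, so the stopping time plays no essential role.

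For the mild $\Rightarrow$ strong direction (both statements in $\B_{\theta,r}$), conditions (ii) and (iii) of \Cref{def:strong} follow immediately from \eqref{eq:mild-vs-strong} via the continuous embeddings $\B_{\theta+\epsilon,r}\hookrightarrow \B_{\theta,r}$ and $\gamma(\U,\B_{\theta+1/2+\epsilon,r}) \hookrightarrow \gamma(\U,\B_{\theta,r})$. Condition (i) is the genuine regularity upgrade, and I would obtain it by applying the three parts of \Cref{convolution-bounds} to the three terms of the mild formula with $\alpha = 1+\theta$: part~(i) with $p=1$ controls $S(t)x_0$ via $x_0 \in \B_{\theta+\epsilon,r}$, part~(ii) with $p=1$ controls $(S*F(x))_t$ via $F(x_t) \in \B_{\theta+\epsilon,r}$, and part~(iii) with $p=2$ controls $(S\diamond G(x))_t$, after which Cauchy--Schwarz in time passes from the resulting $L^2$ bound to an $L^1$ bound. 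Finiteness of all three is exactly \eqref{eq:mild-vs-strong}. The integral identity \eqref{eq:seqcn} then follows from the mild formula by integrating in time, swapping integrals via (stochastic) Fubini, and using $\int_0^u AS(s)y\, ds = S(u)y - y$ for $y \in \B_{1,r}$.

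For the converse, suppose $x_t$ is strong in $\B_{\theta,r}$. The integrability condition \eqref{eq:int-condition-mild} for mildness in $\B_{\theta+1,r}$ reduces to $\E\int [\|F(x_t)\|_{\theta+\epsilon',r} + \|G(x_t)\|_{\theta+1/2+\epsilon',r}^2]\, dt < \infty$, which is precisely \eqref{eq:mild-vs-strong} upon choosing $\epsilon'<\epsilon$. To verify the mild formula \eqref{eq:mild-sol-def}, I would fix $t > 0$ and apply the infinite-dimensional It\^{o} chain rule to $s \mapsto S(t-s)x_s$ on $s \in [0, t\wedge\tau]$. Since $S$ is linear in its argument, no second-order correction arises and the chain rule gives
\begin{align*}
d(S(t-s)x_s) &= [-AS(t-s)x_s + S(t-s)(Ax_s + F(x_s))]\,ds + S(t-s)G(x_s)\,dW_s \\
&= S(t-s)F(x_s)\,ds + S(t-s)G(x_s)\,dW_s,
\end{align*}
and integrating from $0$ to $t\wedge\tau$ and using $S(0)=I$ yields exactly \eqref{eq:mild-sol-def}.

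The main obstacle in both directions is justifying the It\^{o}/Fubini manipulation at precisely the right regularity: $(s,\omega) \mapsto AS(t-s)x_s$ must be integrable in $\B_{\theta,r}$ despite the endpoint singularity at $s=t$. This is handled by combining the analytic semigroup bound $\|AS(u)\|_{\mathcal{L}(\B_{\theta+\epsilon'},\B_\theta)} \lesssim u^{-1+\epsilon'}$ (integrable for $\epsilon'>0$) with the regularity $x_s \in \B_{\theta+\epsilon',r}$ supplied by \eqref{eq:mild-vs-strong}; the stochastic counterpart uses the stochastic Fubini theorem for $\gamma$-radonifying operators in UMD spaces as developed in \cite{umd-integration}.
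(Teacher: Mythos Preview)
Your proposal is correct and follows essentially the same approach as the paper: conditions (ii), (iii) of \Cref{def:strong} from the embedding, condition (i) from \Cref{convolution-bounds}, the strong identity via stochastic Fubini, and in the converse direction the mild integrability from \eqref{eq:mild-vs-strong} with the mild formula obtained by differentiating $s\mapsto S(t-s)x_s$. The only cosmetic difference is that the paper carries out the last step in the dual, applying \cite[Corollary~2.6]{ito-formula} to the scalar process $(S(T-t)^*l)(x_t)$ for $l\in\Dm(A^\dagger)$ rather than to $S(t-s)x_s$ directly; this sidesteps the need to justify a Banach-space-valued product rule at the endpoint singularity, but the underlying computation is the same cancellation you identify.
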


\begin{proof}
First, assume that $x_t$ is a mild solution. 
    The conditions (ii) and (iii) in \Cref{def:strong} on $F$ and $G$ are clearly satisfied by \eqref{eq:mild-vs-strong}. The condition (i) is a consequence of \Cref{convolution-bounds} and \eqref{eq:seqcn} follows from (stochastic) Fubini's theorem as in \cite[Proposition 1.3.5]{mild-strong}.

Second, assume that  $x_t$ is a strong solution.  Since \eqref{eq:mild-vs-strong} holds in $\B_{\theta, r}$, then \eqref{eq:int-condition-mild} holds with $\theta$ replaced by $\theta+1$. Then use \cite[Corollary 2.6]{ito-formula} on $(S(T-t)^*l)(x_t)$, where $l \in \Dm(A^\dagger)$ (see \cite[Proposition 6.7]{hairer-notes} for the relevant details) to obtain \eqref{eq:mild-sol-def}.
\end{proof}

As expected, if $F,G$ have some sort of global Lipschitz property then the problem \eqref{eq:gefx} is well-posed:

\begin{lem}\label{general-mild-existence}
    
    Recall $B = C(\overline D)$ and set $\B = L^r$ for some $r > N$
    (set $\theta = 0$ in comparison to \Cref{def:strong}-\ref{def:mild}).  Fix $\eta \in (N/(2r), 1/2)$ and $\theta_F, \theta_G \leq 0$ that satisfy $\theta_F > \eta - 1$, $\theta_G > \eta - 1/2$. If $F: B \to \B_{\theta_F}$ and $G: B \to \gamma(\U,\B_{\theta_G})$ are globally Lipschitz, then for all $p,T > 0$ the following statements hold: 
    \begin{enumerate}[label=(\roman*)]
        \item The problem \eqref{eq:gefx} has a unique mild solution $x_t \in L^p(\Omega; C([0,T];B))$ (\Cref{def:mild}) for any initial condition $x_0 \in B$.
        \item If $F_n: B \to \B_{\theta_F}$ and $G_n: B \to \gamma(\U,\B_{\theta_G})$ are globally Lipschitz for each $n \geq 1$, $F_n \to F, G_n \to G$ pointwise, and $x_0^{(n)} \to x_0$ in $B$, then the solutions $x_t^{(n)}$ converge to $x_t$ in $L^p(\Omega; C([0,T];B))$.
    \end{enumerate}
\end{lem}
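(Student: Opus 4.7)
The plan is to construct the mild solution as the unique fixed point of the Picard map
\[
\Phi(x)_t := S(t)x_0 + (S*F(x))_t + (S\diamond G(x))_t
\]
on the Banach space $Y_T := L^p(\Omega; C([0,T]; B))$. The structural ingredient is that, since $\eta > N/(2r)$, Sobolev embedding together with \Cref{characterization-of-interpolation-spaces} yields a continuous embedding $\B_{\eta,r} \hookrightarrow C(\overline D) = B$. Moreover, under \Cref{sass-A} the semigroup $S(t)$ extends to a strongly continuous contraction semigroup on $B$, so $t \mapsto S(t)x_0 \in C([0,T]; B)$ with $\sup_{t \leq T}\|S(t)x_0\|_B \lesssim \|x_0\|_B$.

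First I would verify that $\Phi$ maps $Y_T$ into itself. For each given $p$, choose $\epsilon > 0$ small enough that simultaneously $p > \epsilon^{-1}$, $\eta - 1 + \epsilon \leq \theta_F$, and $\eta - 1/2 + \epsilon \leq \theta_G$; this is possible because $\theta_F > \eta - 1$ and $\theta_G > \eta - 1/2$ are strict. Then \Cref{convolution-sup-bounds} applied with $\alpha = \eta$, combined with the embedding $\B_{\eta,r} \hookrightarrow B$, gives
\begin{equation*}
    \E\Big[\sup_{t \leq T}\|\Phi(x)_t\|_B^p\Big] \lesssim \|x_0\|_B^p + \E\int_0^T \|F(x_t)\|_{\theta_F}^p + \|G(x_t)\|_{\theta_G}^{2p} \, dt \,,
\end{equation*}
and the integrals are finite by the global Lipschitz assumptions combined with $x \in Y_T$.

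Second, I would show that $\Phi$ is a contraction for small $T$. By linearity of the convolutions in $F$ and $G$ and a second application of \Cref{convolution-sup-bounds}, one obtains
\begin{equation*}
    \E\Big[\sup_{t \leq T} \|\Phi(x)_t - \Phi(y)_t\|_B^p\Big] \lesssim T^{p \epsilon'} \E\Big[\sup_{t \leq T}\|x_t - y_t\|_B^p\Big]
\end{equation*}
for some $\epsilon' > 0$ (the small-$T$ factor originates in the fact that the bound on $\int_0^T \|(S*y)_t\|_\eta^p dt$ via the semigroup estimate $\|S(t)\|_{\B_\beta \to \B_\eta} \lesssim t^{\beta - \eta}$ has an integrable singularity, so a H\"older rearrangement produces a positive power of $T$). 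Choose $T_0$ making the prefactor strictly less than $1$, obtain a unique fixed point on $[0,T_0]$, and iterate on intervals of length $T_0$ to reach arbitrary $T$; the restart is legitimate because $x_{T_0} \in L^p(\Omega; B)$.

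Third, for part (ii) I would decompose
\[
F_n(x^{(n)}_t) - F(x_t) = [F_n(x^{(n)}_t) - F(x^{(n)}_t)] + [F(x^{(n)}_t) - F(x_t)]
\]
and similarly for $G$, and insert this into the mild formulation for $x^{(n)} - x$. The second bracket contributes a term of order $L_F \|x^{(n)}_t - x_t\|_B$, absorbed by the short-time contraction together with a Gr\"onwall-type iteration; the first bracket equals $(F_n - F)(x^{(n)}_t)$, which vanishes pointwise and is controlled by dominated convergence provided $(x^{(n)})$ is bounded in $Y_T$ uniformly in $n$. The latter uniform bound follows from Step~1 upon noting that the estimate uses only the Lipschitz constant of the limit $F$ together with a possibly $n$-dependent constant $\|F_n(0) - F(0)\|_{\theta_F}$, and the pointwise convergence $F_n(0) \to F(0)$ gives uniform boundedness of the zero-values.

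The main obstacle I anticipate is precisely the continuous-dependence statement~(ii). The decomposition above cleverly places the full Lipschitz burden on the \emph{limit} $F$, thereby avoiding uniform Lipschitz bounds on the approximants $F_n$; nevertheless, the dominated-convergence step requires an integrable majorant for $\|(F_n - F)(x^{(n)}_t)\|_{\theta_F}^p$, and producing one may necessitate either additional control on how $F_n$ approximates $F$ (e.g.\ local-uniform convergence on bounded sets of $B$) or a tightness argument showing that $(x^{(n)})$ takes values in a compact subset of $B$ with high probability. A secondary technicality is the strong continuity of $S(t)$ on $C(\overline D)$, which for Neumann and periodic elliptic operators is standard but needs careful citation, for instance via Feller-semigroup theory or by interpolating between the $L^r$-regularity of $S(t)x_0$ and the embedding $\B_{\eta,r} \hookrightarrow B$.
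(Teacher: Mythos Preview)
The paper does not argue directly at all: its proof of this lemma is a two-line citation of \cite[Proposition~3.8]{feller-stoch-reac-diff}, after which it only verifies the two non-automatic hypotheses of that reference, namely (A5) the embedding $\B_{\eta,r}\hookrightarrow C(\overline D)$ (via Sobolev and \Cref{characterization-of-interpolation-spaces}) and (A4) the strong continuity of $S(t)$ on $B$ (via density of $\B_{\eta,r}$ in $B$ together with $\|S(t)\|_{\Ll(B,B)}\le 1$). You have independently located both of these ingredients --- the embedding in your opening paragraph and the $C_0$-property on $B$ as your ``secondary technicality'' --- so at the level of \emph{what must be checked} you and the paper agree. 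Your route differs in that you unroll the Picard contraction instead of quoting it. For part~(i) this is sound; two small repairs: \Cref{convolution-sup-bounds} forces $p>\epsilon^{-1}>2$, so the case of small $p$ has to be recovered a posteriori by the embedding $L^q(\Omega)\hookrightarrow L^p(\Omega)$ on the probability space, and in your displayed estimate the exponent on $\|G(x_t)\|_{\theta_G}$ should be $p$, not $2p$.

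For part~(ii) the obstacle you flag is genuine and is exactly why the paper defers to \cite{feller-stoch-reac-diff} rather than arguing directly. Your decomposition cleverly places the Lipschitz burden on the limit $F$, but controlling $(F_n-F)(x^{(n)}_t)$ with only \emph{pointwise} convergence of $F_n$ and no uniform Lipschitz bound on the approximants is delicate: the argument $x^{(n)}_t$ moves with $n$, so pointwise convergence at fixed points does not immediately give what dominated convergence needs. Kunze--van~Neerven's proof handles this carefully; if you want a self-contained argument you will either need to reproduce their reasoning or add the hypothesis that the $F_n$ are uniformly Lipschitz --- which, it is worth noting, is satisfied in every application of this lemma later in the paper (the Yosida approximants $R_nF$, the projections $G_n\circ\mathfrak p_k$, and the parameter-dependence in \textit{Step~1} of \Cref{bigthm} all come with uniform Lipschitz constants).
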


\begin{proof}
    The statements are consequences of \cite[Proposition 3.8]{feller-stoch-reac-diff}. Note that \cite[(A5)]{feller-stoch-reac-diff} follows from the Sobolev embedding $W^{2\eta, r} \subset C(\overline D)$ and \Cref{characterization-of-interpolation-spaces}. Furthermore \cite[(A4)]{feller-stoch-reac-diff} follows from \cite[Exercise 5.2]{hairer-notes}; indeed, by \cite[(A5)]{feller-stoch-reac-diff} $S(t)x \to x$ as $t \downarrow 0$ for all $x \in \B_{\eta,r}$, and we have $\|S(t)\|_{\Ll(B,B)} \leq 1$ (see \Cref{rem:subtract-from-c}).
\end{proof}

Often we would rather work with strong solutions in order to use It\^{o}'s formula \cite[Theorem 2.4]{ito-formula}. This is done using the following approximation scheme:

\begin{lem}\label{def:yosida}
    Consider the notation and assumptions from \Cref{general-mild-existence}. For each $n \geq 1$ define
    $$
    R_n \coloneqq n(n - A)^{-1} = \int_0^\infty ne^{-nt}S(t)dt 
    $$
    
    and $F_n \coloneqq R_nF$,  $G_n \coloneqq R_nG$. 
    Then, there is a unique strong solution $x_n(t)$ (called Yosida approximations of \eqref{eq:gefx})
    of the problem
    \begin{equation}\label{eq:yapxn}
    dx_n(t) = [Ax_n(t) + F_n(x_n(t))]dt + G_n(x_n(t))dW_t \,, \quad x_n(0) = R_n x_0 \,.
    \end{equation}
    Furthermore, $(x_n)_{n \geq 1}$ converge as $n \to \infty$ to the mild solution $x$
    of \eqref{eq:gefx} in $L^p(\Omega; C([0,T];B))$ for all $p,T > 0$.
\end{lem}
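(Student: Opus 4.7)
The plan is to construct $x_n$ as a mild solution to \eqref{eq:yapxn} via \Cref{general-mild-existence}, upgrade it to a strong solution using \Cref{mild-strong} by exploiting the smoothing properties of $R_n$, and then extract convergence from \Cref{general-mild-existence}(ii) together with the classical fact that $R_n y \to y$ as $n \to \infty$.

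For existence, the key observation is that $R_n = n(n-A)^{-1}$ is bounded on every $\B_{\theta,r}$, with operator norm at most $1$ (since $S$ is a contraction on $B$ and on $L^r$ by \Cref{rem:subtract-from-c}, and $A$ commutes with $S$ so preserves the $\B_\theta$ norm), and that its range lies in $\mathcal{D}(A)$. Hence $F_n = R_n \circ F$ and $G_n = R_n \circ G$ are globally Lipschitz from $B$ into $\B_{\theta_F,r}$ and $\gamma(\U,\B_{\theta_G,r})$ respectively, with Lipschitz constants uniform in $n$. Applying \Cref{general-mild-existence}(i) produces the unique mild solution $x_n \in L^p(\Omega;C([0,T];B))$ with $x_n(0) = R_n x_0$.

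To upgrade $x_n$ to a strong solution I would apply \Cref{mild-strong} in $\B_{0,r}$: the crucial point is that $F_n, G_n$ actually take values in the smoother spaces $\B_{\theta_F+1,r}$ and $\gamma(\U,\B_{\theta_G+1,r})$ with $\theta_F + 1 > 0$ and $\theta_G + 1/2 > 0$. Using $\|R_n y\|_{\theta+1,r} = \|A R_n y\|_{\theta,r} \leq 2n \|y\|_{\theta,r}$ together with the Lipschitz bounds on $F,G$ gives $\|F_n(y)\|_{\theta_F+1,r} + \|G_n(y)\|_{\theta_G+1,r} \leq C_n(1 + \|y\|_B)$, so since $x_n \in L^p(\Omega;C([0,T];B))$ the integrability condition \eqref{eq:mild-vs-strong} is satisfied for $\epsilon = (\theta_F+1) \wedge (\theta_G + 1/2)$. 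The initial condition $R_n x_0 \in \mathcal{D}(A) \subset \B_{\epsilon,r}$ trivially belongs to the required space, completing the upgrade.

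For convergence I would invoke \Cref{general-mild-existence}(ii). The uniform Lipschitz bound on $F_n, G_n$ in the original spaces $\B_{\theta_F,r}$ and $\gamma(\U,\B_{\theta_G,r})$ has already been established; pointwise convergence $F_n \to F$ and $G_n \to G$ reduces to showing $R_n y \to y$ for each fixed $y$ in the target space, which follows from the representation
$$R_n y - y = \int_0^\infty e^{-s}\bigl(S(s/n)y - y\bigr)\,ds$$
and the dominated convergence theorem, using strong continuity of $S$ on $\B_{\theta_F,r}$, $\gamma(\U,\B_{\theta_G,r})$ and $B$ (the last being guaranteed by the Neumann or periodic boundary conditions of \Cref{sass-A}, together with density of $\mathcal{D}(A)$ in the interpolation spaces). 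Applying this with $y = x_0, F(x), G(x)$ yields the required pointwise convergence, and \Cref{general-mild-existence}(ii) then gives $x_n \to x$ in $L^p(\Omega;C([0,T];B))$. The main obstacle is careful bookkeeping of the interpolation-space exponents: one must check that $R_n$ simultaneously (a) shifts the range of $F$ and $G$ into a scale strictly smoother than what is needed for the mild-to-strong upgrade, and (b) acts as a contraction on the original scale so that the Lipschitz constants entering the convergence argument are uniform in $n$.
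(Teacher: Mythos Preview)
Your proposal is correct and follows essentially the same route as the paper: both use \Cref{general-mild-existence}(i) for existence of a mild solution (via the contraction property $\|R_n\|_{\mathcal{L}(\B_\theta,\B_\theta)}\le 1$), upgrade to strong via \Cref{mild-strong} using the smoothing $R_n\in\mathcal{L}(\B_\theta,\B_{\theta+a})$, and deduce convergence from \Cref{general-mild-existence}(ii) together with $R_n\to I$ strongly.

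The one notable difference is in how $G_n\to G$ pointwise in $\gamma(\U,\B_{\theta_G,r})$ is obtained. The paper uses the operator-norm convergence $R_n\to I$ in $\mathcal{L}(\B_\theta,\B_{\theta-\epsilon})$ (from $R_n-I=AR_n$ and \eqref{semigrp-bound}), together with the strict inequality $\theta_G>\eta-1/2$, which allows $\theta_G$ to be slightly decreased. You instead invoke strong continuity of $t\mapsto S(t)$ on $\gamma(\U,\B_{\theta_G,r})$; this is valid, but note that it is not immediate from strong continuity on $\B_{\theta_G,r}$---it requires an extra dominated-convergence step at the level of the Gaussian series defining the $\gamma$-norm (for fixed $T\in\gamma(\U,\B_{\theta_G,r})$, write $\|S(t)T-T\|_\gamma^2=\E\|(S(t)-I)\xi\|^2$ with $\xi=\sum_k\gamma_kTh_k$, use strong continuity pathwise, and dominate by $4\|\xi\|^2$). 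Your approach is arguably cleaner since it does not consume any slack in $\theta_G$, while the paper's approach avoids touching the $\gamma$-radonifying structure at all.
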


\begin{proof}
    Since the proof follows from  standard properties of the resolvent listed below, we leave details to the reader. 
    \begin{enumerate}[label = (\roman*)]
    
    \item From \eqref{semigrp-bound} follows $\|R_n\|_{\Ll(\B_\theta,\B_\theta)} \leq 1$ for any $\theta \in \R$. Thus, \Cref{general-mild-existence} still applies with $F$ and $G$ replaced respectively by $F_n$ and $G_n$. Hence, there is a unique mild  solution $x_n$ of \eqref{eq:yapxn}.
        
        \item By combining \cite[Exercise 5.42]{hairer-notes} 
        and \cite[Proposition 2.1.11]{lunardi}
        we obtain that $S(t)$ is an analytic semigroup on $\B_\theta$
        for any $\theta \in \R$. In paticular, $\|S(t)x - x\|_{\B_\theta} \to 0$ as $t \to 0^+$, and therefore 
         $R_mx \to x$ in $\B_\theta$ for any $\theta \in \R$ for all $x \in \B_\theta$. Thus, $F_m \to F$ as $m \to \infty$ point-wise.
        
         \item Since $R_m - I = AR_m$, then \eqref{semigrp-bound} implies $R_m \to I$ in $\Ll(\B_\theta,\B_{\theta-\epsilon})$ for all $\epsilon > 0$. Hence, $G_m \to G$ point-wise (after slightly decreasing $\theta_G$), and after combining with (ii), \Cref{general-mild-existence} gives the convergence of $x_m$ to $x$.  
        
        \item Note that $R_m \in \Ll(\B_{\theta},\B_{\theta+a})$ for all $a < 1$, which implies \eqref{eq:mild-vs-strong}. We conclude by \Cref{mild-strong} that $x_m$ is a strong solution.
    \end{enumerate}
\end{proof}

As previously mentioned, we would like to apply It\^{o}'s formula to the Yosida approximations. The following lemma shows that as $n \to \infty$ \eqref{eq:yapxn} certain terms coming from It\^{o}'s formula converge as expected:

\begin{lem}\label{lem:ito-convergence}
    Consider the setup of \Cref{general-mild-existence} and \Cref{def:yosida}, and suppose $\theta_F = \theta_G = 0$. Let $\Phi: \B \to \R$ be twice continuously differentiable with linearly bounded first and second derivatives: $\|\Phi'(x)\|_{\Ll(\B,\R)} + \|\Phi''(x)\|_{\Ll(\B,\Ll(\B,\R))} \lesssim 1 + \|x\|_B$. Then for all $T > 0$, $p \geq 2$:
    \begin{itemize}
        \item $\lim_{n \to \infty} \E\Big[\int_0^T |\Phi'(x_n(t))F_n(x_n(t)) - \Phi'(x(t))F(x(t))|^pdt\Big] = 0$.
        \item $\lim_{n \to \infty} \E\Big[\int_0^T \|\Phi'(x_n(t))G_n(x_n(t)) - \Phi'(x(t))G(x(t))\|_{\U^*}^pdt\Big] = 0$.
        \item $\lim_{n \to \infty} \E\Big[\int_0^T |\operatorname{tr}_{G_n(x_n(t))}\Phi''(x_n(t)) - \operatorname{tr}_{G(x(t))}\Phi''(x(t))|^pdt\Big] = 0$. (For any operator $T$ on $\U$ we define $\tr_{T}\Phi''(x) = \sum_{j \geq 1} \Phi''(x)[Tu_j, Tu_j]$ where $(u_j)$ is an orthonormal basis for $\U$.)
    \end{itemize}
    Thus,
    \begin{align*}
        \int_0^t \Phi'(x_n F_n(x_n))ds + \int_0^t \Phi'(x_n G_n(x_n)) dW_s + \frac{1}{2}\int_0^t \operatorname{tr}_{G_n(x_n)}\Phi''(x_n)ds \\
        \to \int_0^t \Phi'(x)F(x)ds + \int_0^t \Phi'(x)G(x)dW_s + \frac{1}{2}\int_0^t \operatorname{tr}_{G(x)}\Phi''(x)ds
    \end{align*}
    in $L^p(\Omega; C([0,T], \R))$, where we omitted the dependence of $x_n$ and $x$ on $s$.
\end{lem}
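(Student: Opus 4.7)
The plan is to reduce each of the three $L^p$ convergences to two ingredients: pointwise convergence in $(\omega,t)$, and a uniform-integrability bound that upgrades pointwise to $L^p$. By \Cref{def:yosida} we have $x_n \to x$ in $L^p(\Omega;C([0,T];B))$ for every $p \geq 1$, and it suffices to show each convergence along every subsequence. Extract a subsequence (not relabeled) along which $x_n \to x$ almost surely in $C([0,T];B)$, and along which $\sup_{t\leq T}\|x_n(t)\|_B$ is bounded in $L^q(\Omega)$ for every $q$. Note that $\|R_n\|_{\mathcal{L}(\mathcal{B},\mathcal{B})}\leq 1$, and that $R_n y \to y$ in $\mathcal{B}$ for every fixed $y\in\mathcal{B}$ (by analyticity of $S$ and the representation $R_n = \int_0^\infty n e^{-nt}S(t)\,dt$); by dominated convergence this strong convergence lifts from $\mathcal{B}$ to $\gamma(\mathcal{U},\mathcal{B})$, since for any $g\in\gamma(\mathcal{U},\mathcal{B})$ the $\mathcal{B}$-valued Gaussian $Y=\sum_j \gamma_j g u_j$ satisfies $\|(R_n-I)Y\|_\mathcal{B}\to 0$ pointwise and is dominated by $2\|Y\|_\mathcal{B}\in L^2$.

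For the first convergence, I would use the telescoping identity
\begin{equation*}
\Phi'(x_n)F_n(x_n) - \Phi'(x)F(x) = \Phi'(x_n)(R_n-I)F(x_n) + \Phi'(x_n)[F(x_n)-F(x)] + [\Phi'(x_n)-\Phi'(x)]F(x).
\end{equation*}
Each summand tends to zero pointwise: the first by strong convergence of $R_n\to I$ on $\mathcal{B}$ together with the bound $\|\Phi'(x_n)\|\lesssim 1+\|x_n\|_B$; the second by the Lipschitz continuity of $F:B\to\mathcal{B}$ applied to $x_n\to x$ in $B$; and the third by continuity of $\Phi'$ on $\mathcal{B}$ (and $B\hookrightarrow\mathcal{B}$). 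The same decomposition works for the second convergence with $G_n,G$ in place of $F_n,F$, giving $\|\cdot\|_{\mathcal{U}^*}$-estimates by using $\|H\|_{\mathcal{U}^*}\leq \|\Phi'(x)\|\|H'\|_{\gamma(\mathcal{U},\mathcal{B})}$ when $H=\Phi'(x)\circ H'$, together with the lift of $R_n\to I$ to $\gamma(\mathcal{U},\mathcal{B})$ proved above.

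The main obstacle is the third convergence, where one must handle the infinite sum in the trace. The cleanest route is to represent it as a Gaussian expectation: for any bounded bilinear form $B$ on $\mathcal{B}$ and $T\in\gamma(\mathcal{U},\mathcal{B})$,
\begin{equation*}
\operatorname{tr}_T B = \mathbb{E}\bigl[ B[T\xi,T\xi]\bigr], \qquad \xi=\sum_j\gamma_j u_j,
\end{equation*}
where the right side is well defined since $T\xi$ is a genuine $\mathcal{B}$-valued Gaussian with $\mathbb{E}\|T\xi\|_\mathcal{B}^2=\|T\|_{\gamma(\mathcal{U},\mathcal{B})}^2$. From the bilinear identity $B[a,a]-B[b,b]=B[a-b,a+b]$ and Cauchy--Schwarz I obtain the clean bound
\begin{equation*}
|\operatorname{tr}_A B-\operatorname{tr}_{A'} B|\leq \|B\|\,\|A-A'\|_{\gamma(\mathcal{U},\mathcal{B})}\,\|A+A'\|_{\gamma(\mathcal{U},\mathcal{B})},
\end{equation*}
and a similar bound for the dependence on $B$ through $\|B-B'\|$. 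Telescoping $\operatorname{tr}_{G_n(x_n)}\Phi''(x_n)-\operatorname{tr}_{G(x)}\Phi''(x)$ through $\operatorname{tr}_{G(x_n)}\Phi''(x_n)$ and $\operatorname{tr}_{G(x)}\Phi''(x_n)$ then gives pointwise convergence, using strong $R_n\to I$ on $\gamma(\mathcal{U},\mathcal{B})$, Lipschitzness of $G$, and continuity of $\Phi''$.

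Finally, to upgrade the three pointwise convergences to $L^p$, note that every summand above is bounded by a product of a $\|\Phi^{(k)}\|$-term (at most linear in $\|x_n\|_B$ or $\|x\|_B$) and an $F,G$-term (likewise at most linear by the global Lipschitz assumption and $R_n$ being contractive). Hence each integrand is bounded in $L^{2p}(\Omega\times[0,T])$ uniformly in $n$ by powers of $1+\sup_t\|x_n(t)\|_B+\sup_t\|x(t)\|_B$, which is bounded in every $L^q(\Omega)$. Vitali's convergence theorem then gives the stated $L^p$ convergence. The convergence of the last display follows: the drift and trace pieces converge in $L^p(\Omega;L^1([0,T]))$, hence uniformly in $t$ in $L^p(\Omega)$, while for the stochastic integral the Burkholder--Davis--Gundy inequality bounds $\mathbb{E}\sup_{t\leq T}|\int_0^t (\Phi'(x_n)G_n(x_n)-\Phi'(x)G(x))\,dW_s|^p$ by $C_p\mathbb{E}(\int_0^T\|\Phi'(x_n)G_n(x_n)-\Phi'(x)G(x)\|_{\mathcal{U}^*}^2 ds)^{p/2}$, which vanishes by the second convergence.
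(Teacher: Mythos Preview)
Your argument is correct and follows essentially the paper's scheme: uniform $L^{2p}$ bounds (from the linear growth of $\Phi',\Phi''$, global Lipschitzness of $F,G$, and $\|R_n\|\le 1$) combined with almost-everywhere convergence along a subsequence, upgraded to $L^p$ via uniform integrability. The only differences are cosmetic: the paper telescopes through $F_n(x)$ rather than $F(x_n)$, so that $R_n-I$ acts only on the \emph{fixed} element $F(x)$ (your term $(R_n-I)F(x_n)$ works too, but tacitly uses that strongly convergent, uniformly bounded operators converge uniformly on the precompact set $\{F(x_n(t,\omega))\}_n$, or equivalently needs one more split through $F(x)$), and for the trace bound the paper simply invokes \cite[Lemma~2.3]{ito-formula} whereas your Gaussian-expectation representation $\operatorname{tr}_T B=\E[B(T\xi,T\xi)]$ gives a self-contained derivation of the same estimate.
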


\begin{proof}
We start by proving the first bullet point. Since each $R_n$ is a contraction in $B$ and $\B$, and $F: B \to \B$ is globally Lipschitz,
\begin{multline*}
    \sup_n\E \int_0^T |\Phi'(x_n(t))F_n(x_n(t))|^{2p}dt \leq \sup_n\E \int_0^T \|\Phi'(x_n(t))\|_{\Ll(\B,\R)}^{2p}|F(x_n(t))|_B^{2p}dt \\
    \begin{aligned}
    &\lesssim \sup_n\E\int_0^T \|\Phi'(x_n(t))\|_{\Ll(\B,\R)}^{4p} + |x_n(t)|_B^{4p} + |x_0|_B^{4p}dt \\
    &\lesssim \sup_n \|x_n\|_{C([0,T], B)}^{4p} < \infty \,,
\end{aligned}
\end{multline*}
where the last inequalities are from our assumption on $\Phi'$ and the convergence of $x_n$ to $x$ in $L^{4p}(\Omega; C([0,T],B))$ (\Cref{def:yosida}). Thus, $\Phi'(x_n)F_n(x_n)$ is uniformly bounded in $L^{2p}(\Omega \times [0,T])$. To show convergence $\Phi'(x_n)F_n(x_n) \to \Phi'(x)F(x)$ in $L^p(\Omega \times [0,T])$, it suffices to show convergence in measure. Thus, we fix a subsequence of $n$'s and show that there is a further subsequence which converges almost everywhere (this is an equivalent statement). By the convergence of $x_n$ to $x$ in $L^{4p}(\Omega; C([0,T],B))$ (\Cref{def:yosida}) we may WLOG assume that $x_n \to x$ almost everywhere.

Then the first bullet point follows from
\begin{align*}
    |\Phi'(x_n(t))F_n(x_n(t)) - \Phi'(x(t))F(x(t))| &\leq \|\Phi'(x_n(t)) - \Phi'(x(t))\|_{\Ll(\B,R)}\|F(x_n(t))\|_\B \\
    &+ \|\Phi'(x(t))\|_{\Ll(\B,\R)}\|F_n(x_n(t)) - F_n(x(t))\|_\B \\
    &+ \|\Phi'(x(t))\|_{\Ll(\B,\R)}\|F_n(x(t)) - F(x(t))\|_\B \\
    &\to 0 \,,
\end{align*}
where the first term goes to $0$ by continuous differentiability of $\Phi$, the second by $F(x_n(t)) \to F(x(t))$ in $\B$ and $R_n$ being a contraction (recall $F_n = R_nF$), and the third by $F_n \to F$ point-wise (see the proof of \Cref{def:yosida}).

The other bullet points follow similarly because (see \Cref{rmk:gropp})
\begin{align*}
\|\Phi'(x(t))G(x(t))\|_{\U^*} &\leq \|\Phi'(x(t))\|_{\Ll(\B,\R)}\|G(x(t))\|_{\Ll(\U,\B)} \\
&\leq \|\Phi'(x(t))\|_{\Ll(\B,\R)}\|G(x(t))\|_{\gamma(\U,\B)}
\end{align*}
 and
$$|\operatorname{tr}_{G(x(t))}\Phi''(x(t))| \leq \|\Phi''(x(t))\|_{\Ll(\B,\Ll(\B,\R))}\|G(x(t))\|_{\gamma(\U,\B)}^2$$
(\cite[Lemma 2.3]{ito-formula}). The final claim follows from BDG inequality.
\end{proof}
Next,  we describe our assumptions on $F$ and $G$. 
Recall that $d$ was fixed at the beginning of the section and $\pi_u$
is the projection on the first $d$ coordinates. 

\begin{rem}
    In the assumptions below, the constants in every $\lesssim$ are allowed to depend on $r$.
\end{rem}

\begin{sass}\label{SPDE-F}
    The function $F: B \times \Theta \to B$ is given by
    $$F(x,\theta)(\cdot) = x(\cdot)\cdot f_1(\cdot,x(\cdot),\theta) + f_2(\cdot,x(\cdot),\theta) =: f(\cdot,x(\cdot),\theta)$$
    for some locally Lipschitz functions $f_1,f_2: \overline D \times \R^m \times \Theta \to \R^m$. We assume:
    \begin{itemize}
        \item $f_2(\cdot,x,\theta) \geq 0$ if $x \geq 0$. 
        \item $f$ is polynomially bounded: there is $k \geq 0$ such that $|f(\cdot,x,\cdot)| \lesssim 1 + |x|^k$.
        \item $x_u = 0$ implies $\pi_u(f_2(\cdot, x, \theta)) = 0$, 
        and therefore $\pi_u(f(\cdot, x, \theta)) = 0$.
        
        \item $f_1,f_2$ are differentiable in $x_1,\dots,x_d$ with locally Lipschitz derivatives which are polynomially bounded.
        
        \item $f^+ \coloneqq f \vee 0$ satisfies $|f^+_u(\cdot,x,\cdot)| \lesssim |x_u|$ and $|f^+_z(\cdot,x,\cdot)| \lesssim 1 + |x|$.
    \end{itemize}
\end{sass}

In the following assumption on the noise we allow the sequence $(\|e_n\|_{L^\infty})_{n\geq 1}$ (defined below) to be unbounded, unlike in 
 \cite[Hypothesis 8.1]{stochastic-lotka-volterra-SPDE} and  \cite[Hypothesis 1-3]{Cerrai2003}. 
 We remark that $\|e_n\|_{L^\infty} \leq P(n)$ for some polynomial $P$ (see \cite[Remark 2.2]{Cerrai2003}).

\begin{sass}\label{SPDE-G-sass}
    The function $G: B \times \Theta \to \Ll(\U,L^2)$ is given by
    $$G(x,\theta) = \sigma(\cdot,x(\cdot),\theta) \cdot H$$
    for a function $\sigma: \overline D \times \R^m \times \Theta \to \R^m$ and an operator $H \in \Ll(\U,L^2)$. We assume:
    \begin{itemize}
        \item $\sigma$ is differentiable in $x$ with bounded locally Lipschitz derivative (jointly in all variables).
        
        \item $x_i = 0$ implies $\sigma_i(\cdot,x,\theta) = 0$.
        \item There is an (orthonormal) basis $\{u_n\}_{n \in \N}$ for $\U$ such that $Hu_n = a_ne_n$, where $a_n \in \R$ and $e_n$ are the eigenfunctions of $A$ (see \Cref{sass-A}) normalized with the unit $L^2$ norm.
        \item If $N > 1$ then there is some $2 \leq p < 2N/(N-2)$ such that
        \begin{equation}\label{eq:an-en-assumption}
            \sum_{n \geq 1} a_n^p\|e_n\|_{L^\infty}^2 < \infty \,,
        \end{equation}
        and if $N = 1$, then $\sup_n |a_n| < \infty$ (and in this case we say $p = \infty$).
    \end{itemize}
\end{sass}

\begin{lem}\label{SPDE-G-lem}
    Under \Cref{SPDE-G-sass}, we have the following assertions:
    \begin{enumerate}[label=(\roman*)]
        \item There is $\beta \in (-1/2,0]$ such that $\|x \cdot H\|_{\beta,r} \lesssim \|x\|_{L^r}$ for all $r \in [2,\infty)$ and $x \in L^r$. In particular, the estimate is valid for any $\beta \in \big(-\frac{1}{2},  -N(\frac{1}{4} - \frac{1}{2p})\big)$, and additionally for $\beta = 0$ if \eqref{eq:an-en-assumption} holds with $p = 2$.
        \item For all $r > 2$ there is $\epsilon > 0$ such that the following holds for all $x,y \in \{x \in \cdom \mid \|x\|_{L^1} \leq 1\}$:

        \begin{equation}\label{eq:wierd-g}
        \begin{aligned}
        \|H^*(x)\|_\U^2\|y\|_{\epsilon-1,r} + \|y\cdot H H^*(x)\|_{\epsilon-1,r} +\|H^*(x)\|_\U\|y\|_{\epsilon-1/2,r} \\
        \lesssim \|x\|_{L^2}^2 + \|x\|_{L^r} + \|y\|_{L^2}^2 + \|y\|_{L^r}\,,
        \end{aligned}
        \end{equation}
         where the adjoint of $H$ is taken in $\Ll(\U,L^2)$.

         \item With $\beta$ as in (i),
\begin{equation}\label{eq:g-star-1}
    \|G(x)^*1\|_\U\|x\|_{\beta,2} \lesssim \|x\|_{\beta+1/2,2}\|x\|_{L^1} \,,
\end{equation}
where the adjoint of $G(x)$ taken in $\Ll(\U,L^2)$.
    \end{enumerate}
\end{lem}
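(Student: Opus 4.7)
The proof of all three statements proceeds by spectral expansion in the basis $\{u_n\}$ with $Hu_n = a_n e_n$, after which each norm on the left becomes a weighted series in the eigenfunctions $e_n$. The general strategy is to combine the type-$2$ property of $L^r$ (which carries over to $\B_{\beta,r}$ for $r \geq 2$) with the Sobolev embeddings from \Cref{characterization-of-interpolation-spaces}, and then to close the resulting bounds by a H\"older coupling against the summability hypothesis \eqref{eq:an-en-assumption}.

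For (i), starting from \Cref{fractional-domains-deff} and applying type $2$ gives
\begin{equation*}
\|x \cdot H\|_{\beta, r}^2 \lesssim \sum_n a_n^2 \|x \cdot e_n\|_{\B_{\beta, r}}^2.
\end{equation*}
For $\beta \leq 0$, $L^q \hookrightarrow \B_{\beta, r}$ holds for $1/q \leq 1/r - 2\beta/N$, which via H\"older gives $\|x \cdot e_n\|_{L^q} \lesssim \|x\|_{L^r}\|e_n\|_{L^{q''}}$ with $1/q'' = -2\beta/N$. Interpolating $\|e_n\|_{L^{q''}}$ between $\|e_n\|_{L^2} = 1$ and $\|e_n\|_{L^\infty}$ reduces the task to estimating $\sum_n a_n^2 \|e_n\|_{L^\infty}^{2\tau}$ for a suitable exponent $\tau$. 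A final H\"older inequality in $n$ with conjugate exponents $p/2$ and $p/(p-2)$ matches this sum to \eqref{eq:an-en-assumption}, the admissible range $\beta > -N(1/4 - 1/(2p))$ being precisely what keeps the residual sum finite (using Weyl-type eigenfunction bounds). When $p = 2$ the same argument works with $\beta = 0$ without interpolation.

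For (ii), I would first establish $\|H^*(x)\|_\U^2 \lesssim \|x\|_{L^r} + \|x\|_{L^2}^2$ for $x \geq 0$ with $\|x\|_{L^1} \leq 1$ by expanding $\|H^*(x)\|_\U^2 = \sum_n a_n^2 \langle x, e_n\rangle^2$, using Cauchy--Schwarz against the measure $x\, d\mu$ to get $\langle x, e_n\rangle^2 \leq \int x e_n^2 \, d\mu$, and then bounding $\|\sum_n a_n^2 e_n^2\|_{L^{r^*}}$ by the same Hölder/interpolation argument as in (i). Since $\epsilon - 1 < 0$ and $\epsilon - 1/2 < 0$, the embeddings $L^r \hookrightarrow \B_{\epsilon - 1, r}$ and $L^r \hookrightarrow \B_{\epsilon - 1/2, r}$ give $\|y\|_{\epsilon-1, r} + \|y\|_{\epsilon-1/2, r} \lesssim \|y\|_{L^r}$. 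The mixed term is treated by expanding $HH^*(x) = \sum_n a_n^2 \langle x, e_n\rangle e_n$, applying H\"older to the product $y \cdot HH^*(x)$, and reusing the $H^*$ estimate. Finally AM--GM absorbs every cross product into sums of the four norms on the right of \eqref{eq:wierd-g}.

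For (iii), writing $\langle G(x)^* 1, u_n\rangle = a_n\langle \sigma(\cdot, x), 1\rangle_{L^2}$ and using $|\sigma(\cdot, x)| \lesssim |x|$, which follows from $\sigma_i|_{x_i = 0} = 0$ and the Lipschitz bound of \Cref{SPDE-G-sass}, together with Cauchy--Schwarz against $|x|\, d\mu$, yields
\begin{equation*}
\|G(x)^* 1\|_\U^2 \lesssim \|x\|_{L^1} \int |x| \sum_n a_n^2 e_n^2 \, d\mu.
\end{equation*}
Pairing with $\|x\|_{\beta, 2}$ and using the spectral interpolation $\|u\|_{\beta, 2}^2 \leq \|u\|_{\beta - 1/2, 2}\|u\|_{\beta + 1/2, 2}$ (from Cauchy--Schwarz in the spectral measure, exploiting the self-adjointness of $A$), and controlling $\sum_n a_n^2 e_n^2$ by the same interpolation argument as in (i)--(ii), one arrives at $\|G(x)^* 1\|_\U\|x\|_{\beta, 2} \lesssim \|x\|_{L^1}\|x\|_{\beta + 1/2, 2}$. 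The main obstacle throughout is the tight coordination of the parameters $\beta, \epsilon, r, p, q$: every Sobolev embedding, interpolation exponent, and the final H\"older pairing must be satisfied simultaneously, and this book-keeping is what determines both the allowed range of $\beta$ and the required smallness of $\epsilon$.
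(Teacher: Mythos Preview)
Your outline has a genuine gap that recurs in all three parts: the bounds you write down require summability in $n$ that is simply not available under \eqref{eq:an-en-assumption} when $p>2$. Concretely, in (i) the type-$2$ step $\|x\cdot H\|_{\beta,r}^2 \lesssim \sum_n a_n^2\|xe_n\|_{\B_{\beta,r}}^2$ is correct, but after Sobolev and H\"older you are left with $\sum_n a_n^2\|e_n\|_{L^\infty}^{2\tau}$ for some $\tau\in(0,1)$. Your proposed H\"older split against $\sum_n a_n^p\|e_n\|_{L^\infty}^2$ leaves a residual factor $\bigl(\sum_n \|e_n\|_{L^\infty}^{B}\bigr)^{(p-2)/p}$ with $B\le 0$, and no ``Weyl-type bound'' makes this finite: on the torus $\|e_n\|_{L^\infty}$ is bounded above and below, so the residual sum is $\sum_n 1=\infty$; on a general domain $\|e_n\|_{L^\infty}$ is only bounded \emph{below}, so negative powers do not decay. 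The same obstruction hits your arguments for (ii) and (iii): the step $\langle x,e_n\rangle^2\le \int x\,e_n^2$ followed by summing in $n$ produces $\int x\sum_n a_n^2 e_n^2$, and the pointwise sum $\sum_n a_n^2 e_n^2$ is not controlled by \eqref{eq:an-en-assumption} when $p>2$ (take $a_n\equiv 1$ in the one-dimensional white-noise case).

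The paper's proof avoids this by never trying to sum crude eigenfunction bounds. For (i) it writes $(-A)^\beta$ as a time integral of the semigroup and estimates $\|S(t)(x\cdot H)\|_{\gamma(\U,L^r)}$ via the \emph{square-function} characterisation of the $\gamma$-norm (not the type-$2$ bound), then uses the Gaussian kernel of $S(t)$ and Parseval to evaluate $\sum_n |S(t)(xe_n)|^2 = \int K(t,\cdot,v)^2 x(v)^2\,dv$; the singularity in $t$ that emerges is exactly $t^{-N(1/4-1/(2p))}$, which fixes the admissible range of $\beta$. For (ii)--(iii) the paper does a H\"older split in $n$ that places $q-2$ copies of $|\langle x,e_n\rangle|$ into $\ell^\infty$ (using $|\langle x,e_n\rangle|\le \|x\|_{L^1}\|e_n\|_{L^\infty}$, which provides the missing $\|e_n\|_{L^\infty}$ factors) and the remaining two copies into $\ell^2$ via Parseval, so that no bare sum over eigenfunctions ever appears. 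This coupling of $\ell^\infty$ and $\ell^2$ control on the Fourier side is the missing idea in your sketch.
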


\begin{proof}
In what follows assume $p > 2$ (we leave the case $p = 2$ to the reader as it is quite similar but simpler).

We start by showing $\|x \cdot H\|_{\beta,r} \lesssim \|x\|_{L^r}$ for all $r \in [2,\infty)$, $x \in L^r$, and $\beta < -N(\frac{1}{4} - \frac{1}{2p})$. This will prove (i) because by \Cref{SPDE-G-sass} $-N(\frac{1}{4} - \frac{1}{2p}) > -1/2$, so such a $\beta \in (-1/2,0]$ exists. First notice that 
for any $T \in \Ll(\U,L^2)$ and $\beta \in (-1,0)$ we have
\begin{equation}\label{eq:t-neg-power-bound}
      \begin{aligned}
        \|T\|_{\gamma(\U,\B_{\beta,r})} &= \|(-A)^{\beta}T\|_{\gamma(\U,L^r)} = \Big\|\frac{1}{\Gamma(-\beta)}\int_0^\infty t^{-\beta-1}S(t)Tdt\Big\|_{\gamma(\U,L^r)} \\
        &\leq \frac{1}{\Gamma(-\beta)}\int_0^\infty t^{-\beta-1}\|S(t)T\|_{\gamma(\U,L^r)}dt \,,
    \end{aligned}
\end{equation}
  
    which is finite if $\|S(t)T\|_{\gamma(\U,L^r)} \lesssim t^{-\alpha}$ for some $\alpha < -\beta$ (see \eqref{eq:doapt} for the second equality).

    By \cite[Lemma 2.1]{umd-existence} we have 
    \begin{equation}\label{eq:eosaxa}
    \|S(t) \circ (x \cdot H)\|_{\gamma(\U,L^r)} \lesssim \Big\|\Big(\sum_{n \geq 1} a_n^2|S(t)(xe_n)|^2\Big)^{1/2}\Big\|_{L^r} \,.    
    \end{equation}
    Let $q$ be such that $1/q + 1/p = 1/2$. Then by H\"{o}lder's inequality and \eqref{eq:an-en-assumption} we have
    \begin{multline}\label{eq:sdmlf}
    \Big(\sum_{n \geq 1} a_n^2|S(t)(xe_n)|^2\Big)^{\frac{1}{2}} \leq \Big(\sum_{n \geq 1}a_n^p\|e_n\|_{L^\infty}^2\Big)^{\frac{1}{p}}\Big(\sum_{n \geq 1}|S(t)(xe_n)|^q\|e_n\|_{L^\infty}^{-\frac{2q}{p}}\Big)^{\frac{1}{q}} \\
       \begin{aligned}
                &\lesssim \Big(\sup_{n \geq 1} |S(t)(xe_n\|e_n\|_{L^\infty}^{-1})|^{q-2} \Big)^{\frac{1}{q}}\Big(\sum_{n \geq 1}|S(t)(xe_n)|^2\|e_n\|_{L^\infty}^{-2q/p+q-2}\Big)^{\frac{1}{q}} \\
        &\leq (S(t)|x|)^{1 - \frac{2}{q}}\Big(\sum_{n \geq 1}|S(t)(xe_n)|^2\Big)^{\frac{1}{q}} \,,
    \end{aligned} 
    \end{multline}
where in the last inequality we used the facts that $S(t)$ is positivity preserving (\Cref{sass-A}) and $-2q/p +q-2 = q(1-2/p)-2 = 0$.

To proceed, we use \cite[(2.7),(2.8)]{Cerrai2003} (see also the references within) to express $S(t)$ as an integral operator with gaussian upper bounds. In other words, there is $K: (0,\infty) \times D \times D \to \R^m$ and $c > 0$ such that
$$[S(t)x](u) = \int_D K(t,u,v) \cdot x(v)d\mu(v)$$
and
\begin{equation}\label{eq:gaussian-bounds}
    0 \leq K(t,u,v) \lesssim t^{-N/2}\exp\Big(-c\frac{|u-v|^2}{t}\Big) \,.
\end{equation}
Then since $e_n$ is an orthonormal basis for $L^2$ (which is a consequence of $A$ being self-adjoint in \Cref{sass-A}) we have for $u \in D$ that
\begin{align*}
\sum_{n \geq 1}|S(t)(xe_n)(u)|^2 &= \sum_{n \geq 1} \Big(\int_D K(t,u,v)x(v)e_n(v)d\mu(v)\Big)^2 
\\
&= \int_D K(t,u,v)^2x(v)^2d\mu(v) \,.
\end{align*}
Combining with \eqref{eq:eosaxa} and \eqref{eq:sdmlf} yields
$$
\|S(t) \circ (x \cdot H)\|_{\gamma(\U,L^r)} \lesssim \Big(\int_D [S(t)|x|(u)]^{r(1-\frac{2}{q})}\Big[\int_D K(t,x,y)^2x(v)^2d\mu(v)\Big]^{\frac{r}{q}}d\mu(u)\Big)^{\frac{1}{r}}.$$

By \eqref{eq:gaussian-bounds} we have
$$
K(t,u,v)^2 \lesssim t^{-N/2}k(t,u,v) \,,
$$
where $k$ is the kernel of a heat semigroup $\Pp_t: C(\R^N;\R^m) \to C(\R^N;\R^m)$ (meaning $\Pp_t f$ solves $\frac{\partial}{\partial t}\Pp_t f = c'\Delta \Pp_tf$ for some $c' > 0$). Thus,
$$
\|S(t) \circ (x \cdot H)\|_{\gamma(\U,L^r)} \lesssim t^{-\frac{N}{2q}}\Big(\int_D [S(t)|x|(u)]^{r(1-\frac{2}{q})}[\Pp_t x^2(u)]^{\frac{r}{q}}d\mu(u)\Big)^{\frac{1}{r}} \,.
$$
Using H\"{o}lder's inequality ($2/p + 2/q = 1$) and the fact that $S(t)$ and $\Pp_t$ are contractions in all $L^p$ spaces (for $\Pp_t$ this is well-known, for $S(t)$ see \Cref{rem:subtract-from-c}) we have
\begin{align*}
    \|S(t) \circ (x \cdot H)\|_{\gamma(\U,L^r)} &\lesssim t^{-\frac{N}{2q}}\Big(\Big[\int_D [S(t)|x|(u)]^{r}d\mu(u)\Big]^{\frac{2}{p}}\Big[\int_D[\Pp_t x^2(u)]^{\frac{r}{2}}d\mu(u)\Big]^{\frac{2}{q}}\Big)^{\frac{1}{r}} \\
    &= t^{-\frac{N}{2q}}\|S(t)|x|\|_{L^r}^{2/p}\|\Pp_tx^2\|_{L^{r/2}}^{1/q} \\
    &\leq t^{-\frac{N}{2q}}\|x\|_{L^r}^{2/p}\|x^2\|_{L^{r/2}}^{1/q}\\
    &= t^{-\frac{N}{2q}}\|x\|_{L^r} \,.
\end{align*}
Since
\begin{equation}\label{eq:q-bigger-than-n}
    \frac{N}{2q} = N\Big(\frac{1}{4} - \frac{1}{2p}\Big) < 
    -\beta
    < \frac{1}{2} \,,
\end{equation}
(i) follows from the criterion discussed after \eqref{eq:t-neg-power-bound} with $\alpha = \frac{N}{2q}$.

To prove \eqref{eq:wierd-g}, we use H\" older inequality, \eqref{eq:an-en-assumption}, and $2(q-2)/q = 2-4/q = 4/p$ to obtain
\begin{equation}\label{eq:H-star-x}
    \begin{aligned}
    \|H^*(x)\|_\U^2 &= \sum_{n \geq 1} \ip{x}{a_ne_n}^2 \leq \Big(\sum_{n \geq 1}a_n^p\|e_n\|_{L^\infty}^2\Big)^{2/p}\Big(\sum_{n\geq 1} |\ip{x}{e_n}|^q\|e_n\|_{L^\infty}^{-2q/p}\Big)^{2/q} \\
    &\lesssim \sup_n |\ip{x}{e_n\|e_n\|_{L^\infty}^{-1}}|^{2(q-2)/q}\Big(\sum_{n \geq1} \ip{x}{e_n}^2\Big)^{2/q} \\
    &\leq \|x\|_{L^1}^{4/p}\|x\|_{L^2}^{4/q} \leq \|x\|_{L^2}^{4/q}\,,
\end{aligned}
\end{equation}
for any $\|x\|_{L^1} \leq 1$.

By (i) and a Sobolev embedding (see \Cref{characterization-of-interpolation-spaces}) we have
\begin{equation}\label{eq:feoxh}
\|x\cdot H\|_{\gamma(\U,\B_{\epsilon-1,r})} \lesssim \|x \cdot H\|_{\gamma(\U,\B_{\beta,s})} \lesssim \|x\|_{L^s} \,,
\end{equation}
where $\epsilon$ and $s$ are such that
$$
\frac{1}{r} - \frac{2\epsilon - 2}{N} = \frac{1}{s} - \frac{2\beta}{N}\,, \qquad \beta \geq \epsilon - 1 \,.
$$
Choosing $\beta$ close to $-1/2$ and $\epsilon > 0$ small, we have that \eqref{eq:feoxh} holds for any $s$ such that
\begin{equation}\label{eq:choice-of-s}
\frac{1}{r} + \frac{1}{N} > \frac{1}{s} \,.
\end{equation}
By another Sobolev embedding and making $\epsilon > 0$ smaller if necessary, for $s$ satisfying \eqref{eq:choice-of-s} we also have $\|y\|_{\epsilon-1/2,r} \lesssim \|y\|_{L^s}$.
Additionally, from \Cref{rmk:gropp}, \eqref{eq:feoxh}, and \eqref{eq:H-star-x} follows
\begin{align*}
 \|y\cdot H H^*(x)\|_{\epsilon-1,r} &\leq 
 \|y\cdot H\|_{\mathcal{L}(\U,\B_{\epsilon-1,r})} \|H^*(x)\|_{\U}
 \leq \|y\cdot H\|_{\gamma(\U,\B_{\epsilon-1,r})} \|H^*(x)\|_{\U}
 \\
 &\lesssim \|y\|_{L^s}\|x\|_{L^2}^{2/q} \,.
\end{align*}

Thus, the expression on the left hand side of \eqref{eq:wierd-g} is bounded by
\begin{multline*}
        \|x\|_{L^2}^{4/q}\|y\|_{\epsilon-1,r} + \|x\|_{L^2}^{2/q}(\|x\|_{L^s} + \|y\|_{L^s}) 
        \lesssim \|x\|_{L^2}^2 + \|y\|_{\epsilon-1,r}^{p/2} + \|x\|_{L^s}^a + \|y\|_{L^s}^a \,,
    \end{multline*}
where $1/a + 1/q = 1$. We finish the proof of \eqref{eq:wierd-g} by showing 
\begin{equation}
 \|x\|_{L^s}^a \lesssim 1 + \|x\|_{L^2}^2 + \|x\|_{L^r} \qquad \textrm{and} \quad 
\|y\|_{\epsilon-1,r}^{p/2} \lesssim 1+ \|y\|_{L^2}^2 + \|y\|_{L^r} \,.
\end{equation}
 We only show the details for the former and leave the latter to the reader, as it is extremely similar: just replace $a$ with $p/2$ so that instead of $1/a = 1-1/q$ we have $1/a = 1-2/q$, and also replace \eqref{eq:choice-of-s} by $1/r + 2/N > 1/s$. 
 (Notice that both sides of the final expression below are multiplied by $2$, so the desired inequality still holds.)

We begin with $\|x\|_{L^s}^a \leq \|x\|_{L^2}^{a\theta}\|x\|_{L^r}^{a(1-\theta)}$, where $\theta/2 + (1-\theta)/r = 1/s$. Since $cd \lesssim 1 + c^k + d^l$ if $c,d \geq 0$ and $1/k + 1/l \leq 1$, it  suffices to show that $a\theta/2 + a(1-\theta) \leq 1$. We have the equivalent expressions
\begin{align*}
    1-\frac{\theta}{2} &\leq \frac{1}{a} \\
    1- \frac{1}{2}\Big(\frac{1}{s}-\frac{1}{r}\Big)\Big(\frac{1}{2}-\frac{1}{r}\Big)^{-1} &\leq 1 - \frac{1}{q} \\
    2\Big(\frac{1}{2}- \frac{1}{r}\Big) &\leq q\Big(\frac{1}{s}- \frac{1}{r}\Big) \,.
\end{align*}
The last relation is indeed satisfied since $q > N$ (see \eqref{eq:q-bigger-than-n}) and \eqref{eq:choice-of-s} holds with arbitrary precision, which allows us to make the right hand side larger than $1$ by choosing $\epsilon$ small enough.

Finally, we prove \eqref{eq:g-star-1}. By \eqref{eq:H-star-x}
and \Cref{SPDE-G-sass}
    $$
    \|G^*(x) 1\|_\U 
    = \|H^*(\sigma(x))\|_\U 
    \lesssim
    \|\sigma(x)\|_{L^2}^{2/q}\|\sigma(x)\|_{L^1}^{2/p}
    \lesssim \|x\|_{L^2}^{2/q}\|x\|_{L^1}^{2/p} \,.
    $$
    By Sobolev embedding we have $\|x\|_{-N/4-\epsilon,2} \lesssim \|x\|_{L^1}$ for all $\epsilon > 0$, so we conclude via interpolation that
    \begin{equation}\label{eq:crazy-exponents}
        \|G^*(x)1\|_\U\|x\|_{\beta,2} \lesssim \|x\|_{\beta+1/2,2}^{2(1-a)/q+1-b}\|x\|_{L^1}^{2/p + 2a/q +b}
    \end{equation}
    where $(1-b)(\beta+1/2) + b(-N/4-\epsilon) = \beta$ and $(1-a)(\beta+1/2) + a(-N/4-\epsilon) = 0$. Then
    \begin{align*}
        \frac{2(1-a)}{q} + (1-b) &= \frac{2}{q}\Big(1-\frac{\beta+1/2}{N/4+\epsilon+\beta+1/2}\Big) + \Big(1-\frac{1/2}{N/4+\epsilon+\beta+1/2}\Big)\\
        &= \frac{(2/q)(N/4+\epsilon)}{N/4+\epsilon+\beta+1/2} + \Big(1-\frac{1/2}{N/4+\epsilon+\beta+1/2}\Big) \\
        &= 1+\frac{(2/q)(N+4\epsilon) - 2}{N+4\epsilon+4\beta+2} \\
        &< 1 + \frac{(2/N)(N+4\epsilon)-2}{N+4\epsilon+4\beta+2}\,,
    \end{align*}
    where in the last inequality we used $q>N$ (recall \eqref{eq:q-bigger-than-n}). As $\epsilon \downarrow 0$ the limit of the right hand side is $\leq 1$, so it follows that the left hand side is $\leq 1$ for some small enough $\epsilon > 0$.

    Thus, the exponent of $\|x\|_{\beta+1/2,2}$ is at most $1$. Since the exponents on the right hand side of \eqref{eq:crazy-exponents} add to $2$ and $\|x\|_{L^1} \lesssim \|x\|_{\beta+1/2,2}$, \eqref{eq:g-star-1} follows.
\end{proof}

Next we introduce some notation which appears frequently in the study of the projective process \eqref{eq:proj-process-first-def}.

\begin{rem}\label{rem:heavy-notation}
    The definition below is notationally heavy, so we provide alternate insight; if $x_t$ is an $\R^m$-valued path satisfying $dx/dt = F(x)$ and for any $r > 0$ we make the substitution $v = r^{-1}x_u$, $z = x_z$ (recall $x_u$ and $x_z$ are first $d$ respectively last $m -d$ components of $x$).  Then 
    $\tilde x = (v,z)$ satisfies $d\tilde x/dt = \tilde F(r,\tilde x)$ for $\tilde F$ detailed in \Cref{r-functions} below and properly modified initial condition. If $F$ is sufficiently regular,  it is possible to continuously extend $\tilde{F}$ to $r = 0$.

\end{rem}

Recall that $\pi_u$ and $\pi_z$ (from $\R^m$ to $\R^m$) are projections on the first $d$ respectively last $m-d$ components of a vector in $\R^m$. 

\begin{deff}\label{r-functions}
    If $h: \R^m \to \R^m$ is a continuous function  which is continuously differentiable in $x_u$ and satisfies $\pi_u \circ h \circ \pi_z = 0$, then $\tilde h: [0,\infty) \times \R^m \to \R^m$ is defined as 
     $$\tilde h(r,x) \coloneqq \begin{cases}
        r^{-1}\pi_u(h(rx_u,x_z)) + \pi_z(h(rx_u,x_z))  & r > 0 \\
            \pi_u\big(\sum_{i \leq d}\frac{\partial h}{\partial x_i}(0,x_z)x_i\big) + h(0,x_z) & r = 0 \,.
        \end{cases}$$
    In other words, $\tilde h$ is the unique continuous function such that
    \begin{equation}\label{eq:alt-def-tilde}
        (r\pi_u + \pi_z)(\tilde h(r,x)) = h(rx_u,x_z) \,.
    \end{equation}

    For the functions $F$ and $G$ from \Cref{SPDE-F} and 
    \Cref{SPDE-G-sass} we also denote
    $$\tilde F(r,x,\theta)(\cdot) \coloneqq \tilde f(\cdot, r,x(\cdot),\theta) \,, \quad \tilde G(r,x,\theta) \coloneqq \tilde \sigma(\cdot, r,x(\cdot),\theta) \cdot H \,.$$

\end{deff}

\begin{lem}\label{lem:general-tilde-lipschitz}
Assume that conditions of \Cref{r-functions} hold for some $h:\R^m \to \R^m$, which is  locally Lipschitz and $\partial h/\partial x_i$
are locally  Lipschitz for all $i \leq d$ as well. Then
$\tilde h$ is locally Lipschitz on $[0, \infty) \times \R^m$. 
\end{lem}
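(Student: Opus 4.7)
The strategy is to split $\tilde h = \pi_u \circ \tilde h + \pi_z \circ \tilde h$ and handle each piece separately, with all of the work concentrated in the $\pi_u$-component, where the apparent singularity $r^{-1}$ appears. Throughout, the crucial structural fact is that $\pi_u(h(0,x_z)) = 0$, which is exactly the assumption $\pi_u \circ h \circ \pi_z = 0$ evaluated at $(0,x_z) = \pi_z(0,x_z)$.

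For the $\pi_z$-component, I would first observe that for every $r \geq 0$ and $x \in \R^m$,
\[
\pi_z(\tilde h(r,x)) = \pi_z(h(rx_u,x_z)),
\]
where the case $r=0$ uses $h(0,x_z) = \pi_z(h(0,x_z))$ (a consequence of $\pi_u(h(0,x_z))=0$). Since $(r,x) \mapsto (rx_u,x_z)$ is smooth and $h$ is locally Lipschitz by hypothesis, the composition is locally Lipschitz on $[0,\infty) \times \R^m$.

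For the $\pi_u$-component, the idea is to remove the $r^{-1}$ via the fundamental theorem of calculus. Using $\pi_u(h(0,x_z))=0$ and the $C^1$ regularity of $h$ in $x_1,\dots,x_d$, I would write, for all $r \geq 0$,
\[
\pi_u(h(rx_u,x_z)) - \pi_u(h(0,x_z)) = r\int_0^1 \sum_{i \leq d} \partial_i \pi_u(h)(trx_u,x_z)\, x_i \, dt,
\]
which yields the continuous extension
\[
\Phi(r,x) := \int_0^1 \sum_{i \leq d} \partial_i \pi_u(h)(trx_u,x_z)\, x_i \, dt
\]
satisfying $\Phi(r,x) = r^{-1}\pi_u(h(rx_u,x_z))$ for $r>0$ and $\Phi(0,x) = \pi_u\big(\sum_{i\leq d} \partial_i h(0,x_z) x_i\big)$, i.e.\ $\Phi = \pi_u \circ \tilde h$.

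To establish the local Lipschitz bound for $\Phi$, fix a compact $K \subset [0,\infty) \times \R^m$ with $\sup_K r \leq R$ and $\sup_K |x| \leq M$. For $(r,x) \in K$ and $t \in [0,1]$, the point $(trx_u,x_z)$ ranges in a fixed compact set $K'$; on $K'$ the functions $\partial_i h$ are Lipschitz by hypothesis, with some constant $L$, and bounded by some $C$. The map $(t,r,x) \mapsto (trx_u,x_z)$ is Lipschitz in $(r,x)$ uniformly in $t \in [0,1]$ with constant $\max(M,R,1)$, so composition gives a uniform-in-$t$ Lipschitz bound on the integrand, and the product rule combined with boundedness of $x_i$ and $\partial_i h$ on $K'$ controls the remaining cross-term. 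Integrating over $t \in [0,1]$ preserves the Lipschitz bound, so $\Phi$ is locally Lipschitz on $[0,\infty) \times \R^m$.

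The main obstacle is precisely the $r=0$ boundary: naive composition fails for $r^{-1}\pi_u(h(rx_u,x_z))$, and one might worry about losing a derivative when differentiating under the integral. The resolution is the FTC-based integral representation, which replaces the singular expression by one that is manifestly continuous at $r=0$ and whose Lipschitz behavior follows only from the local Lipschitz regularity of $\partial_i h$ for $i \leq d$ — exactly the hypothesis we are given.
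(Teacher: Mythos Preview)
Your proof is correct and rests on the same key idea as the paper's: the fundamental theorem of calculus representation
\[
\pi_u(\tilde h(r,x)) = \int_0^1 \sum_{i\le d} \partial_i \pi_u(h)(trx_u,x_z)\,x_i\,dt,
\]
which removes the apparent $r^{-1}$ singularity and reduces local Lipschitzness to that of $\partial_i h$. The paper proceeds by a case analysis in $(r,r')$ (splitting $|\tilde h(r,x)-\tilde h(r',x')|$ into three pieces via the triangle inequality and handling $r'=0$ and $r=r'=0$ separately), but each piece is ultimately controlled through exactly this integral identity. Your organization is more economical: one formula valid for all $r\ge 0$, one Lipschitz estimate on the integrand uniform in $t$, and the $\pi_z$-part disposed of by composition. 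Nothing is lost, and the argument is arguably cleaner.
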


\begin{proof}
Denote $\nabla_u$ the gradient with respect to first $d$ variables. 
For any fixed $r, r' > 0$ and $x, x' \in \R^m$ we have 
\begin{align*}
    |\tilde{h}(r, x) - \tilde{h}(r', x')| &\leq 
    |\tilde{h}(r, x) - \tilde{h}(r', x)| + 
    |\tilde{h}(r', x_u, x_z) - \tilde{h}(r', x_u', x_z)| \\
    & \qquad +
    |\tilde{h}(r', x_u', x_z) - \tilde{h}(r', x_u', x_z')|
    \\
    &=: (I) + (II) + (III) \,.
\end{align*}
Let us estimate each term separately. First, 
\begin{align*}
    (I) \leq \Big|\frac{1}{r} \pi_u(h(rx_u, x_z))- \frac{1}{r'} \pi_u(h(r'x_u, x_z)) \Big| + |\pi_z(h(rx_u, x_z)) - \pi_z(h(r'x_u, x_z))| 
\end{align*}
and since $\pi_u h(0, x_z) = 0$, we have for any $j = 1, \ldots, d$ that
\begin{multline*}
\Big|\frac{1}{r} h_j(rx_u, x_z) - \frac{1}{r'} h_j(r'x_u, x_z)\Big| \\ 
\begin{aligned}
&=
\Big|\int_0^1 \nabla_u h_j(srx_u, x_z) x_u ds -
\int_0^1 \nabla_u h_j(sr'x_u, x_z) x_u ds\Big|  \\
&\leq L |r - r'| \,,
\end{aligned}
\end{multline*}
where we used Lipschitz continuity of $\nabla_u h_j$ for each $j$ and $L$ is bounded on bounded sets in $(0, \infty) \times \R^m$. 
In addition, the Lipschitz continuity of $h$ yields
\begin{equation*}
|\pi_z(h(rx_u, x_z)) - \pi_z(h(r'x_u, x_z))| \leq L|r - r'| \,.   
\end{equation*}
To estimate (II), we observe that 
\begin{align*}
 (II)
 = \frac{1}{r'} |h(r'x_u, x_z) - h(r'x_u', x_z)| \leq L|x_u - x_u'|\,,
\end{align*}
where we again used Lipschitz continuity of $h$ and $L$ is bounded on bounded sets in $(0, \infty) \times \R^m$. 

Finally, to obtain (III) we proceed as in the estimate of (I) to have
\begin{align*}
    (III) &= \frac{1}{r'} |h(r'x_u', x_z) - h(r'x_u', x_z')|
    \\
    &= \Big|\int_0^1 \nabla_u h_j(sr'x_u', x_z) x_u' ds -
\int_0^1 \nabla_u h_j(sr'x_u', x_z') x_u' ds\Big|
\leq L|x_z - x_z'| \,,
\end{align*}
where we used Lipschitz continuity of $\nabla_u h_j$ for each $j$ and $L$ is bounded on bounded sets in $(0, \infty) \times \R^m$. 

 If $r' = 0$ and $r > 0$, then 
\begin{equation}
    \tilde h(r',x) = \pi_u\Big( \nabla_u h(0,x_z)\cdot x_u\Big) + \pi_z(h(0,x_z)) \,.
\end{equation}
 In addition, 
$\pi_u \circ h \circ \pi_z = 0$ and the  mean value theorem imply  (for all $r \in [0,\infty), x \in \R^m$)
\begin{equation}\label{eq:mvt-for-tilde-h}
    \tilde h(r,x) = \pi_u\Big( \nabla_u h(u^*,x_z) \cdot x_u\Big) + \pi_z(h(rx_u,x_z))
\end{equation}
for some $u^* \in [0,rx_u]$. The Lipschitz continuity then follows 
from Lipschitz continuity of $h$ and $\nabla_u h$. 
The case $r = r' = 0$ directly follows from the Lipschitz continuity of $h$ and $\nabla_u h$. 
\end{proof}

\begin{cor}\label{lem:lipschitz-f-g}
The following statements hold:
\begin{enumerate}[label=(\roman*)]
    \item $\tilde F: [0,\infty) \times B \times \Theta \to B$ is locally Lipschitz.
    \item $\tilde G: [0,\infty) \times B \times \Theta \to \gamma(\U,\B_{\beta,\rho})$ is locally Lipschitz for all $\rho \in [2,\infty)$, where $\beta$ is as in \Cref{SPDE-G-lem}(i).
    \item For all $M,p > 0$ there is some constant $C > 0$ such that
    \begin{equation}\label{eq:local-u-bound}
        R+\|x\|_B \leq M \implies \|\tilde F_u(R,x,\theta)\|_{L^p} \leq C\|x_u\|_{L^p} \,.
    \end{equation}
    \item For any $r \geq 0$, $\theta \in \Theta$, and any $x$
    \begin{equation}\label{eq:sigma-u-bound} 
        |\tilde \sigma_u(\cdot,r,x,\theta)| \lesssim |x_u| \,.
    \end{equation}
    \item There is some $k > 0$ such that, for $\rho \in [2,\infty)$ and $\beta$ as in \Cref{SPDE-G-lem}:
    \begin{equation}\label{eq:local-u-bound-G}
    \begin{aligned}
         \|\tilde G_u(R,x,\theta)\|_{\beta,\rho} &\lesssim \|x_u\|_{L^\rho} &\quad &\text{and} &\quad \|\tilde G(R,x,\theta)\|_{\beta,\rho} &\lesssim (1+R)\|x\|_{L^\rho}\\
         |\tilde F^+(R,x,\theta)| &\lesssim 1 + (1+R)|x| &\quad &\text{and} &\quad |\tilde F(R,x,\theta)| &\lesssim (1+R)^k|x|^k
         \,.
    \end{aligned}
    \end{equation}
\end{enumerate}
\end{cor}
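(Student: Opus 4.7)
My plan is to prove each item by applying the pointwise ($\R^m$-valued) constructions of \Cref{r-functions} and \Cref{lem:general-tilde-lipschitz} fiberwise over $\overline D \times \Theta$, and then transferring the resulting pointwise estimates to function spaces using either an obvious $L^p$ integration or the bound $\|y \cdot H\|_{\beta,\rho} \lesssim \|y\|_{L^\rho}$ from \Cref{SPDE-G-lem}(i). The two structural facts that make everything work are: (a) $\pi_u \circ f \circ \pi_z = 0$ from \Cref{SPDE-F}, which ensures that $\tilde F$ fits the framework of \Cref{r-functions}; and (b) the componentwise vanishing $\sigma_i|_{x_i = 0} = 0$ from \Cref{SPDE-G-sass}, which is strictly stronger than needed for \Cref{r-functions} and will give the sharper first inequality in (v).

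For (i), I would apply \Cref{lem:general-tilde-lipschitz} to $f(y,\cdot,\theta)$. By \Cref{SPDE-F}, $f$ and $\nabla_u f$ are locally Lipschitz jointly in $(y,x,\theta)$, so the local Lipschitz constant for $\tilde f(y,\cdot,\cdot,\theta)$ is uniform in $(y,\theta)$ on compact sets, and taking $\sup_y$ yields Lipschitz continuity of $\tilde F$ into $B$. For (ii), the same argument applied to $\sigma$ (the vanishing $\sigma_i|_{x_i=0}=0$ clearly implies the weaker hypothesis $\pi_u \circ \sigma \circ \pi_z = 0$) shows that $\tilde\sigma : [0,\infty) \times B \times \Theta \to L^\rho$ is locally Lipschitz; composing with the bounded linear multiplication $y \mapsto y \cdot H$ from $L^\rho$ into $\gamma(\U,\B_{\beta,\rho})$ provided by \Cref{SPDE-G-lem}(i) gives the desired Lipschitz continuity of $\tilde G$.

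For (iii), I would use the mean-value representation \eqref{eq:mvt-for-tilde-h}: since $f^u(y,0,x_z,\theta) = 0$ (because $x_u = 0$ forces both $x_u \cdot f_1^u = 0$ and $\pi_u f_2 = 0$ by \Cref{SPDE-F}), we get $\tilde f_u(y,r,x,\theta) = \nabla_u f^u(y,u^*,x_z,\theta) \cdot x_u$ for some $u^* \in [0, r x_u]$, and $|\nabla_u f^u|$ is bounded on $\{R + \|x\|_B \leq M\}$, so the $L^p$ bound follows. For (iv), the per-component vanishing $\sigma_i|_{x_i=0} = 0$ combined with the bounded first derivative from \Cref{SPDE-G-sass} lets one factor $\sigma_i(y,x,\theta) = \psi_i(y,x,\theta)\, x_i$ with $\psi_i$ uniformly bounded; then $\tilde\sigma_i(y,r,x,\theta) = \psi_i(y,rx_u,x_z,\theta)\, x_i$ for $i \leq d$, giving $|\tilde\sigma_u| \lesssim |x_u|$ globally. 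For (v), the first inequalities in each row follow by combining (iv) (and the analogous global bound $|\tilde\sigma(y,R,x,\theta)| \lesssim (1+R)|x|$ obtained by applying the MVT to the $z$-components using $\sigma(y,0,\theta) = 0$) with \Cref{SPDE-G-lem}(i); the $\tilde F^+$ bound follows directly from $|f_u^+| \lesssim |x_u|$ and $|f_z^+| \lesssim 1 + |x|$ in \Cref{SPDE-F} applied at $(rx_u, x_z)$, together with the factor $r^{-1}$ absorbing one power of $r$ in the $u$-component; the polynomial bound on $|\tilde F|$ combines $|f| \lesssim 1 + |x|^k$ with the MVT representation for $\tilde f_u$ to control the $r^{-1}$ factor. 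No step presents a genuine obstacle; the main burden is just careful bookkeeping of which components vanish under which conditions, and of how the MVT interacts with the rescaling by $r$.
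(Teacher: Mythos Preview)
Your proposal is correct and follows essentially the same approach as the paper: apply \Cref{lem:general-tilde-lipschitz} fiberwise for (i)--(ii), use the vanishing at $x_u=0$ together with either the MVT representation \eqref{eq:mvt-for-tilde-h} or the equivalent Lipschitz subtraction for (iii)--(iv), and combine these pointwise bounds with \Cref{SPDE-G-lem}(i) and the growth assumptions in \Cref{SPDE-F}--\ref{SPDE-G-sass} for (v). The only cosmetic difference is that for (iii) and (iv) the paper phrases the argument as $|\tilde F_u(R,x,\theta) - \tilde F_u(R,\pi_z(x),\theta)| \lesssim |x - \pi_z(x)| = |x_u|$ using local Lipschitzness of $\tilde f$, whereas you invoke the MVT representation or the factorization $\sigma_i = \psi_i x_i$ directly; these are equivalent and equally valid.
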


\begin{proof}
    Recall \Cref{SPDE-F} and \Cref{SPDE-G-sass} where we assumed that $f_1,f_2,\sigma$ and their derivatives are locally Lipschitz as functions from $\overline D \times \R^m \times \Theta$ to $\R^m$. It follows by \Cref{lem:general-tilde-lipschitz} that $\tilde f,\tilde \sigma$ are locally Lipschitz as functions from $\overline D \times [0,\infty) \times \R^m \times \Theta$ to $\R^m$ and (i) follows. 
    
    For (ii) note that by \Cref{SPDE-G-lem}
    $$\|x\cdot H - y\cdot H\|_{\beta,\rho} = 
    \|(x-y)\cdot H\|_{\beta,\rho}
    \lesssim \|x - y\|_{L^\rho} \lesssim \|x - y\|_B \,,$$
    so the result follows from $\tilde \sigma$ being locally Lipschitz after replacing $x$ and $y$ by $\tilde \sigma$ at two different variabes.

    To show \eqref{eq:local-u-bound}, note that $\tilde F_u(R,\pi_z(x),\theta) = 0$ for each $R$ and $\theta$, so
    $$
    |\tilde F_u(R,x,\theta)| = |\tilde F_u(R,x,\theta) - \tilde F_u(R,\pi_z(x),\theta)| \lesssim |x-\pi_z(x)| = |x_u| \,,
    $$
    where the constant in the $\lesssim$ is the (local) Lipschitz constant of $\tilde f$. A similar argument implies \eqref{eq:sigma-u-bound}, because $\sigma$ has bounded derivatives (\Cref{SPDE-G-sass}), and therefore it is globally Lipschitz.

    To prove \eqref{eq:local-u-bound-G}, we use the first assertion of \Cref{SPDE-G-lem} with $x = \tilde{\sigma}$
    to conclude that
    $$\|\tilde G_u(R,x,\theta)\|_{\beta,\rho} \lesssim \|\tilde \sigma_u(\cdot,R,x(\cdot),\theta)\|_{L^\rho} \quad \text{and  } \|\tilde G(R,x,\theta)\|_{\beta,\rho} \lesssim \|\tilde \sigma(\cdot,R,x(\cdot),\theta)\|_{L^\rho}.$$
    Since $\sigma$ 
    has bounded derivatives and thus it is globally Lipschitz, the inequalities in \eqref{eq:local-u-bound-G}
    for $\tilde G$ follow from \eqref{eq:sigma-u-bound} and \eqref{eq:mvt-for-tilde-h}.

    By the last assertion of \Cref{SPDE-F} and \eqref{eq:mvt-for-tilde-h},
    $$|\tilde F_u^+(R,x,\theta)| \lesssim |x_u| \quad \text{and} \quad |\tilde F_z^+(R,x,\theta)| \lesssim 1 + |(Rx_u,x_z)| \,,$$
    which proves $|\tilde F^+(R,x,\theta)| \lesssim 1 + (1+R)|x|$. The final inequality of \eqref{eq:local-u-bound-G} follows from \eqref{eq:mvt-for-tilde-h} and the polynomial boundedness of $f$ and $\partial f/\partial x_i$, $i = 1,\dots,d$.
\end{proof}

Below we frequently use the fact that $\|x\|_{L^1} = \ip{x}{1}$ if $x$ is nonnegative. Thus, the following definition will be useful.

\begin{deff}\label{def:tilde-e}
    Since $A$ is self adjoint
    
    there is a smooth function $\tilde e: \overline D \to \R^m$ such that
    $$\ip{Ax}{1} = \ip{x}{A1} = 
     \ip{x}{\tilde e}$$
    for all $x \in \B_{1,2}$.
    
\end{deff}

\begin{rem}
    Above we used the fact that $1 \in \B_{1,2}$, which is clearly true for the boundary conditions we consider in \Cref{sass-A}. If we were to also consider Dirichlet boundary conditions, we would need to be more careful about what happens near the boundary of $D$ in all of our proofs regarding positivity (ex. \Cref{bigthm}\ref{bigthm:nonneg}).
\end{rem}

Recall that $y_u$ and $y_z$ denote the first $d$ respectively the last $m-d$ components of a vector $y$. To simplify the notation, we denote $u = y_u$
and $z = y_z$. The following theorem will provide the well-posedness of the projective process. 

\begin{thm}\label{bigthm}
    Let $r$ be large enough. Under \Cref{sass-A}-\ref{SPDE-G-sass}, for every $R \geq 0$, $\theta \in \Theta$, and initial condition $\tilde{x}_0 \in \cdom$ there is a unique mild solution in $L^r$ (see \Cref{def:mild}), denoted by $\tilde{x}_t = (\tilde{u}_t, \tilde{z}_t)$, to
    \begin{equation}\label{eq:mild-solution}
        \tilde{x}_t =S(t) \tilde{x}_0 + (S * \tilde F(R,\tilde{x},\theta))_t + (S \diamond \tilde G(R, \tilde{x},\theta))_t
   \end{equation}
    Furthermore, the following properties hold:
    \begin{enumerate}[label=(\roman*)]
        \item \label{bigthm:nonneg} $\tilde{x}_t \geq 0$.
        \item \label{bigthm:invt-set} $\tilde{u}_t = 0 \iff \tilde{u}_0 = 0$.
        \item \label{bigthm:cont-in-time} $t \mapsto \tilde{x}_t$ is almost surely a continuous map from $[0,\infty)$ to $B$.
        \item \label{bigthm:cont-in-space} If $(R_n,\tilde{x}_0^{(n)},\theta_n) \to (R,\tilde{x}_0,\theta)$ in $[0,\infty) \times \cdom \times \Theta$, then $\tilde{x}^{(n)}$, the solutions to \eqref{eq:mild-solution} with initial condition $\tilde{x}_0^{(n)}$ and $R = R_n$, $\theta = \theta_n$, converge to $\tilde{x}$ in $L^p(\Omega;C([0,T];B))$ for all $p,T > 0$.
        \item \label{bigthm:drdv} If $\tilde{u}_0 \neq 0$ then $(r_t,v_t) \coloneqq (R\|\tilde{u}_t\|_{L^1}, \tilde{u}_t/\|\tilde{u}_t\|_{L^1})$ satisfies
        \begin{equation}\label{new-dr-dv}
            \begin{aligned}
            dr &= r[\ip{v}{\tilde e_u} + \ip{\tilde F_u(r,v,\tilde{z})}{1_u}]dt + r\ip{\tilde G_u(r,v,\tilde{z})^*1_u}{dW_t} \\
            dv &= A_uvdt + v[-\ip{v}{\tilde e_u} - \ip{\tilde F_u(r,v,\tilde{z})}{1_u} + \|\tilde G_u(r,v,\tilde{z})^*1_u\|_\U^2]dt \\
            &+ \tilde F_u(r,v,\tilde{z})dt - \tilde G_u(r,v,\tilde{z})\tilde G_u(r,v,\tilde{z})^*1_udt \\
            &+ \tilde G_u(r,v,\tilde{z})dW_t- \ip{\tilde G_u(r,v,\tilde{z})^*1_u}{dW_t}v \,,
        \end{aligned}
    \end{equation}
    where the equation for $v$ holds in the mild sense of \Cref{def:mild}. We omitted the dependence on $\theta$ for brevity.
    
    \item \label{bigthm:dbeta} Let $\bar{x}_t \coloneqq (R\tilde{u}_t, \tilde{z}_t) = (r_tv_t,\tilde{z}_t)$, where $(r_t, v_t)$ was defined in (v). Then
    \begin{equation}\label{eq:dbeta-x-tilde}
        d\|\bar x_t\|_{\beta,2}^2 = [-2\|\bar x_t\|_{\beta+1/2,2}^2 +2\ip{\bar x_t}{F(\bar x_t)}_\beta + \|G(\bar x_t)\|_{\beta,2}^2]dt + 2\ip{\bar x_t}{G(\bar x_t)dW_t}_\beta \,,
    \end{equation}
    where $\beta$ is as in \Cref{SPDE-G-lem}. 
    \end{enumerate}
\end{thm}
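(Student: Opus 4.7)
The plan is to construct $\tilde{x}_t$ via truncation plus Yosida regularization, derive the It\^o identities at the regularized level, and pass to the limit using \Cref{lem:ito-convergence}, using the energy identity in part (vi) to get global-in-time control.

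\textbf{Step 1: Local existence.} Since \Cref{lem:lipschitz-f-g} gives only local Lipschitz bounds on $\tilde F, \tilde G$, I would first multiply them by a smooth cutoff $\chi_M$ supported in $\{\|x\|_B \leq 2M\}$ and equal to $1$ on $\{\|x\|_B \leq M\}$. The truncated coefficients are globally Lipschitz from $B$ to $B$ and from $B$ to $\gamma(\U, \B_{\beta,r})$, so \Cref{general-mild-existence} (with $\theta_F = 0$, $\theta_G = \beta$, and $r$ large enough that $\beta > N/(2r) - 1/2$) yields a unique mild solution $\tilde x^M$. Standard localization via stopping times $\tau_M := \inf\{t: \|\tilde x^M_t\|_B \geq M\}$ gives consistency and a local mild solution $\tilde x_t$ up to $\tau_\infty := \lim_M \tau_M$.

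\textbf{Step 2: Nonnegativity (i) and $u$-invariance (ii).} For (i), I would work with the Yosida approximations $\tilde x^{M,n}$ of the truncated equation (\Cref{def:yosida}), which are strong solutions. Applying It\^o's formula componentwise to $\phi_i(y) := \int_D (y_i)_-^2 d\mu$ and using that $f_{2,i} \geq 0$ and $\sigma_i = 0$ on $\{y_i = 0\}$ (from \Cref{SPDE-F} and \Cref{SPDE-G-sass}), one sees that the drift points inward at the boundary, so $\E \phi_i(\tilde x^{M,n}_t) \to 0$ via Gronwall. Passing $n \to \infty$ with \Cref{lem:ito-convergence} propagates nonnegativity to $\tilde x^M_t$ and hence to $\tilde x_t$, following the approach of \cite{nhu-positivity, feller-stoch-reac-diff}. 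For (ii), the forward implication $\tilde u_0 = 0 \Rightarrow \tilde u_t = 0$ follows from the mild formula for $\tilde u$: by \eqref{eq:local-u-bound} and \eqref{eq:local-u-bound-G}, $\tilde F_u$ and $\tilde G_u$ are controlled by $\|u_\cdot\|$, so \Cref{convolution-bounds} plus Gronwall forces $\tilde u \equiv 0$. The reverse uses that by \Cref{sass-A}(iii), $S(t)\tilde u_0 > 0$ pointwise on $\overline D$ for every $t > 0$; a localization argument plus the multiplicative structure of the noise shows this positivity persists almost surely.

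\textbf{Step 3: It\^o identities (v) and (vi).} On the Yosida approximants, where It\^o's formula applies directly via \cite[Theorem 2.4]{ito-formula}, I would apply it to
\[
    \Phi_1(y) = R\ip{y_u}{1_u}, \qquad \Phi_2(y) = \|y\|_{\beta,2}^2, \qquad \Psi_\epsilon(y) = \frac{y_u}{\ip{y_u}{1_u} + \epsilon},
\]
sending $\epsilon \downarrow 0$ afterwards to obtain the equation for $v$. Using \Cref{def:tilde-e} to rewrite $\ip{A_u y_u}{1_u} = \ip{y_u}{\tilde e_u}$ and noting that the dissipation term in \eqref{eq:dbeta-x-tilde} arises as $2\ip{(-A)^\beta A \bar x}{(-A)^\beta \bar x} = -2\|\bar x\|_{\beta+1/2,2}^2$, the formulas \eqref{new-dr-dv} and \eqref{eq:dbeta-x-tilde} emerge by routine stochastic calculus. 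The limits $n \to \infty$ and $\epsilon \downarrow 0$ are handled by \Cref{lem:ito-convergence} and by (ii) above, which keeps $\ip{\tilde u_t}{1_u} > 0$ on compact sets a.s. Parts (iii) and (iv) are then immediate from \Cref{convolution-sup-bounds} and \Cref{general-mild-existence}(ii) applied to the truncated equations. Finally, the energy identity \eqref{eq:dbeta-x-tilde} combined with \eqref{eq:local-u-bound-G} and \eqref{eq:g-star-1}, after Young's inequality absorbs terms into the $-2\|\bar x\|_{\beta+1/2,2}^2$ dissipation, yields an a priori moment bound on $\sup_{t \leq T \wedge \tau_M} \|\bar x_t\|_{\beta,2}$ independent of $M$; \Cref{lq-to-lr} upgrades this to an $L^q$ bound with $q$ large enough that $\B_{\eta,q} \hookrightarrow B$, which via \Cref{convolution-sup-bounds} on the mild formula forces $\tau_M \to \infty$ a.s. The main obstacle is Step 3: the norm $\|\cdot\|_{\beta,2}^2$ is not a natural function on the $B$-valued state space where mild solutions live, so justifying It\^o's formula for $\Phi_2$ requires the Yosida approximants to live in $\B_{\beta+1,2}$, and showing that the trace term $\operatorname{tr}_{G_n(\bar x_n)} \Phi_2''$ converges to $\|G(\bar x)\|_{\beta,2}^2$ relies crucially on the delicate bounds in \Cref{SPDE-G-lem} to handle the non-Hilbert-Schmidt noise.
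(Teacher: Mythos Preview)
Your overall architecture matches the paper's, but two of the steps you marked as routine are where the actual work lies, and as written they contain real gaps.

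\textbf{Nonnegativity.} Your plan is to apply It\^o's formula to $\phi_i(y) = \int_D (y_i)_-^2$ on the Yosida approximants $\tilde x^{M,n}$ and use that $f_2 \geq 0$ and $\sigma_i = 0$ on $\{y_i = 0\}$. The problem is that these vanishing properties are \emph{pointwise} in the spatial variable, and the Yosida operator $R_n = n(n-A)^{-1}$ is nonlocal: even though $G_i(\psi(y))(\cdot)$ vanishes at a point $a$ where $y_i(a) \leq 0$, the regularized noise $R_n G_i(\psi(y))$ need not vanish there, so the It\^o correction term $\phi_i''(y)[R_nG\cdot,R_nG\cdot]$ does not drop out and your Gronwall argument fails. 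This is exactly the unjustified step in \cite{nhu-positivity} that the paper warns about (\Cref{rmk:extra-approx}). The paper's fix is substantial: an extra layer of approximation $\mathcal G_k = G_n \circ \mathfrak p_k$ projecting onto finitely many eigenmodes so that $\mathcal G_k$ maps into $\gamma(\U,\B)$ rather than only $\gamma(\U,\B_\beta)$, combined with a Lyapunov function $\Phi(z) = \int_D \phi(z\nu^{-1})\nu^*\nu$ built from the Krein--Rutman eigenfunctions, for which $\Phi'(z)(Az) \leq 0$ is proved via a Jensen-type argument on the associated Markov semigroup (\Cref{lem:e-Phi-is-noninc}). None of this is covered by ``following \cite{nhu-positivity, feller-stoch-reac-diff}.''

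\textbf{Global-in-time bounds.} You propose closing an a priori estimate from the energy identity \eqref{eq:dbeta-x-tilde}. But that identity contains $\ip{\bar x}{F(\bar x)}_\beta$, and under \Cref{SPDE-F} the nonlinearity $F$ is only polynomially bounded; absorbing it into the dissipation $-2\|\bar x\|_{\beta+1/2,2}^2$ would require exactly the structural hypothesis \Cref{SPDE-lyap}\ref{case-1}, which is \emph{not} assumed in \Cref{bigthm}. The paper instead uses nonnegativity together with the fact that only $F^+$ enters the estimate after exploiting positivity of $S(t)$ (as in \eqref{eq:lq-to-lr}), and $F^+$ is linearly bounded by \Cref{SPDE-F}; this plus \Cref{convolution-sup-bounds} and Gronwall gives $\sup_n \E[\sup_{t\leq T}\|x_n\|_B^p] < \infty$ directly from the mild formula. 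Also, your argument for the reverse implication in (ii) (``localization plus multiplicative structure shows positivity persists'') is too vague; the paper needs the uniform bound $\sup_{\epsilon>0}\E\int_0^T \|v_{s\wedge\sigma_\epsilon}\|_{L^2}^4\,ds < \infty$ from the mild equation for $v$, which then feeds a Lyapunov argument with $h(\|u\|_{L^1}) \approx -\ln\|u\|_{L^1}$ to rule out $\|u_t\|_{L^1} \to 0$.
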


\begin{proof}
To simplify the notation, we drop tildes from $x$, $u$, and $z$ in the proof. 
We split the proof into the following steps:

\begin{enumerate}
    \item Truncate $\tilde F, \tilde G$ to obtain approximations $x_n$ to \eqref{eq:mild-solution} which satisfy \ref{bigthm:cont-in-time}, \ref{bigthm:cont-in-space}.
    \item Show that \ref{bigthm:nonneg} holds for $x_n$. This is the longest step, but the essential idea is as follows:
    
    Replace $F_n,G_n$ with $F_n \circ \psi, G_n \circ \psi$, where $\psi(x) = x \vee 0$ is the positive part of $x$. If the solution of this new equation $y^*$ is nonnegative, then it solves the same equation as $x_n$, so by uniqueness $x_n = y^*$ is nonnegative. To show that $y^*$ is nonnegative, try applying It\^{o}'s formula to a smooth version $\Phi$ of the Lyapunov function $x \mapsto \int_D -(x\wedge 0)$. There are three deterministic terms: one coming from $A$, another coming from $F_n \circ \psi$, and the third is the It\^{o} correction term. Since we precomposed with $\psi$, the latter two are both $0$. By \Cref{lem:e-Phi-is-noninc} (the crucial argument in this step) the term coming from $A$ is $\leq 0$. This implies that $\E[\Phi(y^*(s))]$ is nonincreasing, and since $\Phi(y^*(s)) = 0$ iff $y^*(s) \geq 0$, this proves our claim.
    
    Unfortunately, rigorously justifying the proof sketch above requires a couple of nontrivial approximations. For example, we must use the Yosida approximation (\Cref{def:yosida}) because the hypotheses of It\^{o}'s formula require strong solutions as opposed to mild ones. There is also a less obvious approximation which is explained in greater detail below (\Cref{rmk:extra-approx}).
    
    \item Use another It\^{o} formula/Yosida approximation argument to show that \ref{bigthm:dbeta} holds for $x_n$ until the time $\tau_n$ when $\|x_n\|_B$ first exceeds $n-1$.
    \item Use another Ito formula/Yosida approximation argument to show that \ref{bigthm:drdv} holds for $x_n = (u_n,z_n)$ until $\tau_n \wedge \sigma_\epsilon$, where $\sigma_\epsilon$ is the first time $\|u_n\|_{L^1}$ drops below $\epsilon$. In the process of showing that $v_t$ is indeed a mild solution, we prove $\sup_{\epsilon > 0}\E[\int_0^T \|u_n(s \wedge \sigma_\epsilon)\|_{L^2}^2\|u_n(s \wedge \sigma_\epsilon)\|_{L^1}^{-2}ds] < \infty$.
    \item Show that \ref{bigthm:invt-set} is satisfied for $x_n$ using the Lyapunov function $-\ln{\|u_n\|_{L^1}}$, which requires the use of the bound above. This implies that $\sigma_\epsilon \to \infty$ almost surely, and so \ref{bigthm:drdv} holds for $x_n$ until $\tau_n$.
    \item Prove $\sup_n \E[\sup_{t \leq T} \|x_n\|_B^p] < \infty$, thus $\sup_n \tau_n = \infty$ a.s.
    \item Show that there is a well-defined process $x_t$ which satisfies $x_n(t \wedge \tau_n) = x_{t \wedge \tau_n}$ for every $n$ and solves \eqref{eq:mild-solution}.
    \item Conclude the theorem by noting that $x$ solves \eqref{eq:mild-solution} and inherits the aforementioned properties of $x_n$.
    
\end{enumerate}

Since $R,\theta$ are held constant for much of the proof, we sometimes omit the dependence on $R,\theta$ for ease of notation.

\textit{Step 1:} Recall \Cref{r-functions}. The truncation function $\Psi_n: \R^m \to \R^m$ given by
\begin{equation}\label{eq:trunc-psi-map}
          \begin{aligned}
        \Psi_n(x) = \begin{cases}
             x  & |x| \leq n \\
            n\frac{x}{|x|} & |x| > n
        \end{cases}
    \end{aligned}
\end{equation}
is Lipschitz with bounded range and it induces a Lipschitz map $\Psi_n: B \to B$ via
$f \mapsto \Psi_n \circ f$. We define the truncated functions 
\begin{equation}\label{eq:dffngn}
F_n(r,x,\theta) \coloneqq \tilde F(r, \Psi_n(x),\theta) \qquad 
\textrm{and} \quad G_n(r,x,\theta) \coloneqq \tilde G(r,\Psi_n(x),
\theta).    
\end{equation}
 Let $\beta$ be as in \Cref{SPDE-G-lem} and $r \in [2,\infty)$ in the definition of $\B \coloneqq L^r$ be large. By \Cref{lem:lipschitz-f-g} i-ii, $F_n(R,\cdot,\theta):B \to B$ and $G_n(R,\cdot,\theta): B \to \gamma(\U,\B_{\beta,r})$ are globally Lipschitz.

    Then with $\theta_F = 0$ and $\theta_G = \beta$ we conclude by \Cref{general-mild-existence} that for any $x_0 \in \cdom$ there is a unique $B$-valued processes $x_n$ (mild solution) satisfying \ref{bigthm:cont-in-time} and
    \begin{equation}\label{eq:truncated}
            x_n(t) = S(t) x_0 + (S * F_n(x_n))_t + (S \diamond G_n(x_n))_t\,, \quad x_n(0) = x_0 \,.
    \end{equation}
    Following the notation of \Cref{general-mild-existence}, we  use $\B \coloneqq L^r$, so $\B_{-1} = \B_{-1,r}$, etc. For fixed $x \in B$ we have by \Cref{lem:lipschitz-f-g} that $F_n(\cdot,x,\cdot): [0,\infty) \times \Theta \to \B$ and $G_n(\cdot,x,\cdot): [0,\infty) \times \Theta \to \gamma(\U,\B_{\beta,r})$ are continuous, so by \Cref{general-mild-existence} we conclude that for each $n$ the solutions of \eqref{eq:truncated} satisfy \ref{bigthm:cont-in-space}.

    \textit{Step 2:} For this step, fix some $n$ and approximate $G_n$ by
    $$
    \mathcal{G}_k(x) \coloneqq G_n(x)\circ \mathfrak{p}_k \,,
    $$
    where $\mathfrak{p}_k$ is the orthogonal projection onto $\operatorname{Span}(u_1,\dots,u_k)$ and $\{u_k\}_{k \in \N}$ is the orthonormal basis of $\U$ from \Cref{SPDE-G-sass}. Since 
    by the elliptic regularity the eigenfunctions $(e_n)$ of $A$ are in $L^\infty$, it follows from \eqref{eq:gron} and the definition of $H$ that $H \circ \mathfrak{p}_k \in \gamma(\U,\B)$. Since $\|f \mapsto x\cdot f\|_{\Ll(\B,\B)} \lesssim \|x\|_{L^\infty}$, we also obtain from the local Lipschitzness of $\tilde \sigma$ (\Cref{lem:general-tilde-lipschitz}) that $\mathcal{G}_k: B \to \gamma(\U,\B)$ is globally Lipschitz (global because we truncated with $\Psi_n$ in \textit{Step 1} and $R$ is fixed). 
    
    Notice that, for all $x \in B$, $\lim_{k \to \infty} \mathcal{G}_k(x) = G_n(x)$ in  $\gamma(\U,\B)$, and therefore in $\gamma(\U,\B_{\beta})$, by \cite[Proposition 2.4]{umd-integration} ($\mathfrak{p}_k$ converges to the identity on $\U$).

    Then \Cref{general-mild-existence} implies that there exists a mild solution to
    \begin{equation}\label{eq:smoothed-G}
        y_k(t) = S(t) x_0 + (S * F_n(y_k))_t + (S \diamond \mathcal{G}_k(y_k))_t\,, \quad y_k(0) = x_0
    \end{equation}
    and $y_k \to x_n$ as $k \to \infty$ in $L^p(\Omega; C([0,T];B))$ for all $p,T > 0$. Thus, to show nonnegativity of $x_n$ it suffices to show the nonnegativity of $y_k$ for each $k$.

    In what follows we fix $k$ and
    let $y^*(t)$ be the solution of
    \begin{equation}\label{eq:truncated-positive}
            y^*(t) = S(t) x_0 + (S * (F_n \circ \psi)(y^*))_t + (S \diamond (\mathcal{G}_k \circ \psi)(y^*))_t\,, \quad y^*(0) = x_0 \,,
    \end{equation}
    where $\psi(f) = f \vee 0$ ($\psi$ is Lipschitz from $B$ to $B$ so \Cref{general-mild-existence} still provides the existence of the solution). To simplify notation for already fixed $n$ and $k$, we let $\hat F \coloneqq F_n \circ \psi$, $\hat G \coloneqq \mathcal{G}_k \circ \psi$.

    Let $\phi: \R \to \R$ be a function which satisfies:
    \begin{enumerate}[label=(\roman*)]
        \item $\phi(x) = 0$ for $x \geq 0$ and $\phi(x) > 0$ for $x < 0$.
        \item $\phi$ is twice continuously differentiable with uniformly bounded first and second derivatives.
        \item $\phi'' \geq 0$ (so $\phi$ is convex).
    \end{enumerate}
    For example, we may set $\phi'(x) = -1$ for $x \leq -1$, and 
    $\phi'(x) = 0$ for $x \geq 0$, smoothly and increasingly interpolate between $-1$ to $0$ for $x \in (-1,0)$, and then set $\phi(x) = \int_0^x\phi'(t)dt$.

    Since $S(t)$ preserves positivity (\Cref{sass-A}), by Krein-Rutman ($S(t)$ is compact, see \Cref{rem:subtract-from-c}) there is a nonnegative eigenfunction $\nu$ corresponding to the largest eigenvalue of $S(t)$, and since $\nu \in \B_{\theta,q}$ for all $\theta,q$ we conclude via Sobolev embedding that $\nu \in C^\infty(\overline D)$ (see \Cref{characterization-of-interpolation-spaces}).  Similarly, $S(t)^*$, the adjoint semigroup, also preserves positivity and so there is $\nu^* \geq 0$, a nonnegative eigenfunction for $S(t)^*$ (with the same eigenvalue). By the strong maximum principle (see \Cref{rem:subtract-from-c}) $\nu, \nu^*$ are strictly positive on $D$, and thus the function
    $$\Phi(z) \coloneqq \int_D (\phi \circ (z\cdot\nu^{-1}))\nu^*\nu$$

    is well-defined (note the integrand is in $L^r$ since $z \in \B = L^r$ and $\phi$ is globally Lipschitz with $\phi(0) = 0$). By (ii) in the definition of $\phi$, dominated convergence theorem, and mean value theorem we have that $\Phi$ is twice continuously differentiable from $\B$ to $\R$ (for $r \geq 3$). Specifically,
    \begin{equation}\label{eq:phi-double-prime}
        \Phi'(z)w = \int_D (\phi' \circ(z \cdot \nu^{-1}))w\nu^*, \qquad 
    \Phi''(z)[w_1, w_2] = \int_D (\phi'' \circ(z \cdot \nu^{-1}))w_1w_2\frac{\nu^*}{\nu}
    \end{equation}
    (the $\nu^{-1}$ is not problematic since we work with Neumann or periodic boundary conditions, so the maximum principle implies that $\nu$ is uniformly bounded below on $D$).
    By (i) in the definition of $\phi$, one observes $z \geq 0$ if and only if $\Phi(z) \leq 0$ (in fact $\Phi(z) = 0$), and therefore if $\Phi(y^*(t)) \leq 0$ almost surely then $y^*(t) \geq 0$. Consequently, $\psi(y^*(t)) = y^*(t)$, which implies $y^*(t)$ solves \eqref{eq:smoothed-G}, and so by uniqueness $y_k(t) = y^*(t) \geq 0$. Thus, the claim of \textit{Step 2} holds if $\Phi(y^*(t)) \leq 0$ almost surely.

      To proceed, we wish to apply It\^o's formula \cite[Theorem 2.4]{ito-formula} to $\Phi$, but the solutions to \eqref{eq:truncated-positive} are not strong. For each $\ell \geq 1$ let $z_\ell(t)$ be the  strong solution of \eqref{eq:truncated-positive} (see \Cref{def:yosida}) 
with $\hat{F}$ and $\hat{G}$ (recall $\hat{F} = F_n \circ \psi$, 
$\hat{G} = \mathcal{G}_k \circ \psi$) replaced respectively by their
      Yosida approximations  $\hat F_\ell \coloneqq R_\ell \hat F$, and $\hat G_\ell \coloneqq R_\ell \hat G$.

    Then It\^o's formula can be applied to $z_\ell$:
    \begin{equation}\label{eq:ito-positivity}
        \begin{aligned}
        \Phi(z_\ell(t)) &= \Phi(z_\ell(0)) + \int_0^t \Phi'(z_\ell(s))[Az_\ell(s) + \hat F_\ell(z_\ell(s))]ds \\
        &+ \int_0^t \Phi'(z_\ell(s))\hat G_\ell(z_\ell(s))dW_s + \frac{1}{2}\int_0^t \tr_{\hat G_\ell(z_\ell(s))}\Phi''(z_\ell(s))ds \,,
    \end{aligned}
    \end{equation}
where for any operator $T$ on $\U$ we have $\tr_{T}\Phi''(z_\ell(s)) = \sum_{j \geq 1} \Phi''(z_\ell(s))[Tu_j, Tu_j]$ and $(u_j)$ is the orthonormal basis as in \Cref{SPDE-G-sass}.
    
    We analyze each term on the right hand side separately. The following lemma treats the term containing $A$. 
    
    \begin{lem}\label{lem:e-Phi-is-noninc}
        For any $z \in \B_1$ (not necessarily positive), $\Phi'(z)(Az) \leq 0$.
    \end{lem}
    
    \begin{proof}
    From \Cref{sass-A} (i) (assumption on the spectrum of $A$) follows that there is 
$\lambda \leq 0$ be such that $S(t)\nu = e^{\lambda t}\nu$ and $S(t)^*\nu^* = e^{\lambda t}\nu^*$. 
        Let $\mathfrak{m}_g$ denote the multiplication by a function $g$ and denote $\Pp_t \coloneqq e^{-\lambda t} \mathfrak{m}_{\nu^{-1}} \circ S(t) \circ \mathfrak{m}_\nu$. It is standard to check that $\Pp_t 1 = 1$ and $\Pp_t$ preserves positivity, and therefore $\Pp_t$ is a Markov semigroup with the generator
         $\Ll u = -\lambda u + \nu^{-1}A(u\nu)$, and $\nu^* \nu$ is a fixed point of $\Pp_t^*$. Since $\phi$ is convex and $\Pp_t$ is Markov, 
         by Jensen's inequality and the invariance of $\nu^* \nu$ we have
        $$
        \int_D \phi(\Pp_t u)\nu^*\nu \leq \int_D \Pp_t\phi(u)\nu^*\nu = \int_D \phi(u)\nu^*\nu \,,
        $$
        and so we conclude that
        $$\int_D \phi'(u)[-\lambda u + \nu^{-1}A(u\nu)]\nu^*\nu  = \int_D \phi'(u)(\Ll u)\nu^*\nu = \frac{d}{dt} \int_D \phi(\Pp_t u)\nu^*\nu \Big|_{t = 0} \leq 0 \,.$$
        Using \eqref{eq:phi-double-prime} and the above with $u = z\nu^{-1}$, one obtains
        $$\Phi'(z)(Az) = \int_D \phi'(z\nu^{-1})(Az)\nu^* \leq \lambda \int_D \phi'(z\nu^{-1})z\nu^* \,.$$
        Since $\lambda \leq 0$ and $\phi'(z\nu^{-1})z \geq 0$ ($\phi'' \geq 0$ and $\phi'(0) = 0$), the claim is proven.
    \end{proof}

    To estimate the other terms in \eqref{eq:ito-positivity} we observe that 
    by the convergence of the Yosida approximations (see \Cref{def:yosida}), $\Phi(z_\ell(t)) \to \Phi(y^*(t))$ and $\Phi(z_\ell(0)) \to \Phi(x_0) = 0$ because $x_0 \in C(\overline D)_+$. Moreover, since $\phi'(x_i) = \phi''(x_i) = 0$ if $x_i \geq 0$ and $\tilde \sigma_i(\cdot,R,\Psi_n(\psi(x)),\theta) = 0$ if $x_i \leq 0$ (see \Cref{SPDE-G-sass}), it follows from $\hat G = (\tilde{\sigma} \cdot \mathcal{G}_k) \circ \psi$ that
$\phi'(x) \hat G(x)u_\ell = \phi''(x) \hat G(x)u_\ell = 0$, and therefore 
    $$
    \int_0^t \Phi'(y^*(s))\hat G(y^*(s))dW_s + \frac{1}{2}\int_0^t \tr_{\hat G(y^*(s))}\Phi''(y^*(s))ds = 0\,.
    $$
    Using the same reasoning as for $\hat G$ we obtain 
$\phi'(x)(x \vee 0)f_1(x \vee 0) = 0$. Then from 
 $f(x) \geq x\cdot f_1(x)$ if $x \geq 0$ (\Cref{SPDE-F}) and $\phi' \leq 0$ follows
    $$
    \int_0^t \Phi'(y^*(s))\hat F(y^*(s))ds 
    = 
    \int_0^t \Phi'(y^*(s)) F_n(y^*(s)\vee 0)ds 
    \leq 0 \,.
    $$
    
    Using \Cref{lem:ito-convergence} to take the limit as $l \to \infty$ on both sides of \eqref{eq:ito-positivity}, we conclude from \Cref{lem:e-Phi-is-noninc} and the above that $\Phi(y^*(t)) \leq \Phi(x_0) = 0$ almost surely, which finishes \textit{Step 2}. Indeed, the assumptions of \Cref{lem:ito-convergence} are satisfied since $\|\Phi'(z)\|_{\Ll(\B,\R)} + \|\Phi''(z)\|_{\Ll(\B,\Ll(\B,\R))} \lesssim 1$. To see this, note that $\phi$ has uniformly bounded first and second derivatives, so by \eqref{eq:phi-double-prime} it follows that
    $$|\Phi'(z)w| \leq \int_D |\phi'(z\nu^{-1})w\nu^*| \lesssim \|w\|_{L^1} \,.$$
    The inequality for $\Phi''$ follows similarly using that $S(t)$ is self-adjoint, and therefore $\nu^*\nu^{-1} = 1$. 
    
    We remark that if we were to allow $S$ not self-adjoint but the boundary conditions were still either Neumann or periodic, then $\nu$ is bounded away from zero on $D$ and this proof still works. However, in  the case of Dirichlet boundary conditions one would have to be more careful and use arguments related to the boundary Harnack inequality.

     To summarize, in \textit{Step 2} we have shown for each $n\geq 1$ that $x_n(t) \geq 0$ almost surely.

    \textit{Step 3:}
    Next we fix $n$ and show that $y(t) = x_n(t)$ solving 
    \eqref{eq:truncated} satisfies \ref{bigthm:dbeta}. It is essentially a consequence of It\^{o}'s formula applied to $\|\cdot\|_{\beta,2}^2$. For technical reasons we must use the Yosida approximations again, but since many steps are similar to the other Yosida approximation arguments in this paper (\textit{Step 2} of this theorem, \Cref{lem:ito-convergence}, etc.) we omit some of the details.
    
    First, we claim that $\hat y(t) = \pi_u \circ y(t)$ (recall $\pi_u$ is the projection on the first $d$ coordinates) satisfies 
    \begin{equation}\label{eq:dbeta-of-y}
        \begin{aligned}
         \|\hat y(t)\|_{\beta,2}^2 &= \|\hat y(0)\|_{\beta,2}^2 + \int_0^t 2\ip{\hat y(s)}{F_n(y(s))}_\beta + \|\pi_u \circ G_n(y(s))\|_{\beta,2}^2 ds \\
         &+ \int_0^t \ip{2\hat y(s)}{G_n(y(s))dW_s}_\beta - \int_0^t 2\|\hat y(s)\|_{\beta+1,2}^2ds \,,
    \end{aligned}
    \end{equation}
    where $\beta$ is as in \Cref{SPDE-G-lem}. By using the same argument, we obtain that \eqref{eq:dbeta-of-y} holds with 
     $\pi_u$ replaced by $\pi_z$ and $\hat y$ by $\pi_z \circ y$.

    For each $\ell \geq 1$ we replace $F_n$ and $G_n$ in \eqref{eq:truncated} by the Yosida approximations (\Cref{def:yosida}) respectively $R_\ell F_n$
    and $R_\ell G_n$ and denote $y_\ell$ the corresponding strong solution 
    (see \Cref{def:yosida} for well posedness). 
    Applying the energy (or It\^o) formula (see \cite[Theorem 4.2.5]{liu-rockner} or \cite[Theorem 3.3.15]{SPDE-book}) to $\hat y_\ell - \hat y_k$ in the normal triple $(\B_{\beta+1/2,2}, \B_{\beta,2}, \B_{\beta-1/2,2})$ (note that $\B_{\beta,2}$ is a Hilbert space with the inner product $\ip{\cdot}{\cdot}_\beta$), and noting 
    that the norm in $\gamma(\U, \B_{\beta,2})$ is the same as the Hilbert-Schmidt norm $\mathcal{L}_2(\U, \B_{\beta,2})$ (see \cite[Proposition 13.5]{radonifying-survey}), we obtain
    \begin{multline}\label{eq:dbeta-of-ym}
    \|\hat y_\ell(t) - \hat y_k(t)\|_{\beta,2}^2 = \|\hat y_\ell(0) - \hat y_k(0)\|_{\beta,2}^2 \\
    \begin{aligned}
         &+ \int_0^t 2\ip{\hat y_\ell(s) - \hat y_k(s)}{R_\ell F_n(y_\ell(s)) - R_kF_n(y_k(s))}_\beta \\
         &+ \|\pi_u\circ R_\ell G_n(y_\ell(s)) - \pi_u \circ R_kG_n(y_k(s))\|_{\beta,2}^2 ds \\
         &+ \int_0^t 2\ip{\hat y_\ell(s) - \hat y_k(s)}{ [R_\ell G_n(y_\ell(s)) - R_kG_n(y_k(s))]dW_s}_\beta \\
         &- \int_0^t 2\|\hat y_\ell(s) - \hat y_k(s)\|_{\beta+1/2,2}^2ds \,,
    \end{aligned}
    \end{multline}
where we used that $\ip{Ay}{y}_\beta = -\ip{(-A)^{1/2}y}{(-A)^{1/2}y}_\beta = -\|y\|_{\beta+1/2}^2$, because $A$ is self-adjoint (\Cref{sass-A}). Since $\beta \leq 0$ and $F_n$ is globally Lipschitz from $B$ to $\B$, it is also globally Lipschitz from $B$ to $\B_{\beta,2}$ since $\B \subset L^2 \subset \B_{\beta,2}$. Also, recall from \textit{Step 1} that $G_n: B \to \gamma(\U, \B_{\beta,2})$ is globally Lipschitz. Then it 
follows by the same argument as \Cref{lem:ito-convergence} with $\Phi(x) = \|x\|_{\beta,2}^2$
 that all of the terms on the right hand side of \eqref{eq:dbeta-of-ym} converge to $0$ in $L^p(\Omega; C([0,T],\R))$ as $(\ell,k) \to \infty$ except perhaps $\int_0^t 2\|\hat y_\ell(s) - \hat y_k(s)\|_{\beta+1/2,2}^2ds$. Since $\hat y_l \to \hat y$ in $L^p(\Omega; C([0,T];\B_{\beta,2}))$ for all $p,T > 0$ (\Cref{def:yosida}), the left hand side of \eqref{eq:dbeta-of-ym} converges to $0$.  Hence, $(\hat y_\ell)_{\ell = 1}^\infty$ is Cauchy in $L^2(\Omega \times [0,T]; \B_{\beta+1/2,2})$, so $\int_0^t 2\|\hat y_\ell(s)\|_{\beta+1/2,2}^2ds \to \int_0^t 2\|\hat y(s)\|_{\beta+1/2,2}^2ds$ (the limit must be $\hat y$ because $\hat y_{\ell} \to \hat y$ in $L^p(\Omega; C([0,T];\B_{\beta,2}))$).
 Then \eqref{eq:dbeta-of-y} follows 
 after setting $\hat{y}_k = 0$ and passing $\ell \to \infty$
 in \eqref{eq:dbeta-of-ym}.
  . 

For already fixed $n$ define a (random) time
  \begin{equation}\label{eq:def-tau-n}
        \tau_n \coloneqq \inf\{t > 0 \mid \|x_n(t)\|_B > n - 1\} \,,
    \end{equation}
and $\bar x_n(t) \coloneqq (Ry_u(t), y_z(t))$.
Note that \eqref{eq:dbeta-of-y} holds for $t\leq \tau_n$ with 
$F_n$ and $G_n$ replaced respectively by $\tilde{F}$ and $\tilde{G}_u$ because $\Psi_n(x) = x$ for $\|x\|_B \leq n - 1$ (see \eqref{eq:trunc-psi-map}).

We have by \eqref{eq:alt-def-tilde} that
    $$
    (R\pi_u + \pi_z)\circ \tilde{F}(y(t)) = 
    F(\bar{x}_n(t)) \,,$$
    and the analogous identity holds with $F$ replaced by $G$. Also, $\|\bar{x}_n(t)\|_{\beta,2}^2 = R^2\|\pi_u \circ y(t)\|_{\beta,2}^2 + \|\pi_z \circ y(t)\|_{\beta,2}^2$, because $\ip{\pi_u \circ y(t)}{\pi_z \circ y(t)}_\beta = 0$. Similarly 
    \begin{align*}
      \|(R\pi_u + \pi_z)\circ G_n(y(t))\|_{\beta,2}^2 = R^2\|\pi_u \circ G_n(y(t))\|_{\beta,2}^2 + \|\pi_z \circ G_n(y(t))\|_{\beta,2}^2  \,,
    \end{align*}
    \begin{align*}
        &R^2\ip{\pi_u \circ y(s)}{F_n(y(s))}_\beta + \ip{\pi_z \circ y(s)}{F_n(y(s))}_\beta \\&=  \ip{R\pi_u \circ y(s)}{R\pi_u \circ F_n(y(s))}_\beta + \ip{\pi_z \circ y(s)}{\pi_z \circ F_n(y(s))}_\beta \\
        &= \ip{(R\pi_u + \pi_z)\circ y(s)}{(R\pi_u + \pi_z)\circ F_n(y(s))}_\beta\,,
    \end{align*}
    and the analogous statement holds with $F$ replaced by $G$.
    Thus, a multiplication of \eqref{eq:dbeta-of-y} by $R^2$ and an addition  to \eqref{eq:dbeta-of-y} with $u$ replaced by $z$ implies
    for $t \leq \tau_n$ that
    
    \begin{equation*}
        \begin{aligned}
         \|\bar{x}_n(t)\|_{\beta,2}^2 &= \|\bar{x}_n(0)\|_{\beta,2}^2 + \int_0^t 2\ip{\bar{x}_n(s)}{F(\bar{x}_n(s))}_\beta + \|G(\bar{x}_n(s))\|_{\beta,2}^2 ds \\
         &+ \int_0^t \ip{2\bar{x}_n(s)}{G(\bar{x}_n(s))dW_s}_\beta - \int_0^t 2\|\bar{x}_n(s)\|_{\beta+1,2}^2ds \,.
    \end{aligned}
    \end{equation*}

    To summarize, in \textit{Step 3} we showed that \ref{bigthm:dbeta} holds with $\bar{x}_n(t \wedge \tau_n)$ in place of $\bar{x}_t$.

    \textit{Step 4:}
    We fix $n$ and show that $y(t) = x_n(t)$ solving \eqref{eq:truncated}  satisfies \ref{bigthm:drdv} until $\tau_n$ as in \eqref{eq:def-tau-n}. Throughout, we assume $u_0 \coloneqq (x_0)_u \neq 0$ as postulated in \ref{bigthm:drdv}. Let $\hat F \coloneqq (F_n)_u$, $\hat G \coloneqq (G_n)_u$, 
    where $F_n$ and $G_n$ are as in \eqref{eq:dffngn}. Note that $\hat{F}$ and $\hat{G}$ are different than the ones in Step 2.
    We start by noticing that
    \begin{equation}\label{eq:d-l1-norm}
        \|y_u(t)\|_{L^1} = \|u_0\|_{L^1} + \int_0^t \ip{y(s)}{\tilde e_u} + \ip{\hat F(y(s))}{1_u}ds + \int_0^t \ip{\hat G(y(s))^* 1_u}{dW_s},
    \end{equation}
    where $\tilde e$ is as in \Cref{def:tilde-e} and the adjoint of $\tilde G_u(y(s))$ is in $\Ll(\U,L^2(D;\R^d))$. Indeed, by \textit{Step 2} $y(t) \geq 0$, thus $\|y_u(t)\|_{L^1} = \ip{y_u(t)}{1_u}$, so \eqref{eq:d-l1-norm} almost follows by \Cref{lem:ito-convergence} applied to $\Phi(x) \coloneqq \ip{x_u}{1_u}$. The careful reader may notice that $\hat G$ is not globally Lipschitz from $B$ to $\gamma(\U, L^2)$, but since the It\^{o} correction term is $0$ and the stochastic term $\Phi'(x)\hat G(x)$ is equal to $\hat G(x)^*1_u$, it is enough to have $\hat G$ globally Lipschitz from $B$ to $\Ll(\U,L^2)$. This follows because $H \in \Ll(\U, L^2)$ (\Cref{SPDE-G-sass}) and $\tilde \sigma \circ \Psi_n$ is globally Lipschitz (for fixed $R$).

    Consequently,  the $dr$ portion of \eqref{new-dr-dv} follows for $t \leq \tau_n$ after multiplication by $R$ on both sides of \eqref{eq:d-l1-norm} and
    using for $R > 0$, $\|y\|_B \leq n$, and $\|y_u\|_{L^1} \neq 0$ that
\begin{equation}
    R \hat{F}(y) = R\tilde F_u(R,y)= F(Ry_u,y_z) = R\|y_u\|_{L^1}\tilde{F}_u\Big(R\|y_u\|_{L^1}, 
     \frac{y_u}{\|y_u\|_{L^1}}, y_z \Big) \,, 
\end{equation}
which in $(r,v,z)$ coordinates is equal to $r\tilde F_u(r,v,z)$,
and similarly for $\hat{G}$ term. Note that if $R= 0$, the statement is trivial since $r_t = 0$ for all $t$. 
    
    To obtain the $dv$ portion of \eqref{new-dr-dv}, we have to guarantee that $\|y_u(t)\|_{L^1}$ does not vanish (indeed, if $\|y_u(t)\|_{L^1} = 0$ then $v_t$ is not even well-defined). Thus, for every $\epsilon > 0$
    we define
    \begin{equation}\label{eq:sigma-stopping-time}
        \sigma_\epsilon \coloneqq \inf\{t > 0 \mid \|y_u(t)\|_{L^1} < \epsilon\} 
    \end{equation}
    and apply \eqref{eq:d-l1-norm} and It\^{o}'s formula to the semi-martingale $\|y_u(t \wedge \sigma_\epsilon)\|_{L^1}$ to obtain
    \begin{equation*}
    \begin{aligned}
        d\|y_u\|_{L^1}^{-1} &= \Big(-\|y_u\|_{L^1}^{-2}[\ip{y_u}{\tilde e_u} + \ip{\hat F(y)}{1_u}] + \|y_u\|_{L^1}^{-3}\|\hat G(y)^*1_u\|_\U^2\Big)dt \\
        &- \ip{\|y_u\|_{L^1}^{-2}\hat G(y)^*1_u}{dW_t} \,.
    \end{aligned}
    \end{equation*}
    $y_t$ is a strong solution in $\B_{-1}$ by \Cref{mild-strong} (\eqref{eq:mild-vs-strong} follows from the global Lipschitz properties of $F_n$ and $G_n$ noted in \textit{Step 1}), so by \cite[Corollary 2.6]{ito-formula} we have the following equality in $\B_{-1}$ ($v \coloneqq y_u/\|y_u\|_{L^1}$ and $\rho \coloneqq \|y_u\|_{L^1}^{-1}$):
    \begin{equation}\label{eq:dv-for-y}
           \begin{aligned}
        dv &= \Big(-\ip{v}{\tilde e_u} - \rho\ip{\hat F(y)}{1_u} + \rho^2\|\hat G(y)^*1_u\|_\U^2\Big)vdt - \ip{\rho\hat G(y)^*1_u}{dW_t}v \\
        &+[A_uv + \rho \hat F(y)]dt + \rho \hat G(y)dW_t - \rho^2\hat G(y)\hat G(y)^*1_udt \,,
    \end{aligned}
    \end{equation}
    for all $t \leq \sigma_\epsilon$. We want to use \Cref{mild-strong} to show that \eqref{eq:dv-for-y} holds in $\B$ in the mild sense of \Cref{def:mild}. Thus, we need to verify \eqref{eq:int-condition-mild}. We start by showing that for some small enough $\eta >0$ (independent of $\epsilon$)
    \begin{equation}\label{eq:desired-bc-bound}
        \E\Big[\int_0^{T\wedge \sigma_\epsilon} \|b_t\|_{-1+\eta} + \|c_t\|_{-1/2+\eta}^2dt\Big] < \infty \,,
    \end{equation}
    where
    
    \begin{align*}
        b_t &\coloneqq \Big(-\ip{v}{\tilde e_u} - \rho\ip{\hat F(y)}{1_u} + \rho^2\|\hat G(y)^*1_u\|_\U^2\Big)v + \rho \hat F(y) - \rho^2\hat G(y)\hat G(y)^*1_u \\
        c_t &\coloneqq - \ip{\rho\hat G(y)^*1_u}{\cdot}v + \rho \hat G(y)
    \end{align*}

    Using \Cref{SPDE-G-lem} (especially \eqref{eq:wierd-g}) to handle the terms involving $\hat G$, global Lipschitz properties of $\tilde \sigma \circ \Psi_n$ and $\hat F$ such as $|\hat F(y)| \lesssim |y_u|$ (see \Cref{lem:lipschitz-f-g} for further explanation), 
    $\rho^2\hat G(y)\hat G(y)^*1_u = \rho^2 \tilde{\sigma}HH^* (\pi_u\circ\tilde{\sigma})$, and our definition $\rho y_u = v$ with $\|v\|_{L^1} = 1$, we derive the following bounds:
    \begin{equation}\label{eq:bc-bounds}
        \begin{aligned}
            \|b\|_{-1 + \eta} &\lesssim \Big(\|v\|_{L^1} + \rho\|y_u\|_{L^1} \Big)\|v\|_{-1+\eta} + \rho\|y_u\|_{L^r} + (\|v\|_{L^2}^2 + \|v\|_{L^r}) \\
            &\lesssim \|v\|_{L^2}^2 \vee \|v\|_{L^r} \\
            \|c\|_{-1/2+\eta}^2 &\lesssim (\|v\|_{L^2}^2 + \|v\|_{L^r})^2 + \rho^2\|y_u\|_{L^r}^2 \\
            &\lesssim (\|v\|_{L^2}^2 \vee \|v\|_{L^r})^2 \,,
        \end{aligned}
    \end{equation}
    where the implicit constants are independent of $\epsilon$ (see the definition of $\sigma_\epsilon$ in \eqref{eq:sigma-stopping-time}). 
    Then \eqref{eq:desired-bc-bound} follows from $\rho_{t\wedge\sigma_\epsilon} \leq \epsilon^{-1}$ and $\E[\sup_{t \leq T} \|y_t\|_B^p] < \infty$ for all $p,T > 0$ (see \Cref{general-mild-existence} and note we assumed $r > N$ and $y = x_n$ is the mild solution of \eqref{eq:truncated}). So indeed we can apply \Cref{mild-strong} to deduce that \eqref{eq:dv-for-y} holds in the mild sense of \Cref{def:mild} up to time $\sigma_\epsilon$ for every fixed $\epsilon > 0$. To establish uniform bounds in $\epsilon$, we first use \Cref{convolution-bounds} (see also \Cref{rem:convolution-bounds-hold-locally}) and \eqref{eq:bc-bounds} to conclude that for any small $\eta > 0$
    \begin{equation}\label{eq:bound-on-v-sigma-epsilon}
        \E\Big[\int_0^{T \wedge \sigma_\epsilon} \|v_t\|_{\eta/2}^2\Big] \lesssim \|v_0\|_{L^r}^2 + \E\Big[\int_0^{T \wedge \sigma_\epsilon} \|v_t\|_{L^r}^2 \vee \|v_t\|_{L^2}^4\Big]
    \end{equation}
    for all $T,\epsilon > 0$, where the implicit constants are independent of $\epsilon$. Next, by Sobolev embedding $L^1 \subset W^{-N-1,r}$ for all $1 < r < \infty$, and thus with $a = (N+1)/(\eta+N+1) < 1$ we have by interpolation and $\|v\|_{L^1} = 1$ that
    \begin{equation}\label{eq:lrbwwr}
    \|v\|_{L^r} \leq \|v\|_{W^{\eta,r}}^a\|v\|_{W^{-N-1,r}}^{1-a} \lesssim \|v\|_{W^{\eta,r}}^a \,.    
    \end{equation}
    Then \Cref{characterization-of-interpolation-spaces} and Young's inequality imply that $\|v\|^2_{L^r}$ vanishes over $\|v\|_{\eta/2}^2$ (\Cref{fxn-vanish}). In addition, by \eqref{eq:lrbwwr} and H\"{o}lder's inequality with $b(r) = r/(2(r-1)) \to 1/2$ as $r \to \infty$
    $$
    \|v\|_{L^2}^4 \leq \|v\|_{L^r}^{4b(r)}\|v\|_{L^1}^{4(1-b(r))} = \|v\|_{L^r}^{4b(r)} \lesssim \|v\|_{\eta/2}^{4ab(r)}
    \,,$$
    and so $\|v\|_{L^2}^4$ (thus $\|v\|_{L^r}^2 \vee \|v\|_{L^2}^4$ as well) also vanishes over $\|v\|_{\eta/2}^2$ when $r$ is large enough. We conclude from \eqref{eq:bound-on-v-sigma-epsilon} that
    \begin{equation}\label{eq:bound-v-l2}
        \E\Big[\int_0^{T \wedge \sigma_\epsilon} \|v_t\|_{L^r}^2 \vee \|v_t\|_{L^2}^4\Big] \lesssim \E\Big[\int_0^{T \wedge \sigma_\epsilon} \|v_t\|_{\eta/2}^2\Big] \lesssim \|v_0\|_{L^r}^2  +1 \,,
    \end{equation}
    where the implicit constants are independent of $\epsilon$. 

    In the Step 5 below, we establish the following result. 
    
    \begin{lem}\label{lem:consep}
    We have $\sigma_\epsilon \to \infty$ as $\epsilon \to 0^+$ almost surely. 
    \end{lem}

    Then by \Cref{lem:consep},  \eqref{eq:desired-bc-bound} and \eqref{eq:bound-v-l2} hold with $\sigma_\epsilon$ removed and we conclude that $v_t$ is a mild solution in $\B$ (in the sense of \Cref{def:mild}) to \eqref{eq:dv-for-y}. Also, if $\|y\|_B \leq n$, then  $\rho \hat F(y) = \tilde F_u(R,y_u,y_z) = \tilde F_u(R\rho^{-1},\rho y_u,y_z)$ and similarly $\rho\hat G(y) = \tilde G_u(R\rho^{-1}, \rho y_u,y_z)$. Since $r = R\rho^{-1}$ and $v = \rho y_u$, the $dv$ portion of \eqref{new-dr-dv} follows for $x_n(t \wedge \tau_n)$ by \eqref{eq:dv-for-y}.

    To summarize, in \textit{Step 4} we have shown that \ref{bigthm:drdv} holds with $x_n(t \wedge \tau_n)$ in place of $x_t$.

    \textit{Step 5:}
    Next we fix $n$ and show that $y(t) = x_n(t)$ satisfies \ref{bigthm:invt-set} and \Cref{lem:consep} holds. First assume $u_0 = 0$ and observe that $\hat F(0,y_z) = (F_n)_u(0,y_z) = 0$ and $\hat G(0,y_z) = (G_n)_u(0,y_z) = 0$. Thus, if $z(t)$ is the solution to
    \begin{equation*}
            z(t) = S_z(t) z_0 + (S_z * F_n(0,z))_t + (S_z \diamond \tilde G(0,z))_t\,, \quad z(0) = z_0
    \end{equation*}
    (existence of $z$ follows in the same way as existence of $x_n$ in \textit{Step 1}), then $(0,z(t))$ satisfies \eqref{eq:truncated}. By the uniqueness of mild solutions (see \Cref{general-mild-existence}), $y(t) = (0,z(t))$, and therefore $y_u(t) = 0$. Thus, it remains to show that $\tilde u_0 \neq 0 \implies y_u(t) \neq 0$.
    
    Throughout the rest of this step we assume $\tilde u_0 \neq 0$. 
    Define the function $h: (0,\infty) \to [0,\infty)$ by $h(x) \coloneqq g(-\ln x)$, where $g: \R \to \R$ is a nonnegative smooth function such that  $g(x) = x$ on $[1,\infty)$, $g(x) = 0$ on $(-\infty,0]$, and $g$ has bounded first and second derivatives.
    
    Recall $\|y_u(t)\|_{L^1}$ is a $\R$-valued semi-martingale, $\rho_t \coloneqq \|y_u(t)\|^{-1}_{L^1}$, 
    $v_t = \rho_t y_u(t)$, and $\sigma_\epsilon$ is the stopping time defined in \eqref{eq:sigma-stopping-time}.
    
    Fix any small $\epsilon > 0$ and use \eqref{eq:d-l1-norm} and 
    It\^{o}'s formula to calculate 
    
    \begin{equation}\label{eq:h-ito}
        h(\|y_u(t \wedge \sigma_\epsilon)\|_{L^1}) = h(\|u_0\|_{L^1}) + \int_0^{t\wedge \sigma_\epsilon} \mu_{s }ds + \int_0^{t\wedge \sigma_\epsilon} \ip{\nu_{s}}{dW_s} \,,
    \end{equation}
    where
    \begin{align*}
        \mu_t &\coloneqq h'(\rho_t^{-1})[\ip{y_u(t)}{\tilde e_u} + \ip{\hat F(y_t)}{1_u}] + \frac{1}{2}h''(\rho_t^{-1})\|\hat G(y_t)^*1_u\|_\U^2 \\
        \nu_t &\coloneqq h'(\rho_t^{-1})\hat G(y_t)^*1_u \,.
    \end{align*}
If $\rho_t \in [e, \infty)$ for some $t$, then 
$|h'(\rho_t^{-1})| = \rho_t$ and $|h''(\rho_t^{-1})| = \rho_t^2$. 
If $\rho_t \in (0, e)$, then  
$|h'(\rho_t^{-1})| \leq \|g'\|_{L^\infty}|\rho_t|$ and $|h''(\rho_t^{-1})| \leq (\|g'\|_{L^\infty}) + \|g''\|_{L^\infty})|\rho_t|^2$. Consequently,
    \begin{align*}
         |\mu_t| &\lesssim |\ip{v_t}{\tilde e_u}| + \rho_t|\ip{\hat F(y_t)}{1_u}| + \rho_t^2\|\hat G(y_t)^*1_u\|_\U^2\\
        \|\nu_t\|_\U^2 &=  \rho_t^2\|\hat G(y_t)^*1_u\|_\U^2 \,.
    \end{align*}
Then using \eqref{eq:H-star-x}, global Lipschitz properties of $\tilde \sigma \circ \Psi_n$ and $\hat F$ such as $|\hat F(y)| \lesssim |y_u|$ (see \Cref{lem:lipschitz-f-g} for further explanation), we have 
    
    $$
    |\mu_t| + \|\nu_t\|_\U^2 \lesssim \|v_t\|_{L^2}^4 
     \,.
    $$
     It follows from \eqref{eq:bound-v-l2} that $\int_0^{t\wedge \sigma_\epsilon} \ip{\nu_{s }}{dW_s}$ is a martingale and that there is $M > 0$, independent of $\epsilon$, such that $\E[\int_0^{t\wedge \sigma_\epsilon} |\mu_{s }|ds] \leq M$. Using $h(\|y_u(\sigma_\epsilon)\|_{L^1}) = -\ln \epsilon$ (for small enough $\epsilon$), Chebyshev inequality, and  
    optional stopping one observes
    \begin{equation*}
        (-\ln \epsilon)\Prb(\sigma_\epsilon < t)\leq \E[h(\|y_u(t \wedge \sigma_\epsilon)\|_{L^1})] \leq h(\|u_0\|_{L^1}) + M \,,
    \end{equation*}
    and thus $\Prb(\sigma_\epsilon < t) \to 0$ as $\epsilon \to 0^+$. We conclude that $\sigma_\epsilon \to \infty$ almost surely, which means that $\|y_u(t)\|_{L^1} > 0$ for all $t$ and \Cref{lem:consep} holds, as desired.

    To summarize, in \textit{Step 5} we have shown that \ref{bigthm:invt-set} holds with $x_n(t)$ in place of $x_t$.

\textit{Step 6:}
    Next if $x_n$ solves \eqref{eq:truncated} we prove a uniform bound on $\E[\sup_{t \leq 1} \|x_n(t)\|_B^p]$ for all (large enough) $p$.
    Using the same argument as in \Cref{lq-to-lr} with $q = \infty$, we conclude that by choosing small $\eta > 0$ and $r$ large enough, we have for all $t \geq 0$
    \begin{equation}\label{eq:axxne}
    \|x_n(t)\|_B \lesssim \|x_0\|_B + \|(S * F_n^+(x_n))_t\|_{\eta/2} + \|(S \diamond G_n(x_n))_t\|_{\eta/2} \,,    \end{equation}
    where $F_n^+ \coloneqq F_n \vee 0$ and the constant in the $\lesssim$ depends only on $r,\eta$ (and $t$, but the dependence is uniform over $t \leq T$). By decreasing $\eta$ and choosing $\beta > -1/2$ sufficiently close to $-1/2$,  there are $\eta$ and $\beta$ satisfying respectively \Cref{lq-to-lr}\eqref{eq:lborc} (with $q = \infty$) and \Cref{SPDE-G-lem} and additionally $\eta = \beta + 1/2 > 0$.

    By \Cref{convolution-sup-bounds} and \eqref{eq:axxne} it follows that (for all large enough $p$) 
    $$\E\Big[\sup_{s \leq t} \|x_n(t)\|_B^p\Big] \lesssim \|x_0\|_B^p + \E\Big[\int_0^t \|F_n^+(x_n(s))\|^pds\Big] + \E\Big[\int_0^t \|G_n(x_n(s))\|_{\eta - 1/2}^pds\Big] \,.$$

    By \eqref{eq:local-u-bound-G}, $\|F_n^+(x)\| \lesssim \|x\| + 1 +R\|x\|$ and $\|G_n(x)\|_{\eta - 1/2} \lesssim \|x\| + R\|x\|$ (where the constants do not depend on $n$), so we conclude that
    \begin{align*}
        \E[\sup_{s \leq t} \|x_n(s)\|_B^p] &\lesssim \|x_0\|_B^p + \E\Big[\int_0^t 1+\|x_n(s)\|^pds\Big] 
        \\
        &\lesssim \|x_0\|_B^p + \E\Big[\int_0^t 1+\sup_{r \leq s}\|x_n(r)\|_B^pds\Big] \,.
    \end{align*}
    For every $T>0$, by Gronwall's inequality there is some $C > 0$ (independent of $n$ and uniform over $\theta \in \Theta$ and $R,T$ in bounded subsets of $[0,\infty)$) such that
    \begin{equation}\label{eq:ubonxn}
        \E[\sup_{t \leq T} \|x_n(t)\|_B^p] \leq C(1+\|x_0\|_B^p) \,.
    \end{equation}
    By Chebyshev's inequality we conclude $\lim_{n \to \infty} \Prb(\tau_n < T) = 0$ (see \eqref{eq:def-tau-n}), and thus $\sup_n \tau_n = \infty$ almost surely.
    
    \textit{Step 7:}
    Next we fix integers $m < n$ and define $\sigma \coloneqq \inf\{t > 0 \mid \|x_n(t)\|_B > m\}$. Note that $F_n(x_n(t \wedge \sigma)) = F_m(x_n(t \wedge \sigma))$ and $G_n(x_n(t \wedge \sigma)) = G_m(x_n(t \wedge \sigma))$. 
    Also, if $\tau$ is a bounded stopping time and $Z_s$ is such that $Z_s 1_{s \leq \tau} = 0$, then $\int_0^t S(t-s)Z_s 1_{s\leq \tau \wedge t}dW_s = 0$ and by \cite[Proposition 5.3]{umd-integration}, we have the following localization result: 
    $(S \diamond Z)_\tau \coloneqq \int_0^\tau S(\tau-s)Z_s dW_s = 0$. 
    Setting $\tau = t \wedge \sigma$ and $Z_s = G_n(x_n(s)) - G_m(x_n(s))$ and using the analogous (obvious) localization result for deterministic convolutions, we obtain
    $$
    x_n(t \wedge \sigma) = S(t\wedge \sigma)x_0 + (S * F_m(x_n))_{t \wedge \sigma} + (S \diamond G_m(x_n))_{t \wedge \sigma} \,.
    $$
    Then by our choice of $\eta$ as $r$ in \textit{Step 6} (cf. \eqref{eq:axxne}) we have
    \begin{align*}
        \|x_n(t \wedge \sigma) - x_m(t \wedge \sigma)\|_B &\lesssim \|(S * F_m(x_n))_{t \wedge \sigma} - (S * F_m(x_m))_{t \wedge \sigma}\|_{\eta/2} \\
        &+ \|(S \diamond G_m(x_n))_{t \wedge \sigma} - (S \diamond G_m(x_m))_{t \wedge \sigma}\|_{\eta/2} \,.
    \end{align*}
    Since $F_m:B \to \B$ and $G_m:B \to \B_{-1/2+\eta}$ are globally Lipschitz (see \textit{Step 1}), by \Cref{convolution-sup-bounds} we have (for all large enough $p$)
    $$\E\Big[\sup_{s \leq t} \|x_n(s \wedge \sigma) - x_m(s \wedge \sigma)\|_B^p\Big] \lesssim \E\Big[\int_0^t \|x_n(s \wedge \sigma) - x_m(s \wedge \sigma)\|_B^pds\Big] \,.$$
    We conclude by Gronwall's inequality that
    $$\E\Big[\sup_{t \leq T} \|x_n(t \wedge \sigma) - x_m(t \wedge \sigma)\|_B^p\Big] = 0 \,.$$
    On the event $\{\|x_n(t)\|_B \leq m-1 \text{ for all } t\}$, clearly $\tau_m \leq \sigma$ because they are both $\infty$. Otherwise there is some (random) $t < \sigma$ such that $m-1 < \|x_n(t)\|_B = \|x_m(t)\|_B < m$, and again we conclude $\tau_m \leq \sigma$. Thus, $x_n(t \wedge \tau_m) = x_m(t \wedge \tau_m)$ almost surely and $\tau_m \leq \tau_n$.

    It follows from $\sup_n \tau_n = \infty$ (\textit{Step 6}) that there exists a unique process $x_t$ which satisfies $x_{t \wedge \tau_n} = x_n(t \wedge \tau_n)$ for all $n$ and $t > 0$. Since $\Psi_n(x_n(t \wedge \tau_n)) = x_n(t \wedge \tau_n)$, then $x_t$ is a mild solution of \eqref{eq:mild-solution} (note that by \textit{Step 6} $\E[\sup_{t \leq T} \|x_t\|_B^p] < \infty$ for all $p$, and thus \Cref{def:mild} holds with $\epsilon = \eta$ by \eqref{eq:local-u-bound-G} and the polynomial boundedness of $\tilde F$ assumed in \Cref{SPDE-F}). If $z_t$ is another mild solution, then the same argument as presented above shows that $z_{t \wedge \tau_n} = x_n(t \wedge \tau_n)$, so \eqref{eq:mild-solution} has a unique mild solution.
    
    \textit{Step 8:} In the previous steps we have shown that $x_n(t)$ satisfies \ref{bigthm:nonneg}-\ref{bigthm:dbeta}, at least until $\tau_n$. We claim that $x_t$ also satisfies \ref{bigthm:nonneg}-\ref{bigthm:dbeta} for all $t \geq 0$. The only non-trivial proof is that of \ref{bigthm:cont-in-space}, so we present below and leave the rest to the reader.
    
    First,  Chebyshev inequality and \eqref{eq:ubonxn} imply
    \begin{equation}\label{eq:inxnx}
    \begin{aligned}
    \Prb\Big(x_n(t) \neq x(t) \text{ in } C([0,T];B)\Big) &\leq \Prb(\tau_n < T) \leq \frac{\E\Big[\|x_n(t)\|_{C([0,T];B)}^p\Big]}{(n-1)^{p}} \\
    &\leq 
    \frac{C(1+\|x_0\|_B^p)}{(n-1)^{p}} 
    \,,    
    \end{aligned}
    \end{equation}
    where $C$ (and thus the right-hand side) is bounded 
    uniformly in $\theta \in \Theta, R \leq M, x_0 \in \{\|\cdot\|_B \leq M\}$ for any fixed $M,T,p > 0$. Thus, if $x_n^{(k)}$ denote the solutions to \eqref{eq:truncated} with initial condition $(R^{(k)}, x_0^{(k)}, \theta^{(k)})$ and $(R^{(k)}, x_0^{(k)}, \theta^{(k)}) \to (R^{(\infty)},x_0^{(\infty)},\theta^{(\infty)})$ as $k \to \infty$ in $\R\times B \times \Theta$, then by \eqref{eq:inxnx} for all $\delta > 0$ there exists $n$ such that, for all 
    $k \in \N \cup \{\infty\}$, 
    $$
    \Prb\Big(x_n^{(k)} \neq x^{(k)} \text{ in } C([0,T];B)\Big) < \delta \,.
    $$
    For this fixed $n$, $x_n$ satisfies \ref{bigthm:cont-in-space} (see \textit{Step 1}), so we have $x_n^{(k)} \to x_n^{(\infty)}$ as $k \to \infty$ in probability. Then there is $K = K(n)$ such that, for all $k > K$, $\Prb\big(\|x_n^{(k)} - x_n^{(\infty)}\|_{C([0,T];B)} > \delta\big) < \delta$, and thus $\Prb\big(\|x^{(k)} - x^{(\infty)}\|_{C([0,T];B)} > \delta\big) < 3\delta$. It follows that $x^{(k)} \to x^{(\infty)}$ in probability, and thus also in $L^p(\Omega;C([0,T];B))$ for all $p > 0$ since by \eqref{eq:ubonxn} and monotone convergence we have  $\sup_k\E[\|x^{(k)}(t)\|_{C([0,T];B)}^{2p}] < \infty$. (Indeed, $x_n^{(k)}(\cdot \wedge \tau_n) \to x^{(k)}(\cdot)$ almost surely and $\|x_n^{(k)}(\cdot \wedge \tau_n)\|_{C([0,T];B)}$ is increasing in $n$ by \textit{Step 7}.)

\end{proof}

\begin{rem}\label{rmk:extra-approx}
    Although our proof of \Cref{bigthm} \ref{bigthm:nonneg} was inspired by \cite[Lemma 3.1]{nhu-positivity},
    
     our proof has extra approximation steps, because  
     \cite{nhu-positivity} uses the following unjustified statement: 
     \begin{equation}\label{eq:wsfop}
     \parbox{11cm}{
     \textrm{
     Suppose $f_2 = 0$ in \Cref{SPDE-F}. Since the resolvent is positivity preserving, $\phi'(x)R_mF(x \vee 0) = 0$
     for any function $\phi$ which satisfies $\phi(x) = 0$ for $x \geq 0$, and similarly other terms in the It\^o formula vanish.} 
     }
     \end{equation}
     It is not obvious why such claim should be true because here $x$ is a function and ``positivity preserving" is not a pointwise property. Suppose $x$ takes positive and negative values, so $x \vee 0$ is not constantly zero and there is some $a$ such that $x(a) < 0$. Consider the simple case of the 1-dimensional heat equation on the torus and $F(x) = x$. Even though $F(x \vee 0)(a) = 0$, $R_mF(x \vee 0)(a)$ is some weighted integral of the (nonzero, nonnegative) function $F(x \vee 0)$, and thus $R_mF(x \vee 0)(a) > 0$. Since $\phi'(x(a)) \neq 0$ in general, we cannot conclude that $\phi'(x(a))R_mF(x\vee 0)(a) = 0$.

    However, \eqref{eq:wsfop} is true without $R_m$, that is, since $(x \vee 0)(a) = 0$ implies that $F(x \vee 0)(a) = 0$. Thus we have $\phi'(x)F(x \vee 0) = 0$. So after applying the It\^ o formula we pass $m \to \infty$ to remove $R_m$. In order to justify this limit for the It\^ o correction (second-order) term we need $G(x) \in \gamma(\U,\B)$, which motivates the approximation of  $G = xH$ by $xR_kH$.
\end{rem}

\begin{cor}\label{well-posedness-of-SPDE}
Under \Cref{sass-A}-\ref{SPDE-G-sass}
    the problem \eqref{SPDE} is well-posed for $(x_0,\theta) \in \cdom \times \Theta$ in the sense that it has a unique mild solution (in $L^r$ for some large $r$) (\Cref{def:mild}) which is continuous in time and depends continuously on its initial condition.
\end{cor}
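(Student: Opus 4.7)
The plan is to deduce this corollary directly from \Cref{bigthm}, which has already done all the heavy lifting for the more general $R$-parametrized family of mild equations
\begin{equation*}
    \tilde{x}_t = S(t)\tilde{x}_0 + (S * \tilde F(R,\tilde{x},\theta))_t + (S \diamond \tilde G(R,\tilde{x},\theta))_t.
\end{equation*}
The key observation is that \Cref{r-functions} (specifically the characterizing identity \eqref{eq:alt-def-tilde}) gives, when $R=1$, the equalities $\tilde F(1,x,\theta) = F(x,\theta)$ and $\tilde G(1,x,\theta) = G(x,\theta)$. Indeed, $(1\cdot \pi_u + \pi_z)(\tilde F(1,x,\theta)) = F(1\cdot x_u, x_z) = F(x,\theta)$, and since $\pi_u + \pi_z$ is the identity on $\R^m$, the claim follows; the argument for $\tilde G$ is identical.

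First, I would note that the mild formulation of \eqref{SPDE} in $L^r$ in the sense of \Cref{def:mild} is exactly
\begin{equation*}
    x_t = S(t)x_0 + (S * F(x,\theta))_t + (S \diamond G(x,\theta))_t,
\end{equation*}
which coincides with \eqref{eq:mild-solution} under the substitution $R=1$. Therefore, \Cref{bigthm} applied with $R=1$ and initial condition $\tilde x_0 = x_0 \in \cdom$ immediately yields existence and uniqueness of a mild solution in $L^r$ (for $r$ chosen large enough as in the statement of \Cref{bigthm}), together with the required almost sure continuity $t \mapsto x_t \in B$ from \ref{bigthm:cont-in-time} and the nonnegativity $x_t \in \cdom$ from \ref{bigthm:nonneg} (so that the state space $\cdom$ is indeed preserved).

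For continuous dependence on the initial condition, I would apply \Cref{bigthm}\ref{bigthm:cont-in-space} with the trivial sequence $R_n \equiv 1$ and $\theta_n \equiv \theta$: whenever $x_0^{(n)} \to x_0$ in $\cdom$, the corresponding mild solutions converge in $L^p(\Omega; C([0,T];B))$ for all $p,T > 0$, which in particular gives continuous dependence in probability in $C([0,T];B)$. This settles the statement. There is no real obstacle at this stage; the only thing to verify is the identification $\tilde F(1,\cdot,\cdot) = F$ and $\tilde G(1,\cdot,\cdot) = G$, which is immediate from the definitions.

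I remark that working with the general $R$-parametrized tilded equation in \Cref{bigthm} was done precisely so that one could later apply the same well-posedness machinery to the projective process $(r_t,v_t,z_t)$ studied via \ref{bigthm:drdv}; the present corollary is a ``free'' byproduct corresponding to the simplest choice $R=1$, where no polar blow-up is performed.
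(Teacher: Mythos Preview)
Your proof is correct and follows exactly the paper's approach: the paper's own argument is the one-line observation that \eqref{eq:mild-solution} and \eqref{SPDE} coincide when $R=1$ (via \Cref{r-functions}), so the result is a direct consequence of \Cref{bigthm}. You have simply unpacked this identification and spelled out which parts of \Cref{bigthm} give which conclusions.
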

\begin{proof}
    The proof is a direct consequence of \Cref{bigthm} because \eqref{eq:mild-solution} and \eqref{SPDE} coincide when $R = 1$ (see \Cref{r-functions}).
\end{proof}

From the well-posedness of \eqref{eq:mild-solution} also follows the well-posedness of the projective process \eqref{eq:proj-process-first-def}. Below, we given an alternative definition of \eqref{eq:proj-process-first-def} (\Cref{def:rv-process}) and show that our definitions coincide (\Cref{rv-is-markov-process}).

\begin{deff}\label{def:rv-process}
    Given
    $$(r_0,x_0,\theta) \in \mcM \coloneqq [0,\infty) \times \{x \in \cdom \mid \|x_u\|_{L^1} = 1\} \times \Theta \,,$$
    let $\tilde{x}_t = (\tilde{u}_t, \tilde{z}_t)=(\tilde{u}_t, z_t)$ be the unique mild solution to
    $$
    \tilde{x}_t = S(t)x_0 + (S * \tilde F(r_0,\tilde{x},\theta))_t + (S \diamond \tilde G(r_0, \tilde{x},\theta))_t \,,$$
    with initial condition $\tilde{x}_0 = x_0$
    as guaranteed by \Cref{bigthm}. As in \Cref{bigthm} \ref{bigthm:drdv}, define $(r_t,v_t) \coloneqq (r_0\|\tilde{u}_t\|_{L^1}, \tilde{u}_t/\|\tilde{u}_t\|_{L^1})$.

    We also define $\mcM_0 \coloneqq \mcM \cap \{r = 0\}$ and $\inv \coloneqq \mcM \setminus \mcM_0$.
\end{deff}

\begin{cor}\label{rv-is-markov-process}
     Using notation from \Cref{def:rv-process},   $(r_t,v_t, z_t,\theta)$ forms a Markov process on $\mcM$ which satisfies \Cref{as1} and \Cref{as2}. Furthermore, $(r_tv_t,z_t)$ is the unique mild solution to \eqref{SPDE} with initial condition $(r_0v_0,z_0) = (r_0 \tilde{u}_0, \tilde{z}_0)$.
\end{cor}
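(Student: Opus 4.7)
The plan is to reduce everything to \Cref{bigthm} by identifying $\bar x_t \coloneqq (r_tv_t, z_t)$ with the mild solution of \eqref{SPDE}. First I would apply the bounded linear operator $T \coloneqq r_0\pi_u + \pi_z$ to the mild equation defining $\tilde x_t$ in \Cref{def:rv-process}. Since $T$ commutes with the (block-diagonal) semigroup $S$, and the defining identity \eqref{eq:alt-def-tilde} gives $T\tilde F(r_0,\tilde x_t,\theta) = F(r_0 \tilde u_t, \tilde z_t) = F(\bar x_t)$ and likewise $T \tilde G(r_0,\tilde x_t,\theta) = G(\bar x_t)$, we obtain
\[
\bar x_t = S(t)\,\bar x_0 + (S * F(\bar x))_t + (S \diamond G(\bar x))_t,
\]
with $\bar x_0 = (r_0 v_0, z_0)$. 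Uniqueness is \Cref{well-posedness-of-SPDE}, so $\bar x_t$ is precisely the unique mild solution of \eqref{SPDE}.

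Next I would establish the Markov property. Since $(r_t, v_t, z_t)$ and $\bar x_t$ encode the same information (indeed $\bar x_t = (r_t v_t, z_t)$ and conversely $r_t = \|(\bar x_t)_u\|_{L^1}$, $v_t = (\bar x_t)_u / \|(\bar x_t)_u\|_{L^1}$ whenever $r_0 > 0$, while if $r_0 = 0$ then $r_t \equiv 0$ and $v_t = \tilde u_t/\|\tilde u_t\|_{L^1}$ is determined by the mild equation for $\tilde x_t$ itself, which again only depends on the initial data), the Markov property of $(r_t, v_t, z_t, \theta)$ on $\mcM$ is inherited from the flow property of mild solutions, i.e., uniqueness combined with the time-homogeneous structure. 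Invariance of $\mcM_0$ and $\inv$ in the sense of \Cref{as1} follows directly from \Cref{bigthm}\ref{bigthm:invt-set}: since $\tilde u_0 = v_0$ with $\|v_0\|_{L^1}=1$, we have $\tilde u_t \neq 0$ for all $t$, and therefore $r_t = r_0 \|\tilde u_t\|_{L^1}$ is zero exactly when $r_0 = 0$.

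For the Feller property (\Cref{as2}), suppose $(r_0^{(k)}, v_0^{(k)}, z_0^{(k)}, \theta^{(k)}) \to (r_0, v_0, z_0, \theta)$ in $\mcM$. By \Cref{bigthm}\ref{bigthm:cont-in-space} applied with $R = r_0^{(k)}$ and initial datum $x_0^{(k)} = (v_0^{(k)}, z_0^{(k)})$, the associated solutions $\tilde x^{(k)}$ converge to $\tilde x$ in $L^p(\Omega; C([0,T];B))$, hence uniformly on $[0,T]$ in probability. Because $\tilde u_0 = v_0 \neq 0$, \Cref{bigthm}\ref{bigthm:invt-set} combined with \Cref{bigthm}\ref{bigthm:nonneg} and continuity in time give $\inf_{t \in [0,T]} \|\tilde u_t\|_{L^1} > 0$ almost surely, so the maps $\tilde u_t \mapsto \|\tilde u_t\|_{L^1}$ and $\tilde u_t \mapsto \tilde u_t/\|\tilde u_t\|_{L^1}$ are continuous at the limit, and we deduce convergence of $(r_t^{(k)}, v_t^{(k)}, z_t^{(k)})$ to $(r_t, v_t, z_t)$ in probability, uniformly on $[0,T]$. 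For bounded continuous $f$ on $\mcM$ this yields $\mathcal{P}_t f(r_0^{(k)}, v_0^{(k)}, z_0^{(k)}, \theta^{(k)}) \to \mathcal{P}_t f(r_0, v_0, z_0, \theta)$ by dominated convergence, and right-continuity at $t = 0$ follows from \Cref{bigthm}\ref{bigthm:cont-in-time} together with the remark after \Cref{as2}.

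The only subtle point — and the step I expect to be the main obstacle — is maintaining continuity of the polar transformation across the extended state space, in particular ensuring $\|\tilde u_t\|_{L^1}$ stays bounded away from zero on $[0,T]$ for the limiting trajectory. This is why the choice in \Cref{def:rv-process} to normalize so that $\|v_0\|_{L^1} = 1$ (rather than, say, $\|v_0\|_B = 1$) is essential: it forces $\tilde u_0 \neq 0$ for \emph{every} point of $\mcM$, including those with $r_0 = 0$, which is what allows a uniform treatment and the invocation of \Cref{bigthm}\ref{bigthm:invt-set} for both cases simultaneously.
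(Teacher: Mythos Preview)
Your proposal is correct and follows essentially the same approach as the paper: apply $r_0\pi_u + \pi_z$ to the mild equation and invoke \eqref{eq:alt-def-tilde} together with \Cref{well-posedness-of-SPDE} to identify $(r_tv_t,z_t)$ with the solution of \eqref{SPDE}, use \Cref{bigthm}\ref{bigthm:invt-set} (with $\tilde u_0 = v_0 \neq 0$) for \Cref{as1}, and deduce \Cref{as2} from \Cref{bigthm}\ref{bigthm:cont-in-space},\ref{bigthm:cont-in-time} combined with the continuity of $(r_0,u,z)\mapsto(r_0\|u\|_{L^1},u/\|u\|_{L^1},z)$ on $\{u\neq 0\}$. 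Your discussion of the Markov property and of why $\|\tilde u_t\|_{L^1}$ stays positive is more explicit than the paper's, but the underlying argument is identical.
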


\begin{proof}
Since $\|\tilde{u}_0\|_{L^1} = \|\pi_{u}x_0\|_{L^1} = 1$, we have 
$\tilde{u}_0 \neq 0$, and by \Cref{bigthm}\ref{bigthm:invt-set} 
$\tilde{u}_t \neq 0$ for all $t \geq 0$. Thus, $r_t = r_0\|\tilde{u}_t\|_{L^1} = 0$  if and only if $r_0 = 0$ and 
\Cref{as1} follows. 

    To show \Cref{as2}, that is, $\Pp_tf \in C_b(\mcM)$ and $\lim_{t \downarrow 0}\Pp_tf \to f$ pointwise for each $f \in C_b(\mcM)$, it is equivalent to show: 
    \begin{enumerate}[label=(\roman*)]
        \item $X_t^{x_n} \to X_t^x$ in distribution
    as $x_n \to x$.
    \item $\lim_{t \downarrow 0} X_t^x = x$ in distribution.
    \end{enumerate}
     Since $(r_0,u,z) \mapsto (r_0\|u\|_{L^1}, u/\|u\|_{L^1},z)$ is continuous as a map from 
     $[0,\infty) \times \{(u, z) \in \cdom : u \neq 0\}$ to $[0,\infty) \times \cdom$, (i) follows from \Cref{bigthm}\ref{bigthm:cont-in-space} and (ii) from \Cref{bigthm}\ref{bigthm:cont-in-time}.

    Finally, 
    applying $r_0\pi_u + \pi_z$ to both sides of \eqref{eq:mild-solution} and using that $\pi_u$ and $\pi_z$ commute with $S$,  shows that $\hat x_t \coloneqq (r_0 \tilde{u}_t,z_t) = (r_tv_t,z_t)$ satisfies
    $$
    \hat x_t = S(t)(r_0v_0,z_0) + (S * (r_0\pi_u + \pi_z)\circ\tilde F(r_0,\tilde x,\theta))_t + (S \diamond (r_0\pi_u + \pi_z)\circ\tilde G(r_0, \tilde x,\theta))_t 
    $$
    and $\hat x_0 = (r_0 v_0, z_0)$. 
    Hence,  the final claim follows from $(r_0\pi_u + \pi_z)\circ \tilde F(r_0,\tilde{x},\theta) = F(r_0 \tilde{u},z,\theta) = F(\hat x, \theta)$ and $(r_0\pi_u + \pi_z)\circ\tilde G(r_0, \tilde{x},\theta) = G(r_0 \tilde{u}, z,\theta) = G(\hat x, \theta)$ (see \eqref{eq:alt-def-tilde}) and uniqueness of mild solutions (\Cref{well-posedness-of-SPDE}).
\end{proof}

\subsection{Lyapunov Functions}\label{sec:SPDE-lyap}
In this section, we assume $(r_t,v_t,z_t,\theta)$ is the fixed Markov process on $\mcM$ defined in \Cref{def:rv-process} and $\beta$ is as in \Cref{SPDE-G-lem}. In most of the expressions below we omit the dependence on $\theta$ because it is irrelevant (for example, we write $(r,v,z) \in \mcM$). We also write $x_t = (r_tv_t,z_t)$ (see \Cref{rv-is-markov-process}).

We introduce the Lyapunov functions which are central to our analysis. In particular, we introduce functions which satisfy \Cref{as-W} and \Cref{as-U}\ref{5.1}-\ref{5.3}, as well as the function satisfying \Cref{as-V}. We encourage the reader to recall $\Ll$ and $\Gamma$ 
introduced respectively in \Cref{D+} and \Cref{D2}.
 
\begin{lem}\label{generator-for-SPDE}
    Let $\mathcal{A} = \mcM$ or $\mathcal{A} = \inv \coloneqq \mcM_0^c$ and $f: \mathcal{A} \to \R$, $b: \mathcal{A} \to \R \cup \{-\infty\}$, and $\sigma: \mathcal{A} \to \Ll_2(\U, \R) \cong \U$ be measurable functions such that $df(r,v,z) = b(r,v,z)dt + \sigma(r,v,z) dW_t$ holds, meaning that for all $(r_0,v_0,z_0) \in \mathcal{A}$ 
    we have 
    \begin{equation}\label{eq:fieq}
        f(r_t,v_t,z_t) = f(r_0,v_0,z_0) + \int_0^t b(r_s,v_s,z_s)ds + \int_0^t \sigma(r_s,v_s,z_s)dW_s
    \end{equation}
    and all the integrals in \eqref{eq:fieq} are well-defined. Then the following statements hold:

    \begin{enumerate}[label = (\roman*)]
        \item If $f \geq 0$, then $f \in \Dme_+(A)$ with $\Ll f = b$.
        \item Assume $\hat f, \hat b, \hat \sigma$ satisfy the same assumptions as $f,  b,  \sigma$
        above, $\hat f \geq 0$, and there are some constants $K, C > 0$ such that $\sup_A (\|\sigma\|_\U^2 + K \hat b) =: C < \infty$. Then $f \in \Dme_2(A)$ with $\Ll f = b$ and $\Gamma f = \|\sigma\|_\U^2$.
    \end{enumerate}
\end{lem}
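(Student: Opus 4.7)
For part (i), the hypothesis that $df(r,v,z) = b(r,v,z)\,dt + \sigma(r,v,z)\,dW_t$ holds in the integral sense \eqref{eq:fieq} with all integrals well-defined is essentially the conclusion we need. Rearranging \eqref{eq:fieq} gives
\begin{equation*}
    M_t^f \;=\; f(r_t,v_t,z_t) - f(r_0,v_0,z_0) - \int_0^t b(r_s,v_s,z_s)\,ds \;=\; \int_0^t \sigma(r_s,v_s,z_s)\,dW_s.
\end{equation*}
The right-hand side, being a stochastic integral of a locally square-integrable $\U$-valued integrand against the cylindrical Brownian motion $W$, is a continuous (hence cadlag) local martingale by the standard theory of stochastic integration in Hilbert spaces. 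Together with $f \geq 0$ and the measurability of $b = \Ll f$, this places $f$ in $\Dme_+(\mathcal{A})$ (see \Cref{D+}). So part (i) requires essentially no argument beyond recognizing that the defining identity is the local martingale property.

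For part (ii), the task is to upgrade $M_t^f$ to a true square-integrable martingale and identify $\Gamma f = \|\sigma\|_\U^2$. The crucial idea is to exploit the comparison $\|\sigma\|_\U^2 \leq C - K\hat b$ via the auxiliary function $\hat f$. Applying part (i) to $\hat f$ produces a nonnegative local martingale
\begin{equation*}
    \hat M_t \;=\; \hat f(r_t,v_t,z_t) - \hat f(r_0,v_0,z_0) - \int_0^t \hat b(r_s,v_s,z_s)\,ds,
\end{equation*}
and one chooses a localizing sequence $\tau_n \uparrow \infty$. Using optional stopping and $\hat f \geq 0$,
\begin{equation*}
    \E\Big[\int_0^{t\wedge\tau_n} (-\hat b(r_s,v_s,z_s))\,ds\Big] \;=\; \hat f(r_0,v_0,z_0) - \E[\hat f(r_{t\wedge\tau_n},v_{t\wedge\tau_n},z_{t\wedge\tau_n})] \;\leq\; \hat f(r_0,v_0,z_0).
\end{equation*}
Combining with the pointwise bound on $\|\sigma\|_\U^2$ yields
\begin{equation*}
    \E\Big[\int_0^{t\wedge\tau_n} \|\sigma(r_s,v_s,z_s)\|_\U^2\,ds\Big] \;\leq\; Ct + K\hat f(r_0,v_0,z_0),
\end{equation*}
and monotone convergence (as $n \to \infty$, using $\|\sigma\|_\U^2 \geq 0$) removes the stopping time.

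With this $L^2$ bound in hand, the It\^o isometry upgrades $M_t^f = \int_0^t \sigma\,dW_s$ from a local martingale to a genuine square-integrable martingale, and the standard formula for the quadratic variation of stochastic integrals against a cylindrical Brownian motion gives $\langle M^f\rangle_t = \int_0^t \|\sigma(r_s,v_s,z_s)\|_\U^2\,ds$, so that $(M^f_t)^2 - \int_0^t \|\sigma\|_\U^2\,ds$ is a martingale. This verifies \Cref{D2} with $\Ll f = b$ and $\Gamma f = \|\sigma\|_\U^2$. I do not anticipate a serious obstacle here; the only mildly delicate point is that $\hat b$ is allowed to take the value $-\infty$, but since the bound on $\hat b$ is used only through $-\hat b \geq (\|\sigma\|_\U^2 - C)/K$, which gives a pointwise lower bound on $-\hat b$ that is finite wherever $\|\sigma\|_\U^2$ is, the manipulations above go through without modification.
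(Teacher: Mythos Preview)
Your proposal is correct and follows essentially the same approach as the paper: part (i) is immediate from the definitions, and for part (ii) the paper invokes \cite[Lemma 4.1]{extinction}, whose content is precisely the localization argument you spell out (apply (i) to $\hat f$, use $\hat f\geq 0$ and optional stopping to bound $\E\int_0^{t\wedge\tau_n}(-\hat b)\,ds$, then transfer to $\E\int_0^t\|\sigma\|_\U^2\,ds$ via the pointwise inequality and monotone convergence). The only point worth tightening is the integrability underlying your displayed equality $\E[\int_0^{t\wedge\tau_n}(-\hat b)\,ds]=\hat f(r_0)-\E[\hat f(r_{t\wedge\tau_n})]$: since $\hat b\leq C/K$ the integrand $-\hat b$ is bounded below, and it is bounded above a.s.\ by the integrable $\hat f(r_0)+\hat M_{t\wedge\tau_n}$, so the split is justified.
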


\begin{proof}
   Note that (i) directly follows from the definitions and properties of (stochastic) integration. To prove (ii), first note that by 
   (i) we have $\hat f \in \Dme_+(A)$ and that $\Ll \hat f = \hat b \leq (C - \|\sigma\|_\U^2)/K$. Then as in  \cite[Lemma 4.1]{extinction} we obtain that $\E[\int_0^t \|\sigma(r_s,v_s,z_s)\|_\U^2ds] < \infty$, and consequently $\int_0^t \sigma(r_s,v_s,z_s)dW_s$ is a square integrable martingale with quadratic variation $\int_0^t \|\sigma(r_s,v_s,z_s)\|_\U^2ds$, proving the claim.
\end{proof}

We use \Cref{generator-for-SPDE} to prove that the Lyapunov functions defined below indeed belong to the extended domains $\Dme_+$ and $\Dme_2$. Even if the function $f$ from \Cref{generator-for-SPDE} is not known yet to be in $\Dme_2$, we make the convention of writing $\Gamma f = \|\sigma\|_\U^2$. This leads to following notation:

\begin{deff}\label{spde-tilde-domain}
    If $\mathcal{A},f,b,\sigma$ are as in \Cref{generator-for-SPDE} and $f \geq 0$, then we write $f \in \tDme_+(\mathcal{A})$ (this is a definition of $\tDme_+(\mathcal{A})$) and formally set $\Ll f = b$ and $\Gamma f = \|\sigma\|_\U^2$.
\end{deff}

Let be $h: (0,\infty) \to [0,\infty)$ given by $h(x) \coloneqq g(-\ln x)$, where $g: \R \to \R$ is a nonnegative smooth function such that
\begin{equation}\label{eq:dfgc}
    g(x) = 
    \begin{cases}
        x & x\geq 1 \\
        0 & x \leq 0
    \end{cases}
\end{equation}
 and $g$ has bounded first and second derivatives. Our average Lyapunov function for \Cref{as-V} is
\begin{equation}\label{def-of-V-for-SPDE}
    V(r,v,z) \coloneqq h(r) \,.
\end{equation}

\begin{lem}\label{l-v-lemma}
For $V$ as in \eqref{def-of-V-for-SPDE} we have $V \in \tDme_+(\inv)$ (see \Cref{spde-tilde-domain}) and
    \begin{equation*}
    \begin{aligned}
        \Ll V(r,v,z) &= h'(r)r\Big[\ip{v}{\tilde e_u} + \ip{\tilde F_u(r,v,z)}{1_u}\Big] + \frac{1}{2}h''(r)r^2\|\tilde G_u(r,v,z)^*1_u\|_U^2 \\
        \Gamma V(r,v,z) &= h'(r)^2r^2\|\tilde G_u(r,v,z)^*1_u\|_U^2 \,,
    \end{aligned}
\end{equation*}
where the adjoint of $G_u(r,v,z)$ is taken in $\Ll(\U,L^2)$ and $\tilde e$ is as in \Cref{def:tilde-e}.
\end{lem}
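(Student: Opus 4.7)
The plan is to apply It\^o's formula to $h(r_t)$ using the explicit semimartingale decomposition of $r_t$ from \Cref{bigthm}\ref{bigthm:drdv}, and then read off the drift and diffusion coefficients in order to conclude $V \in \tDme_+(\inv)$ via \Cref{spde-tilde-domain}. Since we are working on $\inv$, \Cref{bigthm}\ref{bigthm:invt-set} (together with $\|v_0\|_{L^1} = 1$) guarantees $r_t > 0$ for all $t \geq 0$, so $h(r_t)$ is well-defined and $V(r_t,v_t,z_t) = h(r_t)$.

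First I would recall from \Cref{bigthm}\ref{bigthm:drdv} that $r_t$ is a real-valued semimartingale with drift part $r_t[\ip{v_t}{\tilde e_u} + \ip{\tilde F_u(r_t,v_t,z_t)}{1_u}]\,dt$ and martingale part $r_t\ip{\tilde G_u(r_t,v_t,z_t)^*1_u}{dW_t}$, so its quadratic variation is $r_t^2\|\tilde G_u(r_t,v_t,z_t)^*1_u\|_\U^2\,dt$. Since $g$ in \eqref{eq:dfgc} is smooth with bounded first and second derivatives, applying the standard one-dimensional It\^o formula to $h(r_t) = g(-\ln r_t)$ yields
\begin{equation*}
\begin{aligned}
    dh(r_t) &= h'(r_t)r_t\big[\ip{v_t}{\tilde e_u} + \ip{\tilde F_u(r_t,v_t,z_t)}{1_u}\big]dt + \tfrac{1}{2}h''(r_t)r_t^2\|\tilde G_u(r_t,v_t,z_t)^*1_u\|_\U^2\,dt \\
    &\qquad + h'(r_t)r_t\ip{\tilde G_u(r_t,v_t,z_t)^*1_u}{dW_t},
\end{aligned}
\end{equation*}
which matches the claimed formulas for $b$ and $\sigma$ in \Cref{spde-tilde-domain}, and hence the claimed $\Ll V$ and $\Gamma V$.

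The only remaining task is to verify that the drift and stochastic integrals in the It\^o expansion are well-defined along paths of the Markov process. A direct computation shows that $|h'(x)x|$ and $|h''(x)x^2|$ are uniformly bounded on $(0,\infty)$: for $x \geq e$ one has $h'(x) = -1/x$ and $|h''(x)| \lesssim 1/x^2$; for $x \leq 1/e$ both derivatives vanish; and on the compact interval $[1/e,e]$ the bounds follow from smoothness of $g$. Therefore the integrability of the drift and the quadratic variation of the martingale part reduces to controlling $|\ip{v_s}{\tilde e_u}|$, $|\ip{\tilde F_u(r_s,v_s,z_s)}{1_u}|$, and $\|\tilde G_u(r_s,v_s,z_s)^*1_u\|_\U^2$ on $[0,T]$. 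The first term is bounded by $\|\tilde e_u\|_{L^\infty}\|v_s\|_{L^1} = \|\tilde e_u\|_{L^\infty}$; the second is controlled by \eqref{eq:local-u-bound} in \Cref{lem:lipschitz-f-g}; and the third is handled by \Cref{SPDE-G-lem} combined with the fact that $r_s$ is locally bounded and $v_s \in L^2$ in an integrated sense, exactly the bounds already established in Steps~4--5 of the proof of \Cref{bigthm}.

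The main (mild) obstacle is this last integrability verification: one must assemble the a priori estimates derived en route to proving the existence of $(r_t,v_t,z_t)$ rather than appealing to a single external result. Once that bookkeeping is done, \Cref{generator-for-SPDE}(i) (applied via \Cref{spde-tilde-domain}) immediately yields $V \in \tDme_+(\inv)$ with the stated $\Ll V$ and $\Gamma V$.
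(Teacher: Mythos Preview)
Your approach is essentially the same as the paper's: apply It\^o's formula to $h(r_t)$ using the $dr$ equation from \Cref{bigthm}\ref{bigthm:drdv}, then invoke \Cref{generator-for-SPDE} via \Cref{spde-tilde-domain}. The paper's proof is even terser, simply citing these two ingredients without the integrability verification you spell out.

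One small slip in your bookkeeping: you have the regions for $h$ reversed. Since $h(x)=g(-\ln x)$ with $g(y)=y$ for $y\geq 1$ and $g(y)=0$ for $y\leq 0$, it is for $x\leq 1/e$ (not $x\geq e$) that $h(x)=-\ln x$ and hence $h'(x)=-1/x$, $h''(x)=1/x^2$; and it is for $x\geq 1$ that $h$ and its derivatives vanish. Your conclusion that $|h'(x)x|$ and $|h''(x)x^2|$ are uniformly bounded on $(0,\infty)$ remains correct after this swap, so the argument goes through unchanged.
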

\begin{proof}
    Recall that by \Cref{bigthm}\ref{bigthm:drdv} the process $(r_t,v_t,z_t)$ satisfies \eqref{new-dr-dv}. Then the claim follows immediately from It\^{o}'s formula and \Cref{generator-for-SPDE}.
\end{proof}

To construct $W$ for \Cref{as-W}, we combine the following functions (here $x = (rv, z)$):
\begin{equation}\label{w123}
\begin{aligned}
    W_1(r,v,z) &\coloneqq 1+\frac{1}{2}\|x\|_{\beta,2}^2 = 1+\frac{1}{2}\|(rv,z)\|_{\beta,2}^2 \\
    W_2(r,v,z) &\coloneqq 1+\|x\|_{L^1} = 1+\ip{x}{1} = 1+r\ip{v}{1_u} + \ip{z}{1_z} \,.
\end{aligned}
\end{equation}

\begin{lem}\label{rigorous-w123}
    For $W_1, W_2$ from \eqref{w123} we have $W_1,W_2 \in \tDme_+(\mcM)$ and with $x = (rv, z)$:
    \begin{equation}\label{l-gamma-w1}
    \begin{aligned}
        \Ll W_1(r,v,z) &= -\|x\|_{\beta+1/2,2}^2 +\ip{x}{F(x)}_\beta + \frac{1}{2}\|G(x)\|_{\beta,2}^2 \\
        \Gamma W_1(r,v,z) &= \|G(x)^*x\|_\U^2 \,,
    \end{aligned}
\end{equation}
where the adjoint of $G(x)$ is taken in $\Ll(\U,\B_{\beta,2})$. Moreover, 
\begin{equation}\label{l-gamma-w2}
    \begin{aligned}
        \Ll W_2(r,v,z) &= \ip{x}{\tilde e} + \ip{F(x)}{1} \\
        \Gamma W_2(r,v,z) &= \|G(x)^*1\|_\U^2 \,,
    \end{aligned}
\end{equation}
where the adjoint of $G(x)$ is taken in $\Ll(\U,L^2)$ and $\tilde{e}$ is as in \Cref{def:tilde-e}.
\end{lem}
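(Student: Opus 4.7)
The plan is to realize each of $W_1,W_2$ as a semimartingale of the form $f(r_t,v_t,z_t)$ with explicit drift and diffusion, and then invoke \Cref{generator-for-SPDE}(i) (via \Cref{spde-tilde-domain}). Both $W_i\ge 1$, so nonnegativity is automatic and the drift is allowed to take the value $-\infty$ on a Lebesgue-null set in time.

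For $W_1$ the conclusion is essentially immediate: \Cref{bigthm}\ref{bigthm:dbeta} applied with $\bar x_t=(r_tv_t,z_t)=x_t$ yields $dW_1(r_t,v_t,z_t)=\tfrac12 d\|x_t\|_{\beta,2}^2$ equal to
\begin{equation*}
    \Big[-\|x_t\|_{\beta+1/2,2}^2+\ip{x_t}{F(x_t)}_\beta+\tfrac12\|G(x_t)\|_{\beta,2}^2\Big]dt+\ip{x_t}{G(x_t)dW_t}_\beta.
\end{equation*}
Identifying the diffusion coefficient $\sigma(h)=\ip{x}{G(x)h}_\beta=\ip{G(x)^*x}{h}_\U$, with the adjoint taken in $\Ll(\U,\B_{\beta,2})$, gives $\|\sigma\|_\U^2=\|G(x)^*x\|_\U^2$, which is exactly \eqref{l-gamma-w1}.

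For $W_2$, since $x_t\ge 0$ by \Cref{bigthm}\ref{bigthm:nonneg} we have $W_2(r_t,v_t,z_t)=1+\ip{x_t}{1}$, so I need a semimartingale decomposition of $\ip{x_t}{1}$. The constant function $1$ satisfies the Neumann (or periodic) boundary conditions and is smooth, so $1\in\Dm(A)=\Dm(A^*)$ (by the self-adjointness in \Cref{sass-A}) and $A1=\tilde e$ by \Cref{def:tilde-e}. Since $x_t$ is a mild solution of \eqref{SPDE}, testing the mild equation against $1$ via \cite[Corollary 2.6]{ito-formula} (the same device already used at the end of the proof of \Cref{mild-strong} with $l=1$) produces
\begin{equation*}
    \ip{x_t}{1}=\ip{x_0}{1}+\int_0^t\bigl[\ip{x_s}{\tilde e}+\ip{F(x_s)}{1}\bigr]ds+\int_0^t\ip{G(x_s)^*1}{dW_s},
\end{equation*}
with the adjoint in $\Ll(\U,L^2)$. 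Reading off drift and diffusion gives \eqref{l-gamma-w2}. As a sanity check, the same identity can be rederived from the ``$(r,v,z)$-picture'' by combining $dr_t$ from \Cref{bigthm}\ref{bigthm:drdv} with the analogous decomposition for $\ip{z_t}{1_z}$ and using $r\tilde F_u(r,v,z)=F_u(rv,z)$ and $r\tilde G_u(r,v,z)=G_u(rv,z)$ from \eqref{eq:alt-def-tilde}; all mixed terms cancel and one recovers the same $\Ll W_2$ and $\Gamma W_2$.

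The only thing to verify is that every integral above is finite, so that \Cref{generator-for-SPDE}(i) applies. The pairings $\ip{F(x_s)}{1}$ and $\ip{x_s}{\tilde e}$, together with $\|G(x_s)^*1\|_\U^2$, are controlled by \Cref{SPDE-F}, \Cref{SPDE-G-lem}\eqref{eq:g-star-1}, and the a priori bound $\E[\sup_{t\leq T}\|x_t\|_B^p]<\infty$ established in Step 6 of the proof of \Cref{bigthm}; for $W_1$, the potentially singular term $\|x_s\|_{\beta+1/2,2}^2$ has a.s.\ finite path integral as a direct output of Step 3 of the proof of \Cref{bigthm}. I expect this bookkeeping, together with justifying the weak formulation for a mild (not strong) solution against the smooth test function $1$, to be the only real obstacle, and it requires no new analytic input beyond what is already proved in \Cref{bigthm}.
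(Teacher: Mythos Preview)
Your proposal is correct and follows essentially the same route as the paper: for $W_1$ you invoke \Cref{bigthm}\ref{bigthm:dbeta} directly, and for $W_2$ you derive the semimartingale decomposition of $\ip{x_t}{1}$ and then appeal to \Cref{generator-for-SPDE}/\Cref{spde-tilde-domain}, exactly as the paper does. The only minor discrepancy is in how you justify the $W_2$ decomposition for a mild (not strong) solution: the paper points to the truncation + Yosida approximation argument already carried out in the proof of \eqref{eq:d-l1-norm}, whereas you cite \cite[Corollary~2.6]{ito-formula} ``as in the proof of \Cref{mild-strong}''. Note that in \Cref{mild-strong} that corollary is applied to the time-dependent functional $x\mapsto (S(T-t)^*l)(x)$ to pass from strong to mild, which is not quite the same as testing a mild solution against a fixed $l=1$; the cleanest justification is indeed the Yosida route you allude to at the end, and the paper simply defers to it.
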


\begin{proof}
    Recall that by \Cref{bigthm}\ref{bigthm:dbeta} the process 
    $x_t = (r_tv_t, z_t)$ (denoted $\bar{x}$ in \Cref{bigthm}\ref{bigthm:dbeta}) satisfies \eqref{eq:dbeta-x-tilde} with $\bar{x}$ replaced by $x$ and \eqref{l-gamma-w1} follows.
    
Using that $x$ satisfies \eqref{SPDE} (\Cref{rv-is-markov-process}), applying It\^{o}'s formula to $\Phi(x) = \ip{x}{1}$ shows
    $$d\|x\|_{L^1} = [\ip{x}{\tilde e} + \ip{F(x)}{1}]dt + \ip{G(x)^*1}{dW_t} \,,$$
    and \eqref{l-gamma-w2} holds. (Technically since we work with mild solutions one cannot apply It\^{o}'s formula directly. Since the details of the truncation + Yosida approximation argument that handles this subtelty are contained in the proof of \Cref{bigthm}, we do not repeat the argument here. See the proof of \eqref{eq:d-l1-norm}.)

    The claim $W_1,W_2 \in \tDme_+(\mcM)$ follows from \Cref{generator-for-SPDE}.
\end{proof}

\begin{cor}\label{cor:w1-w2-p-in-dme}
    For all $p,C > 0$ we have $W_1^{p/2} + CW_2^p \in \tDme_+(\mcM)$ and the following hold with $x = (rv, z)$:
\begin{multline*}
\begin{aligned}
    \Ll (W_1^{p/2} + CW_2^p) &= \frac{p}{2}W_1^{p/2 - 1}\Ll W_1 + \frac{p}{4}(p/2-1)W_1^{p/2-2}\Gamma W_1 \\ 
    &\qquad + pCW_2^{p-1}\Ll W_2 + \frac{p}{2}(p-1)CW_2^{p-2}\Gamma W_2 
\end{aligned}
    \\
\begin{aligned}
    &= \frac{p}{2^{p/2}}(2 + \|x\|_{\beta,2}^2)^{p/2-1}\Big(-\|x\|_{\beta+1/2,2}^2 +\ip{x}{F(x)}_\beta + \frac{1}{2}\|G(x)\|_{\beta,2}^2\Big) \\
    &\qquad + \frac{p}{2^{p/2}}(p/2-1)(2+\|x\|_{\beta,2}^2)^{p/2-2}\|G(x)^*x\|_\U^2 \\
    &\qquad + pC(1+\|x\|_{L^1})^{p-1}\Big(\ip{x}{\tilde e} + \ip{F(x)}{1}\Big) \\    
    &\qquad + \frac{p}{2}(p-1)C(1+\|x\|_{L^1})^{p-2}\|G(x)^*1\|_\U^2 
\end{aligned}
\end{multline*}
and
\begin{multline*}
    \Gamma(W_1^{p/2} + CW_2^p) \lesssim (p/2)^2W_1^{p-2}\Gamma W_1 + p^2C^2W_2^{2p-2}\Gamma W_2 \\
    = \frac{p^2}{2^p}(2+\|x\|_{\beta,2}^2)^{p-2}\|G(x)^*x\|_\U^2 + p^2C^2(1+\|x\|_{L^1})^{2p-2}\|G(x)^*1\|_\U^2  \,.
\end{multline*}    
\end{cor}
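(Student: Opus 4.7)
The plan is to reduce the assertion to a one-dimensional It\^o formula applied to the $\R^2$-valued semimartingale $(W_1(r_t,v_t,z_t), W_2(r_t,v_t,z_t))$. All of the hard SPDE work has already been done in \Cref{rigorous-w123}, which gives us that $W_1$ and $W_2$ are real semimartingales whose drift and diffusion coefficients are known explicitly in terms of $x = (rv,z)$.

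First I would record the It\^o decompositions implicit in \Cref{rigorous-w123}: for $i=1,2$ write $dW_i = b_i\,dt + \sigma_i\,dW_t$ where $b_i = \Ll W_i$ and $\sigma_i \in \Ll_2(\U,\R)$ is given by $\sigma_1(u) = \langle x, G(x)u\rangle_\beta = \langle G(x)^*x, u\rangle_\U$ and $\sigma_2(u) = \langle G(x)^*1, u\rangle_\U$, so that $\|\sigma_i\|_\U^2 = \Gamma W_i$ in the sense of \Cref{spde-tilde-domain}. The cross-bracket is $d[W_1,W_2]_t = \langle \sigma_1,\sigma_2\rangle_\U\,dt$, which will not contribute because the test function below has no mixed partials.

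Second I would apply the classical finite-dimensional It\^o formula to $\phi: [1,\infty)^2 \to [0,\infty)$ defined by $\phi(a,b) = a^{p/2} + Cb^p$. This is legitimate since $W_1 \geq 1$ and $W_2 \geq 1$ by construction (both are bounded below by $1$), so $\phi$ is $C^2$ on the range of the semimartingale. Noting $\phi_a = \tfrac{p}{2}a^{p/2-1}$, $\phi_b = pCb^{p-1}$, $\phi_{aa} = \tfrac{p}{2}(\tfrac{p}{2}-1)a^{p/2-2}$, $\phi_{bb} = p(p-1)Cb^{p-2}$, and $\phi_{ab} = 0$, I obtain
\begin{align*}
d(W_1^{p/2} + CW_2^p) &= \phi_a(W_1,W_2)\,dW_1 + \phi_b(W_1,W_2)\,dW_2 \\
&\quad + \tfrac{1}{2}\phi_{aa}(W_1,W_2)\,\|\sigma_1\|_\U^2\,dt + \tfrac{1}{2}\phi_{bb}(W_1,W_2)\,\|\sigma_2\|_\U^2\,dt \,.
\end{align*}
Collecting the $dt$-coefficient yields exactly the claimed expression for $\Ll(W_1^{p/2} + CW_2^p)$, and substituting the explicit formulas from \eqref{l-gamma-w1}--\eqref{l-gamma-w2} recovers the expanded form in terms of $x$.

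Third, for $\Gamma$: the diffusion coefficient of $W_1^{p/2} + CW_2^p$ is the $\U$-valued vector $\tfrac{p}{2}W_1^{p/2-1}\sigma_1 + pCW_2^{p-1}\sigma_2$, whose squared $\U$-norm is $\Gamma(W_1^{p/2} + CW_2^p)$ by \Cref{spde-tilde-domain}. The elementary bound $(a+b)^2 \leq 2(a^2 + b^2)$ then yields the stated inequality $\Gamma(W_1^{p/2} + CW_2^p) \lesssim (\tfrac{p}{2})^2 W_1^{p-2}\Gamma W_1 + p^2 C^2 W_2^{2p-2}\Gamma W_2$. Finally, to conclude membership in $\tDme_+(\mcM)$, I would appeal to \Cref{generator-for-SPDE}(i) via \Cref{spde-tilde-domain}: since $W_1^{p/2} + CW_2^p \geq 0$ and the It\^o decomposition above is valid for every initial condition $(r_0,v_0,z_0)\in \mcM$ with all integrals well-defined (the pathwise continuity of $W_1,W_2$ ensures local boundedness of all the coefficients along trajectories), the conclusion follows.

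The only potential obstacle is really a bookkeeping one: ensuring that we are allowed to compose the $\R^2$-valued semimartingale $(W_1,W_2)$ with the $C^2$ function $\phi$ without having to revisit the mild-solution/Yosida machinery used in the proof of \Cref{bigthm}. This is immediate because \Cref{rigorous-w123} already exhibits $W_1$ and $W_2$ as bona fide real semimartingales in the usual sense, so classical one-dimensional stochastic calculus suffices and no further infinite-dimensional It\^o formula is required.
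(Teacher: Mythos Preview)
Your proposal is correct and follows exactly the same approach as the paper: the paper's one-line proof reads ``Follows immediately from \Cref{rigorous-w123}, It\^{o}'s formula, and \Cref{generator-for-SPDE},'' and you have simply written out the details of that computation, including the helpful observation that $\phi_{ab}=0$ so no cross-bracket term appears and that $W_1,W_2\geq 1$ so $\phi$ is $C^2$ on the relevant range.
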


\begin{proof}
    Follows immediately from \Cref{rigorous-w123}, It\^{o}'s formula, and \Cref{generator-for-SPDE}.
\end{proof}

\begin{sass}\label{SPDE-lyap}
There are $K,c > 0$ such that
\begin{equation}\label{eq:l-l1-bound}
   \ip{x}{\tilde e} + \ip{F(x)}{1} = \ip{Ax + F(x)}{1} \leq K - c\|x\|_{L^1} \,,
\end{equation}
for all $x \in \Dm(A) \cap \cdom$ ($\tilde e$ is as in \Cref{def:tilde-e}).

Let $F^+(x) \coloneqq F(x) \vee 0$ and $F^-(x) \coloneqq -(F(x) \wedge 0)$. Then (at least) one of the following hold:
\begin{enumerate}[label = (\roman*)]
    \item \label{case-1} With $p$ as in \Cref{SPDE-G-sass}, there is some $\beta \in (-1/2,-N(\frac{1}{4} - \frac{1}{2p}))$ (or we can take $\beta = 0$ if $p = 2$) such that the function
    $$\cdom \ni x \mapsto \|x\|_{\beta,2}^2 \in \R$$
    vanishes over (see \Cref{fxn-vanish}) 
    $$\cdom \ni x \mapsto \ip{x}{F^-(x)}_\beta \in \R \,.$$
    \item \label{case-2} $|F_u(x)| \lesssim |x_u|$ and $|F(x)| \lesssim 1+|x|$.
\end{enumerate}
\end{sass}

If (i) is assumed in \Cref{SPDE-lyap}, then we show that there is a nonlinear negative drift and the process enjoys several bounds in expectation. However, such bounds are not available if (ii) is assumed, which occurs when all terms are linear or lower order.

\begin{rem}
    For example, consider the SPDE on the torus
    \begin{equation}\label{eq:eospde}
    du = [u_{xx} - u]dt + 10udB_t \,,    
    \end{equation}
    where $B_t$ is a standard (one-dimensional) Brownian motion. One solution to \eqref{eq:eospde} is $u_t \equiv a_t$, where $a_t$ is a real-valued process, constant in the spacial variable $x$, satisfying
    $$da = -adt + 10adB_t \,,$$
    which has the explicit solution
    $$a_t = a_0\exp(-51t + 10B_t) \,.$$
    Then 
    $$\E[a_t^2] = a_0^2e^{98t} \,,$$
    so that
    $$\E\Big[\frac{1}{T}\int_0^T \|a_t\|_{L^1}^2dt\Big] \to \infty \quad \text{as} \quad T \to \infty \,.$$
    In the context of \Cref{as-W}, this means that we cannot set $W = W' = \|u\|_{L^1}$, because $\Gamma W = 100\|u\|_{L^1}^2$ cannot satisfy \Cref{as-U}\ref{5.3} for any $g \in \Y$ (all $g \in \Y$ satisfy $\E\big[\frac{1}{T}\int_0^T g(u_t)\|dt\big] \lesssim 1$). Thus, if we wanted to use expectation of norms as a tool for obtaining bounds, we would need to work with $W = \|u\|_{L^1}^p$ for some $p < 1$. Indeed, we could use \Cref{convolution-bounds} to show that there is some $p \in (1/2,1)$ such that 
    $$\limsup_{T \to \infty}\E\Big[\frac{1}{T}\int_0^T \|u_t\|_{W^{\eta,r}}^{2p}dt\Big] < \infty \,,
    $$
    where $W^{\eta,r} \subset L^\infty$ compactly (this type of argument will be given in \Cref{sec:SPDE-semigrp-method}). However, this does not immediately imply $\|x\|_{W^{\eta,r}}^p \in \X$, and we cannot apply the same argument with $2p$ replaced by $p$ because \Cref{convolution-bounds}(iii) requires $p > 1$.

    This sort of example is why we need two separate cases in \Cref{SPDE-lyap}. Note that if we are only interested in existence of invariant measures (point (iii) in \Cref{main}), then by \Cref{rem:weaker-thm} the issue above is irrelevant and we would not need these separate cases.
\end{rem}

The following lemma is useful for verifying \Cref{as-W} and \Cref{as-U}\ref{5.1}-\ref{5.3} at the same time:

\begin{lem}
\label{Wk-lemma}
    Suppose $W \in \tDme_+(\mcM)$ (see \Cref{spde-tilde-domain}) and $W': \mcM \to [0,\infty]$ are nonnegative functions satisfying:
    \begin{itemize}
        \item $\Ll W \leq K - W'$ for some constant $K > 0$.
        \item $\Gamma W / W$ vanishes over $W'$ (see \Cref{fxn-vanish}).
        \item $W << W'$ (see \Cref{<<}).
    \end{itemize}
    Then for all $k \geq 1$, $W^k \in \Dme_2(\mcM)$ and there is a constant $K' > 0$ such that \begin{equation}\label{Wk-lemma-equation}
        \Ll W^k \leq K' - \frac{k}{4}W^{k-1}W' \quad \text{and} \quad \Gamma W^k \leq K' + k^2W^{2k-1}W' \,.
    \end{equation}
\end{lem}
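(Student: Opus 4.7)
The plan is to apply It\^o's formula (justified via \Cref{generator-for-SPDE}) to the function $W^k$, which yields the formal expressions
\begin{equation*}
    \Ll W^k = kW^{k-1}\Ll W + \frac{k(k-1)}{2}W^{k-2}\Gamma W, \qquad \Gamma W^k = k^2 W^{2k-2}\Gamma W,
\end{equation*}
and then to prove the stated inequalities by combining the three hypotheses. Since $W \in \tDme_+(\mcM)$ means $dW = \Ll W\,dt + \sigma\,dW_t$ with $\|\sigma\|_\U^2 = \Gamma W$, standard It\^o calculus (together with \Cref{generator-for-SPDE}(i)) immediately places $W^k$ in $\tDme_+(\mcM)$ with the expressions above.

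For the drift bound, I would substitute $\Ll W \leq K - W'$ and then use the vanishing hypothesis in the form $\Gamma W \leq \epsilon W W' + N_\epsilon W$. Choosing $\epsilon = 1/(k-1)$ so that $\tfrac{k(k-1)}{2}\epsilon = k/2$, the $\Gamma W$ contribution absorbs only half of the good $-kW^{k-1}W'$ term, leaving
\begin{equation*}
    \Ll W^k \leq \Big(kK + \tfrac{k(k-1)}{2}N_\epsilon\Big) W^{k-1} - \tfrac{k}{2} W^{k-1}W'.
\end{equation*}
To finish, I would split the domain into $\{W' \geq M\}$ and $\{W' < M\}$ for $M$ large enough: on the first set $\tfrac{k}{4}W^{k-1}W'$ dominates the $W^{k-1}$ term, while on the second set $W << W'$ makes $W^{k-1}$ uniformly bounded, producing a finite $K'$ and the inequality $\Ll W^k \leq K' - \tfrac{k}{4}W^{k-1}W'$.

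The bound on $\Gamma W^k$ is analogous but easier: writing $\Gamma W^k = k^2 W^{2k-2}\Gamma W \leq k^2 W^{2k-1}(\epsilon W' + N_\epsilon)$ with, say, $\epsilon = 1/2$, the surplus term $k^2 N_{1/2} W^{2k-1}$ is again absorbed into $K' + \tfrac{k^2}{2}W^{2k-1}W'$ using the same $\{W' \geq M\}$/$\{W' < M\}$ dichotomy and $W << W'$, giving $\Gamma W^k \leq K' + k^2 W^{2k-1}W'$.

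The main delicate point is to promote $W^k$ from $\tDme_+(\mcM)$ to $\Dme_2(\mcM)$, i.e., to know that the martingale part of $W^k$ is genuinely square-integrable (not just a local martingale) and that its quadratic variation process equals $\int_0^{\cdot}\Gamma W^k(X_s)\,ds$. For this I would invoke \Cref{generator-for-SPDE}(ii) with the auxiliary function $\hat f = W^{2k}$, which also lies in $\tDme_+(\mcM)$ by the same It\^o computation and whose drift obeys $\Ll W^{2k} \leq K'' - \tfrac{k}{2}W^{2k-1}W'$ by applying the drift estimate just proved with $k$ replaced by $2k$. Choosing the constant in \Cref{generator-for-SPDE}(ii) so that the coefficient of $W^{2k-1}W'$ in $\Gamma W^k + K\,\Ll W^{2k}$ becomes non-positive (any $K \geq 2k$ works), the supremum of $\Gamma W^k + K\,\Ll W^{2k}$ is finite, and \Cref{generator-for-SPDE}(ii) delivers $W^k \in \Dme_2(\mcM)$ with the identified $\Gamma W^k$, completing the proof.
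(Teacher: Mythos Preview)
Your proposal is correct and follows essentially the same route as the paper: apply It\^o's formula to obtain the formal expressions for $\Ll W^k$ and $\Gamma W^k$, absorb the $\Gamma W$ contribution using the vanishing hypothesis, then handle the residual $W^{k-1}$ (resp.\ $W^{2k-1}$) term via the dichotomy on $\{W' \geq M\}$ versus $\{W' < M\}$ and $W \ll W'$, and finally invoke \Cref{generator-for-SPDE}(ii) with $\hat f = W^{2k}$ to upgrade to $\Dme_2(\mcM)$. The only cosmetic point is that your choice $\epsilon = 1/(k-1)$ is undefined at $k=1$, but there the $\Gamma W$ term disappears anyway, so no issue.
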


\begin{proof}
    Recalling \Cref{generator-for-SPDE}, we start by applying It\^{o}'s formula to obtain:
    \begin{equation}\label{eq-l-wk}
        \Ll W^k = kW^{k-1}\Ll W + \frac{1}{2}k(k-1)W^{k-2}\Gamma W
    \end{equation}
    \begin{equation}\label{eq-gamma-wk}
        \Gamma W^k = k^2W^{2k-2}\Gamma W \,.
    \end{equation}
    In \eqref{eq-l-wk} we use our assumptions $\Ll W \leq K - W'$ and $\Gamma W/W$ vanishes over $W'$ to conclude that there is some constant $K' > 0$ such that
    \begin{align*}
        \Ll W^k &= W^{k-1}(k\Ll W + \frac{1}{2}k(k-1)\Gamma W/W) \\
        &\leq W^{k-1}(kK - kW' + \frac{1}{2}k(k-1)\Gamma W/W) \\
        &\leq W^{k-1}(K' - \frac{k}{2}W')
    \end{align*}
    Next we note that if $kW'/2 > 2K'$, then $K' - kW'/2 < -kW'/4$. Also, by our assumption $W << W'$ there is some $K''$ such that $kW'/2 \leq 2K'$ implies $W \leq K''$. Thus,
    $$\Ll W^k \leq K'(K'')^{k-1}-\frac{k}{4}W^{k-1}W' \,.$$

    Similarly, from \eqref{eq-gamma-wk} we have \begin{equation*}
        \Gamma W^k = W^{2k-1}\Big(k^2 \frac{\Gamma W}{W}\Big) \leq W^{2k-1}(K''' + \frac{1}{2}k^2W') \leq K'''(K'''')^{2k-1} + k^2W^{2k-1}W' \,,
    \end{equation*}
    where $K'''$ is such that $\Gamma W / W \leq K'''/k^2 + W'/2$ and $K''''$ is such that $k^2W'/2 \leq K'''$ yields $W \leq K''''$.

    This finishes the proof of \eqref{Wk-lemma-equation}. Finally, the statement $W^k \in \Dme_2(\mcM)$ follows from \Cref{generator-for-SPDE} applied with $f = W^k$, $\hat f = W^{2k}$.
\end{proof}

\begin{lem}\label{lem:lyapunov-fxn-construction-spde}
    
    \Cref{as-W} and \Cref{as-U}\ref{5.1}-\ref{5.3} hold for $(r_t,v_t,z_t,\theta)$ as in \Cref{rv-is-markov-process}.
    Furthermore, for $x = (rv, z)$ we have 
    \begin{enumerate}[label=(\roman*)]
        \item If \Cref{SPDE-lyap}\ref{case-1} holds, then for all $p > 1$, $W, W', U, U'$ can be chosen such that  $1+\|x\|_{L^1}^p + \|x\|_{L^2}^2 \lesssim W'$ and $1+\|x\|_{L^1}^p + \|x\|_{L^2}^2 \lesssim U'$.
        \item If \Cref{SPDE-lyap}\ref{case-2} holds, then there is some $p > 1$ such that $1+\|x\|_{L^2}^{p/2} \lesssim W'$ and $1+\|x\|_{L^2}^p \lesssim U'$.
    \end{enumerate}
\end{lem}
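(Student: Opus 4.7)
\medskip

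\noindent\textbf{Proof proposal.} The plan is to take $W$ of the form $W = W_1^{p/2} + C W_2^p$ for suitable $p \geq 1$ and $C > 0$, invoking \Cref{cor:w1-w2-p-in-dme} to read off $\Ll W$ and $\Gamma W$ explicitly from \eqref{l-gamma-w1}, \eqref{l-gamma-w2}. The game is to exhibit a nonnegative $W'$ with $\Ll W \leq K - W'$, such that $W \ll W'$ and $\Gamma W/W$ vanishes over $W'$; then \Cref{Wk-lemma} applied with a sufficiently large $k$ produces $U := W^k$ verifying \Cref{as-U}\ref{5.1}-\ref{5.2}, while \Cref{as-U}\ref{5.3} is extracted from the $\Gamma W \leq K' + W W'$ bound of \eqref{Wk-lemma-equation}, which for $k \geq 2$ is dominated by $U' \approx W^{k-1} W'$. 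Throughout we use the standing relations $\|x\|_{\beta+1/2,2} \gtrsim \|x\|_{L^2}$ (since $-A \geq I$) and $|\ip{y}{z}_\beta| \leq \|y\|_{\beta+1/2,2} \|z\|_{\beta-1/2,2}$.

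\smallskip

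\noindent\emph{Case 1.} Here one takes $p$ large (say $p \geq 2$). Split the $F$-term in $\Ll W_1$ as $\ip{x}{F(x)}_\beta = \ip{x}{F^+(x)}_\beta - \ip{x}{F^-(x)}_\beta$. By \Cref{SPDE-F}, $|F^+(x)| \lesssim 1+|x|$ pointwise, so $\|F^+(x)\|_{\beta-1/2,2} \lesssim \|F^+(x)\|_{L^2} \lesssim 1 + \|x\|_{L^2} \lesssim 1 + \|x\|_{\beta+1/2,2}$, whence by Cauchy--Schwarz and Young we absorb $\ip{x}{F^+(x)}_\beta \leq \tfrac14 \|x\|_{\beta+1/2,2}^2 + C$. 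The vanishing hypothesis then yields $\|x\|_{\beta,2}^2 \leq C + \epsilon\, \ip{x}{F^-(x)}_\beta$ so that the negative sign on $\ip{x}{F^-(x)}_\beta$ produces genuine dissipation proportional to $\|x\|_{\beta,2}^2 \gtrsim \|x\|_{L^2}^2$, up to an additive constant. For $G$, \Cref{SPDE-G-lem}(i) together with $|\sigma(x)|\lesssim |x|$ (\Cref{SPDE-G-sass}) gives $\|G(x)\|_{\beta,2}^2 \lesssim \|x\|_{L^2}^2$, which is absorbed into the nonlinear dissipation. The $W_2^p$ part uses \eqref{eq:l-l1-bound} to produce $-cW_2^{p-1}\|x\|_{L^1}$, which after choosing $C$ large enough dominates the Itô correction $W_2^{p-2}\|G(x)^*1\|_\U^2$ using the bound $\|G(x)^*1\|_\U \lesssim \|x\|_{L^2}$ (since $H^*$ is bounded $L^2 \to \U$). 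Collecting terms yields
\[
W' \;\gtrsim\; 1 + \|x\|_{L^1}^{p} + \|x\|_{L^2}^{2},
\]
which proves (i) for this case.

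\smallskip

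\noindent\emph{Case 2.} Now $|F(x)| \lesssim 1+|x|$ globally, so the Cauchy--Schwarz/Young argument above bounds $\ip{x}{F(x)}_\beta$ directly by $\tfrac14\|x\|_{\beta+1/2,2}^2 + C$. The crucial technicality is that without the nonlinear $F^-$-dissipation, the only available higher-order dissipation is $\|x\|_{\beta+1/2,2}^2$, which restricts the viable exponent in $W_1^{p/2}$. The Itô correction term $\tfrac{p(p/2-1)}{2}W_1^{p/2-2}\|G(x)^*x\|_\U^2$ must be controlled using \eqref{eq:g-star-1} (rewritten so that $\|G(x)^*x\|_\U^2 \lesssim \|x\|_{\beta+1/2,2}^2 \|x\|_{L^1}^2 / \|x\|_{\beta,2}^2 \cdot \|x\|_{L^2}^{?}$, effectively), and for this the prefactor $p/2-1$ must be small: choose $p$ only slightly larger than $1$. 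With that choice, the $\|x\|_{\beta+1/2,2}^2$ term survives as dissipation with some small but positive coefficient. The $W_2^p$ analysis proceeds as in Case 1. The resulting lower bound is
\[
W' \;\gtrsim\; 1 + \|x\|_{L^2}^{p/2},
\]
which matches the claim in (ii).

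\smallskip

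\noindent\emph{Upgrade to $U$.} In both cases, verify the hypotheses of \Cref{Wk-lemma}: we already have $\Ll W \leq K - W'$; $W \ll W'$ holds because both are dominated by polynomials in $\|x\|_B$ and $W'$ includes the leading coercive terms; finally $\Gamma W / W$ vanishes over $W'$ since $\Gamma W \lesssim W^{2-2}\|G(x)^*x\|_\U^2 + W^{2(p-1)}\|G(x)^*1\|_\U^2$, and both can be bounded by Young's inequality and \Cref{SPDE-G-lem} in terms of $(1 + \|x\|_{L^2}^2)$ times lower powers of $W$, which vanishes over $W'$. Taking $k \geq 2$ in \Cref{Wk-lemma} produces $U = W^k \in \Dme_+(\mcM)$ with $\Ll U \leq K' - U'$ and $U' \gtrsim W^{k-1} W' \gtrsim W'$, giving the claimed lower bounds on $U'$. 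For \Cref{as-U}\ref{5.3}, the first half of \eqref{Wk-lemma-equation} applied with $k=1$ reads $\Gamma W \leq K' + W W'$, and since $k \geq 2$ we have $W W' \leq W^{k-1} W' \lesssim U'$, so $\Gamma W \in \Y$ by \Cref{u'-in-y}.

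\smallskip

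\noindent\emph{Main obstacle.} The delicate point is handling the Itô correction $W_1^{p/2-2}\|G(x)^*x\|_\U^2$, which in Case 2 forces us to keep $p$ very close to $1$ (explaining why only sublinear $L^2$-moments can be extracted); in Case 1 it must be shown that the nonlinear dissipation from $\ip{x}{F^-(x)}_\beta$ actually beats this correction, which requires carefully combining the "vanishes over" hypothesis with the $\|G(x)^*x\|_\U^2 \lesssim \|x\|_{L^2}^2 \cdot \|x\|_{\beta,2}^2$-type bound from \Cref{SPDE-G-lem}. All other verifications are standard bookkeeping once the right exponents are fixed.
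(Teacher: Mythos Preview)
Your overall architecture (take $W=W_1^{p/2}+CW_2^p$, read off $\Ll W$ and $\Gamma W$ from \Cref{cor:w1-w2-p-in-dme}, then feed into \Cref{Wk-lemma}) matches the paper, but there is a genuine gap in how you handle the It\^o correction coming from $W_2^p$, and a related sign confusion in Case~2.

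\textbf{The $W_2^p$ It\^o correction.} Your claim that ``choosing $C$ large enough dominates the It\^o correction $W_2^{p-2}\|G(x)^*1\|_\U^2$'' cannot work: $C$ multiplies both the drift dissipation $-cCW_2^{p-1}\|x\|_{L^1}$ and the correction $\tfrac{p(p-1)}{2}CW_2^{p-2}\|G(x)^*1\|_\U^2$, so the ratio is independent of $C$. Using only $\|G(x)^*1\|_\U\lesssim\|x\|_{L^2}$, you would need $\|x\|_{L^2}^2\lesssim(1+\|x\|_{L^1})\|x\|_{L^1}$, which is false for concentrated $x$. The paper's resolution is to absorb this term into the $W_1$-dissipation, not the $W_2$-dissipation: from \eqref{eq:g-star-1} one obtains \eqref{eq:turning-l1-to-beta},
\[
(1+\|x\|_{L^1})^{p-2}\|G(x)^*1\|_\U^2 \;\lesssim\; (1+\|x\|_{\beta,2}^2)^{p/2-1}\|x\|_{\beta+1/2,2}^{2-p}\|x\|_{L^2}^p\,,
\]
and the right-hand side is then absorbed by $W_1^{p/2-1}\|x\|_{\beta+1/2,2}^2$ via Young. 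This derivation requires $p\le 2$ (one raises an inequality to the nonnegative power $1-p/2$), which is why the paper works with a small exponent $p^*(C)\in(1,2]$ in \emph{both} cases and only afterwards boosts to arbitrary powers via \Cref{Wk-lemma}. Your choice $p\ge 2$ in Case~1 loses access to this mechanism.

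\textbf{Sign confusion in Case~2.} You write that the $W_1$-correction $\tfrac{p}{4}(p/2-1)W_1^{p/2-2}\|G(x)^*x\|_\U^2$ ``must be controlled using \eqref{eq:g-star-1}'' and therefore $p$ must be close to~$1$. But for $p<2$ this term has a \emph{nonpositive} coefficient and is simply dropped; the paper never controls it. The genuinely dangerous correction in Case~2 is again the $W_2$-term, handled exactly as above via \eqref{eq:turning-l1-to-beta}; the additional ingredient specific to Case~2 is a Gagliardo--Nirenberg interpolation of $\|x\|_{L^2}$ between $\|x\|_{\beta+1/2,2}$, $\|x\|_{\beta,2}$ and $\|x\|_{L^1}$ to absorb the residual $c_0\|x\|_{L^2}^2$ without any nonlinear help from $F^-$. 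Your sketch does not supply this step.
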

\begin{proof}
    By Cauchy-Schwartz inequality, $\beta \leq 0$, and \Cref{SPDE-F} (in particular the last bullet point) we have
\begin{equation}\label{eq:fbond}
\ip{x}{F^+(x)}_\beta \leq \|x\|_{\beta,2}\|F^+(x)\|_{\beta,2} \lesssim \|x\|_{L^2}\|F^+(x)\|_{L^2} \lesssim \|x\|_{L^2}^2+1 \,.
\end{equation}
By \Cref{SPDE-G-lem} and \Cref{SPDE-G-sass} (recall that for the following inequality the adjoint of $G(x)$ is taken in $\Ll(\U,\B_{\beta,2})$)
\begin{equation}\label{eq:gbond}
\|G(x)^*x\|_\U^2 \leq \|G(x)\|_{\beta,2}^2\|x\|_{\beta,2}^2 \lesssim \|x\|_{L^2}^2\|x\|_{\beta,2}^2 \,.    
\end{equation}
Combining \Cref{cor:w1-w2-p-in-dme}, Young's inequality, \eqref{eq:fbond}, \eqref{eq:gbond}, $\|G(x)\|_{\beta,2} \lesssim \|x\|_{L^2}^2$ (see \Cref{SPDE-G-lem}), and \eqref{eq:l-l1-bound} imply the existence of $K,c, c_0 > 0$ (which may change from line to line) such that for any $p \leq 2$ 
\begin{equation*}
    \begin{aligned}
    \Ll (W_1^{\frac{p}{2}} + CW_2^p) &\leq \frac{p}{2^{\frac{p}{2}}}(1+\|x\|_{\beta,2}^2)^{\frac{p}{2}-1}\Big(-\|x\|_{\beta+1/2,2}^2 -\ip{x}{F^-(x)}_\beta + c_0\|x\|_{L^2}^2\Big) \\
    &\qquad + pCK - pCc\|x\|_{L^1}^p + \frac{p}{2}(p-1)C(1+\|x\|_{L^1})^{p-2}\|G(x)^*1\|_\U^2
    \\
    \Gamma(W_1^{p/2} + CW_2^p) &\lesssim \frac{p^2}{2^p}(1+\|x\|_{\beta,2}^2)^{p-1}\|x\|_{L^2}^2 + p^2C^2(1+\|x\|_{L^1})^{2p-2}\|G(x)^*1\|_\U^2 \,.
    \end{aligned}
\end{equation*}
By \Cref{SPDE-G-sass} and $\beta > -1/2$ it follows easily that
\begin{equation}\label{eq:g(x)-star-1}
    \|G(x)^*1\|_\U = \|H^* \sigma(x)\|_\U \lesssim \|x\|_{L^2} \lesssim \|x\|_{\beta+1/2,2}^2
\end{equation}
(recall $H \in \Ll(\U,L^2)$, and therefore $H^* \in \Ll(L^2,\U)$). Then we conclude by \eqref{eq:g-star-1} that
\begin{equation*}
    (1+\|x\|_{\beta,2}^2)\|G(x)^*1\|_\U^2 \lesssim (1+\|x\|_{L^1})^2\|x\|_{\beta+1/2,2}^2 \,.
\end{equation*}
For $p \leq 2$, we can raise the above inequality to the power of $1-p/2$ on both sides and combine this with $\|G(x)^*1\|_\U^p \lesssim \|x\|_{L^2}^p$ (see \eqref{eq:g(x)-star-1}) to conclude that
\begin{equation}\label{eq:turning-l1-to-beta}
    (1+\|x\|_{L^1})^{p-2}\|G(x)^*1\|_\U^2 \lesssim (1+\|x\|_{\beta,2}^2)^{p/2-1}\|x\|_{\beta+1/2,2}^{2-p}\|x\|_{L^2}^p \,.
\end{equation}

Thus, after using Young's inequality, for every $C > 0$ there is some $p^*(C) \in (1,2]$ such that for all $p \leq p^*(C)$
\begin{equation}\label{eq:initial-w1-w2-bd}
    \begin{aligned}
    \Ll (W_1^{\frac{p}{2}} + CW_2^p) &\leq \frac{p}{2^{\frac{p}{2}}}(1+\|x\|_{\beta,2}^2)^{\frac{p}{2}-1}\Big(-\frac{1}{2}\|x\|_{\beta+\frac{1}{2},2}^2 - \ip{x}{F^-(x)}_\beta +c_0\|x\|_{L^2}^2\Big) \\
    &\qquad + pCK - pCc\|x\|_{L^1}^p \,,
    \end{aligned}
\end{equation}
and, by replacing $p$ with $2p$ in \eqref{eq:turning-l1-to-beta}, for $p \leq 1$
\begin{equation}\label{eq:gamma-w1-w2-bd}
    \Gamma(W_1^{p/2} + CW_2^p) \lesssim (1+\|x\|_{\beta,2}^2)^{p-1}\|x\|_{\beta+1/2,2}^{2-2p}\|x\|_{L^2}^{2p} \,.
\end{equation}

\textit{Case 1:}
Suppose \Cref{SPDE-lyap}\ref{case-1} holds. Since $-1/2 < \beta \leq 0$, for all $\epsilon > 0$ there is $C_\epsilon$ such that
\begin{equation}\label{eq:ibbabh}
\|x\|_{L^2}^2 \leq \epsilon \|x\|_{\beta+1/2,2}^2 + C_\epsilon \|x\|_{\beta,2}^2 \,,    
\end{equation}
so we conclude from \Cref{SPDE-lyap}\ref{case-1} that $\|x\|_{L^2}^2$ vanishes over $\frac{1}{2}\|x\|_{\beta+1/2,2}^2 + \ip{x}{F^-(x)}_\beta$.

Then from \eqref{eq:initial-w1-w2-bd} we obtain for $p \leq p^*(C)$ that
\begin{equation*}
    \Ll (W_1^{p/2} + CW_2^p) \leq K -pc(1+\|x\|_{\beta,2}^2)^{p/2-1}\Big(\|x\|_{\beta+1/2,2}^2 + \ip{x}{F^-(x)}_\beta\Big) - pc\|x\|_{L^1}^p
\end{equation*}

By \Cref{cor:w1-w2-p-in-dme},  
$$W \coloneqq W_1^{p^*(C)/4} + CW_2^{p^*(C)/2} \in \tDme_+(\mcM)$$
and $\Ll W \leq K - W'$ for
\begin{equation}\label{eq:def-of-w'-case-1}
    W' \coloneqq c(1+\|x\|_{\beta,2}^2)^{p^*(C)/4-1}\Big(\|x\|_{\beta+1/2,2}^2+ \ip{x}{F^-(x)}_\beta\Big)  + c\|x\|_{L^1}^{p^*(C)/2}  + 1 \,,
\end{equation}

which clearly satisfies $W << W'$ (see \Cref{<<}. In fact $W \lesssim W'$ because $\|x\|_{\beta,2} \lesssim \|x\|_{\beta+1/2,2}$ and, for $p \geq 0$, $(1+a)^p \lesssim (1+a)^{p-1}a + 1$ as functions of $a \in [0,\infty)$.)

By \eqref{eq:gamma-w1-w2-bd} with $p = p^*(C)/2$, 
$W \geq (W_1)^{p^*(C)/4}$, interpolation of $L^2 = \B_{0,2}$ between
$\B_{\beta,2}$ and $\B_{\beta+1/2,2}$ and Young's inequality (as in \eqref{eq:ibbabh}), we have
\begin{align*}
    \frac{\Gamma W}{W} &\leq K(1+\|x\|_{\beta,2}^2)^{p^*(C)/4-1}\|x\|_{\beta+1/2,2}^{2-p^*(C)}\|x\|_{L^2}^{p^*(C)} \\
    &\leq \epsilon (1+\|x\|_{\beta,2}^2)^{p^*(C)/4-1}\|x\|_{\beta+1/2,2}^2 + C_\epsilon(1+\|x\|_{\beta,2}^2)^{p^*(C)/4-1}\|x\|_{\beta,2}^2 \,. 
\end{align*}
Thus,  by \Cref{SPDE-lyap}\ref{case-1} and  we conclude that $\Gamma W/W$ vanishes over $W'$. We have verified the assumptions of \Cref{Wk-lemma}, so by \Cref{u'-in-y} and \Cref{generator-for-SPDE} we have that
\Cref{as-W} and \Cref{as-U}\ref{5.1}-\ref{5.3}  hold with  $W = W^k$ and $U = W^{2k}$, for any $k \geq 1$. By \eqref{eq:def-of-w'-case-1} and $\|x\|_{L^2}^2 \lesssim \|x\|_{\beta+1/2,2}^2$, we indeed have that $1+\|x\|_{L^1}^p + \|x\|_{L^2}^2 \lesssim W'$ and $1+\|x\|_{L^1}^p + \|x\|_{L^2}^2 \lesssim U'$.

\textit{Case 2:}
Suppose \Cref{SPDE-lyap}\ref{case-2} holds, so, using $\beta \leq 0$, 
$$
|\ip{x}{F^-(x)}_\beta| \leq \|x\|_{\beta,2}\|F^-(x)\|_{\beta,2} \lesssim \|x\|_{L^2}\|F^-(x)\|_{L^2}\lesssim 1+\|x\|_{L^2}^2 \,.
$$
Then from \eqref{eq:initial-w1-w2-bd} we obtain (for $p \leq 2$)
that (recall we may change $K,c,c_0$ from line to line)
\begin{equation*}
    \begin{aligned}
    \Ll (W_1^{p/2} + CW_2^p) &\leq \frac{p}{2^{p/2}}(1+\|x\|_{\beta,2}^2)^{p/2-1}\Big(-\frac{1}{2}\|x\|_{\beta+1/2,2}^2 + c_0\|x\|_{L^2}^2\Big) \\
    &+ pCK - pCc\|x\|_{L^1}^p \,.
    \end{aligned}
\end{equation*}
Using \Cref{characterization-of-interpolation-spaces} and Gagliardo-Nirenberg inequality,  we conclude that there is $a \in [0,1]$ such that
$$\|x\|_{L^2} \leq \|x\|_{\beta+1/2,2}^a\|x\|_{L^1}^{1-a}$$
and by $-1/2 < \beta \leq 0$ we have for some $b \in [0,1]$ that
$$\|x\|_{L^2} \leq \|x\|_{\beta+1/2,2}^b\|x\|_{\beta,2}^{1-b} \,.$$
Thus, 
$$\|x\|_{L^2}^2 \leq (\|x\|_{\beta+1/2,2}^a\|x\|_{L^1}^{1-a})^c(\|x\|_{\beta+1/2,2}^b\|x\|_{\beta,2}^{1-b})^{2-c}$$
where 
$c \in [0,2]$ is chosen such that $c(1-a)(2-p) = (2-c)(1-b)p$ (it is possibe for example by the intermediate value theorem). Then by Young's inequality we obtain that for all $\epsilon > 0$ there is $C_\epsilon > 0$ such that
$$\|x\|_{L^2}^2 \leq \epsilon\|x\|_{\beta+1/2,2}^2 + C_\epsilon\|x\|_{L^1}^p\|x\|_{\beta,2}^{2-p} \,.$$
By choosing $\epsilon$ small enough and then $C$ large enough, we obtain that for all $p \leq p^*(C)$
\begin{equation*}
    \Ll (W_1^{p/2} + CW_2^p) \leq K -pc(1+\|x\|_{\beta,2}^2)^{p/2-1}\|x\|_{\beta+1/2,2}^2 - pc\|x\|_{L^1}^p \,.
\end{equation*}
Thus, by \Cref{generator-for-SPDE} and \Cref{cor:w1-w2-p-in-dme} the functions $W \coloneqq W_1^{p^*(C)/4} + CW_2^{p^*(C)/2}$ and $W' \coloneqq c(1+\|x\|_{\beta,2}^2)^{p^*(C)/4-1}\|x\|_{\beta+1/2,2}^2 + c\|x\|_{L^1}^{p^*(C)/2} + 1$ satisfy \Cref{as-W}. Similarly, using \eqref{eq:gamma-w1-w2-bd} and $\|x\|_{L^2} \lesssim \|x\|_{\beta+1/2,2}$ we see that $U \coloneqq W_1^{p^*(C)/2} + CW_2^{p^*(C)}$ and $U' \coloneqq c(1+\|x\|_{\beta,2}^2)^{p^*(C)/2-1}\|x\|_{\beta+1/2,2}^2 + c\|x\|_{L^1}^{p^*(C)} + 1$ satisfy \Cref{as-U}\ref{5.1}-\ref{5.3} (see \Cref{u'-in-y}). The claims $1+\|x\|_{L^2}^{p/2} \lesssim W'$ and $1+\|x\|_{L^2}^p \lesssim U'$ follow since $\|x\|_{\beta,2} \lesssim \|x\|_{L^2} \lesssim \|x\|_{\beta+1/2,2}$.
\end{proof}

\subsection{Semigroup Method Bounds}\label{sec:SPDE-semigrp-method}
In \Cref{sec:SPDE-lyap} we used variational methods to construct certain Lyapunov functions (\Cref{lem:lyapunov-fxn-construction-spde}), but they do not provide adequate control over $(r_t,v_t,z_t)$ (see \Cref{def:rv-process}). Indeed, we have some control over the magnitude of $x_t = (rv_t,z_t)$ in spaces like $L^1$ or $\B_{\beta+1/2,2}$, but not in $L^\infty$ (unless the dimension of our space is small enough and the noise term smooth enough that $\B_{\beta+1/2,2} \subset L^\infty$). Even more importantly, we have no control over the ``angle" $v_t$. In this section we will use the mild formulation of SPDE to ``upgrade" the bounds given to us by our Lyapunov functions. In particular, we verify \Cref{as-V}\ref{as-vanish}, \Cref{as-U}\ref{5.4}, and \Cref{as-compact}.

\begin{lem}\label{lem:bootstrap-inductive-step}
    Define $\eta = 1/2 + \beta > 0$, where $\beta$ is as in  \Cref{SPDE-G-lem} and recall that $N$ is the dimension of domain $D$. For all $T > 0$, $p > 1$, $q \in (2,\infty]$ we have
    $$\E\Big[\int_0^T \|x_t\|_{L^q}^pdt\Big] \lesssim \|x_0\|_{L^r}^p + \E\Big[\int_0^T 1+\|x_t\|_{L^r}^pdt\Big] \,,$$
    where $r$ satisfies
    $$\frac{1}{r} = \Big(\frac{\eta \wedge p^{-1}}{N} + \frac{1}{q}\Big) \wedge \frac{1}{2} \,.$$
\end{lem}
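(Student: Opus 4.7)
The plan is to reduce this bound directly to \Cref{lq-to-lr}, applied with $u_t = x_t = (r_t v_t, z_t)$, $y_t = F(x_t)$ decomposed as $F^+(x_t) - F^-(x_t)$, and $Z_t = G(x_t)$. By \Cref{rv-is-markov-process}, $x_t$ is the mild solution of \eqref{SPDE}, and $x_t \in \cdom$ by \Cref{bigthm}\ref{bigthm:nonneg}, so the nonnegativity hypothesis of \Cref{lq-to-lr} is met. The natural choice is $\eta^* = \eta = \tfrac{1}{2} + \beta$, which is precisely the amount of spatial regularity that \Cref{SPDE-G-lem}(i) gives for the noise coefficient.

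First I would verify the integrability hypotheses of \Cref{lq-to-lr}. From \Cref{bigthm} we have $x \in L^p(\Omega; C([0,T]; B))$ for every $p,T > 0$, and since $D$ is bounded (or the torus) $B = C(\overline D) \hookrightarrow L^r$ continuously for all $r \in [2,\infty)$, so $x_t \in L^p(\Omega; C([0,T]; \cdom))$. The polynomial bound in \Cref{SPDE-F} gives $y^\pm \in L^p(\Omega; C([0,T]; B))$, and \Cref{SPDE-G-lem}(i) together with $|\sigma(x)| \lesssim 1 + |x|$ from \Cref{SPDE-G-sass} yields
\begin{equation*}
\|G(x_t)\|_{\gamma(\U, \B_{\eta - 1/2, r})} = \|\sigma(x_t)\cdot H\|_{\beta, r} \lesssim \|\sigma(x_t)\|_{L^r} \lesssim 1 + \|x_t\|_{L^r},
\end{equation*}
so $Z_t \in L^p(\Omega \times [0,T]; \gamma(\U, \B_{\eta - 1/2, r}))$ for all $r \in [2, \infty)$. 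Applying \Cref{lq-to-lr} with the stated exponent $r$ (which matches \eqref{eq:lborc} verbatim) yields
\begin{equation*}
\E\!\left[\int_0^T \|x_t\|_{L^q}^p\, dt\right] \lesssim \|x_0\|_{L^r}^p + \E\!\left[\int_0^T \|F^+(x_t)\|_{\eta-1,r}^p\, dt\right] + \E\!\left[\int_0^T \|G(x_t)\|_{\gamma(\U, \B_{\eta-1/2,r})}^p\, dt\right].
\end{equation*}

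Next I would absorb the two remaining right-hand side terms into $\|x_t\|_{L^r}$. For the drift, the last bullet of \Cref{SPDE-F} gives $|F^+(x)| \lesssim 1 + |x|$ pointwise, hence $\|F^+(x_t)\|_{L^r} \lesssim 1 + \|x_t\|_{L^r}$. Because $\eta - 1 < 0$, formula \eqref{eq:doapt} combined with $\|S(t)\|_{\mathcal{L}(L^r,L^r)} \lesssim e^{-t}$ (\Cref{semigrp-deff}) shows that $(-A)^{\eta-1}$ is a bounded operator on $L^r$, so $\|\cdot\|_{\eta-1, r} \lesssim \|\cdot\|_{L^r}$, giving $\|F^+(x_t)\|_{\eta-1, r}^p \lesssim 1 + \|x_t\|_{L^r}^p$. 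The noise term was already bounded above. Combining these gives the desired estimate.

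There is no real obstacle; the substance is bundled into \Cref{lq-to-lr} and \Cref{SPDE-G-lem}. The only point that requires attention is matching the parameter $\eta$ here with $\eta^*$ in \Cref{lq-to-lr} and checking that the choice $\beta = \eta - 1/2 \in (-1/2, 0]$ is within the range permitted by \Cref{SPDE-G-lem}(i), which it is by construction.
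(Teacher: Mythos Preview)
Your proof is correct and follows essentially the same route as the paper: apply \Cref{lq-to-lr} with $u_t = x_t$, $y_t = F(x_t)$, $Z_t = G(x_t)$, then bound $\|F^+(x_t)\|_{\eta-1,r}$ via the last bullet of \Cref{SPDE-F} and $\|G(x_t)\|_{\beta,r}$ via \Cref{SPDE-G-lem}(i). The paper's proof is slightly terser but identical in substance; your extra care in checking the integrability hypotheses and the range $\eta^* = \eta \in (0,1/2]$ is appropriate.
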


\begin{proof}
    In \Cref{lq-to-lr} we take $u_t = x_t$, $y_t = F(x_t)$, $Z_t = G(x_t)$ (recall that $x_t$ solves \eqref{SPDE} by \Cref{rv-is-markov-process}). By the last bullet point of \Cref{SPDE-F} and $\eta \leq 1$ we obtain
    $$\|y^+_t\|_{\eta-1,r} = \|F^+(x_t)\|_{\eta-1,r} \lesssim \|F^+(x_t)\|_{L^r} \lesssim 1+\|x_t\|_{L^r} \,.$$
    By \Cref{SPDE-G-lem} and \Cref{SPDE-G-sass},
    $$\|Z\|_{\eta-1/2,r} = \|G(x_t)\|_{\eta-1/2,r} = \|G(x_t)\|_{\beta,r} \lesssim \|x_t\|_{L^r} \,.$$
    Then the claim follows from \Cref{lq-to-lr}.
\end{proof}

\begin{lem}\label{lem:case-1-l-infty-to-p-in-X}
    Under \Cref{SPDE-lyap}\ref{case-1}, for all $p > 1$ we have $\|x\|_{L^\infty}^p \in \X$ (see \Cref{x-y-def}). In fact, we may take $f_n(r,v,z) = \|x\|_{L^\infty}^p$ for some $n \geq 1$ in \Cref{fn-in-X}.
\end{lem}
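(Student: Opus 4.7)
The plan is to apply \Cref{fn-in-X} with $f_0 = (W')^{1/2}$ (as prescribed), $f_n = \|x\|_{L^\infty}^p$, and intermediate functions $f_i = D_i(1 + \|x\|_{L^{q_i}}^p)$ for $1 \leq i \leq n-1$, where $D_i > 0$ are constants to be chosen and $2 = q_0 < q_1 < \cdots < q_n = \infty$ is an increasing chain of H\"older exponents. Setting $\eta := \beta + 1/2 > 0$ (with $\beta$ as in \Cref{SPDE-G-lem}), I will insist that $1/q_{i-1} - 1/q_i \leq (\eta \wedge (2p)^{-1})/N$; such a finite chain exists because the step size is strictly positive, so finitely many steps suffice to drive $1/q_i$ from $1/2$ down to $0$.

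For each generic inductive step $i-1 \to i$ (with $i \geq 2$, and with a small modification at $i = n$ since $f_n = \|x\|_{L^\infty}^p$ has $D_n = 1$ and no ``$1+$''), I will invoke \Cref{lem:bootstrap-inductive-step} twice, once with exponent $p$ and once with $2p$, to obtain integral bounds on $\E[\int_0^T \|x_t\|_{L^{q_i}}^{p'}\,dt]$ for $p' \in \{p,2p\}$ in terms of $\|x_0\|_{L^{r_i^{(p')}}}^{p'}$ and the time integral of the same quantity. The chain condition ensures $q_{i-1} \geq r_i^{(p')}$, so the H\"older embedding on the finite-measure domain $D$ gives $\|x\|_{L^{r_i^{(p')}}}^{p'} \lesssim \|x\|_{L^{q_{i-1}}}^{p'}$, which is then controlled by $f_{i-1}^{p'/p}/D_{i-1}^{p'/p}$. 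Choosing the $D_i$ in a decreasing geometric sequence so that the coefficients of $f_{i-1}$ and $f_{i-1}^2$ on the right-hand sides of \eqref{eq-int-f-h-bounds} are at most one, the remaining $T$-dependent residuals are absorbed into a single constant $h_i$, which trivially vanishes over $f_i \geq 0$.

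The delicate step is the base case $i = 1$, where the replacement for $\|x\|_{L^{r_1^{(p')}}}^{p'} \lesssim \|x\|_{L^{q_0}}^{p'}$ must instead bound this quantity by $W' = f_0^2$ (up to a constant and additive $1$). By \Cref{lem:lyapunov-fxn-construction-spde}(i), choosing the parameter $p^*$ in that lemma large enough (depending on $p$), we may arrange
\[
W' \gtrsim 1 + \|x\|_{L^1}^{p^*} + \|x\|_{L^2}^2 + \|x\|_{\eta,2}^2,
\]
where the last term is visible in the explicit form of $W'$ produced in the Case 1 proof. Gagliardo--Nirenberg interpolation between the $L^1$ norm (of which $W'$ controls arbitrarily high moments) and the Sobolev seminorm $\|x\|_{\eta,2}$ (controlled at power two) will yield $\|x\|_{L^{r_1^{(p')}}}^{p'} \lesssim 1 + W'$; substituting this into the bootstrap inequality produces the two base inequalities with $h_1$ a suitable multiple of $(W')^{1/2}$ plus a constant. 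Checking that this $h_1$ vanishes over $f_1$ reduces, via $f_0 \lesssim \|x\|_{L^{q_1}} \lesssim f_1^{1/p}$, to a Young-type absorption.

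The main obstacle is precisely this base step: the bootstrap lemma demands $L^r$ control at the bootstrap exponents $p$ and $2p$, while the Lyapunov function naturally provides $\|\cdot\|_{L^2}$ and $\|\cdot\|_{\eta,2}$ control only at power two, together with $\|\cdot\|_{L^1}$ control at an arbitrarily high (but lower-grade) power. The Gagliardo--Nirenberg bridge between these mismatched exponents is the technical crux; once the base is established, the iterative bootstrap steps amount to a mechanical application of \Cref{lem:bootstrap-inductive-step} with carefully chosen multiplicative constants.
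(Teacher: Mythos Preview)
Your inductive bootstrap (the steps $i \geq 2$ via \Cref{lem:bootstrap-inductive-step}) is correct and matches the paper's mechanism for passing from $L^{q_{i-1}}$ to $L^{q_i}$ at a fixed power. The genuine gap is the base case. There you need, in effect, $\|x\|_{L^2}^{2p} \lesssim 1 + W'$, and you propose Gagliardo--Nirenberg between $\|x\|_{L^1}$ (of which $W'$ controls arbitrarily high powers) and $\|x\|_{\eta,2}$ (which $W'$ controls only at power~$2$, even after taking $k$ large in \Cref{Wk-lemma}). But the interpolation exponent in $\|x\|_{L^2} \lesssim \|x\|_{L^1}^{1-\theta}\|x\|_{\eta,2}^{\theta}$ is $\theta = N/(N+4\eta)$, and the Young split needed to land on $\|x\|_{\eta,2}^2$ forces $p\theta < 1$, i.e.\ $p < 1 + 4\eta/N$. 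Since $\eta = \beta + 1/2 \in (0,1/2)$, this threshold is at most $1 + 2/N$ and degenerates to $1$ as the noise becomes rough ($\beta \to -1/2$); for general $p > 1$ your base step simply fails. Your proposed check that $h_1$ vanishes over $f_1$ is also broken: you take $h_1 \sim (W')^{1/2}$ and invoke ``$f_0 \lesssim \|x\|_{L^{q_1}}$'', but $f_0 = (W')^{1/2} \gtrsim \|x\|_{L^1}^{p^*/2}$ for arbitrarily large $p^*$, so that inequality is in the wrong direction.

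The paper circumvents this by a double induction on $(p,q)$. First one establishes the result only for some $p^*>1$ close to $1$: there a direct H\"older/Young splitting $\|x\|_{L^2}^{p^*} \lesssim \|x\|_{L^q} + \|x\|_{L^1}^a$ (with $q$ just above $2$) suffices, and one takes $h_1 = C\|x\|_{L^q}$, which vanishes over $f_1 = C'\|x\|_{L^q}^{p^*}$ because $1 < p^*$. After bootstrapping to $(p^*,\infty)$ exactly as you do, the paper trades $L^\infty$ control at power $p$ for $L^2$ control at the higher power $3p/2$ via
\[
\|x\|_{L^2}^{3p/2} \leq \|x\|_{L^1}^{3p/4}\|x\|_{L^\infty}^{3p/4} \lesssim \|x\|_{L^1}^{3p} + \|x\|_{L^\infty}^{p},
\]
and then bootstraps back to $L^\infty$. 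Iterating this two-step cycle reaches any $p > 1$. The missing idea in your proposal is precisely this return to $L^2$ with an increased exponent; a single chain at a fixed $p$ cannot start unless $p$ is already small.
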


\begin{proof}

    Let $S$ be the set of pairs $(p,q) \in [1,\infty) \times [2,\infty]$ such that $f_n(r,v,z) = \|x\|_{L^q}^p$ satisfies the assumptions of \Cref{fn-in-X} for some $n \geq 1$, which implies  $\|x\|_{L^q}^p \in \X$. 
    Suppose $(p,r) \in S$ with $r < \infty$, $p > 1$ and that $\infty \geq q > 2$ is such that $1/r \leq (\eta \wedge p^{-1})/(4N) + 1/q$, where $\eta>0$ is as in \Cref{lem:bootstrap-inductive-step}. Then by \Cref{lem:bootstrap-inductive-step},
     \begin{equation}\label{eq:bound-lq-w-lr}
         \E\Big[\int_0^1 \|x_t\|_{L^q}^pdt\Big] \lesssim \|x_0\|_{L^r}^p + \E\Big[\int_0^1 1+\|x_t\|_{L^r}^pdt\Big] 
     \end{equation}
and the statement also holds with $p$ replaced by $2p$.
We proceed by showing:
    \begin{enumerate}
        \item If $(p,2) \in S$ and $p > 1$ then $(p,\infty) \in S$.
        \item If $(p,\infty) \in S$ then $(3p/2,2) \in S$.
        \item $(p^*,2) \in S$ for some $p^* > 1$.
        
    \end{enumerate}
    Then, by induction, $(p,\infty) \in S$ for all $p > 1$, which finishes the proof. 

    To prove (1), suppose $(p,r) \in S$ with $2\leq r < \infty$, $p > 1$, that is, $f_n = \|x\|_{L^r}^p$ satisfies the assumptions of \Cref{fn-in-X} for some $n \geq 1$. Let $q$ be as in \eqref{eq:bound-lq-w-lr}, and set $f_{n+1} = \|x\|_{L^q}^p$. Then, by \eqref{eq:bound-lq-w-lr}, we obtain that $f_{n+1}$ satisfies the assumptions of \Cref{fn-in-X} with $h_{n+1} = 0$, and consequently $(p, q) \in S$. 
Since it is possible to decrease $1/q$ by $(\eta \wedge p^{-1})/(4N)$  until $q = \infty$, we obtain that (for $p > 1$) $(p,2) \in S$ implies that $(p,\infty) \in S$, and (1) follows.

    To prove (2), suppose $(p,\infty) \in S$. Then by H\" older's and Young's inequalities
     \begin{equation}\label{eq:hiltin}
     \|x\|_{L^2}^{3p/2} \leq \|x\|_{L^1}^{3p/4}\|x\|_{L^\infty}^{3p/4} \lesssim \|x\|_{L^1}^{3p} + \|x\|_{L^\infty}^{p}\,.
     \end{equation}
     By \Cref{lem:lyapunov-fxn-construction-spde}(i) we may assume $\|x\|_{L^1}^{6p} \lesssim W'$. Let $(f_i)_{i=1}^n$ be as in \Cref{fn-in-X} with $f_n = \|x\|_{L^\infty}^p$. Replace $f_i$ with $f_i + (W')^{1/2}$ for all $i = 1, \ldots n$. Then the assumptions of \Cref{fn-in-X} still hold and $\|x\|_{L^1}^{3p} + \|x\|_{L^\infty}^p \lesssim f_n$, thus $(3p/2,2) \in S$ by \eqref{eq:hiltin}.

To prove (3), we fix $r = 2$, 
$p \in (1, 2)$ sufficiently close to 1 (specified below), and 
$q > 2$ such that $1/r \leq (\eta \wedge 2^{-1})/(4N) + 1/q$, and in particular \eqref{eq:bound-lq-w-lr} holds. 
After decreasing $p > 1$ and $q > 2$ if necessary, H\" older's and Young's inequalities imply that there is $a > 1$ such that
\begin{equation}\label{eq:inxlpr}
\|x\|_{L^r}^p \lesssim \|x\|_{L^q} + \|x\|_{L^1}^a \quad \text{and} \quad \|x\|_{L^r}^{2p} \lesssim \|x\|_{L^q}^2 + \|x\|_{L^1}^{2a} \,.
\end{equation}
By \Cref{lem:lyapunov-fxn-construction-spde}(i) we may assume that $\|x\|_{L^1}^a \lesssim (W')^{1/2}$. Then by \eqref{eq:bound-lq-w-lr} and \eqref{eq:inxlpr}, \eqref{eq-int-f-h-bounds} is satisfied with $f_1(r,v,z) = C_2\|x\|_{L^q}^p$ and $h_1(r,v,z) = C_1\|x\|_{L^q}$. The assertion (3) holds true by \eqref{fn-in-X} because  
$\|x\|_{L^q}$ vanishes over $\|x\|_{L^q}^p$ and 
$\E[\int_0^T \|x_t\|_{L^q}^{2p}dt] < \infty$, which follows from 
 the proof of \Cref{bigthm} (specifically \textit{Step 6}), where we showed $x_t \in L^p(\Omega, C([0,T];B))$ for all $t,p > 0$. 
\end{proof}

The proof of the following lemma is similar to the argument given in \textit{Step 4} of \Cref{bigthm}.

\begin{lem}\label{v-sobolev}
Let $\eta$ be as in \Cref{lem:bootstrap-inductive-step} and 
$v$ be as in \Cref{def:rv-process}.
    For all large enough $q < \infty$, $\|v\|_{\eta/2,q}^2 \in \X$ and $\|v\|_{\eta/2,q}^4 \in \Y$.
\end{lem}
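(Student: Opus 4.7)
The approach is a bootstrap in the mild formulation of the $v$-equation, culminating in an application of \Cref{fn-in-X}. By \Cref{bigthm}\ref{bigthm:drdv}, $v_t$ satisfies
$$v_t = S_u(t) v_0 + (S_u * b)_t + (S_u \diamond c)_t,$$
where the drift $b$ and diffusion $c$ can be read off from \eqref{new-dr-dv}. Applying \Cref{convolution-bounds} with $\alpha = \eta/2$ yields, for $p \in \{2,4\}$,
$$\E\int_0^T \|v_t\|_{\eta/2, \rho}^p \, dt \lesssim \|v_0\|_{L^\rho}^p + \E\int_0^T \|b_t\|_{\eta/2 - 1 + \epsilon, \rho}^p + \|c_t\|_{\gamma(\U, \B_{\eta/2 - 1/2 + \epsilon, \rho})}^p \, dt.$$

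First, I would bound $b$ and $c$ by a polynomial in $\|v\|_{L^\rho}$ and $\|x\|_{L^{\rho'}}$, reproducing the computation that led to \eqref{eq:bc-bounds} but without the stopping time $\sigma_\epsilon$ (which is no longer needed since we have already shown the process is globally well-defined). The delicate point is the nonlocal correction $-\tilde G_u \tilde G_u^* 1_u$ in the drift and the additive term $\|\tilde G_u^* 1_u\|_\U^2\, v$, both of which are controlled by the mixed-norm estimate \eqref{eq:wierd-g} of \Cref{SPDE-G-lem}. Using the interpolation $\|v\|_{L^q} \lesssim \|v\|_{\eta/2, \rho}^a$ (valid because $\|v\|_{L^1} = 1$, cf.\ \eqref{eq:lrbwwr}) combined with Young's inequality then absorbs all pure-$v$ high-order terms on the right-hand side back into the left-hand side.

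Second, I would iterate the resulting estimate: the Sobolev embedding $\B_{\eta/2, \rho} \hookrightarrow L^{\rho''}$ with $\rho'' > \rho$ converts a bound on $\|v\|_{\eta/2, \rho}$ into a bound on $\|v\|_{L^{\rho''}}$, which can then be fed back into the mild estimate to raise the integrability, exactly as in the bootstrap of \Cref{lem:case-1-l-infty-to-p-in-X}. After finitely many iterations one reaches the desired $\|v\|_{\eta/2, q}^p$ for any $q < \infty$. Finally, I would apply \Cref{fn-in-X} with $f_0 = (W')^{1/2}$ and a chain $f_1, \ldots, f_n$ consisting of successively stronger norms of $v$ (together, where necessary, with $\|x\|_{L^\infty}^K$-terms already known to lie in $\X$ under \Cref{SPDE-lyap}\ref{case-1} via \Cref{lem:case-1-l-infty-to-p-in-X}, or with $\|x\|_{L^2}$-type terms under \Cref{SPDE-lyap}\ref{case-2} via \Cref{lem:lyapunov-fxn-construction-spde}(ii)). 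The last $f_n$ gives $\|v\|_{\eta/2,q}^2 \in \X$. The companion statement $\|v\|_{\eta/2,q}^4 \in \Y$ follows identically, except starting from $f_0 = U'$ and invoking only the first inequality in \eqref{eq-int-f-h-bounds}, as in the remark following \Cref{fn-in-X}.

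The main obstacle will be the first step: deriving the explicit polynomial bounds on $b$ and $c$ in the correct mixed Sobolev--radonifying norms, and then verifying that the interpolation/Young absorption closes without loss. This is essentially a clean-up and generalization of the estimates of Step 4 in the proof of \Cref{bigthm}, but now with careful tracking of the exponents so that each pair $(f_i, h_i)$ satisfies the vanishing-over hypothesis of \Cref{fn-in-X}. Without the constraint $\|v\|_{L^1} = 1$, the nonlocal terms coming from $\tilde G_u \tilde G_u^* 1_u$ would not be absorbable and the bootstrap would fail.
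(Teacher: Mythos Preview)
Your proposal captures the essential architecture of the paper's proof (mild formulation for $v$, bounds on the drift and diffusion via \eqref{eq:wierd-g}, interpolation exploiting $\|v\|_{L^1}=1$, then \Cref{fn-in-X}), but you introduce two unnecessary detours.

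First, the Sobolev bootstrap in $q$ is not needed. The paper works directly at the target exponent: for \emph{any} large $q$, one applies \Cref{convolution-bounds} once and bounds the drift and diffusion by
\[
\|y\|_{\eta-1,q}\lesssim (1+\|x\|_{L^\infty}^k)^{c_\epsilon}+(\|v\|_{L^2}^2\vee\|v\|_{L^q})^{1+\epsilon},\qquad
\|Z\|_{\eta-1/2,q}^2\lesssim(\|v\|_{L^2}^2\vee\|v\|_{L^q})^2.
\]
The interpolation \eqref{eq:lrbwwr} (which you correctly identify) then shows that $(\|v\|_{L^2}^2\vee\|v\|_{L^q})^{2+2\epsilon}$ vanishes over $\|v\|_{\eta/2,q}^2$ for $\epsilon$ small, so this term plays the role of $h_{n+1}$ in \Cref{fn-in-X} and is absorbed by the mechanism of that lemma rather than by a preliminary Young-type closure. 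The $x$-term is handled by setting $f_n=(1+\|x\|_{L^\infty}^k)^{c_\epsilon}$, already known to satisfy the hypotheses of \Cref{fn-in-X} via \Cref{lem:case-1-l-infty-to-p-in-X} in case~(i), or trivially via $k=0$ in case~(ii). No chain of increasing $\rho$ is needed.

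Second, your route to $\|v\|_{\eta/2,q}^4\in\Y$ via the remark after \Cref{fn-in-X} is circuitous: that remark only yields $f_n\in\Y$, so you would need to rerun everything with $f_{n+1}=\|v\|_{\eta/2,q}^4$. The paper instead verifies \emph{both} inequalities in \eqref{eq-int-f-h-bounds} for $f_{n+1}=\|v\|_{\eta/2,q}^2$ (the second is the same \Cref{convolution-bounds} estimate with $p=4$), and then a single invocation of \Cref{fn-in-X} delivers $f_{n+1}\in\X$ and $f_{n+1}^2=\|v\|_{\eta/2,q}^4\in\Y$ simultaneously.
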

\begin{proof}
    By \eqref{new-dr-dv},
    $$v_t = S_u(t)v_0 + (S_u * y)_t + (S_u \diamond Z)_t \,,$$
    where
    \begin{align*}
    y &= v[-\ip{v}{\tilde e_u} - \ip{\tilde F_u(r,v,z)}{1_u} + \|\tilde G_u(r,v,z)^*1_u\|_\U^2] \\
    &\qquad + \tilde F_u(r,v,z) - \tilde G_u(r,v,z)\tilde G_u(r,v,z)^*1_u \\
    Z &= \tilde G_u(r,v,z) - \ip{\tilde G_u(r,v,z)^*1_u}{\cdot}v
    \end{align*}
and we omitted the dependence on $\theta$. 
    By \Cref{convolution-bounds}
    \begin{equation}\label{eq:meotn}
    \E\Big[\int_0^T \|v_t\|_{\eta/2,q}^2dt\Big] \lesssim \|v_0\|^2_{\eta-1/2,q} + \E\Big[\int_0^T \|y_t\|^2_{\eta-1,q}dt\Big] + \E\Big[\int_0^T \|Z_t\|_{\eta-1/2,q}^2dt\Big] \,.
    \end{equation}

Then \Cref{SPDE-F} and the mean value theorem show that $|\tilde F_u(r,v,z)| \lesssim |v|(1+|x|^k)$ for some $k$ (related to the polynomial growth of $f$ and its derivatives).
Note that if \Cref{SPDE-lyap}\ref{case-2} holds, then we take $k = 0$.

Then
\eqref{eq:wierd-g}, \eqref{eq:local-u-bound-G}, and Young's inequality imply,  similarly to \eqref{eq:bc-bounds}, that
for every $\epsilon > 0$ there is $c_\epsilon > 0$ such that
    \begin{align*}
        \|y\|_{\eta-1,q} &\lesssim (1 + \|x\|_{L^\infty}^k)\|v\|_{L^q} + (\|v\|_{L^2}^2 + \|v\|_{L^q}) \\
        &\lesssim (1 + \|x\|_{L^\infty}^k)^{c_\epsilon} + (\|v\|_{L^2}^2 \vee \|v\|_{L^q})^{1+\epsilon} \\
        \|Z\|_{\eta-1/2,q}^2
        &\lesssim (\|v\|_{L^2}^2 \vee \|v\|_{L^q})^2 \,.
    \end{align*}
    From \Cref{lem:case-1-l-infty-to-p-in-X} (if  \Cref{SPDE-lyap}\ref{case-1} is assumed) or $k = 0$ (if \Cref{SPDE-lyap}\ref{case-2} holds) it follows that 
it is possible to set $f_n(r,v,z) = (1 + \|x\|_{L^\infty}^k)^{c_\epsilon}$ in \Cref{fn-in-X}
for some $n \geq 1$. 
     As explained after \eqref{eq:bound-on-v-sigma-epsilon}, for any large $q$ we have that $h_{n+1}(r,v,z) \coloneqq (\|v\|_{L^2}^{2+2\epsilon} \vee \|v\|_{L^q}^{1+1\epsilon})^2$ vanishes over $f_{n+1}(r,v,z) \coloneqq \|v\|_{\eta/2,q}^2$ for some small enough $\epsilon > 0$. Then from 
     \eqref{eq:meotn} follows 
    \begin{align*}
        \E\Big[\int_0^T f_{n+1}(r_t,v_t,z_t)dt\Big] &\lesssim f_n(r_0,v_0,z_0) + h_{n+1}(r_0,v_0,z_0) \\
        &+ \E\Big[\int_0^T f_n(r_t,v_t,z_t) + h_{n+1}(r_t,v_t,z_t)dt\Big] \,,
    \end{align*}
    and the same inequality holds when every function is squared. Since the right hand side is locally finite (that is, it is finite when $T$ is replaced by $T \wedge \tau_m$ for an increasing sequence of stopping times $\tau_m \uparrow \infty$), $h_{n+1}$ vanishes over $f_{n+1}$, and $\E[\int_0^Tf_n(r_t,v_t,z_t)dt] < \infty$ (see the end of the proof of \Cref{lem:case-1-l-infty-to-p-in-X}), we conclude that the left hand side is finite. Then we can apply \Cref{fn-in-X}, which shows that $f_{n+1} \in \X$ and $f_{n+1}^2 \in \Y$, as desired.

    For more details on the localization argument, notice that $\E[\int_0^T h_{n+1}(r_t,v_t,z_t)dt]$ is the only term on the right hand side which is not a-priori finite. Since $h_{n+1}$ vanishes over $f_{n+1}$, there is some $M > 0$ such that $h_{n+1} \leq f_{n+1}/2 + M$. We want to use this bound, subtract $1/2$ of the left hand side from both sides, and then multiply by $2$ to get a finite right hand side. We do not know a-priori that the left hand side is finite, so we cannot do this subtraction. However, this manipulation is valid if we replace $T$ with $T \wedge \tau_M$, where $\tau_M$ is the first time that $h_{n+1}(r_t,v_t,z_t) + f_n(r_t,v_t,z_t)$ exceeds $M$. Since $h_{n+1}+f_n$ is a continuous function defined on all of $\mcM$, the state space of $(r_t,v_t,z_t)$ lives, and $(r_t,v_t,z_t)$ is continuous in time, we conclude that $\tau_M \to \infty$ as $M \to \infty$. Thus, we can proceed with our manipulation when $T$ is replaced by $T \wedge \tau_M$ and then take the limit as $M \to \infty$ on both sides via Monotone convergence theorem.
\end{proof}

In \Cref{lem:lyapunov-fxn-construction-spde} we obtained control over $r$ since  $r \leq \|x\|_{L^1}$.
In \Cref{v-sobolev} we obtained control over $v \in C(\overline D;\R^d)_+ \cap \{\|\cdot\|_{L^1} = 1\}$ (indeed, $\B_{\eta/2,q} \subset C(\overline D)$ compactly for large enough $q$ by \Cref{characterization-of-interpolation-spaces} and a Sobolev embedding). It remains to obtain control over $z \in C(\overline D;\R^{m-d})_+$. In the case of \Cref{SPDE-lyap}\ref{case-1} we have the following result.

\begin{lem}\label{lem:z-compactness-case-1}
    Under \Cref{SPDE-lyap}\ref{case-1}, for all $r \in [2,\infty)$ we have $\|x\|_{\eta/2,r}^2 \in \X$, where $\eta$ is as in \Cref{lem:bootstrap-inductive-step}.
\end{lem}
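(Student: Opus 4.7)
The plan is to adapt the strategy of \Cref{v-sobolev} to $x_t$. Since $x_t$ satisfies the mild formulation $x_t = S(t)x_0 + (S\ast F(x))_t + (S\diamond G(x))_t$ (\Cref{rv-is-markov-process}), applying \Cref{convolution-bounds} with $\alpha = \eta/2$ yields, for $p \in \{2,4\}$,
\begin{equation*}
\E\int_0^T \|x_t\|_{\eta/2,r}^p\, dt \lesssim \|x_0\|_{\eta/2 - 1/p + \epsilon, r}^p + \E\int_0^T \|F(x_t)\|_{\eta/2-1+\epsilon,r}^p + \|G(x_t)\|_{\gamma(\U,\B_{\eta/2-1/2+\epsilon,r})}^p\, dt.
\end{equation*}
Since the interval for $\beta$ in \Cref{SPDE-G-lem}(i) is always open at its right endpoint, I will fix $\beta \in (-1/2,0)$ strictly negative; this ensures that for small $\epsilon>0$ every interpolation index on the right-hand side is non-positive, so $L^r$ embeds continuously into each target space.

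The noise term is controlled by choosing $\epsilon$ small enough that $\eta/2-1/2+\epsilon \leq \beta$, whereupon \Cref{SPDE-G-lem}(i) gives $\|G(x)\|_{\gamma(\U,\B_{\eta/2-1/2+\epsilon,r})} \lesssim \|x\|_{L^r}$. The polynomial bound $|F(x)|\lesssim 1 + |x|^k$ from \Cref{SPDE-F} together with the pointwise estimate $|x|^{kr}\leq \|x\|_{L^\infty}^{kr-1}|x|$ and Young's inequality produces $\|F(x)\|_{L^r} \lesssim 1 + \|x\|_{L^\infty}^k + \|x\|_{L^1}^k$. Interpolating $\|x\|_{L^r}$ in the same fashion, every term on the right of the convolution inequality is dominated, for $p\in\{2,4\}$, by
\begin{equation*}
\text{const}\cdot\Big(1 + \|x_0\|_{L^\infty}^{kp} + \|x_0\|_{L^1}^{kp} + \E\int_0^T 1 + \|x_t\|_{L^\infty}^{kp} + \|x_t\|_{L^1}^{kp}\, dt\Big).
\end{equation*}

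To conclude I apply \Cref{fn-in-X} with $f_{n+1}= \|x\|_{\eta/2,r}^2$ and $h_{n+1}$ taken to be a sufficiently large constant (any constant trivially vanishes over $f_{n+1}$, absorbing the leading $1$'s). By \Cref{lem:case-1-l-infty-to-p-in-X} together with the replacement trick used in \emph{Step 2} of its proof (replacing each $f_i$ by $f_i + (W')^{1/2}$), there is a chain $f_0,\dots,f_n$ satisfying the hypotheses of \Cref{fn-in-X} whose final term $f_n$ dominates both $\|x\|_{L^\infty}^{4k}$ and $f_0 = (W')^{1/2}$. Under Case 1 of \Cref{SPDE-lyap}, \Cref{lem:lyapunov-fxn-construction-spde}(i) allows us to choose $W,W'$ so that $\|x\|_{L^1}^{8k}\lesssim W'$, hence $\|x\|_{L^1}^{4k}\lesssim (W')^{1/2}\leq f_n$; this absorbs all $L^1$ contributions. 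The a priori finiteness of $\E\int_0^T \|x_t\|_{\eta/2,r}^4\, dt$ needed in \Cref{fn-in-X} follows from the same derived bound together with $\E[\sup_{t\leq T}\|x_t\|_B^q] < \infty$ for all $q>0$ (proved in \emph{Step 6} of \Cref{bigthm}). The main technical obstacle is the fourth-moment estimate: \Cref{convolution-bounds}(i) with $p=4$ needs $x_0 \in \B_{\eta/2 - 1/4 + \epsilon, r} = \B_{\beta/2 + \epsilon, r}$, which is an $L^r$-embeddable (non-positive-order) space only when $\beta$ is strictly negative. The flexibility in the choice of $\beta$ afforded by \Cref{SPDE-G-lem}(i) is precisely what makes the argument go through.
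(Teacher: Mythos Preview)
Your proposal is correct and follows essentially the same route as the paper: apply \Cref{convolution-bounds} to the mild formulation of $x_t$, bound the $F$ term via polynomial growth and the $G$ term via \Cref{SPDE-G-lem}(i), and then feed everything into \Cref{fn-in-X} with the predecessor $f_n$ supplied by \Cref{lem:case-1-l-infty-to-p-in-X}. The paper states the intermediate bound as $\|x_0\|_{L^r}^2 + \E\int_0^T (1+\|x_t\|_{L^r}^{2k+2})\,dt$ and simply remarks that ``a similar estimate holds after squaring all norms,'' whereas you split $\|F(x)\|_{L^r}$ into $L^\infty$ and $L^1$ pieces and invoke the replacement trick with $(W')^{1/2}$ to absorb the latter --- a cosmetic difference, since $\|x\|_{L^r}^{2k+2}\lesssim\|x\|_{L^\infty}^{2k+2}$ and \Cref{lem:case-1-l-infty-to-p-in-X} handles arbitrary powers either way. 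Your explicit insistence on $\beta<0$ is a point the paper glosses over: the squared (i.e.\ $p=4$) estimate for the initial-condition term in \Cref{convolution-bounds}(i) indeed requires $\eta/2-1/4<0$, equivalently $\beta<0$, and since the admissible range for $\beta$ in \Cref{SPDE-G-lem}(i) always contains strictly negative values this is a harmless choice that the paper leaves implicit.
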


\begin{proof}
    Using \Cref{convolution-bounds} we have
    $$\E\Big[\int_0^T \|x_t\|_{\eta/2,r}^2dt\Big] \lesssim \|x_0\|_{L^r}^2 + \E\Big[\int_0^T \|F(x_t)\|_{L^r}^2 + \|G(x_t)\|_{\beta,r}^2dt\Big] \,.$$
    Using \Cref{SPDE-G-lem} and the polynomial boundedness of $F$ (\Cref{SPDE-F}) we obtain
    $$\E\Big[\int_0^T \|x_t\|_{\eta/2,r}^2dt\Big] \lesssim \|x_0\|_{L^r}^2 + \E\Big[\int_0^T 1 + \|x_t\|_{L^r}^{2k+2}dt\Big] $$
    and a similar estimate holds after squaring all norms.

    The claim follows from \Cref{fn-in-X} with $f_{n+1}(x) = \|x_t\|_{\eta/2,r}^2$ and $f_n(x) = 1 + \|x_t\|_{L^r}^{2k+2}$
    (the assumptions of \Cref{fn-in-X} are justified by
    \Cref{lem:case-1-l-infty-to-p-in-X}).
\end{proof}

The case of \Cref{SPDE-lyap}\ref{case-2} requires a similar argument to \Cref{v-sobolev}.

\begin{lem}\label{lem:z-compactness-case-2}
    Under \Cref{SPDE-lyap}\ref{case-2}, for all large enough $q < \infty$
    $$\Big(\frac{\|x\|_{\eta/2,q}}{\|x\|_{L^1}+1}\Big)^2 \in \X \,,$$
    where $\eta$ is as in \Cref{lem:bootstrap-inductive-step}. 
\end{lem}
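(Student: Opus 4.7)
The strategy mirrors \Cref{v-sobolev}, applied to the rescaled process $\tilde x_t := x_t/m_0$ with $m_0 := \|x_0\|_{L^1}+1$. The first observation is that under \Cref{SPDE-lyap}\ref{case-2}, the coefficients of the SPDE satisfied by $\tilde x$ have linear growth bounds \emph{uniform} in $m_0 \ge 1$: the pointwise estimate $|F(y)| \lesssim 1+|y|$ gives $|m_0^{-1}F(m_0\tilde x)| \lesssim 1+|\tilde x|$, while $|\sigma(y)| \lesssim |y|$ combined with \Cref{SPDE-G-lem}(i) gives $\|m_0^{-1}G(m_0\tilde x)\|_{\gamma(\U,\B_{\beta,r})} \lesssim \|\tilde x\|_{L^r}$. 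Plugging the mild equation for $\tilde x$ into \Cref{convolution-bounds} and using these bounds yields
\[
\E\!\int_0^T\|\tilde x_t\|_{\eta/2,q}^2\,dt \;\lesssim\; \|\tilde x_0\|_{L^r}^2 + 1 + \E\!\int_0^T \|\tilde x_t\|_{L^r}^2\,dt.
\]
The first step is then to interpolate $\|\tilde x\|_{L^r}^2 \le \epsilon\|\tilde x\|_{\eta/2,q}^2 + C_\epsilon\|\tilde x\|_{L^1}^2$ (valid for $q$ large), use the bound $\|\tilde x_0\|_{L^1}\le 1$ to control $\|\tilde x_0\|_{L^r}^2 \le \epsilon f_{n+1}(x_0) + C_\epsilon$, and apply the standard truncation/absorption trick (as in the proof of \Cref{bigthm}, \textit{Step 4}) to reach
\[
\E\!\int_0^T\|\tilde x_t\|_{\eta/2,q}^2\,dt \;\lesssim\; 1 + \epsilon f_{n+1}(x_0) + \E\!\int_0^T \|\tilde x_t\|_{L^1}^2\,dt.
\]

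The second step is to bound the $L^1$ term through a Lyapunov analysis of $\tilde\phi_t := \phi_t/m_0$. The drift inequality $\Ll\tilde\phi \le K/m_0 - c\tilde\phi \le K - c\tilde\phi$ follows directly from \Cref{SPDE-lyap}. For the carr\'e du champ, I would track the $m_0$-scaling in \eqref{eq:H-star-x}: the estimate $\|G(x)^*1\|_\U^2 \lesssim \|x\|_{L^1}^{4/p}\|x\|_{L^2}^{4/q}$ with $4/p+4/q=2$ (from $1/p+1/q=1/2$) gives $\Gamma\tilde\phi = \|G(x)^*1\|_\U^2/m_0^2 \lesssim \|\tilde x\|_{L^1}^{4/p}\|\tilde x\|_{L^2}^{4/q}$. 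Interpolating $\|\tilde x\|_{L^2}^2 \le \|\tilde x\|_{L^1}\|\tilde x\|_{L^\infty}$, invoking $\|\tilde x\|_{L^\infty} \lesssim \|\tilde x\|_{\eta/2,q}$ (Sobolev, for $q$ large), and applying weighted Young's inequality with exponents $(1+2/p, 2/q)$ summing to $2$ yields $\Gamma\tilde\phi \lesssim \epsilon\tilde\phi^2 + C_\epsilon\|\tilde x\|_{\eta/2,q}^2$. Inserting this into It\^o's formula for $\tilde\phi^2$ and integrating in time gives
\[
\E\!\int_0^T \tilde\phi_t^2\,dt \;\lesssim\; 1 + T + C_\epsilon \E\!\int_0^T \|\tilde x_t\|_{\eta/2,q}^2\,dt.
\]
Substituting back, using $\|\tilde x_t\|_{L^1}^2 \le \tilde\phi_t^2$, and choosing the two $\epsilon$'s small enough to absorb leads to
\[
\E\!\int_0^T\|\tilde x_t\|_{\eta/2,q}^2\,dt \;\lesssim\; 1 + T + \epsilon f_{n+1}(x_0).
\]

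The final, and hardest, step is to pass from this estimate to a comparison of the form required by \Cref{fn-in-X} with $f_{n+1}(x) = (\|x\|_{\eta/2,q}/(\|x\|_{L^1}+1))^2$. Using the identity $\|\tilde x_t\|_{\eta/2,q}^2 = f_{n+1}(x_t)\,\tilde\phi_t^2$ (with $m_0 = \phi_0$), I would split the time integral into the regime where $\tilde\phi_t \ge 1/2$ (so that $f_{n+1}(x_t) \le 4\|\tilde x_t\|_{\eta/2,q}^2$) and the complement, on which the normalization $m_0$ is an overestimate and one iterates the scaling argument after the stopping time $\tau := \inf\{t : \phi_t < m_0/2\}$ with the geometrically smaller scaling factor $m_0/2$. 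Coupling this with the bound $\E\int_0^T \tilde\phi_t^2\,dt \lesssim 1+T$ and invoking \Cref{fn-in-X} with $f_n$ taken as the sum of $\|x\|_{L^1}^{p^*/2}$ (which lies in $\X$ by \Cref{lem:lyapunov-fxn-construction-spde}(ii)) and a vanishing correction term $h_{n+1}$ absorbing the remaining discrepancy then gives $f_{n+1} \in \X$.

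The main obstacle is precisely this last conversion: the clean scaling estimates are naturally in terms of $m_0 = \phi_0$, while the target ratio $f_{n+1}$ involves $\phi_t$ at each later time. Handling the event $\{\phi_t \ll m_0\}$ where the two normalizations differ significantly requires either a localization/stopping-time iteration (as sketched) or a more delicate comparison argument exploiting the fact that $\phi_t$ cannot spend too much time far below its quasi-equilibrium value $1+K/c$.
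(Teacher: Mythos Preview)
Your main obstacle is self-inflicted: you normalize by the \emph{fixed} initial value $m_0=\|x_0\|_{L^1}+1$, so $\tilde x_t=x_t/m_0$ solves the original equation scaled by a constant, and you then face the genuinely awkward task of converting estimates on $\|\tilde x_t\|_{\eta/2,q}$ (normalized by $m_0$) into estimates on $\|x_t\|_{\eta/2,q}/(1+\|x_t\|_{L^1})$ (normalized dynamically). The stopping-time iteration you sketch for this conversion is vague and it is not clear it closes without further work.

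The paper avoids this entirely by working with the \emph{dynamically} normalized process $w_t := x_t/(1+\|x_t\|_{L^1})$ from the start. The point is that $w_t$ satisfies its own mild SPDE, obtained exactly as in \Cref{bigthm}\ref{bigthm:drdv} for the projective process $v_t=u_t/\|u_t\|_{L^1}$: one simply replaces the scaling factor $\|u_t\|_{L^1}$ by $1+\|x_t\|_{L^1}$ (equivalently, set $R=1$ and replace $r_t$ by $r_t+1$ in that derivation). The resulting drift and diffusion terms for $w$ have the same structure as those for $v$; under \Cref{SPDE-lyap}\ref{case-2} one has $|\hat F(l,w)|\lesssim 1+|w|$ in place of $|\tilde F_u(r,v,z)|\lesssim |v|$, and the $\hat G$ bounds are identical since $\sigma$ is globally Lipschitz. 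Crucially $\|w_t\|_{L^1}=\|x_t\|_{L^1}/(1+\|x_t\|_{L^1})\le 1$, so $w_t$ plays the role of $v_t$ and the argument of \Cref{v-sobolev} carries over verbatim to give $\|w\|_{\eta/2,q}^2\in\X$. Since $\|w_t\|_{\eta/2,q}^2$ \emph{is} exactly the target quantity $\big(\|x_t\|_{\eta/2,q}/(1+\|x_t\|_{L^1})\big)^2$, there is nothing left to convert.
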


\begin{proof}
    Let $w = x/(\|x\|_{L^1}+1)$ and $l = \|x\|_{L^1}$. We define
    \begin{equation}\label{eq:hat-g}
        \hat F(l,w) \coloneqq (1+l)^{-1}F((1+l)w) \quad \text{and} \quad \hat G(l,w) = (1+l)^{-1}G((1+l)w) \,.
    \end{equation}
    Analogously as in \Cref{bigthm}\ref{bigthm:drdv} (set $R = 1$ and replace $r_t$ by $r_t+1$), we obtain that $w$ is a mild solution (see \Cref{def:mild}) of
    \begin{align*}
        dw &= Awdt + w[-\ip{w}{\tilde e} - \ip{\hat F(l,w)}{1} + \|\hat G(l,w)^*1\|_\U^2]dt \\
            &\qquad + \hat F(l,w)dt - \hat G(l,w)\hat G(l,w)^*1dt \\
            &\qquad + \hat G(l,w)dW_t- \ip{\hat G(l,w)^*1}{dW_t}w
    \end{align*}
    Then the same argument as in the proof of \Cref{v-sobolev} (using $|\hat F(l,w)| \lesssim 1+|w|$ instead of $|\tilde F_u(r,v,z)| \lesssim |v|$ in the case of \Cref{SPDE-lyap}\ref{case-2}) shows that $\|w\|_{\eta/2,q}^2 \in \X$.
\end{proof}

\begin{lem}\label{lem:compactness-for-spde}
Suppose \Cref{sass-A}--\ref{SPDE-lyap} hold.
    Then \Cref{as-V}\ref{as-vanish}, \Cref{as-U}\ref{5.4}, and \Cref{as-compact} hold for $(r_t,v_t,z_t,\theta)$ (see \Cref{rv-is-markov-process}), where $V$ is as in \eqref{def-of-V-for-SPDE}. Additionally, $V \in \Dme_2(\inv)$ and $\Ll V$ (see \Cref{l-v-lemma}) extends continuously to some $H: \mcM \to \R$.
   Note that $H$ also denotes the operator which is part of the noise (see \Cref{SPDE-G-sass}), but the distinction should be clear from the context.  
    
\end{lem}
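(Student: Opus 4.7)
The plan has four parts: (a) continuously extend $\Ll V$ to all of $\mcM$; (b) bound $\Gamma V$ and deduce both \Cref{as-U}\ref{5.4} and $V\in\Dme_2(\inv)$; (c) dominate $|H|$ pointwise by a function in $\X$ to obtain \Cref{as-V}\ref{as-vanish}; (d) construct the compact sets needed for \Cref{as-compact}. Throughout, fix $q$ large enough that $\B_{\eta/2,q}$ embeds compactly into $C(\overline D)$ (Kondrachov), where $\eta$ is as in \Cref{lem:bootstrap-inductive-step}.

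For (a) and (b): with $h(r)=g(-\ln r)$ as in \eqref{eq:dfgc}, direct computation yields $h'(r)r=-g'(-\ln r)$ and $h''(r)r^2=g'(-\ln r)+g''(-\ln r)$, both uniformly bounded and with continuous extensions to $r=0$ taking values $-1$ and $1$. Combined with the joint continuity of $\tilde F,\tilde G$ on $[0,\infty)\times B\times\Theta$ from \Cref{lem:lipschitz-f-g} and the continuity of $(r,v,z)\mapsto\|\tilde G_u(r,v,z)^*1_u\|_\U^2=\|H^*\tilde\sigma_u(\cdot,r,v,z)\|_\U^2$ (using $H\in\Ll(\U,L^2)$), the formula in \Cref{l-v-lemma} extends to the desired continuous $H:\mcM\to\R$. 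Furthermore, since $|h'(r)r|\le 1$ we have $\Gamma V\le\|H^*\tilde\sigma_u\|_\U^2$, and applying the computation \eqref{eq:H-star-x} to $x=\tilde\sigma_u$ together with $|\tilde\sigma_u|\lesssim|v|$ from \eqref{eq:sigma-u-bound} and $\|v\|_{L^1}=1$ yields $\Gamma V\lesssim\|v\|_{L^2}^{4/q}\le 1+\|v\|_{L^2}^2\lesssim 1+\|v\|_{\eta/2,q}^2$ (using Sobolev $\B_{\eta/2,q}\hookrightarrow L^\infty$ for $q>2N/\eta$). Since \Cref{v-sobolev} gives $\|v\|_{\eta/2,q}^4\in\Y$ (and hence $1+\|v\|_{\eta/2,q}^2\in\Y$), this verifies \Cref{as-U}\ref{5.4}. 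The same pointwise bound combined with the local-in-time estimate $\E[\int_0^T\|v_t\|_{\eta/2,q}^4\,dt]<\infty$ established in the proof of \Cref{v-sobolev} yields $\E[\int_0^T\Gamma V(X_s^x)\,ds]<\infty$, and standard stochastic-calculus facts then upgrade the local martingale $M^V$ (whose existence follows from \Cref{l-v-lemma} via \Cref{generator-for-SPDE}(i)) to a square-integrable martingale with predictable quadratic variation $\int_0^t\Gamma V(X_s^x)\,ds$, giving $V\in\Dme_2(\inv)$.

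For (c), from the explicit form of $H$, the estimate $|\ip{v}{\tilde e_u}|\lesssim 1$, the bound on $\|H^*\tilde\sigma_u\|_\U^2$ above, the mean-value representation \eqref{eq:mvt-for-tilde-h}, and the polynomial boundedness of $\partial_u F$ from \Cref{SPDE-F}, one derives
\[
|H(r,v,z)|\lesssim 1+(r\|v\|_{L^\infty})^k+\|v\|_{L^\infty}\|z\|_{L^\infty}^k+\|v\|_{\eta/2,q}^2
\]
for some $k\ge 1$. In Case \ref{case-1} of \Cref{SPDE-lyap}, \Cref{lem:case-1-l-infty-to-p-in-X} gives $\|x\|_{L^\infty}^p\in\X$ for every $p>1$; since $r\|v\|_{L^\infty}\le\|x\|_{L^\infty}$ and $\|z\|_{L^\infty}\le\|x\|_{L^\infty}$, a Young-type inequality dominates each term by (a constant multiple of) a function in $\X$. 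In Case \ref{case-2}, the linear-growth hypothesis on $F$ together with $|\tilde F_u|\lesssim|v|$ on bounded sets (\eqref{eq:local-u-bound}) reduces the relevant exponent and gives an analogous bound using only $\|v\|_{\eta/2,q}^2$ and $\|x\|_{L^1}^{p/2}$, both of which lie in $\X$. In either case, the pointwise bound $|H|\le\epsilon f+N_\epsilon$ for each $\epsilon>0$ translates via \Cref{rv-vanish} to \Cref{as-V}\ref{as-vanish} with $\mu=\delta_0$.

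Finally, for (d), set
\[
\K_M=\{(r,v,z)\in\mcM:r\le M,\,\|v\|_{\eta/2,q}\le M,\,\|z\|_{\eta/2,q}\le M\}\,,
\]
which is compact in $\mcM$ by the compact embedding $\B_{\eta/2,q}\hookrightarrow\hookrightarrow C(\overline D)$ and the closedness of $\{\|v\|_{L^1}=1\}$. In Case \ref{case-1}, the function $f=1+\|x\|_{L^1}^p+\|v\|_{\eta/2,q}^2+\|z\|_{\eta/2,q}^2$ lies in $\X$ by \Cref{lem:lyapunov-fxn-construction-spde}(i), \Cref{v-sobolev}, and \Cref{lem:z-compactness-case-1}; its sublevel sets are contained in appropriate $\K_{M'}$ (using $r\le\|x\|_{L^1}$), so \Cref{compact-sublevel} applies. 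The main obstacle is Case \ref{case-2}, where $\|z\|_{\eta/2,q}^2$ itself is not known to lie in $\X$: here we combine the normalized bound $(\|x\|_{\eta/2,q}/(1+\|x\|_{L^1}))^2\in\X$ from \Cref{lem:z-compactness-case-2} with the $L^1$-control $\|x\|_{L^1}^{p/2}\in\X$ from \Cref{lem:lyapunov-fxn-construction-spde}(ii) and verify \eqref{eq:as-compact} directly by applying Chebyshev on the two events $\{\|x\|_{L^1}>\sqrt M\}$ and $\{\|x\|_{\eta/2,q}/(1+\|x\|_{L^1})>\sqrt M\}$, which together bound the fraction of time $(r_t,v_t,z_t)$ spends outside $\K_M$ and complete the verification.
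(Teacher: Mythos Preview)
Your overall strategy matches the paper's, and parts (a), (b), and the Case~\ref{case-1} portion of (d) are essentially correct. There is, however, a genuine gap in (c), and an unnecessary complication in the Case~\ref{case-2} portion of (d).

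In (c), \Cref{as-V}\ref{as-vanish} requires that $H$ \emph{vanish over} some $f\in\X$ (i.e.\ $|H|\le\epsilon f+N_\epsilon$ for every $\epsilon>0$), not merely that $|H|\lesssim f$. Your displayed bound contains the summand $\|v\|_{\eta/2,q}^2$, which you then place in $f$; but a function does not vanish over itself, so the conclusion ``$|H|\le\epsilon f+N_\epsilon$ for each $\epsilon>0$'' fails for this term. The fix is to keep the sharper estimate on the quadratic-variation piece: $\tfrac12 h''(r)r^2\|\tilde G_u(r,v,z)^*1_u\|_\U^2\lesssim\|v\|_{L^2}^2$ (directly from $H\in\Ll(\U,L^2)$ and \eqref{eq:sigma-u-bound}), and then use Gagliardo--Nirenberg together with $\|v\|_{L^1}=1$ to obtain $\|v\|_{L^2}^2\lesssim\|v\|_{\eta/2,q}^{2(1-\epsilon')}$, which genuinely vanishes over $\|v\|_{\eta/2,q}^2$. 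The paper handles the drift term by the cleaner bound $|h'(r)r\,\langle\tilde F_u,1_u\rangle|\lesssim 1+\|x\|_{L^\infty}^k$ (using $\|v\|_{L^1}=1$ to absorb the factor of $|v|$ in $\tilde F_u$), which vanishes over $\|x\|_{L^\infty}^{k+1}$ in Case~\ref{case-1} and is bounded in Case~\ref{case-2} (where $k=0$). Your ``Young-type inequality dominates each term by a function in $\X$'' repeats the same conflation of domination with vanishing.

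For (d) in Case~\ref{case-2}, you need not verify \eqref{eq:as-compact} by a direct Chebyshev argument. The function
\[
f(r,v,z)=\|v\|_{\eta/2,q}^2+\|x\|_{L^1}^{p/2}+\Big(\frac{\|x\|_{\eta/2,q}}{1+\|x\|_{L^1}}\Big)^2
\]
lies in $\X$ (by \Cref{v-sobolev}, \Cref{lem:lyapunov-fxn-construction-spde}(ii), and \Cref{lem:z-compactness-case-2}) and already has compact sublevel sets: $f\le M$ forces $\|x\|_{L^1}\le M^{2/p}$ and hence $\|x\|_{\eta/2,q}\le M^{1/2}(1+M^{2/p})$, so $(r,v,z)$ lies in a set of the form $\K_{M'}$. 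Then \Cref{compact-sublevel} applies directly, exactly as in Case~\ref{case-1}; this is the route the paper takes.
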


\begin{proof}
    For \Cref{as-V}\ref{as-vanish}, first note that $\Ll V$ as calculated in \Cref{l-v-lemma} extends to a continuous function on $\mcM$, because $h(x) = g(-\ln x)$ with $g$ as in \eqref{eq:dfgc}, and therefore  $h'(r)r$ and $h''(r)r^2$ are continuous when set respectively equal to $-1$ and $1$ when $r = 0$. By $|h''(r)| \lesssim r^{-2}$, $H \in \Ll(\U,L^2)$, and \eqref{eq:sigma-u-bound} we have
    \begin{equation}\label{eq:gtuest}
    \frac{1}{2}h''(r)r^2\|\tilde G_u(r,v,z)^*1_u\|_\U^2 \lesssim \|v\|_{L^2}^2    
    \end{equation}
    (see for example the proof of \eqref{eq:local-u-bound-G}). Similarly by $|h'(r)| \lesssim r^{-1}$ we have
    $$h'(r)r\Big[\ip{v}{\tilde e_u} + \ip{\tilde F_u(r,v,z)}{1_u}\Big] \lesssim 1 + \|x\|_{L^\infty}^k, 
    \qquad \textrm{where} \quad x = (rv, z)\,.$$
    If \Cref{SPDE-lyap}\ref{case-2} holds (see the proof of \Cref{v-sobolev}), then $k = 0$, and therefore $\Ll V$ vanishes over $\|v\|_{\eta/2,2}^2$ by \Cref{characterization-of-interpolation-spaces},  and Gagliardo-Nirenberg inequality (since $\|v\|_{L^1} = 1$ we have $\|v\|_{L^2} \lesssim \|v\|_{\eta/2,2}^{1-\epsilon}$ for some $\epsilon > 0$). If \Cref{SPDE-lyap}\ref{case-1} holds, $\Ll V$ vanishes over $\|v\|_{\eta/2,2}^2 + \|x\|_{L^\infty}^{k+1}$. In either case, \Cref{as-V}\ref{as-vanish} with $\mu = \delta_0$ and $f(r, v, z) = \|v\|^2_{\eta/2, 2} $ or $f(r, v, z) = \|v\|^2_{\eta/2, 2} + \|v\|_{\eta/2,2}^2 + \|x\|_{L^\infty}^{k+1}$ is satisfied by \Cref{v-sobolev} and \Cref{lem:case-1-l-infty-to-p-in-X}.

    Next we discuss \Cref{as-U}\ref{5.4}. For $\Gamma V$ calculated in 
    \Cref{l-v-lemma}, we have as in \eqref{eq:gtuest} that 
    $$\Gamma V(r,v,z) \lesssim \|\tilde G_u(r,v,z)^*1_u\|_\U^2 \lesssim \|v\|_{L^2}^2 \,.$$
    Since $\|v\|_{L^2}^2$ again vanishes over $g(r, v, z) := \|v\|_{\eta/2,q}^4$ for all $q \geq 2$, \Cref{as-U}\ref{5.4} follows from \Cref{v-sobolev}. Similarly as in \Cref{generator-for-SPDE}, this also shows that $V \in \Dme_2(\inv)$.

To establish \Cref{as-compact}, we verify the sufficient condition 
from \Cref{compact-sublevel}, which requires the existence of 
$f \in \X$ with compact sublevel sets. 
Define $f : \mcM \to \R$ depending on whether \Cref{SPDE-lyap}\ref{case-1} or \Cref{SPDE-lyap}\ref{case-2} holds:
\begin{align*}
    f(r, v, z) = \begin{cases}
        \|v\|_{\eta/2,q}^2 + \|x\|_{_{\eta/2,q}}^2 & \text{in case }(i)\,, \\
        \|v\|_{\eta/2,q}^2 + \|x\|_{L^1}^{p/2} +\Big(\frac{\|x\|_{_{\eta/2,q}}}{\|x\|_{L^1} + 1}\Big)^2 & \text{in case (ii)} \,,
    \end{cases}
\end{align*}
where $x = (rv, z)$, $q$ is sufficiently large as specified below, and $p > 1$ is as in \Cref{lem:lyapunov-fxn-construction-spde}(ii). To show that $f \in \X$, first note that by \Cref{muW'-K} we have $W' \in \X$. Then $f \in \X$ follows from \Cref{lem:lyapunov-fxn-construction-spde}, \Cref{v-sobolev}, \Cref{lem:z-compactness-case-1}, and \Cref{lem:z-compactness-case-2}.

To obtain that $f$ has compact sublevel sets, first note that by \Cref{characterization-of-interpolation-spaces} and a Sobolev embedding we may choose $q$ large enough that $\B_{\eta/2,q} \hookrightarrow C(\overline D)$ compactly. Thus, $\|v\|_{\eta/2,q}^2$ has compact sublevel sets as a function of $\{v \in C(\overline D;\R^d)_+ \mid \|v\|_{L^1} = 1\}$. Also, $r \leq \|x\|_{L^1} \lesssim \|x\|_{\eta/2,q}$, so $\|x\|_{\eta/2,q} \leq M$ implies that $(r,z)$ lies in a compact subset of $[0,\infty) \times C(\overline D;\R^{m-d})_+$. Thus, in the first case, $f$ has compact sublevel sets. For the second case, note that if $\|x\|_{L^1} \leq M$ and $\|x\|_{\eta/2,q}/(\|x\|_{L^1} + 1) \leq M$ then $\|x\|_{\eta/2,q} \leq M(M+1)$. Thus, in the second case, $f$ has compact sublevel sets.

\end{proof}

\subsection{Main Results}\label{SPDE-main-results}
To summarize, in \Cref{sec:spde-well-posed} we defined a Feller Markov process $(r_t,v_t,z_t,\theta)$ on the state space
$$\mcM = [0,\infty) \times \{v \in C(\overline D;\R^d)_+ \mid \|v\|_{L^1} = 1\} \times C(\overline D;\R^{m-d})_+ \times \Theta$$
which has $\mcM_0 \coloneqq \mcM \cap \{r = 0\}$ and its complement $\inv \coloneqq \mcM_0^c$ as invariant sets. The connection between this Markov process and our original equation \eqref{SPDE} is that $x_t = (u_t,z_t) = (r_tv_t,z_t)$, so that $r$ is the distance from $u$ to $0$ and $v$ is the angle. In \Cref{sec:SPDE-lyap} and \Cref{sec:SPDE-semigrp-method} we used variational and semigroup methods to obtain control on
$(r_t,v_t,z_t,\theta)$
in various norms over this projective process, and we are ready to apply \Cref{main} and \Cref{robust} to investigate the stability of $\mcM_0$.

For those who are not familiar with the preceding sections, one may understand the main definitions and statements of the theorems in this section having only read the introduction of \Cref{section-SPDE} and \Cref{sass-A}-\ref{SPDE-G-sass}. For example, we provide two definitions for $\Lambda$. One is used in applications of our theory to examples in \Cref{examples}. A more concise definition of $\Lambda$, used in the proofs in this subsection, is also given but  requires deeper understanding of preceding sections.
\begin{deff}\label{def:equiv-lambda-def}
 Define the system
 
\begin{equation}\label{eq:general-linearized-SPDE}
    \begin{aligned}
        du(\cdot) &= [A_uu(\cdot) + \hat f(\cdot,z(\cdot),\theta)u(\cdot)]dt + \hat \sigma(\cdot,z(\cdot),\theta)u(\cdot)H_udW_t \\
        dz(\cdot) &= [A_zz(\cdot) + f_z(\cdot,u(\cdot),z(\cdot),\theta))]dt + \sigma_z(\cdot,u(\cdot),z(\cdot),\theta)H_zdW_t \,,
    \end{aligned}
\end{equation}
obtained from \eqref{SPDE} by linearizing in $u$ in the first $d$ components. By this we mean that $\hat f,\hat \sigma: D \times \R^{m-d} \times \Theta \to \R^d$ are calculated from $f_u,\sigma_u$ by ignoring any term which is nonlinear in $u$. Then we define
\begin{equation}\label{eq:lambda-alternative}
\begin{aligned}
    \Lambda &\coloneqq \sup_\nu \E_\nu\Big[-\int_D \sum_{i=1}^d A_iv_i(\cdot) + \hat f_i(\cdot,z(\cdot),\theta)v_i(\cdot) d\mu(\cdot)\\
    &+ \frac{1}{2}\sum_{n \geq 1} a_n^2\Big(\int_D \sum_{i=1}^d\hat \sigma_i(\cdot,z(\cdot),
    \theta)v_i(\cdot)(e_n)_i(\cdot)d\mu(\cdot)\Big)^2\Big] \,,
\end{aligned}
\end{equation}
where $v = u/\|u\|_{L^1}$, $(v,z,\theta) \sim \nu$, and $\nu$ ranges over all invariant measures (stationary distributions) on $\{(v,z) \in \cdom \mid \|v\|_{L^1} = 1\} \times \Theta$ for $(u_t/\|u_t\|_{L^1},z_t,\theta)$, where $(u_t,z_t,\theta)$ solves \eqref{eq:general-linearized-SPDE}.
\end{deff}
\begin{rem}\label{rem:explain-lambda-equiv}
More precisely, in \Cref{def:equiv-lambda-def}, we have $\hat f(\cdot,z,\theta) \coloneqq \tilde f_u(\cdot,0,z,\theta)$, where $0$ in the argument of $\tilde{f}$ corresponds to the scaling variable $R$ 
(see \Cref{r-functions}). Also, $v_i$ is not guaranteed to be in the domain of $A_i$ (for example, if we are in one dimension and $dW_t/dt$ is space-time white noise, then, at best, $v_i \in \Dm((-A)^{1/4-\epsilon})$), so technically we understand $A_iv_i$ as $\tilde e_iv_i$ (see \Cref{def:tilde-e}), or equivalently in the sense of distributions. 
\end{rem}

Alternatively, recall \Cref{meas} and \Cref{inv-meas} and then (omitting the dependence on $\theta \in \Theta$) let 
\begin{equation}\label{SPDE-Lambda}
    \Lambda \coloneqq \sup_{\mu \in P_{inv}(\mcM_0)} \mu (-\ip{v}{\tilde e_u} - \ip{\tilde F_u(0,v,z)}{1_u} + \frac{1}{2}\|\tilde G_u(0,v,z)^*1_u\|_U^2 ) \,,
\end{equation}
where $\tilde F,\tilde G$ are defined in \Cref{r-functions}, $\tilde e$ is as in \Cref{def:tilde-e}, and the adjoint of $G_u(r,v,z)$ is taken in $\Ll(\U,L^2(D;\R^d))$. We claim that \eqref{SPDE-Lambda} is equivalent to \Cref{def:equiv-lambda-def}. Indeed, \eqref{eq:general-linearized-SPDE} is obtained by setting $r_0 = 0$ in \Cref{def:rv-process}, so we have $u/\|u\|_{L^1} = v$, where $u$ solves \eqref{eq:general-linearized-SPDE}. Thus, $P_{inv}(\mcM_0)$ is essentially the same as the set of invariant measures for $(u_t/\|u_t\|_{L^1},z_t)$, except that $\nu \in P_{inv}(\mcM_0)$ is technically a measure on $\{0\} \times \{(v,z) \in \cdom \mid \|v\|_{L^1} = 1\} \times \Theta$ instead of $\{(v,z) \in \cdom \mid \|v\|_{L^1} = 1\} \times \Theta$. Thus, taking $\sup_\nu \E_\nu[]$ in \Cref{def:equiv-lambda-def} is the same as taking $\sup_{\mu \in P_{inv}(\mcM_0)} \mu()$ in \eqref{SPDE-Lambda}. Finally, notice that the expression inside the $\E_\nu[]$ in \eqref{eq:lambda-alternative} is just the expression inside $\mu()$ in \eqref{SPDE-Lambda} expanded out according to \Cref{def:tilde-e}, \Cref{SPDE-F}, and \Cref{SPDE-G-sass} (see \Cref{rem:explain-lambda-equiv}).

\begin{rem}
    Under our assumptions, it can be shown that $-\Lambda$ is the smallest possible value of $\liminf_{t \to \infty} \frac{1}{t}\ln \|\tilde u_t\|_{L^1}$ over all initial conditions with $R = 0, u_0 \neq 0$, where $x_t = (\tilde u_t,z_t)$ solves \eqref{eq:mild-solution}. Since \eqref{eq:mild-solution} with $R = 0$ is the linearized version of \eqref{SPDE}, the following result can be interpreted as saying that $\{u = 0\}$ is a repeller for \eqref{SPDE} if it is a repeller for the linearized system.
\end{rem}
\begin{thm}\label{main-SPDE-thm}
Suppose \Cref{sass-A}--\ref{SPDE-lyap} hold. Let $(x_t, \theta) = (u_t,z_t,\theta)$ be the solution to \eqref{SPDE} with initial condition $x_0 = (u_0,z_0)$. If $\Lambda < 0$ then:
\begin{enumerate}[label=(\roman*)]
     \item For all $u_0 \in C(\overline D;\R^d)_+ \setminus \{0\}$, almost surely, all limit points $\mu$ of the random measures $\mu_t = \frac{1}{t}\int_0^t \delta_{u_s}ds$ on $C(\overline D;\R^d)_+$ as $t \to \infty$ satisfy $\mu(\{0\}) = 0$.
        \item $u_t$ is stochastically persistent, meaning for all $\epsilon > 0$ there exists a compact set $\K_\epsilon \subset C(\overline D;\R^d)_+ \setminus \{0\}$ such that for all $u_0 \in C(\overline D;\R^d)_+ \setminus \{0\}$
        $$\Prb(\liminf_{t \to \infty} \mu_t(\K_\epsilon) \geq 1 - \epsilon) = 1 \,.$$
        In particular, for all $\epsilon > 0$, there are $a,A > 0$ such that
        $$\liminf_{T \to \infty} \frac{1}{T} m(\{t \leq T \mid a \leq \|u_t\|_{L^1} \leq A\}) \geq 1 -\epsilon \quad a.s.,
        $$
        where $m$ denotes the Lebesgue measure on $[0,T]$.
        
        \item There is a measure $\mu_+$ on $\cdom \times \Theta$ such that $\mu_+(\{x_u \neq 0\}) = 1$ and $\mu_+$ is invariant for $(x_t,\theta)$ in the sense of \Cref{inv-meas}.
\end{enumerate}
\end{thm}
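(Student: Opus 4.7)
The plan is to recast \Cref{main-SPDE-thm} as an application of \Cref{main} to the projective Markov process $(r_t,v_t,z_t,\theta)$ on $\mcM$ with extinction set $\mcM_0 = \mcM \cap \{r=0\}$ constructed in \Cref{def:rv-process}, and then translate the conclusions back to the original SPDE through the continuous projection $\pi(r,v,z,\theta) := (rv,z,\theta)$ from $\mcM$ onto $\cdom \times \Theta$. Given an arbitrary $u_0 \in C(\overline D;\R^d)_+ \setminus \{0\}$, I would initialize the projective process with $r_0 = \|u_0\|_{L^1}$ and $v_0 = u_0/\|u_0\|_{L^1}$, which places the starting state in $\inv$; \Cref{rv-is-markov-process} then gives the pathwise identity $\pi(r_t,v_t,z_t,\theta) = (x_t,\theta)$, so that any empirical-measure statement for $\mu_t$ follows by pushforward from the corresponding statement for $\mu_t^{(r_0,v_0,z_0,\theta)}$.

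The first step is to verify \Cref{as1}--\ref{as-compact} for the projective process. Everything except \Cref{as-V}\ref{as-alpha} is already in hand: \Cref{as1}--\ref{as2} via \Cref{rv-is-markov-process}, \Cref{as-W} and \Cref{as-U}\ref{5.1}--\ref{5.3} via \Cref{lem:lyapunov-fxn-construction-spde}, and \Cref{as-U}\ref{5.4}, \Cref{as-V}\ref{as-vanish}, and \Cref{as-compact} via \Cref{lem:compactness-for-spde}. To verify \Cref{as-V}\ref{as-alpha} I would determine the continuous extension $H$ of $\Ll V$ to all of $\mcM$. Since $h(r) = g(-\ln r)$ with $g$ as in \eqref{eq:dfgc}, direct differentiation together with $g'(x) \to 1$ and $g''(x) \to 0$ as $x \to \infty$ yield $h'(r)r \to -1$ and $h''(r)r^2 \to 1$ as $r \downarrow 0$. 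Plugging these limits into the formula from \Cref{l-v-lemma} gives
\begin{equation*}
H(0,v,z,\theta) = -\ip{v}{\tilde e_u} - \ip{\tilde F_u(0,v,z,\theta)}{1_u} + \tfrac{1}{2}\|\tilde G_u(0,v,z,\theta)^* 1_u\|_U^2,
\end{equation*}
which is precisely the integrand in \eqref{SPDE-Lambda}. Hence $\sup_{\mu \in P_{inv}(\mcM_0)} \mu H = \Lambda$, and the hypothesis $\Lambda < 0$ delivers \Cref{as-V}\ref{as-alpha}.

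With every hypothesis verified, \Cref{main} applied to the projective process yields three conclusions: (a) almost surely every weak limit of $\mu_t^{(r_0,v_0,z_0,\theta)}$ belongs to $P_{inv}(\inv)$; (b) the process is stochastically persistent on $\inv$; and (c) $P_{inv}(\inv) \neq \emptyset$. Since the normalization $\|v\|_{L^1}=1$ forces $\pi^{-1}(\{u=0\}) = \mcM_0$, pushing forward any limit measure by $\pi$ gives \Cref{main-SPDE-thm}(i). For (ii), the compact set $\K_\epsilon^{\mathrm{proj}} \subset \inv$ supplied by \Cref{main} maps to the compact $\K_\epsilon := \pi(\K_\epsilon^{\mathrm{proj}}) \subset C(\overline D;\R^d)_+ \setminus \{0\}$, and the inclusion $\K_\epsilon^{\mathrm{proj}} \subset \pi^{-1}(\K_\epsilon)$ transfers the lower bound on empirical occupation; the quantitative bound $a \leq \|u_t\|_{L^1} \leq A$ then follows from continuity and strict positivity of $\|\cdot\|_{L^1}$ on the compact $\K_\epsilon$. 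For (iii), any $\nu \in P_{inv}(\inv)$ pushes forward to $\mu_+ := \pi_*\nu$; the identity $(P_t f)\circ\pi = \tilde P_t (f\circ\pi)$, where $\tilde P_t$ is the semigroup of the projective process, implies invariance of $\mu_+$ for $(x_t,\theta)$, and $\mu_+(\{x_u \neq 0\}) = \nu(\{r>0\}) = 1$.

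Essentially all the substantive work is already done in the earlier subsections; the only step which is not pure bookkeeping is identifying the continuous extension of $\Ll V$ at $r=0$ with the integrand in \eqref{SPDE-Lambda}. That identification is the conceptual heart of the argument, because it is what turns $\Lambda$ — an object defined intrinsically via linearized-SPDE invariant measures — into the average Lyapunov exponent required by \Cref{as-V}\ref{as-alpha}, allowing the abstract persistence framework to apply to SPDEs.
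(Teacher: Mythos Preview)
Your proposal is correct and follows essentially the same approach as the paper: verify \Cref{as1}--\ref{as-compact} for the projective process via \Cref{rv-is-markov-process}, \Cref{lem:lyapunov-fxn-construction-spde}, and \Cref{lem:compactness-for-spde}, identify $H|_{\mcM_0}$ with the integrand in \eqref{SPDE-Lambda} via the limits $h'(r)r\to -1$, $h''(r)r^2\to 1$, apply \Cref{main}, and push forward through $\Upsilon(r,v,z,\theta)=(rv,z,\theta)$. The only place where the paper is slightly more careful is part (i): convergence of the pushforward $\mu_{t_n}$ does not imply convergence of $\nu_{t_n}$ itself, so the paper invokes \Cref{tightness} to extract a convergent subsequence of $\nu_{t_n}$ before pushing forward --- a minor gap in your writeup that is easy to fill.
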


\begin{rem}
    \Cref{main-SPDE-thm} is only applied below for time independent $\theta$, that is, $\Theta = \{\theta\}$,  however, it is stated more generally. As noted in \Cref{we-can-also-handle-switching}, there might be contexts in which $\theta$ is time-dependent and our results (perhaps with extra assumptions for non-compact $\Theta$) still apply. Of course after modifications of It\^{o} formula when $\theta_t$ is a jump or some other process, there are possibly extra terms in \eqref{SPDE-Lambda}), but the core of the theorem is the same. We do not pursue the most general setting for brevity and since we did not encounter regime-switching systems in our examples. We encourage readers to generalize oour results if a need arises. 
\end{rem}

\begin{rem}\label{rem:strong-maximum-principle}
    In \Cref{main-SPDE-thm}(ii),  we can improve $a \leq \|u_t\|_{L^1}\leq A$ to $a \leq u_t|_K \leq A$ for some fixed compact $K \subset D$ if $u_0 \neq 0$ implies $u_t > 0$ for all $t > 0$ (strong maximum principle). This is a consequence of \Cref{K-contained-in-strictly-positive} with $\U = \{u > 0\}$. Proving such a strong maximum principle in our setting is an open problem. It has been proved in special cases, for example: in \cite[Theorem 1.2]{SPDE-harnack} the noise is Hilbert-Schmidt (\eqref{eq:an-en-assumption} holds with $p = 2$) and the nonlinearity $F$ has linear growth; in \cite[Theorem 2.1]{stochastic-SIR-analytical}  the noise is of a simple form (finite-dimensional); and in \cite[Theorem 2.3]{space-time} when $N=1$. 
    Since we did not find a short way to incorporate 
    these references to our results, we leave it for future endeavors. 
\end{rem}

\begin{proof}
    In order to apply \Cref{main}, we need to verify \Cref{as1}-\Cref{as-compact} for $X_t \coloneqq (r_t,v_t,z_t,\theta)$ from \Cref{def:rv-process}. \Cref{as1} and \Cref{as2} were already verified in \Cref{rv-is-markov-process}.
    
    By \Cref{lem:lyapunov-fxn-construction-spde}, \Cref{as-W} and \Cref{as-U}\ref{5.1}-\ref{5.3} hold. Recall the function $V: \inv \to [0,\infty)$ defined in \eqref{def-of-V-for-SPDE} which agrees with $-\ln{r}$ when $r \in (0,e^{-1})$. 
    
    By \Cref{lem:compactness-for-spde}, $V \in \Dme_2(\inv)$ and $\Ll V$ extends continuously to some $H: \mcM \to \R$ which for $r > 0$ satisfies
    $$
    H(r,v,z) = h'(r)r\Big[\ip{v}{\tilde e_u} + \ip{\tilde F_u(r,v,z)}{1_u}\Big] + \frac{1}{2}h''(r)r^2\|\tilde G_u(r,v,z)^*1_u\|_U^2 \,.
    $$
     As noted in the proof of \Cref{lem:compactness-for-spde}, $h'(r)r \to -1$ and $h''(r)r^2 \to 1$ as $r \downarrow 0$. Thus, the quantity defined in \eqref{SPDE-Lambda} is equal to $\sup_{\mu \in P_{inv}(\mcM_0)} \mu H$. We are assuming $\Lambda < 0$, so \Cref{as-V}\ref{as-alpha} is satisfied. The remainder of \Cref{as-V}-\ref{as-compact} follow from \Cref{lem:compactness-for-spde}.

    With \Cref{as1}--\ref{as-compact} verified, our conclusions are almost direct consequences of \Cref{main} as shown next.
    
    \begin{itemize}
    
    \item Denote $\Upsilon(r,v,z,\theta) = (rv,z,\theta)$, which is a homeomorphism from $\inv$ to $\{x \in \cdom \mid x_u \neq 0\} \times \Theta$. Since the solution $x_t$ to \eqref{SPDE} is equal to $(r_tv_t,z_t)$ (\Cref{rv-is-markov-process}), pushing forward by $\Upsilon$ gives a bijection between $P_{inv}(\inv)$ and the set of invariant measures $\mu$ for $x_t$ with $\mu(\{x_u \neq 0\}) = 1$. Thus, (iii) follows from \Cref{main} (iii).
    \item To prove (ii) notice that $\Upsilon$ gives a correspondence between compact subsets of $\inv$ and compact subsets of $\{x \in \cdom \mid x_u \neq 0\} \times \Theta$, which relates 
$\K_\epsilon$ in \Cref{main}(ii) to  $\K_\epsilon$ in (ii)
    (apply $\Upsilon$ and project onto the $u$ coordinate to obtain a compact subset of $C(\overline D;\R^d)_+ \setminus \{0\}$). Finally, (ii) follows since $\|u\|_{L^1}$ is a continuous strictly positive function on $C(\overline D;\R^d)_+ \setminus \{0\}$, and thus attains a strictly positive minimum on $\K_\epsilon$. 
    \item Let $\nu_t$ be the empirical occupation measures for $(r_t,v_t,z_t,\theta)$ (see \Cref{occ-meas}) and $\mu_t = \frac{1}{t}\int_0^t \delta_{u_s}ds$. Then $\mu_t$ is $\nu_t$ pushed forward by $\Upsilon_u$ ($\Upsilon$ followed by projection onto the $u$ coordinates). Suppose $t_n \to \infty$ and $\mu_{t_n} \to \mu$ weakly as measures on $C(\overline D;\R^d)_+$. Although this does not imply that $\nu_{t_n}$ converges, by \Cref{tightness} there is a convergent subsequence and by \Cref{main}(i) the limit point $\nu$ lies in $P_{inv}(\inv)$. Since $\inv = \Upsilon_u^{-1}(C(\overline D;\R^d)_+ \setminus \{0\})$, we conclude that $\mu(C(\overline D;\R^d)_+ \setminus \{0\}) = \nu(\Upsilon_u^{-1}(C(\overline D;\R^d)_+ \setminus \{0\})) = \nu(\inv) = 1$, which proves (i).
    \end{itemize}
\end{proof}

In the case of Lotka-Volterra or other models for species competition, we may want to know when \textit{all} of the species coexist, as opposed to just saying that at least one will survive. In this case, we use the following result.

\begin{thm}\label{main-SPDE-thm-2}
    Suppose \Cref{sass-A}-\ref{SPDE-lyap} hold when $x_t$ is written as $(u_t,z_t)$ where $u_t = (x_t)_i$ for all $i = 1,\dots,d$. Consider the processes 
    $(r_i(t),v_i(t),x_{-i}(t),\theta) = (\|(x_t)_i\|_{L^1}, (x_t)_i/\|(x_t)_i\|_{L^1}, x_{-i}(t),\theta)$ (where $x_{-i}$ is the vector with $i$th coordinate omitted) on
    
    $$
    \mcM^{(i)} = [0,\infty) \times \{x \in \cdom \mid \|x_1\|_{L^1} = 1\} \times \Theta
    $$
    with invariant set $\mcM_0^{(i)} = \mcM_0^{(i)} \cap \{r_i = 0\}$. 
    Define  $\phi_i: \mcM^{(i)} \to \cdom \times \Theta$ with $\phi_i(r_i,v_i,x_{-i},\theta) = (x,\theta)$, where $x$ is obtained from  $x_{-i}$ by inserting $r_iv_i$ to $i$th coordinate. Furthermore, we set
    $$\mcM^* \coloneqq \cdom \times \Theta \,, \quad \mcM_0^* \coloneqq \{x \in \cdom \mid x_i = 0 \text{ for some } i=1,\dots,d\} \times \Theta \,.$$
    For each choice of $i=1,\dots,d$ and $\mu \in P_{inv}(\mcM_0^*)$ with $\mu(\{x_i = 0\}) = 1$ define
    \begin{equation}\label{eq:inv-rates-general}
        r_i(\mu) \coloneqq \inf_{\nu \in P_{inv}(\mcM_0^{(i)}), \phi_i^*\nu = \mu} \mu (\ip{v}{\tilde e_i} + \ip{\tilde F_i(r,v,z)}{1_i} - \frac{1}{2}\|\tilde G_i(r,v,z)^*1_i\|_U^2)
    \end{equation}
    (here $\phi_i^*\nu$ is the pushforward of the measure $\nu$, i.e. $\phi_i^*\nu(A) = \nu(\phi_i^{-1}(A))$). For arbitrary $\mu \in P_{inv}(\mcM_0^*)$, we write $\mu = p\mu_1 + (1-p)\mu_2$ for some $p \in [0,1]$ and $\mu_1,\mu_2 \in P_{inv}(\mcM_0^*)$ satisfying $\mu_1(\{x_i = 0\}) = 1$ and $\mu_2(\{x_i = 0\}) = 0$, and then define $r_i(\mu) \coloneqq pr_i(\mu_1)$ (this is possible because $\{x_i = 0\}$ is an invariant set).

    If $\max_{i = 1,\dots,m} r_i(\mu) > 0$ for all $\mu \in P_{inv}(\mcM_0^*)$ then $x_t$ is stochastically persistent in the sense of \Cref{main} (with $\inv = \mcM^* \setminus \mcM_0^*$ and $X_t = (x_t,\theta)$).
\end{thm}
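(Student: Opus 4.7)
The plan is to apply \Cref{main} on a joint projective state space that tracks the polar decomposition $(r_i, v_i) = (\|x_i\|_{L^1}, x_i/\|x_i\|_{L^1})$ of every focal species $i = 1, \ldots, d$ simultaneously, and then transport the conclusion back to $\mcM^*$ along a continuous projection.

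\textbf{Step 1 (Joint projective process).} Generalizing \Cref{def:rv-process} and \Cref{rv-is-markov-process}, one constructs a Feller Markov process $\tilde X_t = (r_1(t), v_1(t), \ldots, r_d(t), v_d(t), z(t), \theta)$ on the Polish space
\begin{equation*}
\tilde \mcM \coloneqq [0,\infty)^d \times \prod_{i=1}^d \{v \in C(\overline D; \R)_+ : \|v\|_{L^1} = 1\} \times C(\overline D; \R^{m-d})_+ \times \Theta.
\end{equation*}
On initial data with $r_i^0 > 0$ set $r_i(t) = \|x_i(t)\|_{L^1}$ and $v_i(t) = x_i(t)/r_i(t)$, where $x_t$ solves \eqref{SPDE}; on initial data with $r_i^0 = 0$, keep $r_i \equiv 0$ and let $v_i$ evolve via the species-$i$ projective dynamics at $r_i = 0$ supplied by \Cref{def:rv-process}. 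The two prescriptions glue consistently because $x_j \equiv 0$ decouples the $x_{-j}$-equation from $v_j$, and Feller continuity in the initial data (including as $r_i^0 \downarrow 0$) follows species-by-species from \Cref{bigthm}\ref{bigthm:cont-in-space}. Let $\tilde \mcM_0 \coloneqq \{\exists i : r_i = 0\}$, which is closed and invariant by \Cref{bigthm}\ref{bigthm:invt-set}, and let $\Upsilon(r, v, z, \theta) \coloneqq ((r_i v_i)_i, z, \theta)$. Then $\tilde \mcM_0 = \Upsilon^{-1}(\mcM_0^*)$, and $\Upsilon$ restricts to a homeomorphism $\tilde \mcM_+ \coloneqq \tilde \mcM \setminus \tilde \mcM_0 \to \mcM_+^* \coloneqq \mcM^* \setminus \mcM_0^*$ intertwining $\tilde X_t$ with $(x_t, \theta)$. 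Hence \Cref{as1}--\ref{as2} for $\tilde X_t$ hold immediately.

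\textbf{Step 2 (Lyapunov infrastructure).} The functions $W, W', U, U'$ from \Cref{lem:lyapunov-fxn-construction-spde} depend only on $x = \Upsilon(\tilde X)$ and lift unchanged to $\tilde \mcM$, providing \Cref{as-W} and \Cref{as-U}\ref{5.1}--\ref{5.3}. For each $i$ set $V_i \coloneqq h(r_i)$; by \Cref{l-v-lemma} applied to species $i$, $V_i \in \tDme_+(\tilde \mcM_+)$ and $\Ll V_i$ extends continuously to a function $H_i$ on all of $\tilde \mcM$ depending only on $(r_i, v_i, x_{-i}, \theta)$. Species-by-species application of \Cref{v-sobolev}, \Cref{lem:case-1-l-infty-to-p-in-X}, \Cref{lem:z-compactness-case-1}/\Cref{lem:z-compactness-case-2}, and \Cref{lem:compactness-for-spde} shows that each $H_i$ vanishes over some function in $\X$ and $\Gamma V_i$ is dominated by some element of $\Y$; summing yields \Cref{as-V}\ref{as-vanish} and \Cref{as-U}\ref{5.4} for any positive combination $V = \sum_i p_i V_i$, while \Cref{as-compact} follows by joining the single-species compact sublevel sets.

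\textbf{Step 3 (Hahn--Banach separation).} Consider
\begin{equation*}
\tilde T \coloneqq \{(\nu H_1, \ldots, \nu H_d) : \nu \in P_{inv}(\tilde \mcM_0)\} \subset \R^d,
\end{equation*}
which is convex by linearity in $\nu$, and compact since $P_{inv}(\tilde \mcM_0)$ is compact by \Cref{pinv-compact} and $\nu \mapsto \nu H_i$ is continuous by the argument of \Cref{H-in-L1} (which uses only the vanishing verified in Step 2). The crux is the claim $\tilde T \cap [0,\infty)^d = \emptyset$. Given $\nu \in P_{inv}(\tilde \mcM_0)$ with $\nu H_i \geq 0$ for every $i$, set $\mu \coloneqq \Upsilon_* \nu \in P_{inv}(\mcM_0^*)$ and $p_i \coloneqq \nu(\{r_i = 0\}) = \mu(\{x_i = 0\})$. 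The strong-law argument from \Cref{muH-neg} (which uses only \Cref{as-W}, \Cref{as-U}, and \Cref{strong-law}) applied to the invariant probability measure $\nu\restrict_{\{r_i > 0\}}/(1-p_i)$ of the species-$i$ projective process on $\{r_i > 0\}$ yields $\int_{\{r_i > 0\}} H_i\,d\nu = 0$. Hence $\nu H_i = p_i \bigl(\nu\restrict_{\{r_i=0\}}/p_i\bigr) H_i$, and pushing forward $\nu\restrict_{\{r_i=0\}}/p_i$ to $\mcM^{(i)}$ produces a lift of $\mu\restrict_{\{x_i=0\}}/p_i$ in $P_{inv}(\mcM_0^{(i)})$, so that the definition of $r_i$ gives $\nu H_i \leq -r_i(\mu)$. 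Thus $r_i(\mu) \leq 0$ for every $i$, contradicting $\max_i r_i(\mu) > 0$. The strict separation theorem for disjoint compact-convex and closed-convex sets then produces a nonzero $p \in [0,\infty)^d$ with $\sup_{\tilde r \in \tilde T} \sum_i p_i \tilde r_i < 0$; compactness of $\tilde T$ and the perturbation $p \mapsto p + \delta\mathbf{1}$ for small $\delta > 0$ preserves this strict inequality while ensuring $p_i > 0$ for every $i$.

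\textbf{Step 4 (Conclusion).} With these weights, $V \coloneqq \sum_i p_i V_i \in \tDme_+(\tilde \mcM_+)$ and its continuous extension $H \coloneqq \sum_i p_i H_i$ of $\Ll V$ satisfies $\sup_{\nu \in P_{inv}(\tilde \mcM_0)} \nu H < 0$, verifying \Cref{as-V}\ref{as-alpha}. \Cref{main} applied to $\tilde X_t$ yields stochastic persistence and the existence of an invariant probability measure on $\tilde \mcM_+$; pushing these forward through the homeomorphism $\Upsilon: \tilde \mcM_+ \to \mcM_+^*$ transports them to $(x_t, \theta)$, which is exactly the desired stochastic persistence with $\inv = \mcM_+^*$. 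The main conceptual obstacle is Step 3, specifically the splitting of $\nu H_i$ into its $\{r_i = 0\}$ and $\{r_i > 0\}$ parts together with the strong-law vanishing of the latter, while the heaviest bookkeeping is the Feller construction of Step 1, which is a careful but routine species-by-species adaptation of the well-posedness results of \Cref{sec:spde-well-posed}.
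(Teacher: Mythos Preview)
Your proposal is correct and follows essentially the same route as the paper: a joint projective state space $[0,\infty)^d \times \prod_i \{\|v_i\|_{L^1}=1\} \times C(\overline D;\R^{m-d})_+ \times \Theta$, the average Lyapunov function $V=\sum_i p_i h(r_i)$, and a Hahn--Banach separation (cf.\ the proof of \Cref{functional-thm}) to pick the weights $p_i>0$. Your Step~3 is in fact more explicit than the paper's own proof of \Cref{main-SPDE-thm-2}: the paper asserts that one may choose $\rho_i>0$ with $\sup_{\mu\in P_{inv}(\mcM_0^*)}(-\sum_i\rho_i r_i(\mu))<0$ and then invokes \Cref{main}, but the passage from this to $\sup_{\nu\in P_{inv}(\tilde\mcM_0)}\nu H<0$ on the \emph{lifted} space---which is what \Cref{as-V}\ref{as-alpha} actually demands---is precisely your splitting $\nu H_i=\int_{\{r_i=0\}}+\int_{\{r_i>0\}}$ together with the \Cref{muH-neg} argument to kill the $\{r_i>0\}$ piece and the identification of the $\{r_i=0\}$ piece with a competitor in the infimum defining $r_i(\mu)$. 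The paper only spells this out later, in the Lotka--Volterra example (proof of \Cref{lotka-volterra-main}).
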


\begin{proof}
    The proof follows the same steps as the proof of \Cref{main-SPDE-thm}, with a few key differences. Instead of using
    $$[0,\infty) \times \{v \in C(\overline D;\R^d)_+ \mid \|v\|_{L^1} = 1\} \times C(\overline D;\R^{m-d})_+ \times \Theta$$
    as our state space, we need to use
     $$\mcM \coloneqq [0,\infty)^d \times \{v \in C(\overline D;\R)_+^d \mid \|v_i\|_{L^1} = 1\} \times C(\overline D;\R^{m-d})_+ \times \Theta$$
     and $\mcM_0 \coloneqq \mcM \cap \{r = 0\}$. (We use the notation $(r,v,z,\theta) \in \mcM$ where $r \in [0,\infty)^d$, etc.)
     Our Markov process $(r_t,v_t,z_t,\theta)$ is given by $r_t = (r_1(t),\cdots,r_d(t))$ and $v_t = (v_1(t),\dots,v_d(t))$. Thus, our $\Upsilon: \inv \to \inv^*$ is given by $\Upsilon(r,v,z,\theta) = (r\cdot v,z,\theta)$.
     Finally, for $V$ we use
     $$V(r,v,z,\theta) \coloneqq \sum_{i=1}^d \rho_ih(r_i) \,,$$
     where $h$ is as in \eqref{def-of-V-for-SPDE} and choose $\rho_i > 0$  so that $\sup_{\mu \in P_{inv}(\mcM_0^*)} (-\sum_{i=1}^d \rho_ir_i(\mu)) < 0$ (this is possible by Hahn-Banach separation theorem, see the proof of \Cref{functional-thm}).

     Then the proof follows from \Cref{rv-is-markov-process}, \Cref{lem:lyapunov-fxn-construction-spde}, \Cref{lem:compactness-for-spde}, and \Cref{main} in exactly the same way as \Cref{main-SPDE-thm}.
\end{proof}

The following theorem shows that the stability of $\mcM_0$ depends continuously on the parameters of the system (meaning $\theta \in \Theta$).

\begin{thm}
\label{robust-SPDE-thm}
    
    Suppose that the assumptions of \Cref{main-SPDE-thm} or \Cref{main-SPDE-thm-2} are satisfied when $\Theta$ is replaced by $\{\theta_0\}$ for some $\theta_0 \in \Theta$. Then there is some open $\U \subset \Theta$ such that $\theta_0 \in \U$ and the conclusions of \Cref{main-SPDE-thm} or \Cref{main-SPDE-thm-2} respectively hold when $\Theta$ is replaced by $\U$. In other words, as long as the parameter $\theta$ is close enough to $\theta_0$, the process is stochastically persistent.
\end{thm}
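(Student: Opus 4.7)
The plan is to invoke the abstract robustness result \Cref{robust} directly for the projective Markov process $(r_t,v_t,z_t,\theta)$ on $\mcM$ constructed in \Cref{def:rv-process}. The homeomorphism $\Upsilon(r,v,z,\theta) = (rv,z,\theta)$ between $\inv$ and $\{x \in \cdom : x_u \neq 0\}\times\Theta$, already exploited in the proof of \Cref{main-SPDE-thm}, will convert the conclusions of \Cref{robust} back into statements about the SPDE solution $x_t$ for parameters near $\theta_0$.

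The first step is to observe that \emph{all} the verifications done in \Cref{sec:SPDE-lyap} and \Cref{sec:SPDE-semigrp-method} were carried out parametrically in $\theta \in \Theta$. The Lyapunov functions $W, W', U, U', V$ do not depend on $\theta$, the pointwise bounds on $\Ll W$, $\Ll U$, $\Gamma W$, $\Gamma V$ depend on $\theta$ only through the constants in \Cref{SPDE-F}, \Cref{SPDE-G-sass}, \Cref{SPDE-lyap}, and those constants were assumed uniform on the compact space $\Theta$. Similarly, the semigroup estimates from \Cref{convolution-bounds}--\Cref{convolution-sup-bounds} and the constants entering \Cref{fn-in-X} are independent of $\theta$. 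Consequently, \Cref{as1}, \Cref{as2}, \Cref{as-W}, \Cref{as-U}, and \Cref{as-compact}, together with the existence of a continuous extension $H: \mcM \to \R$ of $\Ll V$, all hold for the process on the full product $\mcM$, precisely as in the proof of \Cref{main-SPDE-thm}. The assumption that \Cref{main-SPDE-thm} holds at $\Theta = \{\theta_0\}$ translates exactly into
\begin{equation*}
\Lambda_{\theta_0} \;\coloneqq\; \sup_{\mu \in P_{inv}(\{\theta_0\} \times \mcM_0)} \mu H \;<\; 0,
\end{equation*}
which is the modified \Cref{as-V}\ref{as-alpha} demanded by \Cref{robust}. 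Hence \Cref{robust} applies and furnishes an open neighborhood $\U \subset \Theta$ of $\theta_0$ for which the three conclusions of \Cref{main} hold on $\{\theta\} \times \mcM$ for every $\theta \in \U$; pushing forward by $\Upsilon$ gives the statement of \Cref{main-SPDE-thm} on $\U$.

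For \Cref{main-SPDE-thm-2}, the approach is the same but with the larger projective state space used in the proof of that theorem and the weighting constants $\rho_i > 0$ obtained from Hahn–Banach separation. Following the template in \Cref{functional-thm}, one checks that the compact set of invasion-rate vectors $\{(r_1(\mu),\dots,r_d(\mu)) : \mu \in P_{inv}(\{\theta\}\times \mcM_0^\ast)\}$ is upper semi-continuous in $\theta$, using the same arguments as in \Cref{H-in-L1} and \Cref{pinv-compact}; then the $\rho_i$ chosen at $\theta_0$ continue to certify negativity of the average Lyapunov exponent for $\theta$ near $\theta_0$, and \Cref{robust} applies once more. The only substantive point is the upper semi-continuity of $\theta \mapsto \sup_{\mu \in P_{inv}(\{\theta\}\times \mcM_0)} \mu H$, which is the heart of \Cref{robust}: it rests on the compactness of $\bigcup_\theta P_{inv}(\{\theta\}\times \mcM_0)$ (supplied by the uniform Lyapunov bounds via \Cref{occ-is-pretty-tight}) and on the continuity of $\mu \mapsto \mu H$ on that compact set (supplied by the uniform vanishing condition in \Cref{as-V}\ref{as-vanish}, as in \Cref{H-in-L1}). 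Both ingredients are already packaged inside \Cref{robust}, so the proof of \Cref{robust-SPDE-thm} reduces to the uniform-in-$\theta$ book-keeping indicated above.
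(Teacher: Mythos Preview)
Your proposal is correct and takes essentially the same approach as the paper: the paper's proof is simply ``The proof is an immediate consequence of \Cref{robust}'', and you have spelled out in detail why the hypotheses of \Cref{robust} are met (the uniform-in-$\theta$ validity of \Cref{as1}--\ref{as-compact} established in \Cref{sec:SPDE-lyap}--\ref{sec:SPDE-semigrp-method}, and the identification of $\Lambda_{\theta_0}<0$ with the modified \Cref{as-V}\ref{as-alpha}). Your treatment of the \Cref{main-SPDE-thm-2} case, fixing the Hahn--Banach weights $\rho_i$ at $\theta_0$ so that $V$ and $H$ are defined on all of $\mcM$ before invoking \Cref{robust}, is exactly the right unpacking.
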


\begin{proof}
    The proof is an immediate consequence of \Cref{robust}.
    
\end{proof}

\section{Examples}\label{examples}
In this section we present four examples of stochastic reaction-diffusion type PDEs to which we apply the theorems from \Cref{section-SPDE}. We give criteria which ensure that a species survives (\Cref{example-logistic}), a fixed point is repelling, indicating turbulence (\Cref{example-torus}), a disease is endemic (\Cref{example-SIR}), and an ecosystem of species in competition can coexist (\Cref{sec:lotka-volterra}). We work with a variety of noise types from space-time white noise (\Cref{example-torus}) to standard Brownian motion (\Cref{example-SIR}) and even some interesting intermediate examples  (\Cref{sec:lotka-volterra}). Rather than formulating results in all possible settings such as different noise, boundary conditions, number of equations etc., 
we provide examples that illustrate applications of our results. 

In what follows, well-posedness of the problem and nonnegativity of the solution is a consequence of \Cref{well-posedness-of-SPDE}, so we do not discuss it further and instead focus on the long-term dynamics. Throughout the following, recall that $\cdom$ is the set of all nonnegative continuous functions on $\overline D$, the closure of $D$,
with range $[0, \infty)^m$ for a specified integer $m \geq 1$.

\subsection{Logistic Growth Model on a Domain}\label{example-logistic}
We consider a population evolving in some domain $D \subset \R^N$ which is open, connected, and bounded with smooth boundary. At each point $x \in D$ there is a carrying capacity $K(x)$, an intrinsic growth rate $r(x)$, and a harvesting rate $E(x)$. 
Specifically, 
fix $d > 0$, $C^1$ functions $K,E,r: \overline D \to [0,\infty)$  with $K > 0$, 
smooth functions 
$g_i : \overline D \to \R$, and a $C^1$ function $\sigma: [0,\infty) \to \R$ with $\sigma(0) = 0$ and with locally Lipschitz, globally bounded first derivatives. Also, assume $w_t$ is a Brownian motion on $\R$ and $\epsilon \in \Theta := [0,1]$ is a constant.

Then the logistic growth model for the population with stochasticity is
\begin{equation}\label{reac-diff-domain}
    du = \Big[d\Delta \Big(\frac{u}{K(x)}\Big) + r(x)u\Big(1 - \frac{u}{K(x)}\Big) - E(x)u\Big]dt + \epsilon \sigma(u)K(x)dw_t \,,
\end{equation}
where we impose 
Neumann boundary conditions $\frac{d(u/K)}{dn} \equiv 0$ on $\partial D$ with $n$
being the outer unit normal vector  .

 Our solution $u_t$ to \eqref{reac-diff-domain} with a specified initial condition $u_0 \in \cdom$ is guaranteed to exist as a continuous (with respect to time) $\cdom$-valued process and is unique by \Cref{well-posedness-of-SPDE} (indeed, we will verify \Cref{sass-A}-\ref{SPDE-G-sass} below).

Using the results in \Cref{SPDE-main-results}, we prove theorems related to the persistence of the population. Heuristically, we  show that the species survives in the long term if analogous statements hold for the linearized system:
\begin{equation}\label{reac-diff-domain-linearized}
     du = \Big[d\Delta \Big(\frac{u}{K(x)}\Big) + (r(x) - E(x))u\Big]dt + \epsilon\sigma'(0) K(x)udw^{(i)}_t \,,
\end{equation}
and that this reduces to a (deterministic) eigenvalue problem if we are allowed to choose $\epsilon$ arbitrarily small.
We remark that our results also apply to other growth models (see \cite{carrying-capacity} for examples), but we choose logistic for concreteness and ease of presentation.

\begin{thm}\label{reac-diff-domain-thm}
    Let $\lambda$ be the largest eigenvalue of the problem 
    $$
    d\Delta\Big(\frac{u}{K(x)}\Big) + (r(x)-E(x))u = \lambda u
    \qquad \textrm{on } D
    $$ 
    with the same (Neumann) boundary conditions imposed on \eqref{reac-diff-domain}. If $\lambda > 0$, then there is some $\overline \epsilon > 0$ such that for all $\epsilon < \overline \epsilon$, \eqref{reac-diff-domain} is stochastically persistent in the sense that: 
    \begin{itemize}
        \item For every $\delta > 0$ there are $0 < b < B < \infty$ such that for any nonnegative initial condition $u_0 \neq 0$, $$
        \liminf_{T \to \infty} \frac{1}{T}m\Big(\Big\{t \leq T \mid b \leq \int_D u_t \leq B\Big\}\Big) \geq 1 - \delta
        $$ 
        almost surely, where $m$ is the Lebesgue measure.
        \item There is an invariant measure $\mu_+$ for \eqref{reac-diff-domain} with $\mu_+(\{0\}) = 0$.
    \end{itemize}
\end{thm}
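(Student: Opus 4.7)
The plan is to invoke \Cref{main-SPDE-thm} combined with the robustness result \Cref{robust-SPDE-thm}, viewing the family as parameterized by $\epsilon \in \Theta = [0,1]$ and computing the exponent $\Lambda$ at the deterministic endpoint $\epsilon = 0$. The first step is to cast \eqref{reac-diff-domain} into the framework of \Cref{section-SPDE} with $m = d = 1$. The key structural observation is that the operator $Au = d\Delta(u/K)$ is self-adjoint with respect to the reference measure $d\mu = dx/K(x)$: integrating by parts against Neumann data yields $\ip{Au}{v} = -d\int_D \nabla(u/K)\cdot\nabla(v/K)\,dx$, manifestly symmetric in $u$ and $v$, where $\ip{\cdot}{\cdot}$ denotes the $L^2(\mu)$-inner product. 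Since $K \in C^1(\overline D)$ with $K > 0$, the density $1/K$ is smooth and uniformly bounded above and below, and the remaining items of \Cref{sass-A} follow after a constant shift as allowed by \Cref{rem:subtract-from-c}.

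The nonlinearity $F(u) = (r-E)u - ru^2/K$ decomposes as $uf_1$ with $f_2 \equiv 0$ and $f_1(x,u) = (r-E) - ru/K$, so \Cref{SPDE-F} holds (in particular $F^+(u) \lesssim u$ and $F$ is polynomial of degree $2$). The noise is rank-one: $G(u) = \epsilon\sigma(u)\cdot H$ where $H \in \Ll(\R, L^2)$ acts as $H(c) = cK$. Because $H$ has one-dimensional range contained in $C^\infty(\overline D)$, the structural conclusions of \Cref{SPDE-G-sass} and \Cref{SPDE-G-lem} reduce to trivial bounds such as $\|xH\|_{L^r} \leq \|K\|_{L^\infty}\|x\|_{L^r}$, and \eqref{eq:an-en-assumption} is automatic upon expanding $K$ in the $L^2(\mu)$-eigenbasis of $A$. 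For \Cref{SPDE-lyap}, Young's inequality applied to the cubic dissipation gives $\ip{Au + F(u)}{1} \leq \int_D(r-E)u\,d\mu - \int_D ru^2/K\,d\mu \leq K_0 - c\|u\|_{L^1}$, establishing \eqref{eq:l-l1-bound}; for $\beta \in (-1/2,0]$ sufficiently close to $0$, the cubic lower bound $\ip{u}{F^-(u)}_\beta \gtrsim \int_D u^3\,d\mu$ dominates $\|u\|_{\beta,2}^2$ by interpolation, verifying \Cref{SPDE-lyap}\ref{case-1}.

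With the framework in place, the next step is to compute $\Lambda$ at $\epsilon = 0$. The noise term in \eqref{SPDE-Lambda} vanishes and $\tilde F_u(0,v) = (r-E)v$, so
\begin{equation*}
    \Lambda_0 = \sup_{\nu \in P_{inv}(\mcM_0)}\nu\bigl(-\ip{v}{\tilde e_u} - \ip{(r-E)v}{1}\bigr) = \sup_{\nu}\nu\Bigl(-\int_D(Av + (r-E)v)\,d\mu\Bigr).
\end{equation*}
At $\epsilon = 0$ the projective process on $\{v \in \cdom : \|v\|_{L^1} = 1\}$ reduces to a deterministic gradient-like flow whose positive invariant measures are Dirac masses on normalized positive eigenfunctions of $\mathcal{M} := A + (r-E)$. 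By Krein--Rutman applied to the self-adjoint compact resolvent of $\mathcal{M}$ on $L^2(\mu)$, the only such eigenfunction is (up to scaling) the principal one $\phi > 0$ with $\mathcal{M}\phi = \lambda\phi$; normalizing $\|\phi\|_{L^1} = 1$ gives $P_{inv}(\mcM_0) = \{\delta_\phi\}$ and $\Lambda_0 = -\int_D \mathcal{M}\phi\,d\mu = -\lambda$. Since $\lambda > 0$ by hypothesis, $\Lambda_0 < 0$, and \Cref{robust-SPDE-thm} furnishes $\overline\epsilon > 0$ such that $\Lambda < 0$ for all $\epsilon \in [0,\overline\epsilon)$; both claimed conclusions then follow directly from \Cref{main-SPDE-thm}. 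The main technical obstacle will be the rigorous identification of $P_{inv}(\mcM_0)$ at $\epsilon = 0$: excluding invariant measures supported on closures of non-principal eigendirections is a Perron--Frobenius argument in the self-adjoint elliptic setting that must be executed carefully on the infinite-dimensional simplex $\{v \in \cdom : \|v\|_{L^1} = 1\}$.
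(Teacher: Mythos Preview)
Your proposal is correct and follows essentially the same route as the paper: both cast \eqref{reac-diff-domain} into the \Cref{section-SPDE} framework using the weighted measure $d\mu = K^{-1}\,dx$ to make $A$ self-adjoint, verify \Cref{sass-A}--\ref{SPDE-lyap}, then apply \Cref{robust-SPDE-thm} at $\theta_0 = \epsilon = 0$ after identifying $P_{inv}(\mcM_0)$ as the Dirac at the normalized principal eigenfunction via Krein--Rutman, which gives $\Lambda_0 = -\lambda$. One minor sharpening worth noting: since the noise is rank-one with $K$ itself an eigenfunction of $A$ (indeed $d\Delta(K/K) = 0$), \eqref{eq:an-en-assumption} holds with $p = 2$ and $a_n = 0$ for $n \geq 2$, so \Cref{SPDE-lyap}\ref{case-1} permits the clean choice $\beta = 0$; the paper exploits this and computes $\ip{u}{F^-(u)}_0$ directly rather than interpolating for $\beta$ near $0$.
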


\begin{rem}
  Note that $\int_D u_t$ is the total population of the species at time $t$.
\end{rem}

\begin{rem}\label{rem:rayleigh}
    \Cref{reac-diff-domain-thm} should be compared to \cite[Corollary 1]{carrying-capacity}, where the authors proved a deterministic version of the theorem ($\epsilon = 0$) under the assumption that there exists some $c > 0$ such that $\gamma \coloneqq \inf_{x \in D} (1-c)r(x) - E(x) > 0$. Thus, $E < r$, and consequently 
    $$
    \lambda = \sup_{u \neq 0} \frac{\int_D -d\|\nabla(uK^{-1})\|^2 + (r-E)u^2K^{-1}dm}{\int_D u^2K^{-1}dm} \geq \frac{\int_D (r-E)Kdm}{\int_D Kdm} > 0
    $$ 
    (we used $u = K$ in the first inequality). Hence, the condition for persistence stated in \Cref{reac-diff-domain-stoch-thm} is sharper than the existing one, even in the deterministic case.
\end{rem}

\begin{rem}\label{rem:rayleigh-quotient}
    In \Cref{rem:rayleigh}, $\lambda$ is expressed as a Rayleigh quotient. The proof of \Cref{reac-diff-domain-thm} indicates that, for linear deterministic PDE, $\Lambda$ as defined in \eqref{SPDE-Lambda} is also a Rayleigh quotient, except it uses the $L^1$ norm instead of the $L^2$ norm (the quotient is the same either way).
\end{rem}

\begin{proof}
First we verify that \Cref{sass-A}-\ref{SPDE-lyap} hold for \eqref{reac-diff-domain}. \Cref{sass-A} (i) and (iii) follow from \Cref{rem:subtract-from-c}.  To satisfy \Cref{sass-A} (ii) we work in $L^2(D,\Sigma,\mu)$, where $\Sigma$ is the Borel sigma-algebra of $D$ and the measure $\mu$ has density $K^{-1}$ with respect to Lebesgue measure $m$. We define
\begin{equation}\label{eq:def-a-for-example-1}
    Au \coloneqq d\Delta\Big(\frac{u}{K(x)}\Big) - cu
\end{equation}
for some suitably large $c > 0$ and use integration by parts (twice):
\begin{align*}
    \ip{Au}{v} &= \int_D d\Delta\Big(\frac{u}{K}\Big)v - cuvd\mu = \int_D d\Delta\Big(\frac{u}{K}\Big)\frac{v}{K} -cuvK^{-1}dm \\
    &= \int_D d\Delta\Big(\frac{v}{K}\Big)\frac{u}{K} - cuvK^{-1}dm = \int_D d\Delta\Big(\frac{v}{K}\Big)u -cuvd\mu \\
    &= \ip{u}{Av} 
\end{align*}
and \Cref{sass-A} (ii) follows. 

Note that we have no $z$ component since $m = 1$ (see \Cref{section-SPDE}), and so \Cref{SPDE-F} is verified for $f_2 = 0$, $f_1(\cdot,u) = cu + r(\cdot)u(1-uK(\cdot)^{-1}) - E(\cdot)u$. For \Cref{SPDE-G-sass}, we have that $\sigma(\cdot,u,\theta) = \theta\sigma(u)$ satisfies all assumptions. Since $K(x)$ is an eigenvector for $A$, indeed \eqref{eq:an-en-assumption} is satisfied with $p = 2$ ($a_n = 0$ for $n > 1$).

Next, we set $\beta = 0$ in \Cref{SPDE-G-lem} and verify \Cref{SPDE-lyap} (recall $c$ is a fixed constant involved in the definition of $A$ \eqref{eq:def-a-for-example-1}):
\begin{align*}
    \ip{x}{\tilde e} + \ip{F(x)}{1} &\leq \|x\|_{L^1}\|\tilde e\|_{L^\infty} + \int [c+r-E]K^{-1}x - rK^{-2}x^2 dm \\
    &\leq \|x\|_{L^1}(\|\tilde e\|_{L^\infty} + c + \|r\|_{L^\infty}) - \|x\|_{L^2}^2\inf rK^{-1} \\
    &\leq C - c'\|x\|_{L^1} \,,
\end{align*}
because $\|x\|_{L^1} \lesssim \|x\|_{L^2}$ vanishes over $\|x\|_{L^2}^2$ (cf. \Cref{fxn-vanish}). Similarly we have $\|x\|_{\beta,2}^2 = \|x\|_{L^2}^2$ vanishes over
\begin{align*}
   \ip{x}{F^-(x)}_\beta &= \ip{x}{F^-(x)} \geq \int rx^3K^{-2} - \int [c+r]x^2K^{-1} \\
   &\geq \inf rK^{-1}\|x\|_{L^3}^3 - [c+\sup r]\|x\|_{L^2}^2 \,,
\end{align*}
because $\|x\|_{L^2}^2 \lesssim \|x\|_{L^3}^2$ vanishes over $\|x\|_{L^3}^3$.

Then we want to apply \Cref{robust-SPDE-thm} with $\theta_0 = 0$, which requires verifying the assumptions of \Cref{main-SPDE-thm}. In particular, we need to check that \eqref{SPDE-Lambda} is strictly negative when $\epsilon = 0$ in \eqref{reac-diff-domain}. First we need to find the set of all invariant measures on $\{v \in \cdom \mid \|v\|_{L^1} = 1\}$ (we omit $\Theta$ because we are fixing $\epsilon = 0$) for $v_t = u_t/\|u_t\|_{L^1}$, where $u_t$ solves \eqref{reac-diff-domain-linearized} (observe that \eqref{reac-diff-domain-linearized} is really \eqref{eq:general-linearized-SPDE}, since we are only dealing with one species, and therefore there is no $z$).

By Krein-Rutman theorem and the strong maximum principle, the eigenfunction $e_1$ corresponding to the largest eigenvalue of the operator $u \mapsto d\Delta(u K^{-1}) + (r - E)u$ is strictly positive on $D$, and no other eigenfunction is nonnegative. Since the operator is self-adjoint (this was verified above), it is standard to check that the eigenfunctions form an orthonormal basis for $L^2(D,\Sigma,\mu)$. 
If $u_t$ is the solution to \eqref{reac-diff-domain-linearized} with $\epsilon = 0$ and 
$u_0 \in C(\overline{D})_{+} \setminus \{0\}$ (which implies $\ip{u_0}{e_1} > 0$), then 
 from uniqueness of $e_1$ (up to scaling) follows $\lim_{t \to \infty} u_t/\|u_t\|_{L^2} = e_1/\|e_1\|_{L^2}$ in $L^2$ for all $u_0 \in \cdom \setminus\{0\}$. 
Since $x \mapsto x/\|x\|_{L^1}$ is continuous on $\{x \in L^2 \mid \|x\|_{L^1} > 0\}$, for $x = u_t/\|u_t\|_{L^2}$
we conclude
$$\lim_{t \to \infty} v_t = \lim_{t \to \infty} u_t/\|u_t\|_{L^1} = e_1/\|e_1\|_{L^1}
\qquad \textrm{in } L^2 \,. 
$$
Thus, in \Cref{def:equiv-lambda-def} the only invariant measure $\nu$ is the dirac delta mass at $e_1/\|e_1\|_{L^1}$, and we obtain
$$
\Lambda = -\Big\langle d\Delta\Big(\frac{e_1}{\|e_1\|_{L^1}} K^{-1}\Big) + (r - E)\frac{e_1}{\|e_1\|_{L^1}}, 1\Big\rangle = -\frac{1}{\|e_1\|_{L^1}}\ip{\lambda e_1}{1} = -\lambda \,,$$
so that the condition $\Lambda < 0$ corresponds exactly to $\lambda > 0$ as assumed in \Cref{reac-diff-domain-thm}. Thus, the proof follows from \Cref{robust-SPDE-thm} (in particular, the last two assertions of \Cref{main-SPDE-thm}).

\end{proof}

We can also make conclusions about the persistence of the population for an arbitrary value of $\epsilon$, but it requires computing a stochastic version of the eigenvalue $-\lambda$, which we call the average Lyapunov exponent, defined as follows.
\begin{deff}\label{reac-diff-domain-avg-lyap-exp}
    Let $\epsilon > 0$ be arbitrary and $P_{inv}(\mcM_0)$ denote the set of all invariant measures of the process $v_t \coloneqq u_t/\|u_t\|_{L^1}$ on $\mcM_0 \coloneqq \{v \in \cdom \mid \|v\|_{L^1} = 1\}$, where $u_t$ solves \eqref{reac-diff-domain-linearized}. We define \begin{equation}\label{eq-reac-diff-domain-Lambda}
        \begin{aligned}
            \Lambda \coloneqq \sup_{\mu \in P_{inv}(\mcM_0)} \mu \Big(&-\int_D d\Delta\Big(vK^{-1}\Big)K^{-1} + (r - E)vK^{-1} + \frac{1}{2}\epsilon^2\sigma'(0)^2\Big[\int_D v\Big]^2 \Big) \,.
        \end{aligned}
    \end{equation}
\end{deff}
\begin{rem}
For non-smooth $v$, the Laplacian of $v$ is understood in the sense of distributions, 
    see \Cref{def:tilde-e} and our alternative definition for $\Lambda$ in \eqref{SPDE-Lambda}, which is used in the proofs.
\end{rem}

\begin{thm}
\label{reac-diff-domain-stoch-thm}
    Fix some $\epsilon > 0$ (not necessarily small) and suppose $\Lambda < 0$ (see \Cref{reac-diff-domain-avg-lyap-exp}). Then \eqref{reac-diff-domain} is stochastically persistent in the sense of \Cref{reac-diff-domain-thm} (the bullet points hold).
\end{thm}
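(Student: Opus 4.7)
The plan is to apply \Cref{main-SPDE-thm} to \eqref{reac-diff-domain} with the parameter set $\Theta = \{\epsilon\}$. The proof of \Cref{reac-diff-domain-thm} already verifies \Cref{sass-A}--\ref{SPDE-lyap} for \eqref{reac-diff-domain} and, crucially, none of those verifications relies on smallness of $\epsilon$: the operator $Au = d\Delta(u/K) - cu$ is self-adjoint on $L^2(D, K^{-1} dm)$ after the shift of \Cref{rem:subtract-from-c}, the nonlinearity $f(x,u) = cu + r u(1 - u/K) - E u$ fits \Cref{SPDE-F}, and the noise satisfies \Cref{SPDE-G-sass} with a single nonzero mode $a_1 = \|K\|_{L^2}$, $e_1 = K/\|K\|_{L^2}$. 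Consequently, only the hypothesis $\Lambda < 0$ from \Cref{as-V}\ref{as-alpha} is still needed, and this reduces to matching the quantity $\Lambda$ of \Cref{reac-diff-domain-avg-lyap-exp} with the abstract $\Lambda$ of \eqref{SPDE-Lambda}.

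The main step is this identification. Since $d = m = 1$, there is no $z$-component, so the projective process of \Cref{def:rv-process} is simply $v_t = u_t/\|u_t\|_{L^1}$ driven by the linearized equation \eqref{reac-diff-domain-linearized}; hence the set $P_{inv}(\mcM_0)$ entering \eqref{SPDE-Lambda} is exactly the set of invariant measures for $v_t$ on $\{v \in \cdom \mid \|v\|_{L^1} = 1\}$ used in \Cref{reac-diff-domain-avg-lyap-exp}. A direct application of \Cref{r-functions} gives $\tilde F_u(0,v) = (c + r - E)\, v$ and $\tilde G_u(0,v) = \epsilon \sigma'(0)\, v \cdot H$. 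Using self-adjointness of $A$ to write $\langle v, \tilde e\rangle = \int (Av)\, K^{-1} dm = \int [d\Delta(v/K) - cv]\, K^{-1} dm$, the two shift contributions $\pm c \int v K^{-1} dm$ from $\langle v, \tilde e\rangle$ and $\langle \tilde F_u(0,v), 1\rangle$ cancel. For the noise, since $H \cdot 1 = \|K\|_{L^2}\, e_1 = K$ and $d\mu = K^{-1} dm$, a short duality computation identifies $\tilde G_u(0,v)^* 1$ as the scalar $\epsilon \sigma'(0) \int_D v\, dm$ in $\U \cong \R$. Summing the drift and noise contributions reproduces the integrand of \eqref{eq-reac-diff-domain-Lambda} term by term, so the two definitions of $\Lambda$ coincide.

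The one delicate point, and the main obstacle I foresee, is that a typical $v$ in the support of $\mu \in P_{inv}(\mcM_0)$ need not lie in $\Dm(A)$, so the symbol $\Delta(v/K)$ in \eqref{eq-reac-diff-domain-Lambda} must be interpreted distributionally via the pairing $\langle v, \tilde e\rangle$ from \Cref{def:tilde-e}. This is already the convention built into \eqref{SPDE-Lambda}, as spelled out in \Cref{rem:explain-lambda-equiv}, so no additional regularity on $v$ is required and the identification is rigorous. With the two $\Lambda$'s matched, the hypothesis $\Lambda < 0$ provides \Cref{as-V}\ref{as-alpha}, and the two bulleted conclusions stated in \Cref{reac-diff-domain-thm} then follow verbatim from those of \Cref{main-SPDE-thm}.
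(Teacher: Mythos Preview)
Your proposal is correct and follows exactly the paper's approach: invoke \Cref{main-SPDE-thm} after observing that \Cref{sass-A}--\ref{SPDE-lyap} were already verified in the proof of \Cref{reac-diff-domain-thm} independently of the size of $\epsilon$. The paper's own proof is a single sentence to this effect, so your additional work matching the example-specific $\Lambda$ of \Cref{reac-diff-domain-avg-lyap-exp} with the abstract $\Lambda$ of \eqref{SPDE-Lambda} (the cancellation of the shift $c$, the identification of $\tilde G_u(0,v)^*1$ as $\epsilon\sigma'(0)\int_D v\,dm$, and the distributional reading of $\Delta(v/K)$ via $\langle v,\tilde e\rangle$) is not strictly required but is a faithful expansion of what the paper leaves implicit in the remark following \Cref{reac-diff-domain-avg-lyap-exp}.
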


\begin{proof}
    The proof follows immediately from \Cref{main-SPDE-thm}. Indeed, it was already shown in \Cref{reac-diff-domain-thm} that \eqref{reac-diff-domain} satisfies \Cref{sass-A}-\ref{SPDE-lyap}.

\end{proof}

\subsection{Fisher-KPP on a Torus with Space-Time White noise}\label{example-torus}
The following equation is a typical reaction-diffusion equation with applications to the study of turbulence \cite{space-time, physics-interpretation} and biology \cite{biology-waves}. We work in one spatial dimension so that we may add space-time white noise and the solutions are still functions. For simplicity, we consider the equation on a torus $\T$ (periodic boundary conditions):
\begin{equation}\label{torus-equation}
    du = [u_{xx} + u - u^2]dt + \epsilon \sigma(u)dW_t \,,
\end{equation}
where
$W_t$ denotes cylindrical Brownian motion on $L^2(\T)$ (with Lebesgue measure) so that $dW_t/dt$ is space-time white noise. We assume $\sigma: [0,\infty) \to \R$ is a differentiable function with bounded locally Lipschitz derivative which satisfies $\sigma(0) = 0$ and $\epsilon > 0$ is a constant representing the strength of the noise. The following result was already proven in \cite[Theorem 2.4]{space-time}, but we show how it follows from our method. 

\begin{thm}
\label{torus-thm}
    There is some $\overline \epsilon > 0$ such that for all $0 < \epsilon < \overline \epsilon$, \eqref{torus-equation} is stochastically persistent in the sense that: \begin{itemize}
        \item There is an invariant measure $\mu_+$ for \eqref{torus-equation} with $\mu_+(\{0\}) = 0$.
        \item For all nonnegative initial conditions $u_0 \neq 0$, almost surely all limit points $\mu$ of the measures $$\mu_t \coloneqq \frac{1}{t}\int_0^t \delta_{u_s}ds$$ as $t \to \infty$ satisfy $\mu(\{0\}) = 0$.
    
    \end{itemize}
    
\end{thm}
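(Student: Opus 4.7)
The plan is to apply \Cref{robust-SPDE-thm} with parameter $\theta = \epsilon \in [0,1]$ perturbing around $\theta_0 = 0$. Once one verifies that \Cref{sass-A}--\ref{SPDE-lyap} hold uniformly in $\epsilon \in [0,1]$ and that the deterministic average Lyapunov exponent $\Lambda$ defined by \eqref{SPDE-Lambda} at $\epsilon = 0$ is strictly negative, then the two bullet points in the conclusion follow directly from \Cref{main-SPDE-thm} applied with $\U \subset [0,1]$ a neighborhood of $0$: item (iii) of \Cref{main-SPDE-thm} yields the desired invariant measure with $\mu_+(\{0\})=0$, and item (i) yields the statement about limit points of $\mu_t$. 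Note that in this example there is no $z$-component since $m=d=1$.

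First I will verify \Cref{sass-A}--\ref{SPDE-G-sass}. Using \Cref{rem:subtract-from-c}, I absorb a large constant $c > 0$ into the operator by writing $A u = u_{xx} - c u$ on $L^2(\T)$ with periodic boundary conditions; this operator is self-adjoint and its semigroup preserves positivity, so all items of \Cref{sass-A} hold. The reaction term then becomes $F(u) = u(1+c-u)$, fitting \Cref{SPDE-F} with $f_1(u) = 1 + c - u$ and $f_2 \equiv 0$; the polynomial bound, sign conditions, and regularity in \Cref{SPDE-F} are immediate. For \Cref{SPDE-G-sass}, the cylindrical Brownian motion $W_t$ on $L^2(\T)$ corresponds to $H = I$, so $a_n = 1$ for the Fourier basis $(e_n)$; since $N = 1$, the criterion $\sup_n |a_n| < \infty$ holds and we use the convention $p = \infty$.

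Next I verify \Cref{SPDE-lyap}. A direct computation using the torus (so that $\ip{u_{xx}}{1} = 0$) gives
\begin{equation*}
\ip{Ax + F(x)}{1} = \|x\|_{L^1} - \|x\|_{L^2}^2 \leq \|x\|_{L^1} - |\T|^{-1}\|x\|_{L^1}^2 \leq K - c_1\|x\|_{L^1}
\end{equation*}
by Cauchy--Schwarz and Young's inequality, verifying \eqref{eq:l-l1-bound}. For the dichotomy, I will check case \ref{case-1}: with $N=1$ and $p = \infty$, \Cref{SPDE-G-lem} forces $\beta \in (-1/2,-1/4)$, and I need to show that $\|x\|_{\beta,2}^2$ vanishes over $\ip{x}{F^-(x)}_\beta$. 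Since $F^-(x) \geq \tfrac{1}{2}x^2 \Id_{\{x \geq 2(1+c)\}}$ pointwise and $A$ is self-adjoint, I will rewrite
\begin{equation*}
\ip{x}{F^-(x)}_\beta = \ip{(-A)^{2\beta} x}{F^-(x)}_{L^2},
\end{equation*}
use the fact that $(-A)^{2\beta}$ is positivity-preserving on $\T$ (being a Riesz-type potential, by the integral representation \eqref{eq:doapt}), and combine this with the Sobolev embedding $L^q \hookrightarrow \B_{\beta,2}$ (valid for $q = 1/(1 - 2\beta) \in (1,2)$ because $2\beta > -1$) to interpolate between $L^1$ and $L^3$ control of $x$; the cubic growth of the right-hand side then dominates the subquadratic norm $\|x\|_{\beta,2}^2$ for large $x$, giving the vanishing condition.

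Finally I compute $\Lambda$ at $\epsilon = 0$. The linearized equation \eqref{reac-diff-domain-linearized} reduces to the deterministic $u_t = u_{xx} + u$. On $\T$ with periodic boundary conditions, the largest eigenvalue of $\partial_{xx} + I$ is $\lambda = 1$, attained uniquely (up to scaling) by the constant eigenfunction. Arguing as in the deterministic portion of the proof of \Cref{reac-diff-domain-thm}, for every nonnegative, nonzero initial datum the normalized process $v_t = u_t/\|u_t\|_{L^1}$ converges in $L^2$ to the constant function $|\T|^{-1}$, so the unique element of $P_{inv}(\mcM_0)$ is the Dirac mass at $|\T|^{-1}$. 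Substituting into \eqref{SPDE-Lambda} (equivalently \Cref{def:equiv-lambda-def}) yields
\begin{equation*}
\Lambda = -\int_\T \bigl[(|\T|^{-1})_{xx} + |\T|^{-1}\bigr]\,dx = -1 < 0.
\end{equation*}
Invoking \Cref{robust-SPDE-thm} then gives stochastic persistence for all $\epsilon$ in some open neighborhood $\U$ of $0$, completing the proof. The main obstacle is the verification of \Cref{SPDE-lyap}\ref{case-1}: since the roughness of space-time white noise forbids taking $\beta = 0$, the super-quadratic dissipation of $F^-$ must be quantitatively balanced against a negative-order Sobolev norm, requiring careful use of the smoothing property of $(-A)^{2\beta}$ together with Sobolev interpolation on the torus.
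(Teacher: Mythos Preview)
Your overall architecture matches the paper: apply \Cref{robust-SPDE-thm} at $\theta_0=0$, verify \Cref{sass-A}--\ref{SPDE-lyap}, and compute $\Lambda=-1$ via the constant eigenfunction. The checks of \Cref{sass-A}, \Cref{SPDE-F}, \Cref{SPDE-G-sass}, \eqref{eq:l-l1-bound}, and the computation of $\Lambda$ are all fine.

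The gap is in your verification of \Cref{SPDE-lyap}\ref{case-1}. First, the Sobolev exponent is wrong: for $\beta\in(-\tfrac12,-\tfrac14)$ one has $-2\beta>\tfrac12$, so on $\T$ the dual embedding gives $L^1\hookrightarrow\B_{\beta,2}$, not $L^q$ with $q=1/(1-2\beta)$ (your formula produces $q<1$). More seriously, positivity preservation of $(-A)^{2\beta}$ alone yields only $\ip{x}{F^-(x)}_\beta\geq 0$; it does not give the quantitative lower bound you need. Writing $\ip{x}{F^-(x)}_\beta=\int_\T x\cdot(-A)^{2\beta}F^-(x)$ and invoking $F^-(x)\gtrsim x^2$ on $\{x\text{ large}\}$ still leaves you with a nonlocal pairing between $x$ and a smoothed version of $x^2$, and no mechanism to compare this to $\|x\|_{\beta,2}^2$; the ``interpolate between $L^1$ and $L^3$'' step is not an inequality that actually closes.

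What is missing is a Harnack-type pointwise lower bound: from the representation $(-A)^{2\beta}g=\tfrac{1}{\Gamma(-2\beta)}\int_0^\infty t^{-2\beta-1}e^{-ct}\Pp_t g\,dt$ together with $\Pp_t g\gtrsim\|g\|_{L^1}$ uniformly for $t\in[1,2]$ (heat kernel lower bound on $\T$), one obtains $\ip{f}{g}_\beta\gtrsim\|f\|_{L^1}\|g\|_{L^1}$ for nonnegative $f,g$. This is the content of \Cref{l1-lemma}\ref{l1-times-l1} in the paper. With this in hand, $\|x\|_{\beta,2}^2\lesssim\|x\|_{L^1}^2$ combined with $\ip{x}{F^-(x)}_\beta\gtrsim\|x\|_{L^1}\|F^-(x)\|_{L^1}$ and the fact that $\|x\|_{L^1}$ vanishes over $\|F^-(x)\|_{L^1}$ (since $u$ vanishes over $u^2-(1+c)u$) immediately gives the required vanishing. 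You correctly located the difficulty in your last paragraph; the resolution is precisely this Harnack lower bound, not Sobolev interpolation.
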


\begin{rem}
    Using \cite[Theorem 2.3]{space-time}, we could replace $\mu_+(\{0\}) = 0$ with $\mu_+(\{u \in \cdom \mid u > 0\}) = 1$, and same for $\mu$ in the second bullet point (see \Cref{rem:strong-maximum-principle}).
\end{rem}

\begin{rem}
    Using \Cref{main-SPDE-thm}, we obtain a similar result for arbitrary $\epsilon > 0$ not necessarily small if we assume $\Lambda < 0$, for $\Lambda$ as in \Cref{def:equiv-lambda-def}. 
    
    This relates to the open problem posed after \cite[Remark 2.5]{space-time} because if one could show that $\Lambda$ is increasing in $\epsilon$ (obviously this is true for fixed $\nu$, but it is not sufficient since the invariant measure $\nu$ depends on $\epsilon$), then a corresponding extinction theory to this paper (analogous to how \cite{extinction} complements \cite{persistence}) would verify the conjecture.
\end{rem}

\begin{rem}
    In \cite{space-time} the linearized version of the deterministic system is $d\psi = \psi_{xx} + \psi - F'(0)\psi$, so their condition (called (F2)) $\limsup_{x \downarrow 0} F'(x) < 1$ is equivalent to our condition $\Lambda < 0$ in \Cref{main-SPDE-thm}. Additionally, \cite[Lemma 2.1(2)]{space-time} is enough to guarantee our \Cref{SPDE-lyap}. Thus, \Cref{torus-thm} holds when $u^2$ is replaced by any $F(u)$ satisfying  (F1)-(F3) in \cite{space-time}, we just chose $u^2$ for concreteness.
\end{rem}

\begin{proof}
    
We use \Cref{robust-SPDE-thm} with $\Theta = [0,1]$ and $\theta_0 = 0$. As in the proof of \Cref{reac-diff-domain-thm}, we have that the value $\Lambda_0$ is equal to $-\lambda$, where $\lambda$ is the largest eigenvalue of the operator $u \mapsto u_{xx} + u$. $\lambda$ is easily seen to be $1$ (the eigenvector is $u \equiv 1$), so that the assumption $\Lambda_0 < 0$ is satisfied.

It remains to verify \Cref{sass-A}-\ref{SPDE-lyap}. We define $Au = u_{xx} - 2u$ and then \Cref{sass-A} follows from \Cref{rem:subtract-from-c} and the fact that the Laplacian is self-adjoint. \Cref{SPDE-F} holds true because $f_2 = 0$, $m=1$ (there is no $z$), and polynomials are smooth. For \Cref{SPDE-G-sass}, we have by definition of space-time white noise that $\U = l^2$ and $Hu = \sum u_ne_n$, where $e_n$ is the $n$th eigenfunction of the Laplacian on the torus. In other words, $a_n = 1$ for all $n$, so \Cref{SPDE-G-sass} is satisfied with $p = \infty$ because $\T$ is one-dimensional ($N=1$). Also, \eqref{eq:l-l1-bound} follows from
$$
\int_\T u_{xx} + u - u^2 \leq \int_\T u_{xx} + 1-u= 1 - \|u\|_{L^1}
$$
(indeed, $(u-1)^2 = u^2 - 2u + 1 \geq 0 \implies u - u^2 \leq 1 - u$).
To verify \Cref{SPDE-lyap}\ref{case-1}, we work with the Sobolev spaces $\B_{\beta,2} = W^{2\beta,2}(\T)$ (see \Cref{characterization-of-interpolation-spaces}) and we use the following auxilliary lemma.

\begin{lem}\label{l1-lemma}
    For all $\beta \in (-1/2,-1/4)$ we have: \begin{enumerate}[label=(\roman*)]
        \item \label{l1-in-h-beta} $L^1(\T) \subset \B_{\beta,2}$ and $\|u\|_\beta \lesssim \|u\|_{L^1}$ for all $u \in L^1(\T)$.
        \item \label{l1-times-l1} If $f,g \in \cdom$ then
        $$\|f\|_{L^1}\|g\|_{L^1} \lesssim \ip{f}{g}_\beta \lesssim \|f\|_{L^1}\|g\|_{L^1} \,.$$
    \end{enumerate}
\end{lem}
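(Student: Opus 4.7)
The plan is to handle the two parts separately: (i) is a duality/Sobolev embedding argument on the one-dimensional torus, while the real content of (ii) is a pointwise positivity estimate on the integral kernel of $(-A)^{2\beta}$.

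For (i), I would invoke \Cref{characterization-of-interpolation-spaces}(3), which for periodic boundary conditions identifies $\B_{\beta,2}$ with $W^{2\beta,2}(\T)$; the latter is by definition the dual of $W^{-2\beta,2}(\T)$. Since $N=1$ and $-2\beta > 1/2$, the standard Sobolev embedding gives $W^{-2\beta,2}(\T) \hookrightarrow L^\infty(\T)$, and dualizing yields the continuous inclusion $L^1(\T) \hookrightarrow (L^\infty(\T))^* \hookrightarrow W^{2\beta,2}(\T) = \B_{\beta,2}$, together with the norm bound $\|u\|_\beta \lesssim \|u\|_{L^1}$. The upper bound in (ii) then follows immediately from Cauchy--Schwarz in the Hilbert space $\B_{\beta,2}$ combined with (i): $\ip{f}{g}_\beta \leq \|f\|_\beta\|g\|_\beta \lesssim \|f\|_{L^1}\|g\|_{L^1}$.

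The lower bound in (ii) is the main obstacle, because it uses the nonnegativity of $f$ and $g$ in an essential way. My plan is to rewrite $\ip{f}{g}_\beta = \ip{f}{(-A)^{2\beta} g}_{L^2}$ using self-adjointness of $(-A)^\beta$, and then exploit the integral representation $(-A)^{2\beta} = \frac{1}{\Gamma(-2\beta)}\int_0^\infty t^{-2\beta-1}S(t)\,dt$, which is valid because $2\beta<0$. Since $A = \partial_{xx} - 2$ we have $S(t) = e^{-2t}P_t$, where $P_t$ denotes the heat semigroup on $\T$ whose kernel $p_t(x,y)$ is continuous and strictly positive on $(0,\infty)\times\T\times\T$. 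Restricting the time integral to a compact interval such as $[1,2]$ and using the uniform lower bound $p_t(x,y) \geq c > 0$ available there by compactness, the integral kernel $K(x,y)$ of $(-A)^{2\beta}$ satisfies
$$K(x,y) \;\geq\; \frac{c}{\Gamma(-2\beta)}\int_1^2 t^{-2\beta-1}e^{-2t}\,dt \;=:\; C \;>\; 0$$
uniformly for $x,y\in\T$. For $f,g\in\cdom$, Fubini then gives $\ip{f}{g}_\beta = \int_\T\int_\T K(x,y)f(x)g(y)\,dx\,dy \geq C\|f\|_{L^1}\|g\|_{L^1}$, which is the desired estimate. The essential input is the compactness of $\T$, which is what makes the heat kernel uniformly bounded below at large times; the same strategy would break down on $\R$.
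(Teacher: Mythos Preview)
Your proposal is correct and follows essentially the same approach as the paper: the duality argument for (i), Cauchy--Schwarz for the upper bound in (ii), and for the lower bound the integral representation $\ip{f}{g}_\beta = \frac{1}{\Gamma(-2\beta)}\int_0^\infty t^{-2\beta-1}e^{-2t}\ip{\Pp_t f}{g}\,dt$ followed by restricting to $t\in[1,2]$ and using the uniform positivity of the heat kernel there. Your derivation of that integral formula via $\ip{f}{g}_\beta = \ip{f}{(-A)^{2\beta}g}$ is in fact slightly more direct than the paper's, which obtains the same identity through a double-integral and beta-function computation.
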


\begin{proof}
    The Sobolev embedding $\B_{-\beta,2} = W^{-2\beta,2} \subset L^\infty(\T)$ and duality imply 
    $$
    \|u\|_\beta = \sup_{v \in \B_{-\beta,2}, \|v\|_{-\beta,2} = 1} \ip{u}{v} \lesssim \sup_{v \in \B_{-\beta,2}, \|v\|_{-\beta,2} = 1} \|u\|_{L^1}\|v\|_{L^\infty} \lesssim \|u\|_{L^1} \,.
    $$
and \ref{l1-in-h-beta} follows.  
The  upper bound in \ref{l1-times-l1} is a consequence of Cauchy-Schwartz inequality and \ref{l1-in-h-beta}: 
 \begin{align}
 \ip{f}{g}_\beta &= \ip{A^\beta f}{A^\beta g} \leq \|f\|_{\beta, 2}\|g\|_{\beta, 2} \leq \|f\|_{L^1} \|g\|_{L^1} \,.
 \end{align}
To prove the lower bound, we use \eqref{eq:doapt}, the fact that $A$ is self-adjoint (and therefore $S$ is self-adjoint), the substitution $s + t = r$, 
and a substitution $s/r = q$ to obtain
\begin{align*}
   \ip{f}{g}_\beta &= \frac{1}{(\Gamma(-\beta))^2} 
   \int_{0}^{\infty}\int_{0}^{\infty} t^{-\beta -1} s^{-\beta - 1} 
   \ip{S(t)f}{S(s)g} ds dt \\
   &=  
   \frac{1}{(\Gamma(-\beta))^2} 
   \int_{0}^{\infty}\int_{0}^{\infty} (ts)^{-\beta - 1} 
   \ip{S(t+s)f}{g} ds dt
   \\
   &=
   \frac{1}{(\Gamma(-\beta))^2} 
   \int_{0}^{\infty}\int_{0}^{r} ((r-s)s)^{-\beta - 1} 
   \ip{S(r)f}{g} ds dr
   \\
   &=
   \frac{1}{(\Gamma(-\beta))^2} 
   \int_{0}^{\infty} \ip{S(r)f}{g} r^{-2\beta - 1}\int_{0}^{1} ((1 - q)q)^{-\beta - 1} 
    dq dr \,.
\end{align*}
Using properties of beta function, we have 
$$
\int_{0}^{1} ((1 - q)q)^{-\beta - 1} dq =
\mathcal{B}(-\beta, -\beta) = \frac{(\Gamma(-\beta))^2}{\Gamma(-2\beta)} \,,
$$
and consequently
$$
\ip{f}{g}_\beta = \frac{1}{\Gamma(-2\beta)} 
   \int_{0}^{\infty} \ip{S(r)f}{g} r^{-2\beta - 1} dr \,.
$$

 Let $\Pp_t$ be the heat semigroup, meaning the semigroup generated by $u \mapsto u_{xx} = Au + 2u$, so $\Pp_t = e^{2t}S(t)$. Then continuing the above we have \begin{equation}\label{gamma-inner-product}
    \begin{aligned}
        \ip{f}{g}_\beta &= \frac{1}{\Gamma(-2\beta)}\int_0^\infty t^{-2\beta-1}\ip{S(t)f}{g} dt
        \\
        &= \frac{1}{\Gamma(-2\beta)}\int_0^\infty t^{-2\beta-1}e^{-2t}\ip{\Pp_t f}{g} dt \,.
    \end{aligned}
    \end{equation}

   By Harnack inequality for $f \in \cdom$ we have $\|f\|_{L^1} \lesssim \Pp_tf$ uniformly in $t \in [1,2]$ (for example, use  that $\Pp_tf(x) = \E[f(x + \sqrt{2}B_t)]$,  where $B_t$ is a Brownian motion and the normal distribution projected to the torus has a strictly positive density which is jointly continuous in $t$ and $x$), so that for $t \in [1,2]$
    $$\|f\|_{L^1}\|g\|_{L^1} = \int_\T \|f\|_{L^1}g(x) dx \lesssim \int_\T (\Pp_tf)(x)g(x) dx = \ip{\Pp_t f}{g}$$
    (we used $g \geq 0$ in the first equality). Combined with \eqref{gamma-inner-product}, this proves \ref{l1-times-l1}.
\end{proof}

Next, we proceed with the proof of \Cref{SPDE-lyap}\ref{case-1}. For any $\beta \in (-\frac{1}{2},-\frac{1}{4})$ ($p = \infty$ and $N = 1$ in \Cref{SPDE-lyap}\ref{case-1}) we have  by \Cref{l1-lemma}\ref{l1-in-h-beta} that $\|u\|_{\beta,2}^2 \lesssim \|u\|_{L^1}^2$. Also, by \Cref{l1-lemma}\ref{l1-times-l1} we have
$$\|F^-(u)\|_{L^1}\|u\|_{L^1} \lesssim \ip{F^-(u)}{u}_\beta \,.$$
Since $F(u) = cu - u^2$ for some $c \in \R$ and $u$ vanishes over $u^2$, we have also that $u$ vanishes over $F^-(u) \geq u^2 - |c|u$, and thus for all $K > 0$ there is some $C > 0$ such that $\|F^-(u)\|_{L^1} \geq K\|u\|_{L^1} - C$, so
$$\|u\|_{L^1}\|F^-(u)\|_{L^1} \geq K\|u\|_{L^1}^2 - C\|u\|_{L^1} \geq \frac{K}{2}\|u\|_{L^1}^2 - C' \,,$$
which proves that $\|u\|_{L^1}^2$ (and thus $\|u\|_{\beta,2}^2$) vanishes over $\|F^-(u)\|_{L^1}\|u\|_{L^1}$, and thus $\ip{F^-(u)}{u}_\beta$.

\end{proof}

\subsection{SIR Epidemic Model}\label{example-SIR}
In this section we investigate a stochastic version of the classic SIR epidemic model analyzed numerically in \cite{stochastic-SIR}, which is similar to the one treated analytically in \cite{stochastic-SIR-analytical}.
We consider a disease infecting some portion of a population living in some domain $D \subset \R^N$ which is open, connected, bounded, and has smooth boundary. The function $S$ represents the density of the population which is susceptible and $I$ is the density of population which is currently infected. There are positive parameters $\lambda, \eta, \delta, \sigma$ representing  respectively the recruitment rate, natural death rate, death rate due to disease, recovery rate, as well as diffusion constants $d_1,d_2 > 0$ and $\beta, c_1, c_2 > 0$, $c_3 \geq 0$ governing the rate at which the disease spreads. In fact, we could allow all of these constants to be functions on $D$ and our results would still apply, just with slightly more complicated formulas. We add noise to the death rate $\eta$, so at each time $t$ we have a death rate of $\eta + \alpha dw/dt$, where $w^{(1)}, w^{(2)}$ are independent Brownian motions and $\alpha_1, \alpha_2 \geq 0$ govern the strength of the noise. (We could have made our example more complicated by choosing space-time white noise in one dimension like in \Cref{example-torus}, or the type of noise considered in \Cref{sec:lotka-volterra}, a generalization of the Hilbert-Schmidt noise considered in \cite{stochastic-SIR-NN}, but we already have examples with those types of noise, so we kept it simple for this one.)  The system is:

\begin{equation}\label{SIR}
    \begin{aligned}
        dS &= \Big[d_1\Delta S + \lambda - \eta S - \frac{\beta SI}{1 + c_1S + c_2I + c_3SI}\Big]dt + \alpha_1 Sdw^{(1)}_t \\
        dI &= \Big[d_2\Delta I - (\eta + \delta + \sigma) I + \frac{\beta SI}{1 + c_1S + c_2I + c_3SI}\Big]dt + \alpha_2 Idw^{(2)}_t
    \end{aligned}
\end{equation} with Neumann boundary conditions ($dS/dn = dI/dn = 0$ on $\partial D$) and independent standard Brownian motions $w^{(1)}, w^{(2)}$. We study conditions under which the disease persists.

In the deterministic setting ($\alpha_1 = \alpha_2 = 0$), it is known (\cite[Lemma 2.1]{deterministic-SIR}) that the system has an endemic equilibrium (one where $I \neq 0$) if the quantity
\begin{equation}\label{eq:tilde-lambda}
    \tilde \lambda \coloneqq \frac{\beta \lambda - (\eta + \delta + \sigma)(c_1 \lambda + \eta)}{c_1\lambda + \eta} > 0 \,.
\end{equation}
Via an analysis of the linearized system \begin{equation}\label{SIR-linearized}
    \begin{aligned}
        d\tilde S &= \Big[d_1\Delta \tilde S + \lambda - \eta \tilde S\Big]dt + \alpha_1 \tilde Sdw^{(1)}_t \\
        d\tilde I &= \Big[d_2\Delta \tilde I - (\eta + \delta + \sigma) \tilde I + \frac{\beta \tilde S\tilde I}{1 + c_1\tilde S}\Big]dt + \alpha_2 \tilde Idw^{(2)}_t \,,
    \end{aligned}
\end{equation} we can make similar statements about the stochastic system.

\begin{thm}
\label{SIR-thm}
    Suppose either: \begin{itemize}
        \item $\tilde \lambda > 0$ and $\alpha_1, \alpha_2$ are small enough.
        \item $\Lambda < 0$, where 
        \begin{equation}\label{eq:sir-lambda}
             \begin{aligned}
        \Lambda \coloneqq \sup_{\mu \in P_{inv}(\mcM_0)} \mu\Big(-\int_D \frac{\beta \tilde S}{1 + c_1 \tilde S}\tilde I\Big) + \eta + \delta + \sigma + \frac{1}{2}\alpha_2^2
    \end{aligned}
        \end{equation}
 and $P_{inv}(\mcM_0)$ denotes the set of all invariant measures of the process $(\tilde S_t, \tilde I_t / \|\tilde I_t\|_{L^1})$ on
    $$\mcM_0 \coloneqq \{(S,I) \in C(\overline D;\R^2) \mid \|I\|_{L^1} = 1, S \geq 0, I \geq 0\} \,,$$
    where $(\tilde S_t, \tilde I_t)$ solves \eqref{SIR-linearized}.
    \end{itemize} 

    Then the disease is stochastically persistent in the sense that
    \begin{itemize}
        \item There is an invariant measure $\mu_+$ for \eqref{SIR} with $\mu_+(\{I = 0\}) = 0$.
        \item For all $\delta > 0$ there are $0 < b < B < \infty$ such that for any nonnegative initial condition $(S_0,I_0)$ with $I_0 \neq 0$, $$
        \liminf_{T \to \infty} \frac{1}{T}m\Big(\Big\{t \leq T \mid b \leq \int_D I_t \leq B\Big\}\Big) \geq 1 - \delta$$ 
        almost surely, where $m$ is the Lebesgue measure.
    \end{itemize}
\end{thm}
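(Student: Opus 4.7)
I recast \eqref{SIR} as an instance of the general SPDE \eqref{SPDE} by taking $u = I$ (the component whose extinction we investigate), $z = S$, $d = 1$, $m - d = 1$, and identifying the extinction set with $\{I \equiv 0\}$; the parameter is $\theta = (\alpha_1,\alpha_2)$ ranging in some compact $\Theta \subset [0,\infty)^2$ containing the origin. The two bullet points of the conclusion follow respectively from \Cref{robust-SPDE-thm} with $\theta_0 = (0,0)$ and from \Cref{main-SPDE-thm} applied directly. What remains is to check: (i) the structural hypotheses \Cref{sass-A}--\Cref{SPDE-lyap}; (ii) the identification of the abstract quantity \eqref{SPDE-Lambda} with the explicit expression \eqref{eq:sir-lambda}; and (iii) in the deterministic limit $\alpha = 0$, the equivalence $\Lambda\big|_{\alpha=0} < 0 \iff \tilde\lambda > 0$.

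\textbf{Structural assumptions.} Take $A_i = d_i \Delta - c$ on $L^2(D)$ with Neumann BC for a suitably large constant $c$ absorbed into $F$ as in \Cref{rem:subtract-from-c}; self-adjointness and positivity-preservation are standard and yield \Cref{sass-A}. Decompose $F(S,I) = f_2 + x\cdot f_1(x)$ with $f_2 = (\lambda,0) \geq 0$ and $f_1$ containing the remaining (smooth, globally bounded or linear) terms; the $I$-component of $f$ vanishes when $I = 0$ since it is proportional to $I$, and everything is polynomially bounded, so \Cref{SPDE-F} holds. The noise acts finite-dimensionally, $G(x)dW = (\alpha_1 S\, dw^{(1)}, \alpha_2 I\, dw^{(2)})$, with $H$ of rank two supported on the constant eigenfunction of $A_i$, so \Cref{SPDE-G-sass} is trivially satisfied with arbitrarily large $p$. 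Integrating the system against $1$ and exploiting Neumann BC kills the Laplacian contributions:
\begin{equation*}
\ip{Ax+F(x)}{1} \leq \lambda |D| - \eta \|S\|_{L^1} - (\eta+\delta+\sigma)\|I\|_{L^1} \leq K - c' \|x\|_{L^1},
\end{equation*}
so \eqref{eq:l-l1-bound} holds; and since both $f_I$ and $f_S$ are linear plus a bounded nonlinear interaction, \Cref{SPDE-lyap}\ref{case-2} applies.

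\textbf{Identifying $\Lambda$.} The linearization of the $I$-equation of \eqref{SIR} around $I = 0$ is exactly \eqref{SIR-linearized}, because the $c_2 I$ and $c_3 SI$ terms in the denominator of the interaction are higher order in $I$. Then \Cref{def:equiv-lambda-def} yields
\begin{equation*}
\Lambda = \sup_\nu \E_\nu\Big[-\int_D d_2 \Delta v + \Big(\tfrac{\beta \tilde S}{1+c_1\tilde S} - (\eta+\delta+\sigma)\Big)v\,dx + \tfrac{1}{2}\alpha_2^2\Big(\int_D v\,dx\Big)^2\Big],
\end{equation*}
with $v = \tilde I/\|\tilde I\|_{L^1}$. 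Under Neumann BC one has $\int_D \Delta v = 0$, and $\int_D v = 1$ by normalization, so this reduces to \eqref{eq:sir-lambda}, establishing the second bullet via \Cref{main-SPDE-thm}.

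\textbf{Deterministic limit and main obstacle.} At $\alpha_1 = \alpha_2 = 0$, the linearized system \eqref{SIR-linearized} decouples: $\tilde S$ obeys an autonomous linear parabolic equation with source $\lambda$, whose constant steady state $\tilde S_* \equiv \lambda/\eta$ is globally exponentially stable under Neumann BC; conditioning on $\tilde S = \tilde S_*$, the $\tilde I$-equation becomes self-adjoint with constant coefficients, whose normalized Perron eigenfunction is $v_* \equiv 1/|D|$ by Krein--Rutman. Hence the unique invariant measure in $P_{inv}(\mcM_0)$ is $\delta_{(\tilde S_*, v_*)}$, and plugging into \eqref{eq:sir-lambda} gives
\begin{equation*}
\Lambda\big|_{\alpha = 0} = -\frac{\beta\lambda}{\eta + c_1\lambda} + (\eta+\delta+\sigma) = -\tilde\lambda
\end{equation*}
by \eqref{eq:tilde-lambda}; \Cref{robust-SPDE-thm} then closes the first bullet. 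The conceptually most delicate step is precisely this uniqueness of the invariant measure at $\alpha = 0$: although the $\tilde S$-dynamics trivially contracts to $\tilde S_*$, one must rule out invariant measures of $(\tilde S, v)$ whose $v$-marginal is not concentrated at $v_*$. This follows from the Perron-type argument already carried out in \Cref{reac-diff-domain-thm} — the top eigenfunction of the conditional linear operator is the constant and every nonnegative initial condition normalizes to it in $L^2$ as $t \to \infty$ — applied after freezing $\tilde S$ at its globally attracting value.
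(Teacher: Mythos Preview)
Your proof is correct and follows essentially the same route as the paper: verify \Cref{sass-A}--\ref{SPDE-lyap} (using \Cref{SPDE-lyap}\ref{case-2} since the interaction term is bounded), identify the abstract $\Lambda$ with \eqref{eq:sir-lambda}, and then invoke \Cref{main-SPDE-thm} and \Cref{robust-SPDE-thm}. The one place you do more work than necessary is the deterministic limit: once every invariant measure is supported on $\{\tilde S \equiv \lambda/\eta\}$, the coefficient $\beta\tilde S/(1+c_1\tilde S)$ becomes a constant, so $\int_D \frac{\beta\tilde S}{1+c_1\tilde S}\, v = \frac{\beta\lambda}{\eta+c_1\lambda}$ for \emph{every} normalized $v$, and $\Lambda|_{\alpha=0} = -\tilde\lambda$ follows without ever identifying $v_*$ or invoking the Perron argument from \Cref{reac-diff-domain-thm}.
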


\begin{rem}
    It should be noted that in \Cref{SIR-thm} we obtain persistence in the almost sure sense, as opposed to the statements about expectation in \cite[Theorem 4.1]{stochastic-SIR-NN}, \cite[Theorem 3.3]{stochastic-SEIRS}, \cite[Theorem 3.1]{stochastic-SIR-analytical}. Additionally, we believe our criteria for persistence are tighter than in 
    \cite{stochastic-SIR-NN, stochastic-SEIRS, stochastic-SIR-analytical} if $\beta,\eta,$ etc. are allowed to depend on the space variable. Indeed, in \cite{stochastic-SIR-NN, stochastic-SEIRS, stochastic-SIR-analytical} it is not obvious if the threshold for persistence is the same as the threshold for extinction (except in the case of constant $\beta,\eta,$ etc.). In the forthcoming project we will present a extinction  theory (as \cite{extinction} complements \cite{persistence}) showing that if the infimum of the expression appearing in the definition of $\Lambda$ (see \eqref{eq:sir-lambda}) is positive then extinction occurs. 
\end{rem}

\begin{proof}
    
    We start by verifying \Cref{sass-A}-\ref{SPDE-lyap}. We note that in the notation of \Cref{section-SPDE} we have $m = 2$, $d = 1$, $u = I$ denotes the species which persistence we investigate, and $z = S$ represents the auxiliary species. Since the Laplacian is self-adjoint, \Cref{sass-A} follows from \Cref{rem:subtract-from-c}. For \Cref{SPDE-F} we have (ignoring the aforementioned $c$) 
    $$
    f_1(I,S) = \Big(-(\eta+\delta+\sigma)+\frac{\beta S}{1 + c_1S + c_2I + c_3SI},-\eta-\frac{\beta I}{1 + c_1S + c_2I + c_3SI}\Big)
    $$ 
    and $f_2(I,S) = (0,\lambda)$, so \Cref{SPDE-F} is easily verified. \Cref{SPDE-G-sass}  also follows with $\sigma(I,S) = (I,S)$, $Hu_1 = (1,0)$, $Hu_2 = (0,1)$, and $Hu_n = 0$ for all other $n$. Indeed, \eqref{eq:an-en-assumption} holds with $p = 2$. Finally, all of our terms are of linear order or lower, so \Cref{SPDE-lyap}\ref{case-2} holds, and \eqref{eq:l-l1-bound} follows from
    Neumann boundary conditions and integration by parts:
    \begin{equation*}
        \ip{Ax + F(x)}{1} = \int_D d_1\Delta S + d_2\Delta I + \lambda - \eta S - (\eta + \delta +\sigma)I \leq \lambda - \eta\|(I,S)\|_{L^1} \,.
    \end{equation*}
Next, we claim that \eqref{eq:sir-lambda} is equal to $-\tilde \lambda$ when $\alpha_1 = \alpha_2 = 0$. 
Indeed, by \eqref{SIR-linearized}, the equation for $\tilde S$ is independent of $\tilde I$. If $\alpha_1 = \alpha_2 = 0$, then $\tilde S_E = \lambda/\eta$ is the only equilibrium of 
$$
\partial_t\tilde S = d_1 \tilde \Delta S + \lambda - \eta \tilde S \,.
$$ 
because, for example by the energy method, we obtain $\lim_{t \to \infty} \tilde S_t = \lambda / \eta$ (see also \Cref{rem:eigenvalue-for-SIR}). 
Thus, all invariant measures $\mu \in P_{inv}(\mcM_0)$ when $\alpha_1 = \alpha_2 = 0$ satisfy $\mu(\{\tilde S = \lambda/\eta\}) = 1$.

Replacing $\tilde S$ with $\lambda /\eta$ in \eqref{eq:sir-lambda} and using $\int_D \tilde I = \|\tilde I\|_{L^1} = 1$ when $(\tilde S, \tilde I) \sim \mu \in P_{inv}(\mcM_0)$, we indeed obtain $\Lambda = -\tilde \lambda$ when $\alpha_1 = \alpha_2 = 0$.

Finally, the claim follows from \Cref{main-SPDE-thm} and \Cref{robust-SPDE-thm} (applied to \Cref{main-SPDE-thm}).
        
\end{proof}
\begin{rem}\label{rem:eigenvalue-for-SIR}
    Notice that the value of $\Lambda$ when the noise terms are dropped is the smallest eigenvalue (the only eigenvalue with nonnegative eigenvector) of the operator $-[d_2\Delta - (\eta + \delta + \sigma) + \frac{\beta \lambda}{\eta + c_1 \lambda}]$. See also the proof of \Cref{reac-diff-domain-thm}, where this phenomena is illustrated directly, and \Cref{rem:rayleigh-quotient}, which gives an intuitive explanation.
\end{rem}

\begin{rem}
    Observe that if our ``functional response" $\beta SI/(1+c_1S+c_2I+c_3SI)$ was replaced with $\beta SI/(S+I)$, which is the one considered in \cite{stochastic-SIR-NN}, then technically \Cref{SPDE-F} would not hold because $xy/(x+y)$ is not continuously differentiable. In other words, we cannot linearize the problem. However, we believe our results should still apply in some sense. For example, consider \cite[(2.4)]{stochastic-SIR-NN}. Introduce a parameter $\epsilon \geq 0$ and consider 
    \begin{equation}\label{eq:sir-perturbed-example}
            \begin{aligned}
        dS(t) &= \Big[A_1S(t) + \Lambda - \mu_1S(t) - \frac{\alpha S(t)I(t)}{S(t) + I(t)}\Big]dt + S(t)dW_1(t) \\
        dI(t) &= \Big[A_2I(t) - \mu_2I(t) + \frac{\alpha S(t)I(t)}{S(t)+I(t)+\epsilon}\Big]dt + I(t)dW_2(t)
    \end{aligned}
    \end{equation}

    The corresponding process $X_t = (S_t,I_t,\epsilon)$ satisfies \Cref{as1}-\ref{as-W}, \Cref{as-U}\ref{5.1}-\ref{5.3}, and \Cref{as-compact} on the state space $\mcM = C(\overline D;\R^2)_+ \times [0,1]$. Thus, its set of invariant measures is compact, its empirical occupation measures are almost surely tight, it is Feller, etc. If $\sup_{\epsilon \in (0,\overline \epsilon)} \Lambda_\epsilon < 0$, then \eqref{eq:sir-perturbed-example} is stochastically persistent with respect to $\mcM_0 = C(\overline D;\R)_+ \times \{0\}$ for all fixed $\epsilon \in (0,\overline \epsilon)$. We believe this should imply that it is also persistent for $\epsilon = 0$. Such statement  requires proving an extra `uniform' tightness of $\mu_t$ on $\inv$. It would be interesting to further explore this property, because in general a sort of uniform tightness of $\mu_t$ should also give us a rate of convergence of $\mu_t$ to $P_{inv}(\mcM_0)$ (see for example \cite[Theorem 4.11, Theorem 4.13]{persistence} for a result in the finite-dimensional case).
\end{rem}

\subsection{Stochastic Competitive Lotka-Volterra}\label{sec:lotka-volterra}
Consider the population of $m$ species $u_1,\dots,u_m$ evolving on some domain $D \subset \R^N$ which is connected, open, bounded, and has smooth boundary. Assume $a_{ii}: \overline D \to (0,\infty)$, $a_{ij}: \overline D \to [0,\infty)$, $m_i: \overline D \to \R$, and $\sigma_i: [0,\infty) \to \R$ are all differentiable with bounded Lipschitz derivative and additionally, $\sigma_i(0) = 0$. Fix constants $d_i > 0$ and suppose that the driving noise $W_i(t)$ is given by independent cylindrical $Q$-Wiener processes ($Q$ is the covariance operator):
$$W_i(t) = \sum_{k = 1}^\infty \lambda_{ik}e_kw_{ik}(t) \,,$$
where $w_{ik}(t)$ are independent standard (one-dimensional) Brownian motions, the functions $\{e_k\}_{k = 1}^\infty$ form an orthonormal basis for $L^2(D)$ consisting of eigenfunctions of $\Delta$, and either $N>1$ and $\lambda_{ik} \geq 0$ satisfies
$$\sum_{i=1}^m\sum_{k=1}^\infty \lambda_{ik}^p\|e_k\|_{L^\infty}^2 < \infty$$
for some $p \in [2,2N/(N-2))$, or $N=1$ and $\sup_{ik} |\lambda_{ik}| < \infty$.

\begin{rem}
If $p = 2$ then $W_i$ is a finite-trace Q-Wiener process (it is a Hilbert-Schmidt embedding of a cylindrical Wiener process). If $N = 1$ and $\lambda_{ik} = 1$, then $W_i$ is a cylindrical Wiener process, so $dW_i(t)/dt$ represents space-time white noise. Otherwise, we have something in between. See \cite[Section 8]{stochastic-lotka-volterra-SPDE} for further discussion about the choice of noise, and notice that our assumption is weaker than \cite[Hypothesis 8.1]{stochastic-lotka-volterra-SPDE} because we do not assume that $\{e_{ik}\}$ is equibounded in $L^\infty$.
\end{rem}

Our SPDE is as follows:

\begin{equation}\label{eq:lotka-vol}
    dx_i = \Big[d_i\Delta x_i + x_i\Big(m_i - a_{ii}x_i - \sum_{j \neq i}a_{ij} x_j\Big)\Big]dt + \sigma_i(x_i)dW_i(t) \,,
\end{equation}
where $x_i$ satisfies Neumann boundary conditions ($dx_i/dn = 0$ on $\partial D$, where $n$ is the outward pointing normal vector to $\partial D$).

Then \eqref{eq:lotka-vol} induces a Markov process on $\mcM \coloneqq C(\overline D;\R^m)_+$, the set of nonnegative continuous $\R^d$-valued functions on $\overline D$ (this is a consequence of \Cref{well-posedness-of-SPDE}). We investigate when the species can coexist, meaning that the following set is a ``repeller":
$$\mcM_0 \coloneqq \{x \in \mcM \mid x_i = 0 \text{ for some } i = 1,\dots,m\} \,.$$

As in the finite-dimensional cases like SDEs and discrete time Markov chains (see \cite{ecologicalContinuous, ecologicalDiscrete, ecologicalGeneral}), this question can be answered by considering the ``invasion rates" $r_i(\mu)$ defined below. Intuitively, we imagine that a species $i$ is extinct and other species are in a steady state $\mu$. Then, 
the $i$th invasion rate measures the average exponential rate of growth of the $i$th population if a small amount of species $i$ is introduced into the ecosystem.
The exponential rate of growth can be obtained by formally applying It\^{o}'s formula to the function $\ln{\int_D x_i}$, where $\int_D x_i = \|x_i\|_{L^1}$ is the total population of species $i$, and then to obtain $r_i(\mu)$ we take average according to the measure $\mu$. If the species was not extinct we define the invasion rate to be zero. We arrive at the following definition, which should be compared to \cite[(2.3) and Proposition 3.1]{ecologicalGeneral}.

\begin{deff}\label{def:invasion-rates}
    Fix $\mu \in P_{inv}(\mcM_0)$ (see \Cref{inv-meas}), $k \in \{1,\dots,m\}$, and suppose $\mu(\{x_k = 0\}) = 1$ 
    Let $\tilde{x}$ be a process satisfying
    \begin{align*}
        d\tilde x_k &= [d_k\Delta \tilde x_k + \tilde x_k(m_k -\sum_{j\neq k}a_{kj}\tilde x_j)]dt + \sigma_k'(0)\tilde x_kdW_k(t) \\
        d\tilde x_i &= [d_i\Delta \tilde x_i + \tilde x_i(m_i - a_{ii}\tilde x_i -\sum_{j \neq i,k}a_{ij} \tilde x_j)]dt + \sigma_i(\tilde x_i)dW_i(t) \qquad i \neq k \,. 
    \end{align*}
    Define 
    \begin{equation}\label{eq:lotka-vol-invasion-rate}
        r_k(\mu) \coloneqq \inf_{\nu \in P_k(\mu)} \nu\Big(\int_D v_k\Big(m_k - \sum_{j\neq k} a_{kj}\tilde x_j\Big) - \frac{1}{2} \sigma_k'(0)^2\sum_{i=1}^\infty \lambda_{ki}^2\Big(\int_D e_{ki} v_k\Big)^2\Big) \,,
    \end{equation}
    where $P_k(\mu)$ denotes the set of all invariant measures (see  \Cref{inv-meas}) $\nu$ for the process $(v_k,\tilde x_{-k}) \coloneqq (\tilde x_k/\|\tilde x_k\|_{L^1}, \tilde x_{-k})$ satisfying $\phi^*\nu = \mu$. Here, $\phi$ is given by $\phi(v,x_{-k}) = (x_1,\dots,x_{k-1},0,x_{k+1},\dots,x_m)$, $\phi^*\nu(A) = \nu(\phi^{-1}(A))$, and  $x_{-k} = ( x_1,\dots, x_{k-1}, x_{k+1},\dots, x_m)$.

    For a general $\mu \in P_{inv}(\mcM_0)$, by invariance there is $p \in [0,1]$ and $\mu_1, \mu_2 \in P_{inv}(\mcM_0)$ such that $\mu_1(\{x_k = 0\}) = 1$, $\mu_2(\{x_k = 0\}) = 0$, and $\mu = p\mu_1 + (1-p)\mu_2$. Then we 
    we define $r_k(\mu) = pr_k(\mu_1)$. 
\end{deff}

\begin{rem}
    If $\mu$ is ergodic then $\mu(\{x_k = 0\}) \in \{0,1\}$ because $\{x_k = 0\}$ is an invariant set for \eqref{eq:lotka-vol}. This is intuitively clear and follows rigorously from \Cref{bigthm}\ref{bigthm:invt-set} with $R=1$. Indeed, we will prove below that \Cref{sass-A}-\ref{SPDE-G-sass} hold for \eqref{eq:lotka-vol}. Thus, we could also define $r_k(\mu)$ for ergodic $\mu$ first and then in general via ergodic decomposition.
\end{rem}

\begin{rem}\label{rem:on-inv-rates}
    To provide intuition for \Cref{def:invasion-rates}, note that the expression inside of $\nu()$ in \eqref{eq:lotka-vol-invasion-rate} is identical to $\Ll \ln \|x_k\|_{L^1}$ when $x_k = 0$, modulo the tildes, provided we identify $v_k$ with $x_k/\|x_k\|_{L^1}$.
    
    This is a valid identification because  writing the equation for $x_k/\|x_k\|_{L^1}$ and then setting $x_k = 0$,  yields the same result as if we first linearized the equation for $x_k$. Hence, formally when $x_k = 0$ we have $x_k/\|x_k\|_{L^1} = \tilde x_k/\|\tilde x_k\|_{L^1}$ and the equations for $\tilde x_i$ are obtained simply by setting $x_k = 0$. 
    
    Note that we are averaging over $\nu \in P_k(\mu)$ as opposed to $\mu$. The reason is that before we were looking at an invariant measure $\mu$ where $x_k$ had only one possible value ($0$). However, the expression we need to average contains $v_k$, and since $\|x_k\|_{L^1} = 0$ the polar decomposition $x_k = \|x_k\|_{L^1}v_k$ leaves many possible options for $v_k$ (any $v_k \in C(\overline D;\R)_+$ with $\|v_k\|_{L^1} = 1$). Thus, we must consider all possible stationary behaviors for $v_k$ given that the other species are still distributed according to $\mu$. This detail was not encountered in the SDE or discrete-time Markov chain case where $x_k/x_k$ has only one possible value ($1$) since each $x_k$ lives in the one-dimensional space $\R$ as opposed to the infinite-dimensional space $C(\overline D;\R)_+$.
\end{rem}

\begin{rem}
Below we list some common cases for the noise which simplify the formula \eqref{eq:lotka-vol-invasion-rate}.
When $\lambda_{ki} = 1$ (space-time white noise), by Plancherel's theorem
$$
\sum_{i=1}^\infty \lambda_{ki}^2\Big(\int_D e_{ki} v_k\Big)^2 = \|v_k\|_{L^2}^2 \,.
$$  
Similarly if $\lambda_{ki} = \lambda_i^{\alpha/2}$,  where $\lambda_i$ is the $i$th eigenvalue of the Laplacian (cylindrical Brownian motion on the Sobolev space $H^\alpha(D)$),  then 
$$\sum_{i=1}^\infty \lambda_{ki}^2\Big(\int_D e_{ki} v_k\Big)^2 = \|v_k\|_{H^\alpha}^2 \,,
$$
where $H^\alpha$ is the $\alpha$-Sobolev norm. If the noise terms are standard Brownian motions, meaning $\lambda_{ki} = 1$ for $i = 1$ and $\lambda_{ki} = 0$ otherwise, then $e_{k1} = 1$ for all $k$ and 
we have 
$$
\sum_{i=1}^\infty \lambda_{ki}^2\Big(\int_D e_{ki} v_k\Big)^2 = 1 \,,
$$ 
because $\|v\|_{L^1} = 1$.
\end{rem}

\begin{rem}
    In the SDE case there is no $\inf$ in \eqref{eq:lotka-vol-invasion-rate} because $L^1(D) \cong \R$ when $D$ is a singleton (no spatial dependence) which forces $v_k = 1$. Since we have spatial dependence, the simplex in $L^1(D)$ is a much richer space and the set $P_k(\mu)$ is needed. However, as mentioned in the introduction, it may be possible to use a contraction argument as in \cite{BenaimLobrySari25, Bushell86, cohenfausti2024} to show that the $\nu$ in \eqref{eq:lotka-vol-invasion-rate} is unique ($P_k(\mu)$ is singleton), and then 
    $\inf$ could be removed. 
\end{rem}

\begin{thm}\label{lotka-volterra-main}
Suppose that $\max_{i=1,\dots,m} r_i(\mu) > 0$ for all $\mu \in P_{inv}(\mcM_0)$. Then coexistence (persistence) occurs in \eqref{eq:lotka-vol}. Specifically,
\begin{itemize}
    \item There is an invariant measure $\mu_+$ for \eqref{eq:lotka-vol} with $\mu_+(\mcM_0) = 0$.
    \item For all $\delta > 0$ there are $0 < b < B < \infty$ such that for any nonnegative initial condition $u \notin \mcM_0$,
    $$\liminf_{T \to \infty} \frac{1}{T}m(\{t \leq T \mid b \leq \int_D x_i(t) \leq B \text{ for all } i\}) \geq 1 - \delta$$ almost surely.
\end{itemize}
\end{thm}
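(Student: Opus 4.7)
The plan is to apply \Cref{main-SPDE-thm-2} with $d = m$, treating every species as a candidate for extinction. Three ingredients must be supplied: (a) verification of \Cref{sass-A}--\ref{SPDE-lyap} for the SPDE \eqref{eq:lotka-vol}; (b) identification of the invasion rates of \Cref{def:invasion-rates} with those of \eqref{eq:inv-rates-general}; and (c) translation of the conclusion of \Cref{main-SPDE-thm-2} into the two bullets of \Cref{lotka-volterra-main}.

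For (a), after absorbing constants $c_i > 0$ into the nonlinearity via \Cref{rem:subtract-from-c}, \Cref{sass-A} follows from self-adjointness and ellipticity of the Neumann Laplacian $d_i\Delta$. \Cref{SPDE-F} holds with $f_2 \equiv 0$ and $f_1(\cdot, x)_i = c_i + m_i - \sum_j a_{ij} x_j$, which is smooth, has polynomially bounded derivatives, and satisfies $|f^+_u| \lesssim |x_u|$ by the competitive sign structure. For \Cref{SPDE-G-sass}, take $\U = \ell^2$ indexed by $(i, k)$ and $H u_{ik} = \lambda_{ik}(e_k \mathbf{e}_i)$: the function $e_k \mathbf{e}_i$ is an eigenfunction of the block-diagonal $A$ and the summability \eqref{eq:an-en-assumption} reduces to the noise hypothesis. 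The dissipation bound \eqref{eq:l-l1-bound} follows from integrating $\langle Ax + F(x), 1\rangle$: Neumann BCs and nonnegativity of the cross-coefficients give $m_i x_i - a_{ii} x_i^2 \leq -(a_{ii}/2)x_i^2 + C$, and Cauchy--Schwarz $\|x_i\|_{L^1}^2 \leq |D|\|x_i\|_{L^2}^2$ combined with $\|x_i\|_{L^1}^2 \geq 2M\|x_i\|_{L^1} - M^2$ yields the required linear $-c\|x\|_{L^1}$ drift. Since $F$ is quadratic, case \ref{case-2} of \Cref{SPDE-lyap} fails; to verify case \ref{case-1} I combine the Harnack-type lower bound $\langle f, g\rangle_\beta \gtrsim \|f\|_{L^1}\|g\|_{L^1}$ for nonnegative $f, g$ (adapting \Cref{l1-lemma}\ref{l1-times-l1} to the Neumann heat semigroup on a bounded domain) with the quadratic lower bound $\|F^-(x)_i\|_{L^1} \gtrsim \|x_i\|_{L^2}^2 - C$ to obtain $\langle x_i, F^-(x)_i\rangle_\beta \gtrsim \|x_i\|_{L^1}\|x_i\|_{L^2}^2 - C$, and then dominate $\|x\|_{\beta, 2}^2$ through an $L^q$--Sobolev embedding $L^q \subset \B_{\beta, 2}$ for $q$ near $2N/(N+4\beta)$.

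For (b), a direct calculation gives $\tilde F_i(0, v_i, x_{-i}) = v_i(c_i + m_i - \sum_{j\neq i} a_{ij} x_j)$ and $\tilde e_i = -c_i$, so the artificial constants $c_i$ cancel and
\[
\langle v_i, \tilde e_i\rangle + \langle \tilde F_i(0, v, z), 1_i\rangle = \int_D v_i\Big(m_i - \sum_{j\neq i} a_{ij} x_j\Big).
\]
A parallel computation yields $\|\tilde G_i(0, v, z)^* 1_i\|_\U^2 = \sigma_i'(0)^2 \sum_k \lambda_{ki}^2 \big(\int_D e_{ki} v_i\big)^2$, matching the integrand in \eqref{eq:lotka-vol-invasion-rate}. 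Since on $\{x_i = 0\}$ the reduced dynamics of $x_{-i}$ is precisely Lotka--Volterra without species $i$, invariant measures $\nu \in P_{inv}(\mcM_0^{(i)})$ with $\phi_i^* \nu = \mu$ correspond bijectively to the set $P_i(\mu)$ of \Cref{def:invasion-rates}, and the two invasion rate definitions agree. The hypothesis $\max_i r_i(\mu) > 0$ for all $\mu \in P_{inv}(\mcM_0)$ is then exactly what \Cref{main-SPDE-thm-2} demands. For (c), the first bullet is immediate from the invariant-measure conclusion; for the second, compactness of $\K_\epsilon \subset \mcM^* \setminus \mcM_0^*$ provided by \Cref{main-SPDE-thm-2} together with continuity of $x \mapsto \|x_i\|_{L^1}$ give $b := \inf_{x \in \K_\epsilon} \min_i \|x_i\|_{L^1} > 0$ and $B := \sup_{x \in \K_\epsilon} \max_i \|x_i\|_{L^1} < \infty$, proving the $\liminf$ statement.

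The most delicate step is the vanishing condition in \Cref{SPDE-lyap}\ref{case-1} when $N \geq 2$. The clean $L^1$-embedding $L^1 \subset \B_{\beta, 2}$ used in \Cref{example-torus} is unavailable because it would require $\beta < -N/4$, which is incompatible with the admissible range $\beta > -1/2$ from \Cref{SPDE-G-lem}. The remedy outlined above replaces the $L^1$ upper bound by an $L^q$ bound via Sobolev embedding and exploits the quadratic self-competition $a_{ii}x_i^2$: the lower bound $\langle x_i, F^-(x)_i\rangle_\beta \gtrsim \|x_i\|_{L^1}(\|x_i\|_{L^2}^2 - C)$ dominates $\|x\|_{L^q}^2$ in the regime where $\|x_i\|_{L^1}$ is large, while a complementary Young-type inequality handles the bounded $\|x_i\|_{L^1}$ regime. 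All remaining verifications are direct adaptations of techniques already used in \Cref{example-logistic}--\Cref{example-SIR}.
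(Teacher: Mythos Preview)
Your overall plan coincides with the paper's: invoke \Cref{main-SPDE-thm-2}, verify \Cref{sass-A}--\ref{SPDE-lyap}, match the invasion rates of \Cref{def:invasion-rates} with \eqref{eq:inv-rates-general}, and extract the two bullets from the compact set produced by \Cref{main}. Parts (b) and (c), as well as the checks of \Cref{sass-A}, \Cref{SPDE-F}, \Cref{SPDE-G-sass}, and \eqref{eq:l-l1-bound}, are essentially what the paper does.

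The gap is in your verification of \Cref{SPDE-lyap}\ref{case-1}. The Harnack lower bound $\langle f,g\rangle_\beta \gtrsim \|f\|_{L^1}\|g\|_{L^1}$ combined with $\|F^-_i(x)\|_{L^1}\gtrsim \|x_i\|_{L^2}^2 - C$ gives at best $\langle x_i,F_i^-(x)\rangle_\beta \gtrsim \|x_i\|_{L^1}\|x_i\|_{L^2}^2 - C$, while the Sobolev embedding forces $q>2N/(N+2)$ (your formula $2N/(N+4\beta)$ has a sign slip). For $N\geq 6$ this means $q>3/2$, and then the vanishing fails: take $x_i=h\,\mathbf{1}_{B_r}$ with $h=r^{-\gamma}$ for $\gamma\in(N/q,\,2N(1-1/q)]$ (nonempty precisely when $q>3/2$). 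One computes $\|x_i\|_{L^q}^2\sim r^{2N/q-2\gamma}\to\infty$ while $\|x_i\|_{L^q}^2/(\|x_i\|_{L^1}\|x_i\|_{L^2}^2)\sim r^{\gamma+2N/q-2N}$ stays bounded away from $0$. This family sits exactly in your ``bounded $\|x_i\|_{L^1}$ regime,'' so no Young-type inequality rescues it.

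The paper's route is both simpler and dimension-free. From the representation $\langle f,g\rangle_\beta = C(\beta)\int_0^\infty t^{-2\beta-1}\langle S(t)f,g\rangle\,dt$ (cf.\ \eqref{gamma-inner-product}) and positivity of $S(t)$ one sees that $\langle\,\cdot\,,\,\cdot\,\rangle_\beta$ is monotone in each argument on nonnegative functions. Since pointwise $F_i^-(x)\geq a_{ii}x_i^2-(c+m_i)x_i\geq \epsilon^{-1}x_i-K_\epsilon$ for every $\epsilon>0$ and a suitable constant $K_\epsilon$, monotonicity yields directly
\[
\langle x_i,F_i^-(x)\rangle_\beta \;\geq\; \epsilon^{-1}\|x_i\|_{\beta,2}^2 - K_\epsilon\|x_i\|_{\beta,2}\;\geq\; \tfrac{1}{2}\epsilon^{-1}\|x_i\|_{\beta,2}^2 - K'_\epsilon,
\]
which is exactly the vanishing of $\|x\|_{\beta,2}^2$ over $\langle x,F^-(x)\rangle_\beta$ after summing over $i$. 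No Sobolev embedding or case splitting is needed.
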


\begin{rem}
    It follows from \Cref{robust-SPDE-thm} that the invasion rates are continuous in $m_i,a_{ij},\sigma_i$, so in fact coexistence still occurs if we slightly perturb these functions (compare to \cite[Theorem 2.8]{ecologicalGeneral}). Also, in the deterministic case ($\sigma_i = 0$) it may be possible to give more explicit descriptions of the invasion rates, for example in terms of solutions to eigenvalue problems. Specifically, if $\sigma_k = 0$ then any $\nu \in P_k(\mu)$ (see \Cref{def:invasion-rates})  satisfies $\nu(\{v_k = v^*\}) = 1$, where $v^*$ is the nonnegative eigenvector of a linear PDE for $\tilde x_k$. Then one could use similar methods to \cite[Section 5.5.]{ecologicalGeneral} to calculate the invasion rates, at least when $m_i,a_{ij}$ are constants. It would be interesting to see how this aligns  with the results in \cite{deterministic-lotka-volterra} for corresponding deterministic models since, combined with the continuous dependence on $\sigma_i$, the results would extend to the case of small noise. We already illustrated such an argument in \Cref{SIR-thm} for the SIR epidemic model, so we skip the details here, but we encourage an intersted reader to calculate and/or bound $r_k(\mu)$ for  specific examples.
\end{rem}

\begin{proof}
    To use \Cref{main-SPDE-thm-2}, we first verify \Cref{sass-A}-\ref{SPDE-lyap}. Since there is no parameter $\theta$ we do not include it in the notation. \Cref{sass-A} follows from the Laplacian being self-adjoint (see \Cref{rem:subtract-from-c}) and $A_i = d_i\Delta - c$, where $c > 0$ is large enough. For \Cref{SPDE-F} we have $f_2 = 0$ and $(f_1(\cdot,x))_i = c + m_i(\cdot) - a_{ii}(\cdot)x_i - \sum_{j\neq i} a_{ij}(\cdot)x_j$, so \Cref{SPDE-F} is satisfied for $x_t = (x_i(t),x_{-i}(t))$ for any $i \in \{1,\dots,m\}$. \Cref{SPDE-G-sass} is satisfied with $\sigma(\cdot,x) = (\sigma_1(x_1),\dots,\sigma_m(x_m))$, $\U = (l^2)^m$, and $Hu_{ik} = \lambda_{ik}\tilde e_{ik}$, where $\{\{u_{ik}\}_{i=1,\dots,m}\}_{k=1}^\infty$ is an orthonormal basis for $(l^2)^m$ and $\tilde e_{ik}$ is the function in $C(\overline D;\R^m)$ with $i$th coordinate equal to $e_k$ and all other coordinates equal to $0$ (indeed, $\{\tilde e_{ik}\}$ forms an orthonormal basis for $(L^2(D))^m$).

    It remains to verify \Cref{SPDE-lyap}. First \eqref{eq:l-l1-bound} follows from 
    \begin{align*}
        \ip{Ax + F(x)}{1} &= \ip{xf_1(x)}{1} \leq K - \sum_{i} a_{ii} \ip{x_i^2}{1} - \sum_{i,j} a_{ij} \ip{x_ix_j}{1} \leq K - \tilde c\|x\|_{L^2}^2 \\
        &\leq K - \tilde c\|x\|_{L^1} \,,
    \end{align*}
    where we used $a_{ii} > 0$ on $\overline{D}$, $x_i, a_{i, j} \geq 0$, and Young's inequality. 
    
    To check \Cref{SPDE-lyap}\ref{case-1} we use ideas from \Cref{l1-lemma}, in particular, recall \eqref{gamma-inner-product} (the derivation did not use any conditions specific to that example). 
    As in \eqref{gamma-inner-product}, we have 
    $$
    \ip{f}{g}_\beta = C(\beta)\int_0^\infty t^{-2\beta-1}e^{-ct}\ip{\Pp_t f}{g} dt \,,
    $$
    where $C(\beta) > 0$ is a constant and $\Pp_t$ is the (contraction) semigroup generated by $Ax = (d_1\Delta x_1,\dots,d_m \Delta x_m)$ on $C(\overline D;\R^m)$ with Neumann boundary conditions ($\Pp_t x$ is the solution to $\frac{\partial}{\partial t} \Pp_t x = A\Pp_t x$). As $\Pp_t$ is positivity preserving, we conclude that $\ip{\cdot}{\cdot}_\beta$ is monotone in each argument (for nonnegative $f,g$). Thus, for every $\epsilon > 0$ we have
    \begin{align*}
        \ip{x_i}{F_i^-(x)}_\beta &\geq \ip{x_i}{-cx_i - m_ix_i + a_{ii}x_i^2}_\beta \\
        &\geq \ip{x_i}{\epsilon^{-1} x_i - K}_{\beta,2} \\
        &\geq \epsilon^{-1} \|x_i\|_{\beta,2}^2 - K\|x_i\|_{\beta,2} \\
        &\geq \frac{\epsilon^{-1}}{2}\|x_i\|_{\beta,2}^2 - K
    \end{align*}
    for constants $K = K(\epsilon) > 0$ which are allowed to change from line to line, and therefore $\|x_i\|_{\beta,2}^2$ vanishes over $\ip{x_i}{F_i^-(x)}_\beta$. Summing over $i$ proves \Cref{SPDE-lyap}\ref{case-1}.

Finally, we need to reconcile \Cref{def:invasion-rates} with \eqref{eq:inv-rates-general}. If $\mu(\{x_k = 0\}) = 1$, then \Cref{def:invasion-rates}  agrees with \eqref{eq:inv-rates-general} by exactly the same reasoning as why \Cref{def:equiv-lambda-def} is the same as \eqref{SPDE-Lambda} (see the paragraph right below \eqref{SPDE-Lambda}). 

When $\mu(\{x_k = 0\}) = 0$, then $r_k(\mu) = 0$ in \Cref{def:invasion-rates} and 
in \eqref{eq:inv-rates-general} we have 
$r_k(\mu) = \mu H = 0$ if $\mu \in P_{inv}(\inv)$, where we used \Cref{muH-neg} applied in the setting of \Cref{def:rv-process} with $x_t = (x_k(t),x_{-k}(t))$ ($V$ is as in the proof of \Cref{main-SPDE-thm}, in which \Cref{as1}-\ref{as-compact} are verified, so we may indeed apply \Cref{muH-neg}).
    
\end{proof}

\begin{rem}
     Our results also apply to predator-prey systems with minor adjustments. For example, if we considered an SPDE version of \cite[(5.20)]{ecologicalGeneral}, then \Cref{SPDE-lyap}\ref{case-1} technically does not hold because $\ip{x}{F^{-1}(x)}_\beta$ contains terms like $\ip{x_3-x_1}{a_{13}x_1x_3}_\beta$. This can be bypassed by using $\|\sum_{i=1}^m x_i\|_{\beta,2}^2$ instead of $\|x\|_{\beta,2}^2 = \sum_{i=1}^m \|x_i\|_{\beta,2}^2$. Indeed, in this case $\ip{x}{F^{-1}(x)}_\beta$ is replaced by $\ip{\sum_{i=1}^m x_i}{\sum_{i=1}^m F_i^-(x)}_\beta$, and then terms $a_{13}x_1x_3$ and $-a_{13}x_1x_3$ in $\sum_{i=1}^m F_i^-(x)$ cancel (as in the calculation of $\ip{F(x)}{1}$). This also applies to the SDE case; we would 
     use $(x_1+x_2+x_3)^2$ as 
     a Lyapunov function instead of $x_1^2+x_2^2+x_3^2$ in \cite[(5.20)]{ecologicalGeneral}. We decided, for the sake of brevity and clarity, not to include a third case to \Cref{SPDE-lyap}, which would have been quite similar to \Cref{SPDE-lyap}\ref{case-1}. We are confident that all of the arguments are still valid with this minor change and if we were to write this paper again we would use $\|\sum_{i=1}^m x_i\|_{\beta,2}^2$ in place of $\|x\|_{\beta,2}^2$ for \Cref{SPDE-lyap}\ref{case-1}. 
\end{rem}

\begin{rem}
    Our results will also apply when $d_i$ are allowed to be functions on $D$. However, we avoided this technicality because then we would have to use a measure (different in each component, see \Cref{example-logistic}) other than Lebesgue measure on $D$ to make the operator $d_i\Delta$ self-adjoint. Our general results assume each operator $d_i\Delta$ is self-adjoint with respect to the same measure, but all results in \Cref{section-SPDE} follow without a change if we use spaces like $L^p(D,\Sigma,\mu_1) \oplus \dots \oplus L^p(D,\Sigma,\mu_m)$ as opposed to $L^p(D,\Sigma,\mu;\R^m)$.
\end{rem}

\addcontentsline{toc}{section}{References}

\newcommand{\etalchar}[1]{$^{#1}$}
\providecommand{\bysame}{\leavevmode\hbox to3em{\hrulefill}\thinspace}
\providecommand{\MR}{\relax\ifhmode\unskip\space\fi MR }
\providecommand{\MRhref}[2]{%
  \href{http://www.ams.org/mathscinet-getitem?mr=#1}{#2}
}
\providecommand{\href}[2]{#2}

\end{document}